\documentclass[reqno,10pt]{amsart}
\usepackage{amsfonts}
\usepackage{a4wide}	
\usepackage{bm}

\numberwithin{equation}{section}

\usepackage{amsmath,amssymb,amsthm,amsfonts}
\usepackage{mathrsfs}
\usepackage{bbm}
\usepackage{hyperref}

\usepackage{color}

\newtheorem{lemma}{Lemma}[section]
\newtheorem{theorem}{Theorem}[section]

\newtheorem{proposition}{Proposition}[section]
\newtheorem{remark}{Remark}[section]

\arraycolsep=1.5pt

\newcommand{\R}{\mathbb{R}}

\newcommand{\FI}{\mathbf{I}}

\newcommand{\CE}{\mathcal{E}}

\newcommand{\CI}{\mathcal{I}}
\newcommand{\CJ}{\mathcal{J}}
\newcommand{\CK}{\mathcal{K}}

\newcommand{\CR}{\mathcal{R}}

\newcommand{\na}{\nabla}

\newcommand{\al}{\alpha}
\newcommand{\bet}{\beta}
\newcommand{\ga}{\gamma}

\newcommand{\la}{\lambda}
\newcommand{\de}{\delta}

\newcommand{\pa}{\partial}

\newcommand{\eps}{\epsilon}

\newcommand{\eqdef}{\overset{\mbox{\tiny{def}}}{=}}

\allowdisplaybreaks[4]

\makeatletter
\@namedef{subjclassname@2020}{%
  \textup{2020} Mathematics Subject Classification}
\makeatother

\begin{document}

\title[Spatially inhomogeneous Vlasov-Nordstr\"{o}m-Fokker-Planck system]{The spatially inhomogeneous  Vlasov-Nordstr\"{o}m-Fokker-Planck system in the intrinsic weak diffusion regime}

\author[S.-C. Chang]{Shengchuang Chang}
\address[CSC]{School of Mathematics and Statistics and Key Lab NAA-MOE, Central China Normal University, Wuhan 430079, P.R.~China}
\email{csc981020@mails.ccnu.edu.cn}

\author[S.-Q. Liu]{Shuangqian Liu}
\address[SQL]{School of Mathematics and Statistics and Key Lab NAA-MOE,, Central China Normal University, Wuhan 430079, P.R.~China}
\email{sqliu@ccnu.edu.cn}

\author[T. Yang]{Tong Yang}
\address[TY]{Department of Applied Mathematics, The Hong Kong Ploytechnic University, Hung Hom,
Hong Kong, P.R.~China}
\email{t.yang@polyu.edu.hk}

\subjclass[2020]{35Q20, 35B40}
\keywords{Vlasov-Nordstr\"{o}m-Fokker-Planck system, vanishing viscosity, momentum singularity, large time behaviors}
\maketitle





\everymath{\displaystyle}

\maketitle

\begin{abstract}
The spatially homogeneous Vlasov-Nordstr\"{o}m-Fokker-Planck system is known to exhibit nontrivial large time behavior, naturally leading to weak diffusion of the Fokker-Planck operator. This weak diffusion, combined with the singularity of relativistic velocity, present a significant challenge in analysis  for the spatially inhomogeneous counterpart.

In this paper, we demonstrate that the Cauchy problem for the spatially inhomogeneous Vlasov-Nordstr\"{o}m-Fokker-Planck system, without friction, maintains dynamically stable relative to the corresponding spatially homogeneous system. Our results are twofold: (1) we establish the existence of a unique global classical solution and characterize the asymptotic behavior of the spatially inhomogeneous system using a refined weighted energy method; (2) we directly verify the dynamic stability of the spatially inhomogeneous system in the framework of self-similar solutions.

\end{abstract}



\tableofcontents

\section{Introduction}
\subsection{The problem}\label{tp}
The Vlasov-Nordstr\"{o}m-Fokker-Planck (VNFP) system models the kinetic evolution of relativistic, self-gravitating particles undergoing diffusion. In this paper, we are concerned with the Cauchy problem for the following spatially inhomogeneous VNFP  system
in the whole space
\begin{eqnarray}\label{vnfp-ih}
	\left\{
		\begin{array}{l}
\partial_t F+\nabla_p\sqrt{e^{2\phi}+\vert p\vert^2}\cdot\nabla_x F-\nabla_x\sqrt{e^{2\phi}+\vert p\vert^2}\cdot\nabla_p F=e^{2\phi}\nabla_p\cdot(\Lambda_{\phi, p}\nabla_pF),\ t>0,\ x\in\mathbb R^3,\ p\in\mathbb R^3,\\
\partial_t^2\phi-\Delta_x\phi=-e^{2\phi}\int_{\mathbb R^3}\frac{F}{\sqrt{e^{2\phi}+\vert p\vert^2}}{\rm d}p,\ t>0,\ x\in\mathbb R^3,\\
F(0,x,p)=F_0(x,p),\ \phi(0,x)=\phi_0(x),\ \pa_t\phi(0,x)=\phi_1(x),\ x\in\mathbb R^3,\ p\in\mathbb R^3.\\
	\end{array}\right.
\end{eqnarray}
Here, $F=F(t,x,p)\geq0$ is the density distribution function of particles at time $t\geq0$, in the position $x\in\R^3$  with momentum $p\in\R^3$. The scalar gravitational field generated by the particles is denoted by $\phi=\phi(t,x)$.
Additionally, $\Lambda_{\phi,p}$ is the diffusion matrix defined as
\begin{equation*}
	\Lambda_{\phi,p}=\frac{e^{2\phi}\FI+p\otimes p}{\sqrt{e^{2\phi}+\vert p\vert^2}},
\end{equation*}
where $\FI$ is the identity matrix.

The VNFP system was first derived by  Alc\'antara-Calogero \cite{AJ-CS-11}. One can trace its origin back to 1912 when the Finnish theoretical physicist Nordstr\"{o}m proposed his
theory of gravitation. Although the Vlasov-Nordstr\"{o}m system is not a physical model for gravitational field theory, it serves as a useful toy model for the Vlasov-Einstein system. In addition, this model leads to fruitful mathematical exploration of the effects of gravitational radiation on the dynamics of a many-particle system. In 2014, Alc\'{a}ntara-Calogero-Pankavich \cite{ACP-2014} investigated the following spatially homogeneous VNFP system
	\begin{eqnarray}\label{vnpf-ho}
	\left\{
	\begin{array}{l}
		\partial_t\bar F=e^{2\bar\phi}\nabla_p\cdot(\Lambda_{\bar\phi,p}\nabla_p\bar F),\ t>0,\ p\in\mathbb R^3,\\[2mm]
		\frac{d^2\bar\phi}{dt^2}=-e^{2\bar\phi}\int_{\mathbb R^3}\frac{\bar F}{\sqrt{e^{2\bar\phi}+\vert p\vert^2}}{\rm d}p,\ t>0,
	\end{array}\right.
\end{eqnarray}
with the initial data
\begin{equation}\label{ho-id}
	\bar\phi(0)=\phi_{in},\ \bar\phi'(0)=\psi_{in},\ \bar F(0,p)=\bar F_{in}(p),\ p\in\mathbb R^3.
\end{equation}
They showed that the solution of the above system exhibits nontrivial large time behavior. Precisely, unlike the non-relativistic case \cite{CSV-1996},
the authors in \cite{ACP-2014} proved that
\begin{align}\label{lt-ho}
\bar{F}\nrightarrow0,\ \bar\phi(t)\rightarrow-\infty,\ \textrm{as}\ t\rightarrow+\infty.
\end{align}
Actually, Alc\'{a}ntara-Calogero-Pankavich \cite{ACP-2014} also showed that the gravitational field $\bar\phi$ satisfies the following estimate
\begin{equation*}
	-C-\alpha t\leq\bar\phi(t)\leq C-\beta t,
\end{equation*}
for some constants $C$, $\alpha$, $\beta>0$.
This nontrivial large time behavior not only  reveals that the diffusion on the right hand side of \eqref{vnpf-ho}$_{1}$ vanishes
$t\rightarrow+\infty$, but also leads to a momentum singularity for the ``unphysical" stress energy tensor
\begin{align}\label{nt-phi}
\int_{\mathbb R^3}\frac{\bar F}{\sqrt{e^{2\bar\phi}+\vert p\vert^2}}{\rm d}p.
\end{align}
In this paper, we aim to  prove the stability of the spatially inhomogeneous system \eqref{vnfp-ih} around the spatially homogeneous solution $(\bar{F},\bar\phi)$ satisfying \eqref{vnpf-ho} and \eqref{ho-id}, in both the self-similar and non self-similar setting.

Let us now define the perturbation $f=F-\bar F$, $\Phi=\phi-\bar\phi$. Then the new unknown function $(f(t,x,p),\Phi(t,x))$ satisfies
\begin{eqnarray}\label{pt-vnfp}
	\left\{
\begin{array}{l}
	\partial_t f+\nabla_p\sqrt{e^{2\phi}+\vert p\vert^2}\cdot\nabla_x f-\nabla_x\sqrt{e^{2\phi}+\vert p\vert^2}\cdot\nabla_p f=\nabla_x\sqrt{e^{2\phi}+\vert p\vert^2}\cdot\nabla_p\bar F+e^{2\phi}\nabla_p\cdot(\Lambda_{\phi,p}\nabla_pf)\\[2mm]
	\quad+(e^{2\phi}\nabla_p\cdot(\Lambda_{\phi,p}\nabla_p\bar F)-e^{2\bar\phi}\nabla_p\cdot(\Lambda_{\bar\phi,p}\nabla_p\bar F)),\ t>0,\ x\in\mathbb R^3,\ p\in\mathbb R^3,\\[2mm]
	\partial_t^2\Phi-\Delta_x\Phi=-e^{2\phi}\int_{\mathbb R^3}\frac{f}{\sqrt{e^{2\phi}+\vert p\vert^2}}{\rm d}p-\int_{\mathbb R^3}\Big(\frac{e^{2\phi}\bar F}{\sqrt{e^{2\phi}+\vert p\vert^2}}-\frac{e^{2\bar\phi}\bar F}{\sqrt{e^{2\bar\phi}+\vert p\vert^2}}\Big){\rm d}p,
\end{array}\right.
	\end{eqnarray}
with the initial data
\begin{align}\label{pt-id}
	(f(0,x,p),\Phi(0,x),\partial_t\Phi(0,x))&=(F_0-\bar F_{in},\phi_0-\phi_{in},\phi_1-\psi_{in})\notag\\
&\eqdef(f_0(x,p),\Phi_0(x),\Phi_1(x)),\ x\in\mathbb R^3,\ p\in\mathbb R^3.
\end{align}
To cope with the time decay of $\bar\phi$, one can also look for solutions of \eqref{pt-vnfp} and \eqref{pt-id} in the form of
\begin{align}
f(t,x,p)=e^{-3\bar\phi}g(t,x,e^{-\bar\phi}p)=e^{-3\bar\phi}g(t,x,q).\label{sss}
\end{align}
In this setting, we have the following equation for $g(t,x,q)$
\begin{eqnarray}\label{g-eq}
	\left\{
	\begin{array}{l}
		\partial_t g-3\bar\phi' g-\bar\phi' q\cdot\nabla_qg+\nabla_q\sqrt{e^{2\Phi}+\vert q\vert^2}\cdot\nabla_x g-\nabla_x\sqrt{e^{2\Phi}+\vert q\vert^2}\cdot\nabla_q g=\nabla_x\sqrt{e^{2\Phi}+\vert q\vert^2}\cdot\nabla_q\bar G\\[2mm]
		\quad+e^{\bar\phi+2\Phi}\nabla_q\cdot(\Lambda_{\Phi,q}\nabla_qg)+(e^{\bar\phi+2\Phi}\nabla_q\cdot(\Lambda_{\Phi,q}\nabla_q\bar G)-e^{\bar\phi}\nabla_q\cdot(\Lambda_{0,q}\nabla_q\bar G)),\\[2mm]
		\partial_t^2\Phi-\Delta_x\Phi=-e^{\bar\phi+2\Phi}\int_{\mathbb R^3}\frac{g}{\sqrt{e^{2\Phi}+\vert q\vert^2}}{\rm d}q-\int_{\mathbb R^3}\Big(\frac{e^{\bar\phi+2\Phi}\bar G}{\sqrt{e^{2\Phi}+\vert q\vert^2}}-\frac{e^{\bar\phi}\bar G}{\sqrt{1+\vert p\vert^2}}\Big){\rm d}q,
	\end{array}\right.
\end{eqnarray}
with the initial data
\begin{equation}\label{g-id}
	(g(0,x,q),\Phi(0,x),\partial_t\Phi(0,x))=(g_0(x,q),\Phi_0(x),\Phi_1(x)),\ x\in\mathbb R^3,\ q\in\mathbb R^3.
\end{equation}
Here,
\begin{equation*}
\bar F(t,p)=e^{-3\bar\phi}\bar G(t,e^{-\bar\phi}p)=:e^{-3\bar\phi}\bar G(t,q).
\end{equation*}

\subsection{Main results}
Before stating the main results, we introduce the following energy functionals and dissipation rates.
For any $\ga\geq0$, we define the weighted energy functional
\begin{eqnarray}\label{eng-def}
	\begin{split}
\mathcal E^\gamma(t):=\mathcal E^\gamma(f,\Phi)(t)
=&\sum_{\substack{ \vert\alpha\vert+\vert\beta\vert\leq4\\\vert\beta\vert<4}}\int_{\mathbb R^6}e^{(\vert\alpha\vert+3\vert\beta\vert)\bar\phi}(e^{2\phi}+\vert p\vert^2)^\gamma\vert\partial_x^\alpha\partial_p^\beta f\vert^2{\rm d}x{\rm d}p\\&+\sum_{\vert\alpha\vert\leq3}\int_{\mathbb R^3}(\vert\partial_t\partial_x^\alpha\Phi\vert^2+\vert\nabla_x\partial_x^\alpha\Phi\vert^2){\rm d}x+\sum_{|\alpha|=4}\int_{\mathbb R^3}e^{\bar\phi}(\vert\partial_t\partial_x^\alpha\Phi\vert^2+\vert\nabla_x\partial_x^\alpha\Phi\vert^2){\rm d}x,
	\end{split}
\end{eqnarray}
and the dissipation rate
\begin{align*}
 \mathcal D^\gamma(t):=\mathcal	D^\gamma(f,\Phi)(t)=\sum_{\substack{ \vert\alpha\vert+\vert\beta\vert\leq4\\\vert\beta\vert<4}}\int_{\mathbb R^6}&e^{(\vert\alpha\vert+3\vert\beta\vert)\bar\phi+2\phi}(e^{2\phi}+\vert p\vert^2)^{\gamma-\frac{1}{2}}\\&\times(e^{2\phi}\vert\nabla_p\partial_x^\alpha\partial_p^\beta f\vert^2+\vert p\cdot\nabla_p\partial_x^\alpha\partial_p^\beta f\vert^2){\rm d}x{\rm d}p,
\end{align*}
as well as the enhanced dissipation rate
\begin{align*}
\bar{ \mathcal D}^\gamma(t):=\mathcal	D^\gamma(f,\Phi)(t)+\sum_{\substack{1\leq |\al|+|\bet|\leq4\\ |\bet|<4}}\int_{\mathbb R^6} e^{(|\alpha|+3|\beta|)\bar\phi}(e^{2\phi}+|p|^2)^{\ga}|\partial_x^\alpha\partial_p^\beta f|^2{\rm d}x{\rm d}p.
\end{align*}

Furthermore, for convenience of notations, we also define the $(m+n)$-th order energy functional
\begin{equation*}
	\begin{split}
\mathcal E_{m,n}^\gamma(t):=\mathcal	E_{m,n}^\gamma(f,\Phi)(t)
=&\sum_{\vert\alpha\vert=m,\vert\beta\vert=n}\int_{\mathbb R^6}e^{(\vert\alpha\vert+3\vert\beta\vert)\bar\phi}(e^{2\phi}+\vert p\vert^2)^\gamma\vert\partial_x^\alpha\partial_p^\beta f\vert^2{\rm d}x{\rm d}p
\\&+\sum_{\vert\alpha\vert=m}\int_{\mathbb R^3}e^{d\bar\phi}(\vert\partial_t\partial_x^\alpha\Phi\vert^2+\vert\nabla_x\partial_x^\alpha\Phi\vert^2){\rm d}x,
\end{split}
\end{equation*}
where $d=\left\{\begin{array}{l}0,\ {\rm if}\ m \leq3\\
1,\ {\rm if}\ m=4
\end{array}\right.$,
with the corresponding dissipation rate
\begin{eqnarray}
	\begin{split}
\mathcal D_{m,n}^\gamma(t):=\mathcal	D_{m,n}^\gamma(f,\Phi)(t)
=\sum_{\vert\alpha\vert=m,\vert\beta\vert=n}&\int_{\mathbb R^6}e^{(\vert\alpha\vert+3\vert\beta\vert)\bar\phi+2\phi}(e^{2\phi}+\vert p\vert^2)^{\gamma-\frac{1}{2}}\\&\times(e^{2\phi}\vert\nabla_p\partial_x^\alpha\partial_p^\beta f\vert^2+\vert p\cdot\nabla_p\partial_x^\alpha\partial_p^\beta f\vert^2){\rm d}x{\rm d}p,\notag
\end{split}
\end{eqnarray}
and the enhanced dissipation rate
\begin{align*}
	\bar{\mathcal D}^\ga_{m,n}(t):=\mathcal	D^\gamma_{m,n}(t)+\sum_{|\alpha|=m,|\beta|=n}\int_{\mathbb R^6} e^{(|\alpha|+3|\beta|)\bar\phi}(e^{2\phi}+|p|^2)^{\ga}|\partial_x^\alpha\partial_p^\beta f|^2{\rm d}x{\rm d}p.
\end{align*}


In the setting of self-similar solutions, for any $\la\geq0$ and $N\geq0$, we define the energy functional by
\begin{align}
\tilde{\mathcal	E}^\lambda(t):=\tilde{\mathcal	E}^\lambda(g,\Phi)(t)=&\sum_{ \vert\alpha\vert+\vert\beta\vert\leq N}\int_{\mathbb R^6}(e^{2\Phi}+\vert q\vert^2)^\lambda\vert\partial_x^\alpha\partial_q^\beta g\vert^2{\rm d}x{\rm d}q\notag\\&+\sum_{\vert\alpha\vert\leq N}\int_{\mathbb R^3}(\vert\partial_t\partial_x^\alpha\Phi\vert^2+\vert\nabla_x\partial_x^\alpha\Phi\vert^2){\rm d}x,\notag
\end{align}
and the dissipation rate by
\begin{equation*}
	\begin{split}
\tilde{\mathcal D}^\lambda(t):=	\tilde{\mathcal	D}^\lambda(g,\Phi)(t)=\sum_{ \vert\alpha\vert+\vert\beta\vert\leq N}&\int_{\mathbb R^6}e^{\bar\phi}(e^{2\Phi}+\vert q\vert^2)^{\lambda-\frac{1}{2}}\\
&\qquad\times(e^{2\Phi}\vert\nabla_q\partial_x^\alpha\partial_q^\beta g\vert^2+\vert q\cdot\nabla_q\partial_x^\alpha\partial_q^\beta g\vert^2){\rm d}x{\rm d}q,
\end{split}
\end{equation*}
as well as the enhanced dissipation rate
	\begin{align*}
		\bar{\tilde {\mathcal D}}^\gamma(t):=\tilde{\mathcal	D}^\gamma(g,\Phi)(t)+\sum_{|\al|+|\bet|\leq N}\int_{\mathbb R^6} (e^{2\Phi}+|q|^2)^{\la}|\partial_x^\alpha\partial_q^\beta g|^2{\rm d}x{\rm d}q.
\end{align*}
Similarly, we also define the $(m+n)$-th order energy functional
\begin{equation*}
	\begin{split}
\tilde{\mathcal	E}^\lambda_{m,n}(t):=	\tilde{\mathcal	E}^\lambda_{m,n}(g,\Phi)(t)=&\sum_{\vert\alpha\vert=m,\vert\beta\vert=n}\int_{\mathbb R^6}(e^{2\Phi}+\vert q\vert^2)^\lambda\vert\partial_x^\alpha\partial_q^\beta g\vert^2{\rm d}x{\rm d}q\\
&+\sum_{\vert\alpha\vert=m}\int_{\mathbb R^3}(\vert\partial_t\partial_x^\alpha\Phi\vert^2+\vert\nabla_x\partial_x^\alpha\Phi\vert^2){\rm d}x,
\end{split}
\end{equation*}
and the corresponding dissipation rate
\begin{align}
\tilde{\mathcal D}_{m,n}^\lambda(t):=	\tilde{\mathcal	D}^\lambda_{m,n}(g,\Phi)(t)=&\sum_{\vert\alpha\vert=m,\vert\beta\vert=n}\int_{\mathbb R^6}e^{\bar\phi}(e^{2\Phi}+\vert q\vert^2)^{\lambda-\frac{1}{2}}\notag\\&\qquad\qquad\qquad\times(e^{2\Phi}\vert\nabla_q\partial_x^\alpha\partial_q^\beta g\vert^2+\vert q\cdot\nabla_q\partial_x^\alpha\partial_q^\beta g\vert^2){\rm d}x{\rm d}q,\notag
\end{align}
and the enhanced dissipation rate
\begin{align*}
\bar{\tilde{\mathcal D}}^\la_{m,n}=\tilde{\mathcal D}	^\la_{m,n}+\sum_{|\alpha|=m,|\beta|=n}\int_{\mathbb R^6} (e^{2\Phi}+|q|^2)^{\la}|\partial_x^\alpha\partial_q^\beta g|^2{\rm d}x{\rm d}q.
	\end{align*}
The first result about the global stability of the Cauchy problem of \eqref{pt-vnfp} and \eqref{pt-id} is stated as follows.
\begin{theorem}\label{thm1}
Let $0<\delta_1<\frac{1}{2}$ and $\delta_2>\frac{1}{2}$. Assume that $F_0(x,p)=\bar F_{in}(p)+f_0(x,p)\geq0$. There exists a constant $\eps_0>0$ such that if $$\sqrt{\mathcal E^{\delta_2}(f,\Phi)(0)+\left\|\Phi_0\right\|_{L^2_x}^2}\leq\eps_0,$$
then the Cauchy problem \eqref{pt-vnfp} and \eqref{pt-id} admits a unique global solution $(f(t,x,p),\Phi(t,x))$ satisfying $F(t,x,p)=\bar F(t,p)+f(t,x,p)\geq0$. Moreover, it holds that
\begin{equation}\label{eng-tt}
	\begin{split}
\mathcal E^{\delta_1}(t)&+\mathcal E^{\delta_2}(t)+\int_0^t(\bar{\mathcal D}^{\delta_1}(s)+\bar{\mathcal D}^{\delta_2}(s)) {\rm d}s
\leq C(\mathcal E^{\delta_2}(0)+\left\|\Phi_0\right\|_{L^2_x}^2),
\end{split}
\end{equation}
for all $t>0$. In addition, if $\delta_2\geq\frac{3}{2}$ and $\left\|(1+|p|^2)^{\frac{\delta_2}{2}}\nabla_p^4f_0\right\|_{H^1_{x,p}}\leq\eps_0$, then
\begin{equation}\label{long-t}
\left\| e^{5\bar\phi} f\right\|_{L^\infty_{x,p}}=\left\| e^{5\bar\phi}( F-\bar F)\right\|_{L^\infty_{x,p}}\rightarrow0\ as\ t\rightarrow\infty.
\end{equation}
\end{theorem}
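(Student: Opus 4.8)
The plan is to obtain the first two assertions from a local existence/uniqueness construction, a nonlinear weighted energy estimate, and a continuation argument, and then to read off \eqref{long-t} from a Sobolev embedding in the self-similar variables \eqref{sss}.

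\emph{Step 1 (local theory and nonnegativity).} I would construct a local solution by a linear iteration: given $(f^n,\Phi^n)$, put $\phi^n=\bar\phi+\Phi^n$, solve the linear Fokker--Planck equation obtained from \eqref{pt-vnfp}$_1$ by freezing the coefficients at $\phi^n$, and solve the linear wave equation \eqref{pt-vnfp}$_2$ with source evaluated at $(f^n,\Phi^n)$. Each Fokker--Planck step is locally uniformly parabolic in $p$, hence solvable, and satisfies the maximum principle, so that (using $\bar F\ge0$ and $F_0=\bar F_{in}+f_0\ge0$) the iterates $F^n=\bar F+f^n$ remain nonnegative. Uniform bounds for $\sqrt{\mathcal E^{\delta_2}(f^n,\Phi^n)(\cdot)}$ on a short interval — proved as in Step 2 — give convergence to a local solution with $F\ge0$, and uniqueness follows from an energy estimate for the difference of two solutions.

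\emph{Step 2 (a priori weighted energy estimate).} This is the core. On $[0,T]$, under the bootstrap hypothesis that $\sup_{[0,T]}\mathcal E^{\delta_2}(t)$ is small, for each $(\al,\bet)$ with $|\al|+|\bet|\le4$, $|\bet|<4$ I would apply $\pa_x^\al\pa_p^\bet$ to \eqref{pt-vnfp}$_1$ and test against $e^{(|\al|+3|\bet|)\bar\phi}(e^{2\phi}+|p|^2)^{\ga}\pa_x^\al\pa_p^\bet f$ with $\ga=\delta_1$ and $\ga=\delta_2$, and simultaneously run the wave-energy identity for $\pa_x^\al\Phi$ with weight $e^{d\bar\phi}$. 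Several structural facts make the scheme close. The time derivative hitting the exponential weight produces $(|\al|+3|\bet|)\bar\phi'\,e^{(|\al|+3|\bet|)\bar\phi}(e^{2\phi}+|p|^2)^{\ga}|\pa_x^\al\pa_p^\bet f|^2$, which has a good sign for $t$ large since $\bar\phi'<0$ by \eqref{lt-ho} and the bounds on $\bar\phi$ of \cite{ACP-2014}, and in any case is dominated by $\mathcal D^\ga$; the exponent $3|\bet|$ is tuned to exactly cancel the loss incurred whenever a momentum derivative hits the relativistic coefficient $\na_p\sqrt{e^{2\phi}+|p|^2}=p/\sqrt{e^{2\phi}+|p|^2}$. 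The Fokker--Planck term, after integration by parts, yields $\mathcal D^\ga_{|\al|,|\bet|}$ up to lower-order remainders, because $e^{2\phi}|\na_p h|^2+|p\cdot\na_p h|^2$ is precisely the quadratic form associated with $e^{2\phi}\La_{\phi,p}$. The relativistic streaming and force operators are skew in the unweighted $L^2_{x,p}$, so only their commutators with the weight survive; these carry the factor $(e^{2\phi}+|p|^2)^{-1/2}$ and are absorbed into $\mathcal D^\ga$ and $\bar{\mathcal D}^\ga$. And the inhomogeneous terms built from $\bar F$ and from $e^{2\phi}\La_{\phi,p}-e^{2\bar\phi}\La_{\bar\phi,p}=e^{2\bar\phi}\big((e^{2\Phi}-1)\La_{\phi,p}+(\La_{\phi,p}-\La_{\bar\phi,p})\big)$ are controlled via the exponential decay of $e^{2\bar\phi}$ and the $p$-decay of $\bar F$ (from the spatially homogeneous analysis) and the smallness of $\Phi$.

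For the gravitational field I would test $\pa_x^\al(\pa_t^2\Phi-\Delta_x\Phi)$ against $e^{d\bar\phi}\pa_t\pa_x^\al\Phi$, bounding the growth of the wave energy by the $L^2_x$-norm of $\pa_x^\al$ of the source; by Minkowski's inequality in $p$ this is at most $e^{2\phi}$ times $\sqrt{\mathcal E^{\delta_2}}$ since $(e^{2\phi}+|p|^2)^{-1/2}$ is square-integrable in $p$ against $(e^{2\phi}+|p|^2)^{-\delta_2}$ precisely when $\delta_2>\tfrac12$, the factor $e^{\bar\phi}$ at the top level being inserted to absorb the leftover power of $e^{-\bar\phi}$. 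As the wave equation is undamped, $\Phi$ itself is not returned by $\bar{\mathcal D}^\ga$; I would control it by a separate estimate on \eqref{pt-vnfp}$_2$ in which the source decays like $e^{2\phi}\sim e^{2\bar\phi}$, so that only the initial value $\|\Phi_0\|_{L^2_x}^2$ survives on the right of \eqref{eng-tt}. To upgrade $\mathcal D^\ga$ to the enhanced $\bar{\mathcal D}^\ga$ I would add small multiples of the Kawashima-type functionals $\sum e^{(|\al|+3|\bet|)\bar\phi}\int_{\R^6}\na_x\pa_x^\al\pa_p^\bet f\cdot\na_p\pa_x^\al\pa_p^\bet f\,{\rm d}x{\rm d}p$, whose time derivative generates — via the commutator $[\na_p,\,\na_p\sqrt{e^{2\phi}+|p|^2}\cdot\na_x]=\na_p^2\sqrt{e^{2\phi}+|p|^2}\cdot\na_x$ — a coercive term comparable to the weighted $\|\na_x\pa_x^\al\pa_p^\bet f\|^2$, the pieces of $\bar{\mathcal D}^\ga$ with $|\bet|\ge1$ coming straight from $\mathcal D^\ga$; iterating on the order of derivatives recovers all of $\bar{\mathcal D}^\ga$. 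Summing over $(\al,\bet)$ and $\ga\in\{\delta_1,\delta_2\}$ with suitably small auxiliary constants gives, for some $c>0$,
\begin{equation*}
\frac{d}{dt}\mathbf E(t)+c\,\mathbf D(t)\le C\sqrt{\mathbf E(t)}\,\mathbf D(t),
\end{equation*}
where $\mathbf E\simeq\mathcal E^{\delta_1}+\mathcal E^{\delta_2}$ (plus a lower-order, time-decaying weighted norm of $\Phi$ responsible for the $\|\Phi_0\|_{L^2_x}$ term) and $\mathbf D\gtrsim\bar{\mathcal D}^{\delta_1}+\bar{\mathcal D}^{\delta_2}$. Integrating in time, using $\mathcal E^{\delta_1}(0)\lesssim\mathcal E^{\delta_2}(0)$ (valid since the weight $e^{2\phi_0}+|p|^2$ is bounded below), and absorbing the right-hand side by the smallness of $\eps_0$, a standard continuation argument built on Step 1 produces the unique global solution and \eqref{eng-tt}.

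\emph{Step 3 (large-time limit).} In the self-similar variables \eqref{sss} there is the exact identity $e^{5\bar\phi}f(t,x,p)=e^{2\bar\phi}g(t,x,q)$ with $q=e^{-\bar\phi}p$, hence $\|e^{5\bar\phi}f\|_{L^\infty_{x,p}}=e^{2\bar\phi(t)}\|g(t)\|_{L^\infty_{x,q}}$, and by the embedding $H^4(\R^6)\hookrightarrow L^\infty(\R^6)$ it suffices to bound $\|g(t)\|_{H^4_{x,q}}$ uniformly. The derivatives $\pa_x^\al\pa_q^\bet g$ with $|\bet|\le3$ are controlled by \eqref{eng-tt} rewritten in the variable $q$, whereas the pure momentum derivative $\pa_q^4g$ — equivalently $\na_p^4f$, which $\mathcal E^{\delta_2}$ excludes — needs a separate bound; this is exactly what the hypothesis $\|(1+|p|^2)^{\delta_2/2}\na_p^4f_0\|_{H^1_{x,p}}\le\eps_0$ supplies, propagated by an extra energy estimate for $\na_p^4f$ in the weight $(1+|p|^2)^{\delta_2}$ run alongside Step 2, with $\delta_2\ge\tfrac32$ needed to keep the $p$-integrals of the singular factors $(e^{2\phi}+|p|^2)^{-1/2}$ entering at this top level square-integrable against the weight. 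With $\|g(t)\|_{H^4_{x,q}}$ uniformly bounded and $\bar\phi(t)\to-\infty$ by \eqref{lt-ho}, we conclude $\|e^{5\bar\phi}f\|_{L^\infty_{x,p}}=e^{2\bar\phi(t)}\|g(t)\|_{L^\infty_{x,q}}\lesssim e^{2\bar\phi(t)}\to0$, which is \eqref{long-t}.

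The main obstacle is Step 2: reconciling the exponentially vanishing diffusion — which makes $\mathcal D^\ga$ and even $\bar{\mathcal D}^\ga$ genuinely weak — with the relativistic momentum singularity of the transport fields and of the diffusion matrix $\La_{\phi,p}$, and with the completely undamped wave equation for $\Phi$. The two-tier weights $(\delta_1,\delta_2)$ and the exponentials $e^{(|\al|+3|\bet|)\bar\phi}$ are engineered so that every commutator produced by a derivative landing on a relativistic coefficient is reabsorbed and the singular stress-energy source of the wave equation stays integrable; getting all these estimates to close at once is the heart of the matter.
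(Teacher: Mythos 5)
Your Steps 1--2 follow the paper's general skeleton (linear iteration for local existence, weighted energy with the weights $e^{(|\alpha|+3|\beta|)\bar\phi}$ and the two momentum exponents $\delta_1,\delta_2$, continuation), but the mechanism you propose for the \emph{enhanced} dissipation is not the paper's and, as stated, would fail. The quantity $\bar{\mathcal D}^\gamma$ contains the full weighted norms $\int e^{(|\alpha|+3|\beta|)\bar\phi}(e^{2\phi}+|p|^2)^{\gamma}|\partial_x^\alpha\partial_p^\beta f|^2$ for every $1\le|\alpha|+|\beta|$, including the pure momentum derivatives; these do \emph{not} ``come straight from $\mathcal D^\gamma$'', since $\mathcal D^\gamma$ carries the extra factor $e^{2\phi}(e^{2\phi}+|p|^2)^{-1/2}$ (and only $e^{4\phi}$ in directions orthogonal to $p$), which vanishes exponentially in time. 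Likewise, the Kawashima cross functional you propose gains coercivity only through the commutator coefficient $\nabla_p^2\sqrt{e^{2\phi}+|p|^2}$, whose smallest eigenvalue is $e^{2\phi}(e^{2\phi}+|p|^2)^{-3/2}$; the resulting term is exponentially degenerate and is not comparable to the $x$-derivative part of $\bar{\mathcal D}^\gamma$. The paper obtains $\bar{\mathcal D}^\gamma$ from a completely different source: after the finite critical time $T_c$ of \eqref{tc} one has $\bar\phi'(t)\le\bar\phi'(\infty)/2<0$, so the time derivative falling on the decaying weight $e^{(|\alpha|+3|\beta|)\bar\phi}$ produces a \emph{damping} term with a favorable sign (this is exactly \eqref{xd-dap}), while on $[0,T_c]$ no sign is available and the estimate is closed by Gr\"onwall with a constant $C(T_c)$. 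This also corrects your claim that the weight-derivative term is ``in any case dominated by $\mathcal D^\gamma$'' (it is only bounded by $C\mathcal E^\gamma$), and your model inequality $\frac{d}{dt}\mathbf E+c\,\mathbf D\le C\sqrt{\mathbf E}\,\mathbf D$ omits the terms linear in $(f,\Phi)$ coming from the $\bar F$-sources, which are controlled by the time-integrability of $e^{\bar\phi}$ and of $(\partial_t\phi)_+$, not by smallness.

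Step 3 has a concrete gap. The identity $\|e^{5\bar\phi}f\|_{L^\infty_{x,p}}=e^{2\bar\phi}\|g\|_{L^\infty_{x,q}}$ is correct, but the uniform bound on $\|g\|_{H^4_{x,q}}$ does not follow from \eqref{eng-tt}: since $\|\partial_x^\alpha\partial_q^\beta g\|_{L^2_{x,q}}^2=e^{(3+2|\beta|)\bar\phi}\|\partial_x^\alpha\partial_p^\beta f\|_{L^2_{x,p}}^2$, comparison with the weight $e^{(|\alpha|+3|\beta|)\bar\phi}$ in $\mathcal E$ leaves a factor $e^{(3-|\alpha|-|\beta|)\bar\phi}$, which blows up like $e^{-\bar\phi}\to\infty$ at the top order $|\alpha|+|\beta|=4$. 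Hence \emph{all} fourth-order derivatives of $g$ (not only the pure $\nabla_p^4f$ you single out) escape this transfer, and making your route work would require redoing the self-similar energy theory at order four, which the hypotheses of Theorem \ref{thm1} are not designed for. The paper instead performs an additional weighted energy estimate for $\tilde f=\partial_t f$ (the hypothesis on $(1+|p|^2)^{\delta_2/2}\nabla_p^4f_0$ with $\delta_2\ge\frac32$ enters only through the compatibility value $\partial_tf(0)$), deduces that $\|e^{5\bar\phi}(\nabla_pf,\nabla_p^2f)\|_{L^2_pH^2_x}^2$ belongs to $W^{1,1}(\R^+)$ and therefore tends to zero, and then obtains \eqref{long-t} by Sobolev embedding; no uniform top-order bound in the self-similar variables is needed.
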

Two remarks are necessary regarding the relationship between $\mathcal E^{\delta_1}$ and $\mathcal E^{\delta_2}$ in relation to the initial data and the additional condition on $f_0$ when deriving the large time behavior \eqref{long-t}.
\begin{remark}
Note that $|\bar{\phi}(0)|=|\phi_{in}|$ is finite. Consequently $|\phi_0|$ is also finite so that  we have
 $\mathcal E^{\delta_1}(0)\leq C\mathcal E^{\delta_2}(0)$.
\end{remark}

\begin{remark}
The integrability of the time derivative of the solution is utilized in deriving the large time behavior \eqref{long-t}. This requires a compatibility condition for the time derivative of solution at $t=0$, which is given by
\begin{equation*}
		\begin{split}
			\pa_t f_0\eqdef\pa_t f(0,x,p)&=\nabla_x\sqrt{e^{2\phi_0}+\vert p\vert^2}\cdot\nabla_p f_0-\nabla_p\sqrt{e^{2\phi_0}+\vert p\vert^2}\cdot\nabla_x f_0+\nabla_x\sqrt{e^{2\phi_0}+\vert p\vert^2}\cdot\nabla_p\bar F_0\\
			&\quad+e^{2\phi_0}\nabla_p\cdot(\Lambda_{\phi_0,p}\nabla_pf_0)+(e^{2\phi_0}\nabla_p\cdot(\Lambda_{\phi_0,p}\nabla_p\bar F_0)-e^{2\bar\phi_0}\nabla_p\cdot(\Lambda_{\bar\phi_0,p}\nabla_p\bar F_0)).
		\end{split}
	\end{equation*}
\end{remark}

The second result stated below is about the global stability of the Cauchy problem for \eqref{g-eq} and \eqref{g-id}.
\begin{theorem}\label{thm2}
Let $\frac{1}{2}<\lambda<\frac{3}{2}$ and $N\geq3$.
	Assume that $F_0(x,e^{\phi_{in}}q)=\bar F_{in}(e^{\phi_{in}}q)+e^{-3\phi_{in}}g_0(x,q)\geq0$.
There exists a constant $\eps_1>0$ such that if
$$\sqrt{\tilde{\mathcal E}^\lambda(g,\Phi)(0)+\left\|\Phi_0\right\|_{L^2_x}^2}\leq \eps_1,$$
then the Cauchy problem of \eqref{g-eq} and \eqref{g-id} admits a unique global solution $(g(t,x,q),\Phi(t,x))$ satisfying
$F(t,x,e^{\bar{\phi}}q)=\bar F (t,e^{\bar{\phi}}q)+e^{-3\bar{\phi}}g(t,x,q)\geq0$. Moreover, it holds that
	\begin{equation}\label{eng-es-ss}
		\tilde{\mathcal E}^{\lambda}(g,\Phi)(t)+\int_0^t\tilde{\mathcal D}^{\lambda}(g,\Phi)(s){\rm d}s\leq C(\tilde{\mathcal E}^{\lambda}(g,\Phi)(0)+\left\|\Phi_0\right\|_{L^2_x}^2),
	\end{equation}
	and
	\begin{equation}\label{eng-de-ss}
		\sum_{\vert\alpha\vert+\vert\beta\vert\leq N}\int_{\mathbb R^6}(e^{2\Phi}+\vert q\vert^2)^\lambda\vert\partial_x^\alpha\partial_q^\beta g\vert^2{\rm d}x{\rm d}q\leq	Ce^{\frac{1}{2}\bar\phi(\frac{3}{2}-\lambda)},
	\end{equation}
	for all $t>0$.
\end{theorem}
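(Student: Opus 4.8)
\medskip
\noindent\textbf{Sketch of a proof of Theorem~\ref{thm2}.}
I would argue in four stages: local existence with propagation of positivity; a uniform weighted energy estimate; a continuity argument giving the global solution together with \eqref{eng-es-ss}; and a refined estimate for the $g$-component of the energy yielding the decay \eqref{eng-de-ss}. For local existence, construct the solution by iteration: with $(g^{(k)},\Phi^{(k)})$ given, solve the linear degenerate Fokker--Planck equation for $g^{(k+1)}$ with coefficients frozen at step $k$, coupled to the linear wave equation for $\Phi^{(k+1)}$ with the associated source, carrying the momentum weight $(e^{2\Phi^{(k)}}+|q|^2)^\lambda$ throughout so that the potentially singular objects $\nabla_q\sqrt{e^{2\Phi}+|q|^2}=q/\sqrt{e^{2\Phi}+|q|^2}$, $\nabla_x\sqrt{e^{2\Phi}+|q|^2}=e^{2\Phi}\nabla_x\Phi/\sqrt{e^{2\Phi}+|q|^2}$ and the diffusion matrix $\Lambda_{\Phi,q}$ stay controlled relative to it; uniform bounds of the iterates in $\tilde{\mathcal E}^\lambda$ on a short time interval plus contraction in a lower-order norm give a unique local solution. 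Positivity $F=\bar F+e^{-3\bar\phi}g\ge 0$ is then inherited from $F_0\ge 0$ by the maximum principle applied to the (now linear, once the solution is fixed) degenerate parabolic equation~\eqref{vnfp-ih}$_1$ for $F$, with a vanishing-viscosity regularization if needed.

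The heart of the proof is the a priori estimate. For each $|\alpha|+|\beta|\le N$, apply $\partial_x^\alpha\partial_q^\beta$ to \eqref{g-eq}$_1$ and test with $(e^{2\Phi}+|q|^2)^\lambda\partial_x^\alpha\partial_q^\beta g$. The key structural mechanism is the interaction of the self-similar scaling terms $-3\bar\phi'g-\bar\phi'\,q\cdot\nabla_q g$ with the momentum weight: using $\nabla_q\cdot\big((e^{2\Phi}+|q|^2)^\lambda q\big)=(3+2\lambda)(e^{2\Phi}+|q|^2)^\lambda-2\lambda e^{2\Phi}(e^{2\Phi}+|q|^2)^{\lambda-1}$ and $[\partial_q^\beta,\,q\cdot\nabla_q]=|\beta|\partial_q^\beta$, integration by parts in $q$ produces a coercive contribution proportional to $(3+2|\beta|-2\lambda)(-\bar\phi')\int_{\R^6}(e^{2\Phi}+|q|^2)^\lambda|\partial_x^\alpha\partial_q^\beta g|^2$, which has the favorable sign because $\lambda<\tfrac32$ and $-\bar\phi'(t)>0$ for large $t$ (a property of $\bar\phi$ established in \cite{ACP-2014}). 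This transport-generated enhanced dissipation compensates for the degeneracy of the genuine Fokker--Planck dissipation, whose prefactor $e^{\bar\phi}$ vanishes as $t\to\infty$: integrating that term by parts still yields the nonnegative $\tilde{\mathcal D}^\lambda_{m,n}$ plus remainders absorbable by it. The hypothesis $\lambda>\tfrac12$ enters when closing the wave equation: the velocity average $\int_{\R^3}g/\sqrt{e^{2\Phi}+|q|^2}\,{\rm d}q$ and its $x$-derivatives are controlled by $(\tilde{\mathcal E}^\lambda)^{1/2}$ via Cauchy--Schwarz precisely when $\int_{\R^3}(e^{2\Phi}+|q|^2)^{-\lambda-1}\,{\rm d}q<\infty$, while the $e^{\bar\phi}$-type prefactors of the wave source (up to the bounded factor $e^{2\Phi}$) make it integrable in time, so the wave energy stays bounded. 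The commutator and streaming errors, the profile sources $\nabla_x\sqrt{e^{2\Phi}+|q|^2}\cdot\nabla_q\bar G$ and $e^{\bar\phi+2\Phi}\nabla_q\cdot(\Lambda_{\Phi,q}\nabla_q\bar G)-e^{\bar\phi}\nabla_q\cdot(\Lambda_{0,q}\nabla_q\bar G)$, and all genuinely nonlinear terms are quadratic or higher in $(g,\Phi)$ and are dominated by $C(\tilde{\mathcal E}^\lambda)^{1/2}\bar{\tilde{\mathcal D}}^\lambda$ via Sobolev embedding and Moser-type product estimates (for which $N\ge3$ suffices) together with the uniform boundedness and regularity of $e^{2\Phi}$, $\bar G$ and their derivatives. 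Summing over $|\alpha|+|\beta|\le N$ gives a differential inequality of the schematic form $\frac{d}{dt}\tilde{\mathcal E}^\lambda+\kappa\mathcal R\le C(\tilde{\mathcal E}^\lambda)^{1/2}\mathcal R+Ce^{\bar\phi}\|\Phi\|_{L^2_x}^2$, where $\mathcal R$ collects $\tilde{\mathcal D}^\lambda$ and the $(-\bar\phi')$-weighted transport dissipation; under $\tilde{\mathcal E}^\lambda(0)+\|\Phi_0\|_{L^2_x}^2\le\eps_1^2$ the quadratic term is absorbed, and a standard continuity argument yields the global solution together with \eqref{eng-es-ss}, the extra $\|\Phi_0\|_{L^2_x}^2$ accounting for the low-frequency part of the wave field not seen by $\|\nabla_x\Phi\|_{L^2_x}$.

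For the decay \eqref{eng-de-ss}, restrict to $A(t):=\sum_{|\alpha|+|\beta|\le N}\int_{\R^6}(e^{2\Phi}+|q|^2)^\lambda|\partial_x^\alpha\partial_q^\beta g|^2\,{\rm d}x{\rm d}q$, discard the nonnegative $\tilde{\mathcal D}^\lambda$, and invoke \eqref{eng-es-ss} to bound the quadratic remainders. Two ingredients drive the decay: the transport gives a coercivity term with coefficient proportional to $(\tfrac32-\lambda)(-\bar\phi')$, nonintegrable in $t$ since $\bar\phi(t)\to-\infty$; and the self-similar profile $\bar G(t,q)=e^{3\bar\phi}\bar F(t,e^{\bar\phi}q)$ together with all its $q$- and $t$-derivatives decays exponentially in $\bar\phi$, since $\bar F$ is uniformly bounded and analogously for its derivatives, so the profile sources in the $g$-equation are time-integrable against the coercivity. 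Passing to $A^{1/2}$ and running a weighted Gronwall argument with integrating factor $\exp\big(-c(\tfrac32-\lambda)\bar\phi\big)$, using that $e^{c'\bar\phi}$ is integrable in $t$ for $c'>0$, then yields $A(t)\le Ce^{\frac12(\frac32-\lambda)\bar\phi(t)}$, which is precisely \eqref{eng-de-ss}.

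The step I expect to be the main obstacle is the a priori estimate in this weak-diffusion regime: one must show that the transport-generated coercivity genuinely dominates, at every order $|\alpha|+|\beta|\le N$, all commutator and coupling errors — in particular the streaming terms $\nabla_q\sqrt{e^{2\Phi}+|q|^2}\cdot\nabla_x g$ and $\nabla_x\sqrt{e^{2\Phi}+|q|^2}\cdot\nabla_q g$ generate top-order-in-$x$ contributions not controlled by $\tilde{\mathcal D}^\lambda$ (which dissipates only $q$-derivatives), and must be handled through their near-antisymmetric structure and the weighted $L^2$-in-$g$ part of $\bar{\tilde{\mathcal D}}^\lambda$ — and that the whole scheme closes without ever using the Fokker--Planck dissipation beyond its nonnegativity. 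A secondary difficulty is tracking the coercivity constant sharply throughout, since both the admissible range $\lambda<\tfrac32$ and the decay exponent in \eqref{eng-de-ss} hinge on it.
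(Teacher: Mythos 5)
Your outline follows essentially the same route as the paper: weighted estimates with the momentum weight $(e^{2\Phi}+|q|^2)^\lambda$, the transport-generated damping $\bar\phi'(\tfrac32-\lambda)\int(e^{2\Phi}+|q|^2)^\lambda|\partial_x^\alpha\partial_q^\beta g|^2$ (with the extra $|\beta|\bar\phi'$ from $[\partial_q^\beta,q\cdot\nabla_q]$) as the substitute for the vanishing Fokker--Planck dissipation, the condition $\lambda>\tfrac12$ to control the velocity average in the wave equation, iteration plus continuity for global existence, and a weighted Gr\"onwall argument producing exactly the rate $e^{\frac12(\frac32-\lambda)\bar\phi}$ via the decay of the weighted norms of $\bar G$ (Lemma \ref{lemma A2}). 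The step you flag as the main obstacle is precisely what the paper's Steps 2--3 execute, and the device is concrete: the energy at each order is taken as the asymmetric combination $\int(e^{2\Phi}+|q|^2)^\lambda\big(|\partial_{q_i}\partial_x^\alpha\partial_q^\beta g|^2+A|\partial_{x_i}\partial_x^\alpha\partial_q^\beta g|^2\big)$ with $A$ large, so that the cross streaming term $\int(e^{2\Phi}+|q|^2)^\lambda|\partial_{x_i}\partial_x^\alpha\partial_q^\beta g||\partial_{q_i}\partial_x^\alpha\partial_q^\beta g|$ (which enters only through the $q$-derivative equation, hence without the factor $A$) is absorbed by the transport damping for $t>T_c$; on the finite interval $[0,T_c]$ one simply runs Gr\"onwall with constants depending on $T_c$, using \eqref{tc}. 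Two smaller corrections: the scheme does not close using only the nonnegativity of the Fokker--Planck dissipation --- the commutators of $\partial_x^\alpha\partial_q^\beta$ with the diffusion operator and the profile sources are absorbed with an $\eta$-portion of the top-order dissipation and with lower-order dissipation terms $\tilde{\mathcal D}^\lambda_{m,n}$ used hierarchically (see the estimates of $K_8$, $K_9$, $K_{13}$, $K_{14}$ and $\CK_1$), which is why $\int_0^t\tilde{\mathcal D}^\lambda\,{\rm d}s$ is retained in \eqref{eng-es-ss}; and the nonlinear terms are absorbed using the {\it a priori} smallness assumption $\tilde{\mathcal E}^\lambda(t)\le 2\eps_1$ (giving smallness of $\|\nabla_x\Phi\|_{L^\infty_x}$, etc.) rather than a $C(\tilde{\mathcal E}^\lambda)^{1/2}\bar{\tilde{\mathcal D}}^\lambda$ bookkeeping, though these are interchangeable. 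Your maximum-principle argument for $F\ge0$ differs from the paper, which deduces nonnegativity from the relation to the solution of Theorem \ref{thm1}; either is acceptable.
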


\subsection{Related works}
We now give a brief review of the mathematical studies on the VNFP system and related models.
Nordstr\"{o}m gravity, an alternative theory of gravity introduced in 1912-1913 \cite{ANH}, uses a scalar field $\phi$ to describe the gravitation   in the sense \eqref{vnfp-ih}$_2$.

When diffusion is ignored, the VNFP system reduces to the Vlasov-Nordstr\"{o}m (VN) system. This is a simplified, toy model where the dynamics of matter are described by the Vlasov equation, but gravitational force between the particles is mediated by a scalar field.
On the contrary, the Vlasov-Einstein system is the physical model within the framework of general relativity. However,  few mathematical results are known for the later system. For example,  the global spherically symmetric solution with small data was established in \cite{AKR-2011,RR-cmp}, which, unfortunately, rules out the propagation of gravitational waves, an intriguing feature of general relativity, cf. \cite{CS03}.
The first mathematical study of the VN system was initiated by Calogero \cite{CS03}, where the VN system was formally derived and the existence of static solutions was constructed. The stability of these static solutions was subsequently investigated in \cite{CSS-09-arma}. Later, the Vlasov-Poisson limit of the VN system was discussed in \cite{CL-04}. Around the same time, classical solutions were obtained in \cite{ CS06,CR-03-cpde}, while weak solutions were established in \cite{CR-04}.
From a more physical perspective, Bauer-Kunze-Rein-Rendall derived a radiation formula analogous to the Einstein quadrupole formula in general relativity, as presented in \cite{BKRR-06}. Recently, global small smooth solutions have been investigated by using a vector field method in \cite{FJS-17}, \cite{FJS-apde}, and \cite{Wang-18}.

The VNFP system was formally derived in \cite{AJ-CS-11}. The first mathematical result by  Alc\'{a}ntara-Calogero-Pankavich in \cite{ACP-2014}  addressed the spatially homogeneous case and established the nontrivial large-time behavior as shown in \eqref{lt-ho}.
Recently, by considering the friction force and a constant background stress energy tensor, global
existence of smooth solution near equilibrium was  proved in \cite{DLY-SIAM} by  energy method. A lot of progress has also been made on the spatially inhomogeneous kinetic equations coupled with Poisson or Maxwell system, see \cite{BJ,D-VML,DL-cmp,Guo-JAMS-2011,YangYu-VMFP} and the references therein. However, to our knowledge, there are no existing results on the spatially inhomogeneous VNFP system. This paper aims to fill in this gap. We expect that the approaches introduced here will lead to further study on the VNFP system, such as inviscid limits and Landau damping.

\subsection{Strategies of the proof}\label{sb-si}
We now  present the basic strategies and ideas used in the proof of Theorem \ref{thm1} and \ref{thm2}.
\begin{itemize}
\item \underline{Perturbation framework.}
Due to the delicate transport terms, the spatially inhomogeneous VNFP system is lack of a nice structure. In the homogeneous case, the non-autonomous heat-type equation
$$
\partial_t\bar F=e^{2\bar\phi}\nabla_p\cdot(\Lambda_{\bar\phi,p}\nabla_p\bar F),\ t>0,\ p\in\R^3,\ \int_0^\infty e^{2\bar\phi}{\rm d}t<+\infty
$$
induces a nontrivial solution. Additionally, the second-order ODE
$$
\frac{d^2\bar\phi}{dt^2}=-e^{2\bar\phi}\int_{\mathbb R^3}\frac{\bar F}{\sqrt{e^{2\bar\phi}+\vert p\vert^2}}{\rm d}p
$$
provides a nontrivial gravitational field as indicated by \eqref{nt-phi}. To address \eqref{vnfp-ih}, the starting point is to show  that the solution of \eqref{vnfp-ih} converges to the one  of \eqref{vnpf-ho} in the perturbation framework.

\item \underline{A refined energy method.} The energy functionals in  the form \eqref{eng-def}, incorporating two kinds of momentum weights, are designed to handle the singularities caused by the stress-energy tensor. Precisely, for large time $t>0$, we can cope with these singularities as follows:
\begin{align}\label{el-sg}
		\int_{\mathbb R^3}\frac{f}{\sqrt{{e^{2\phi}+\vert p\vert^2}}}{\rm d}p&=\int_{\sqrt{{e^{2\phi}+\vert p\vert^2}}\leq1}\frac{f}{\sqrt{{e^{2\phi}+\vert p\vert^2}}}{\rm d}p+\int_{\sqrt{{e^{2\phi}+\vert p\vert^2}}>1}\frac{f}{\sqrt{{e^{2\phi}+\vert p\vert^2}}}{\rm d}p\nonumber\\
		&\leq \int_{\vert p\vert\leq1}\frac{1}{\vert p\vert^{1+\delta_1}}(e^{2\phi}+\vert p\vert^2)^{\frac{\delta_1}{2}}\vert f\vert{\rm d}p+\int_{\vert p\vert^2>\frac{1}{2}}\frac{1}{\vert p\vert^{1+\delta_2}}(e^{2\phi}+\vert p\vert^2)^{\frac{\delta_2}{2}}\vert f\vert{\rm d}p\nonumber\\
		&\leq C\left\|(e^{2\phi}+\vert p\vert^2)^{\frac{\delta_1}{2}}f\right\|_{L^2_p}+C\left\|(e^{2\phi}+\vert p\vert^2)^{\frac{\delta_2}{2}}f\right\|_{L^2_p},
\end{align}
where $0<\delta_1<\frac{1}{2}$ and $\delta_2>\frac{1}{2}$  are required.
On the other hand, we  know in {\it a priorily} that the gravitational field $\phi$ satisfies  \eqref{nt-phi}, leading to vanishing diffusion on the right-hand side of \eqref{vnfp-ih} when time is large. Consequently, both the transport terms $\nabla_p\sqrt{e^{2\phi}+\vert p\vert^2}\cdot\nabla_x f$ and $\nabla_x\sqrt{e^{2\phi}+\vert p\vert^2}\cdot\nabla_p f$  are difficult to control. Inspired by \cite{S. Chaturvedi}, the weights of the power of $e^{\bar{\phi}}$ depend on the order of the $x$ and $p$ derivatives. However, unlike \cite{S. Chaturvedi} where a new dissipation quantity
\begin{equation*}
	\int_{\mathbb R^6}\vert\partial_{x_i}f\vert^2{\rm d}x{\rm d}p
\end{equation*}
is used,  we indeed use the damping structure
\begin{equation}
	\int_{\mathbb R^6}\partial_t\partial_{x_i}f e^{\bar\phi}\partial_{x_i} f{\rm d}x{\rm d}p=\frac{1}{2}\frac{d}{dt}\int_{\mathbb R^6}e^{\bar\phi}\vert\partial_{x_i}f\vert^2{\rm d}x{\rm d}p
-\frac{1}{2}\bar\phi'\int_{\mathbb R^6}e^{\bar\phi}\vert\partial_{x_i}f\vert^2{\rm d}x{\rm d}p. \label{xd-dap}
\end{equation}
Precisely, this requires a $e^{\frac{\bar\phi}{2}}$ weight for each $x$ derivative and a $e^{\frac{3\bar\phi}{2}}$ for momentum derivative in the energy functional \eqref{eng-def}. And this is in consistent with the behavior of Fokker-Planck type operators, cf.\cite{BJ, BJD-22, S. Chaturvedi}.

\item \underline{Time splitting.}
By using the property of $\bar{\phi}$ and the {\it a priori} energy assumption
\begin{equation*}
	\sqrt{\mathcal E^{\delta_1}(t)+\mathcal E^{\delta_2}(t)}\leq2\eps_0,
\end{equation*}
there exists a time $T_c>0$ such that both $\bar{\phi}$ and $\phi$ become monotone
for $t>T_c$. So that the second term on the right hand side of \eqref{xd-dap} induces a damping effect.

\item \underline{Self-similar regime.} As showed in Subsection \ref{tp}, to cope with the singularities that arise in $(e^{2\bar\phi}+\vert p\vert^2)^{-1/2}$, we can utilize the self-similar solution of the perturbation equation \eqref{pt-vnfp} in the form of
$$
f(t,x,p)=e^{-3\bar\phi}g(t,x,e^{-\bar\phi}p)=e^{-3\bar\phi}g(t,x,q).
$$
In this setting, there exists a strong damping due to the monotonicity of $\bar\phi$. Hence, this significantly simplifies the proof for the corresponding energy estimates.

\end{itemize}

\noindent{\it Notations.} We list some notations and norms used in the paper. Throughout this paper,  $C$ denotes some generic positive (generally large) constant and $\la$ or $\ga$ denotes some generic positive constant. $D\lesssim E$ means that  there is a generic constant $C>0$
such that $D\leq CE$. $D\sim E$
means $D\lesssim E$ and $E\lesssim D$.
For multi-indices
$\al=[\al_1, \al_2, \al_3]$, we denote
$
\partial^{\al}_{x}=\partial_{x_{1}}^{\al_{1}}
\partial_{x_{2}}^{\al_{2}}\partial_{x_{3}}^{\al_{3}}
$
and likewise for $
\partial^{\beta}_{p}$, and the length of $\al$ is denoted by $|\al|=\al_1+\al_2+\al_3$.
$\al'\leq\al$ means that no component of $\al'$
is greater than the component of $\al$, and $\al'<\al$ means that
$\al'\leq\al$ and $|\al'|<|\al|$. In addition, we denote the indicator function of the set $A$ by  $\chi_A$.

The rest of this paper is arranged as follows.
Section \ref{gs-vnpf} is devoted to the global existence and long time behavior of the Cauchy problem \eqref{pt-vnfp} and \eqref{pt-id}, specifically for proving the Theorem \ref{thm1}.  The self-similar solution of \eqref{pt-vnfp} and \eqref{pt-id} is constructed in Section \ref{sec-sss}, where the proof of Theorem \ref{thm2} is presented. Finally, we put some useful estimates in Appendix \ref{ad}.

\section{Global stability of the VNFP system}\label{gs-vnpf}
In this section, we will prove that the solution of the VNFP system \eqref{vnfp-ih} converges to the one of the homogeneous VNFP in some time-weighted function space.  For this, the first step is to show the global existence of solution to the perturbed system
\begin{eqnarray}\label{vnfp-s2}
	\left\{
\begin{array}{l}
	\partial_t f+\nabla_p\sqrt{e^{2\phi}+\vert p\vert^2}\cdot\nabla_x f-\nabla_x\sqrt{e^{2\phi}+\vert p\vert^2}\cdot\nabla_p f=\nabla_x\sqrt{e^{2\phi}+\vert p\vert^2}\cdot\nabla_p\bar F+e^{2\phi}\nabla_p\cdot(\Lambda_{\phi,p}\nabla_pf)\\[2mm]
	\quad+(e^{2\phi}\nabla_p\cdot(\Lambda_{\phi,p}\nabla_p\bar F)-e^{2\bar\phi}\nabla_p\cdot(\Lambda_{\bar\phi,p}\nabla_p\bar F)),\ t>0,\ x\in\mathbb R^3,\ p\in\mathbb R^3,\\[2mm]
	\partial_t^2\Phi-\Delta_x\Phi=-e^{2\phi}\int_{\mathbb R^3}\frac{f}{\sqrt{e^{2\phi}+\vert p\vert^2}}{\rm d}p-\int_{\mathbb R^3}\Big(\frac{e^{2\phi}\bar F}{\sqrt{e^{2\phi}+\vert p\vert^2}}-\frac{e^{2\bar\phi}\bar F}{\sqrt{e^{2\bar\phi}+\vert p\vert^2}}\Big){\rm d}p,
\end{array}\right.
	\end{eqnarray}
and
\begin{align}\label{id-s2}
	(f(0,x,p),\Phi(0,x),\partial_t\Phi(0,x))=(f_0(x,p),\Phi_0(x),\Phi_1(x)),\ x\in\mathbb R^3,\ p\in\mathbb R^3.
\end{align}
And then we can study  the large-time behavior of solutions  as described in \eqref{long-t}.

The global existence will be obtained by the  continuity argument that  combines the local existence and the \textit{a priori} energy estimates \eqref{eng-tt}. The local existence will be briefly sketched in Subsection \ref{sec-loc}. Here, we mainly focus on proving the \textit{a priori} energy estimate \eqref{eng-tt} based on the following \textit{a priori} assumption:
\begin{equation}
\sqrt{\mathcal{E}^{\delta_1}(t) + \mathcal{E}^{\delta_2}(t)} \leq 2\epsilon_0, \ t\in[0,+\infty). \label{aps}
\end{equation}
As discussed in Subsection \ref{sb-si}, by \eqref{lt-ho} and \eqref{aps}, one can find that there exists a finite time $T_c>0$ such that
\begin{align}\label{tc}
\bar\phi'(t)\leq\frac{\bar\phi'(\infty)}{2},
\ \partial_t\phi\leq\frac{\bar\phi'(\infty)}{4},\ \textrm{and}\ e^{2\phi}<\frac{1}{2},\ \textrm{for}\ t\geq T_c.
\end{align}
Hence, the proof of  Theorem \ref{thm1} is divided in the following two steps.

\subsection{Energy estimates in $[0,T_c]$}\label{sec-ft-eng}
In this subsection, we will show that the weighted energy estimate \eqref{eng-tt} holds in the time interval $[0,T_c]$ in the following proposition, where $T_c$ is givn in  \eqref{tc}.
\begin{proposition}\label{loc-eng-pro}
Assume $[f,\Phi]$ is a solution to \eqref{vnfp-s2} and \eqref{id-s2} that satisfies \eqref{aps}, let $T_c$ be given in  \eqref{tc}, then there exists a constant $C(T_c)>0$, dependent on $T_c$, such that
\begin{align}\label{loc-eng}
\sum_{\substack{m+n\leq4\\n<4}}&	(\mathcal E_{m,n}^{\delta_1}(t)+\mathcal E_{m,n}^{\delta_2}(t))+\sum_{\substack{m+n\leq4\\n<4}}\int_0^t	(\mathcal D_{m,n}^{\delta_1}(s)+\mathcal D_{m,n}^{\delta_2}(s)){\rm d}s\notag\\
&\leq C(T_c)\Big(\sum_{\substack{m+n\leq4\\n<4}}	\mathcal E_{m,n}^{\delta_2}(0)+\left\|\Phi_0\right\|_{L^2_x}^2\Big),
\end{align}
for $t\in[0,T_c]$.
\end{proposition}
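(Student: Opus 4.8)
The plan is to prove Proposition \ref{loc-eng-pro} by a standard high-order weighted energy estimate on the finite time interval $[0,T_c]$, where all exponential weights $e^{\bar\phi}$, $e^{\phi}$ and their reciprocals are bounded above and below by constants depending only on $T_c$ (since $\bar\phi$ and $\phi$ are continuous and $\phi$ stays close to $\bar\phi$ under \eqref{aps}). Thus on $[0,T_c]$ the weighted energies $\mathcal E^{\delta_i}_{m,n}$ are comparable to their unweighted analogues, and the whole point is simply to show the energy does not blow up in finite time, with the constant $C(T_c)$ allowed to be exponentially large. I would proceed order by order in the number of derivatives: for each pair $(m,n)$ with $m+n\le 4$, $n<4$, apply $\partial_x^\alpha\partial_p^\beta$ with $|\alpha|=m$, $|\beta|=n$ to the $f$-equation in \eqref{vnfp-s2}, multiply by $e^{(|\alpha|+3|\beta|)\bar\phi}(e^{2\phi}+|p|^2)^{\delta_i}\partial_x^\alpha\partial_p^\beta f$ and integrate over $\R^6$; simultaneously apply $\partial_x^\alpha$ to the wave equation in \eqref{vnfp-s2} and perform the standard energy identity for $\Box\Phi$, multiplied by the appropriate power $e^{d\bar\phi}$.

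The key steps, in order, would be: (1) Establish the basic commutator calculus: differentiating the transport terms $\nabla_p\sqrt{e^{2\phi}+|p|^2}\cdot\nabla_x f$ and $\nabla_x\sqrt{e^{2\phi}+|p|^2}\cdot\nabla_p f$ produces lower-order terms where derivatives of $\sqrt{e^{2\phi}+|p|^2}$ hit $f$; the top-order transport terms are antisymmetric (a Liouville/divergence-free structure in $(x,p)$) and cancel after integration by parts, while the commutators and the $\nabla_x\sqrt{e^{2\phi}+|p|^2}\cdot\nabla_p\bar F$ source are controlled using the $H^4_{x,p}$-type regularity of $\bar F$ (which is smooth for $t\in[0,T_c]$) and the a priori bound \eqref{aps}. (2) Handle the diffusion term $e^{2\phi}\nabla_p\cdot(\Lambda_{\phi,p}\nabla_p f)$: integrating by parts in $p$ produces the dissipation rate $\mathcal D^{\delta_i}_{m,n}$ (up to constants depending on $T_c$) plus error terms from commuting $\partial_p^\beta$ through $\Lambda_{\phi,p}$ and from the momentum weight $(e^{2\phi}+|p|^2)^{\delta_i}$; the latter generate terms absorbable into the dissipation via Cauchy–Schwarz because the weight exponent shifts by exactly $-\tfrac12$, matching the structure of $\mathcal D^{\delta_i}_{m,n}$. (3) Handle the difference term $e^{2\phi}\nabla_p\cdot(\Lambda_{\phi,p}\nabla_p\bar F)-e^{2\bar\phi}\nabla_p\cdot(\Lambda_{\bar\phi,p}\nabla_p\bar F)$, which is $O(\Phi)$ times smooth $\bar F$-quantities, hence controlled by $\sqrt{\mathcal E^{\delta_i}}$ times constants. (4) Close the wave-equation estimate: the forcing terms on the right of \eqref{vnfp-s2}$_2$ are controlled by $\|(e^{2\phi}+|p|^2)^{\delta_1/2}f\|_{L^2_p}+\|(e^{2\phi}+|p|^2)^{\delta_2/2}f\|_{L^2_p}$ in $L^2_x$ via the splitting \eqref{el-sg} (with the analogous estimate for $x$-derivatives), so $\|\Phi_0\|_{L^2_x}$ enters only because the zeroth-order $x$-energy of $\Phi$ is not in $\mathcal E^{\delta_i}$ and must be recovered by integrating $\partial_t\Phi$ in time. (5) Sum over all $(m,n)$, use that the diffusion error terms and transport commutators at order $k$ are bounded by $\sqrt{\mathcal E}$ times the energy/dissipation at order $\le k$, and conclude by Grönwall's inequality on $[0,T_c]$ — which is exactly where the constant $C(T_c)$, possibly of the form $e^{CT_c}$, is generated.

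The main obstacle I anticipate is the interplay between the momentum weight $(e^{2\phi}+|p|^2)^{\delta_i}$ and the momentum derivatives $\partial_p^\beta$ in the diffusion term. Commuting $\partial_p^\beta$ through $\Lambda_{\phi,p}$ and through the weight produces terms with an apparent loss: the coefficients of $\Lambda_{\phi,p}$ grow like $|p|$ while $\nabla_p$ of the weight contributes a factor $|p|\cdot(e^{2\phi}+|p|^2)^{\delta_i-1}$, so one must verify carefully that every such error term carries a net weight of at most $(e^{2\phi}+|p|^2)^{\delta_i-1/2}$ on a factor paired with $e^{\phi}\nabla_p(\cdots)$ or $p\cdot\nabla_p(\cdots)$, so that it is genuinely absorbed by $\mathcal D^{\delta_i}_{m,n}$ rather than merely by the energy. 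This is the relativistic "momentum singularity" issue flagged in the introduction, and although on $[0,T_c]$ it is not as severe as in the large-time regime (because $e^{2\phi}$ is bounded below here), the algebra of matching weight exponents across the diffusion, the weight commutators, and the two parameters $\delta_1,\delta_2$ is the technically delicate part; the wave-equation and transport estimates are, by comparison, routine on a finite time interval. A secondary point requiring care is the $|\alpha|=4$ case where the $\Phi$-energy carries the extra $e^{\bar\phi}$ weight: differentiating this weight in time produces a term $-\bar\phi' e^{\bar\phi}(\cdots)$ which on $[0,T_c]$ has no definite sign, but is harmless because $|\bar\phi'|$ is bounded on $[0,T_c]$ and it is simply absorbed into the Grönwall constant.
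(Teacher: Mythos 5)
Your proposal is correct and follows essentially the same route as the paper's proof of Proposition \ref{loc-eng-pro}: order-by-order weighted estimates with the multipliers $e^{(|\alpha|+3|\beta|)\bar\phi}(e^{2\phi}+|p|^2)^{\delta_i}\partial_x^\alpha\partial_p^\beta f$, cancellation of the top-order transport terms against the weight (a function of $\sqrt{e^{2\phi}+|p|^2}$), integration by parts in the diffusion term to produce $\mathcal D^{\delta_i}_{m,n}$ with the $-\tfrac12$ weight shift absorbing the commutator errors (the content of Lemmas \ref{lemma A3}--\ref{lemma A5}), the $O(\Phi)$ treatment of the difference term, the splitting-type bound for the field source together with $\|\Phi(t)\|_{L^2_x}\le\|\Phi_0\|_{L^2_x}+t\int_0^t\|\partial_s\Phi\|_{L^2_x}^2\,{\rm d}s$ as in \eqref{phi-l2}, and Gr\"onwall on $[0,T_c]$ yielding $C(T_c)$. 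Your observations that all $e^{\bar\phi}$, $e^{\phi}$ weights are comparable to constants on $[0,T_c]$ and that the $|\alpha|=4$ weight $e^{\bar\phi}$ on the $\Phi$-energy only feeds a bounded $\bar\phi'$ term into Gr\"onwall are consistent with how the paper argues (cf.\ \eqref{n=4,fin} and \eqref{eq-w}), so no gap.
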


\begin{proof}
The proof is divided into following three steps.

\noindent\underline{{\it Step 1. Zeroth order energy estimate.}}
 By multiplying \eqref{vnfp-s2}$_1$ by $(e^{2\phi}+\vert p\vert^2)^\gamma f$ and  integrating over $(x,p)\in\mathbb R^3\times\mathbb R^3$, it holds
 \begin{align}\label{z-ip}
 \frac{1}{2}\frac{d}{dt}&\int_{\mathbb R^6}(e^{2\phi}+\vert p\vert^2)^\gamma\vert f\vert^2{\rm d}x{\rm d}p\nonumber\\
 &=\gamma\int_{\mathbb R^6}\partial_t\phi e^{2\phi}(e^{2\phi}+\vert p\vert^2)^{\gamma-1}\vert f\vert^2{\rm d}x{\rm d}p+\int_{\mathbb R^6}\nabla_x\sqrt{e^{2\phi}+\vert p\vert^2}\cdot\nabla_p\bar F(e^{2\phi}+\vert p\vert^2)^\gamma f{\rm d}x{\rm d}p\nonumber\\
 &\quad+\int_{\mathbb R^6}e^{2\phi}\nabla_p\cdot(\Lambda_{\phi,p}\nabla_p f)(e^{2\phi}+\vert p\vert^2)^\gamma f{\rm d}x{\rm d}p\nonumber\\
 &\quad+\int_{\mathbb R^6}(e^{2\phi}\nabla_p\cdot(\Lambda_{\phi,p}\nabla_p\bar F)-e^{2\bar\phi}\nabla_p\cdot(\Lambda_{\bar\phi,p}\nabla_p\bar F))(e^{2\phi}+\vert p\vert^2)^\gamma f{\rm d}x{\rm d}p\nonumber\\
 &:=I_1+I_2+I_3+I_4.
 \end{align}
 We now estimate $I_i$ $(1\leq i\leq4)$ term by term.
For $I_1$,  by noting that $\bar\phi'(t)$ is bounded, cf. \cite[Theorem 2.1, pp.3703]{ACP-2014}, and by   Sobolev's embedding $H^2_x\hookrightarrow L^\infty_x$, we have
\begin{equation}
	|I_1|\leq C\int_{\mathbb R^6}(e^{2\phi}+\vert p\vert^2)^\gamma\vert f\vert^2{\rm d}x{\rm d}p.\notag
\end{equation}
For $I_2$, by Lemma \ref{lemma A3}, it follows that
\begin{align}
|I_2|\leq Ce^{\bar\phi}\int_{\mathbb R^6}(e^{2\phi}+\vert p\vert^2)^\gamma\vert f\vert^2{\rm d}x{\rm d}p+Ce^{\bar\phi}\left\|\nabla_x\Phi\right\|_{L^2_x}^2.\notag
\end{align}
After integrating by parts, $I_3$ can be estimated as follows
\begin{align}
	I_3=&-\int_{\mathbb R^6}e^{2\phi}(e^{2\phi}+\vert p\vert^2)^{\gamma-\frac{1}{2}}(e^{2\phi}\vert\nabla_p f\vert^2+\vert p\cdot\nabla_p f\vert^2){\rm d}x{\rm d}p\notag\\&-2\gamma\int_{\mathbb R^6}e^{2\phi}(e^{2\phi}+\vert p\vert^2)^{\gamma-\frac{1}{2}}p\cdot\nabla_pff{\rm d}x{\rm d}p\nonumber\\
	\leq&-\int_{\mathbb R^6}e^{2\phi}(e^{2\phi}+\vert p\vert^2)^{\gamma-\frac{1}{2}}(e^{2\phi}\vert\nabla_p f\vert^2+\vert p\cdot\nabla_p f\vert^2){\rm d}x{\rm d}p\nonumber\\
	&+C\int_{\mathbb R^6}e^\phi(e^{2\phi}+\vert p\vert^2)^{\frac{\gamma}{2}-\frac{1}{4}}\vert p\cdot\nabla_pf\vert e^{\frac{\phi}{2}}(e^{2\phi}+\vert p\vert^2)^{\frac{\gamma}{2}-\frac{1}{4}}\vert f\vert e^{\frac{\bar\phi}{2}}{\rm d}x{\rm d}p\nonumber\\
	\leq&-(1-\eta)\int_{\mathbb R^6}e^{2\phi}(e^{2\phi}+\vert p\vert^2)^{\gamma-\frac{1}{2}}(e^{2\phi}\vert\nabla_p f\vert^2+\vert p\cdot\nabla_p f\vert^2){\rm d}x{\rm d}p
\notag\\&+C_\eta e^{\bar\phi}\int_{\mathbb R^6}(e^{2\phi}+\vert p\vert^2)^\gamma\vert f\vert^2{\rm d}x{\rm d}p,\notag
\end{align}
where we have used the folllowing basic identity
\begin{align}\label{m-lb}
\nabla_p f\Lambda_{\phi,p}[\nabla_p f]^T=(e^{2\phi}+\vert p\vert^2)^{-\frac{1}{2}}(e^{2\phi}\vert\nabla_p f\vert^2+\vert p\cdot\nabla_p f\vert^2).
\end{align}
For the last term $I_4$, we use Lemma \ref{lemma A5} with $\vert\alpha\vert=\vert\beta\vert=0$  to get
\begin{align}
|I_4|\leq& \eta\int_{\mathbb R^6}e^{2\phi}(e^{2\phi}+\vert p\vert^2)^{\gamma-\frac{1}{2}}(e^{2\phi}\vert\nabla_p f\vert^2+\vert p\cdot\nabla_p f\vert^2){\rm d}x{\rm d}p+Ce^{\bar\phi}\int_{\mathbb R^6}(e^{2\phi}+\vert p\vert^2)^\gamma\vert f\vert^2{\rm d}x{\rm d}p
\notag\\&+C_\eta e^{\bar\phi}\left\|\Phi\right\|_{L^2_x}^2.\notag
\end{align}
Combining all these  estimates gives
\begin{align}\label{z-f}
\frac{d}{dt}\int_{\mathbb R^6}&(e^{2\phi}+\vert p\vert^2)^\gamma\vert f\vert^2{\rm d}x{\rm d}p+\int_{\mathbb R^6}e^{2\phi}(e^{2\phi}+\vert p\vert^2)^{\gamma-\frac{1}{2}}(e^{2\phi}\vert\nabla_p f\vert^2+\vert p\cdot\nabla_p f\vert^2){\rm d}x{\rm d}p\notag\\
&	\leq C\mathcal E_{0,0}^\gamma(t)+C_\eta e^{\bar\phi}\left\|\Phi\right\|_{L^2_x}^2.
\end{align}
Next we turn to estimate the  gravitational field $\Phi$. Multiplying \eqref{vnfp-s2}$_2$ by $\partial_t\Phi$, and integrating over $\mathbb R^3$ yield
\begin{align}\label{phi-ip}
\frac{1}{2}\frac{d}{dt}&\int_{\mathbb R^3}(\vert\partial_t\Phi\vert^2+\vert\nabla_x\Phi\vert^2){\rm d}x\nonumber\\
		=&-\int_{\mathbb R^3}e^{2\phi}\int_{\mathbb R^3}\frac{f}{\sqrt{e^{2\phi}+\vert p\vert^2}}{\rm d}p\partial_t\Phi{\rm d}x-\int_{\mathbb R^3}\int_{\mathbb R^3}\Big(e^{2\phi}\frac{\bar F}{\sqrt{e^{2\phi}+\vert p\vert^2}}-e^{2\bar\phi}\frac{\bar F}{\sqrt{e^{2\bar\phi}+\vert p\vert^2}}\Big){\rm d}p\partial_t\Phi{\rm d}x\nonumber\\	:=&J_1+J_2.
\end{align}
From \eqref{tc} and H\"{o}lder's inequality, it follows that
\begin{equation*}
	\int_{\mathbb R^3}\frac{f}{\sqrt{e^{2\phi}+\vert p\vert^2}}{\rm d}p=\int_{\mathbb R^3}({e^{2\phi}+\vert p\vert^2})^{-\frac{1}{2}-\frac{\delta_2}{2}}({e^{2\phi}+\vert p\vert^2})^{\frac{\delta_2}{2}}f{\rm d}p\leq C\left\|({e^{2\phi}+\vert p\vert^2})^{\frac{\delta_2}{2}}f\right\|_{L^2_p},
\end{equation*}
here we have chosen $\delta_2>\frac{1}{2}$. Hence, the first term on the right-hand side of \eqref{phi-ip} can be bounded as
\begin{equation*}
	|J_1|\leq Ce^{2\bar\phi}\left\|\partial_t\Phi\right\|_{L^2_x}\left\|\int_{\mathbb R^3}\frac{f}{\sqrt{e^{2\phi}+\vert p\vert^2}}{\rm d}p\right\|_{L^2_x}\\
	\leq Ce^{2\bar\phi}(\left\|\partial_t\Phi\right\|_{L^2_x}^2+\left\|(e^{2\phi}+\vert p\vert^2)^{\frac{\delta_2}{2}}f\right\|_{L^2_{x,p}}^2).
\end{equation*}
Similarily, for $J_2$, we have
\begin{equation*}
	\begin{split}
		|J_2|&\leq C\int_{\mathbb R^3}\int_{\mathbb R^3}\vert\Phi\vert\frac{e^{2\bar\phi}}{\sqrt{e^{2\bar\phi}+\vert p\vert^2}}\vert\bar F\vert{\rm d}p\vert\partial_t\Phi\vert{\rm d}x\\
		&\leq Ce^{2\bar\phi}\Big(\left\|\partial_t\Phi\right\|_{L^2_x}^2+\left\|\Phi\right\|_{L^2_x}^2\int_{\mathbb R^3}(e^{2\bar\phi}+\vert p\vert^2)^{\delta_2}\vert\bar F\vert^2{\rm d}p\Big)\leq Ce^{2\bar\phi}(\left\|\partial_t\Phi\right\|_{L^2_x}^2+\left\|\Phi\right\|_{L^2_x}^2).	
	\end{split}
\end{equation*}
Substituting the above estimates into \eqref{phi-ip}, we get
\begin{equation}
	\frac{d}{dt}\int_{\mathbb R^3}(\vert\partial_t\Phi\vert^2+\vert\nabla_x\Phi\vert^2){\rm d}x\leq Ce^{2\bar\phi}\mathcal E_{0,0}^{\delta_2}(t)+Ce^{2\bar\phi}\left\|\Phi\right\|_{L^2_x}^2,\notag
\end{equation}
which combines with \eqref{z-f} gives
\begin{equation}\label{z-sum1}
\frac{\rm d}{dt}\mathcal E_{0,0}^\gamma(t)+\mathcal D_{0,0}^\gamma(t)\leq C\mathcal E_{0,0}^\gamma(t)+C\mathcal E_{0,0}^{\delta_2}(t)+Ce^{\bar\phi}\left\|\Phi\right\|_{L^2_x}^2.
\end{equation}
Letting $\de_1\in(0,\frac{1}{2})$ and $\de_2\in(\frac{1}{2},+\infty)$, and further
taking $\gamma=\delta_1$ and $\delta_2$ in \eqref{z-sum1} respectively, and summing the resulting inequalities, we get
\begin{equation}
	\frac{\rm d}{dt}(\mathcal E_{0,0}^{\delta_1}(t)+\mathcal E_{0,0}^{\delta_2}(t))+\mathcal D_{0,0}^{\delta_1}(t)+\mathcal D_{0,0}^{\delta_2}(t)\leq C(\mathcal E_{0,0}^{\delta_1}(t)+\CE_{0,0}^{\delta_2}(t))+Ce^{\bar\phi}\left\|\Phi\right\|_{L^2_x}^2.\notag
\end{equation}
It remains now to control $\left\|\Phi\right\|_{L^2_x}$, by H\"{o}lder's inequality and Minkowski's inequality, one has
\begin{equation}\label{phi-l2}
	\left\|\Phi(t)\right\|_{L^2_x}^2=\left\|\Phi_0+\int_0^t\partial_s\Phi(s){\rm d}s\right\|_{L^2_x}^2\leq\left\|\Phi_0\right\|_{L^2_x}^2+t\int_0^t\left\|\partial_s\Phi\right\|_{L^2_x}^2{\rm d}s.
\end{equation}
Finally, by Gr\"{o}nwall's inequality, we get from \eqref{z-sum1} and \eqref{phi-l2} that for $t\in[0,T_c]$
\begin{equation}\label{z-sum3}
	\begin{split}
\mathcal E_{0,0}^{\delta_1}(t)+\mathcal E_{0,0}^{\delta_2}(t)+\int_0^t(\mathcal D_{0,0}^{\delta_1}(s)
+\mathcal D_{0,0}^{\delta_2}(s)){\rm d}s&\leq	C(T_c)(\mathcal E_{0,0}^{\delta_1}(0)+E_{0,0}^{\delta_2}(0))+C\left\|\Phi_0\right\|_{L^2_x}^2\\
&\leq C(T_c)(\mathcal E_{0,0}^{\delta_2}(0)+\left\|\Phi_0\right\|_{L^2_x}^2).
\end{split}
\end{equation}

\noindent\underline{{\it Step 2. First order energy estimates.}}
By applying $\partial_{p_i}$ to \eqref{vnfp-s2}$_1$, multiplying the equation by $e^{3\bar\phi}(e^{2\phi}+\vert p\vert^2)^\gamma\partial_{p_i}f$, and then integrating over $(x,p)\in\mathbb R^3\times\mathbb R^3$, it holds that
\begin{align}\label{2.12}
\frac{1}{2}\frac{d}{dt}&\int_{\mathbb R^6}e^{3\bar\phi}(e^{2\phi}+\vert p\vert^2)^\gamma\vert\partial_{p_i}f\vert^2{\rm d}x{\rm d}p\notag\\
=&\frac{1}{2}\int_{\mathbb R^6}\vert\partial_{p_i}f\vert^2\partial_t(e^{3\bar\phi}(e^{2\phi}+\vert p\vert^2)^\gamma){\rm d}x{\rm d}p\nonumber\\
&+\int_{\mathbb R^6}(\nabla_x\partial_{p_i}\sqrt{e^{2\phi}+\vert p\vert^2}\cdot\nabla_pf-\nabla_p\partial_{p_i}\sqrt{e^{2\phi}+\vert p\vert^2}\cdot\nabla_xf)e^{3\bar\phi}(e^{2\phi}+\vert p\vert^2)\partial_{p_i}f{\rm d}x{\rm d}p\nonumber\\
&+\int_{\mathbb R^6}\partial_{p_i}(\nabla_x\sqrt{e^{2\phi}+\vert p\vert^2}\cdot\nabla_p\bar F)e^{3\bar\phi}(e^{2\phi}+\vert p\vert^2)^\gamma \partial_{p_i}f{\rm d}x{\rm d}p\nonumber\\
	&+\int_{\mathbb R^6}\partial_{p_i}(e^{2\phi}\nabla_p\cdot(\Lambda_{\phi,p}\nabla_p f))e^{3\bar\phi}(e^{2\phi}+\vert p\vert^2)^\gamma \partial_{p_i}f{\rm d}x{\rm d}p\nonumber\\
	&+\int_{\mathbb R^6}\partial_{p_i}(e^{2\phi}\nabla_p\cdot(\Lambda_{\phi,p}\nabla_p\bar F)-e^{2\bar\phi}\nabla_p\cdot(\Lambda_{\bar\phi,p}\nabla_p\bar F))e^{3\bar\phi}(e^{2\phi}+\vert p\vert^2)^\gamma \partial_{p_i}f{\rm d}x{\rm d}p\nonumber\\
	:=&I_5+I_6+I_7+I_8+I_9.
	\end{align}
We now estimate $I_i$ $(5\leq i\leq9)$ as follows.
Similar to $I_1$, $ I_5$ can be bounded as
\begin{equation}\label{2.13}
	|I_5|\leq C\int_{\mathbb R^6}e^{3\bar\phi}(e^{2\phi}+\vert p\vert^2)^\gamma\vert\partial_{p_i}f\vert^2{\rm d}x{\rm d}p.\notag
\end{equation}
For $I_6$ and $I_7$, by using H\"{o}lder's inequality and Sobolev's imbedding $H_x^2\hookrightarrow L^\infty_x$, we have
\begin{align*}
|I_6|&\leq C\left\|\nabla_x\Phi\right\|_{L^\infty_x}\int_{\mathbb R^6}e^{3\bar\phi}(e^{2\phi}+\vert p\vert)^\gamma\vert\nabla_p f\vert\vert\partial_{p_i}f\vert{\rm d}x{\rm d}p\\
&\quad+C\int_{\mathbb R^6}e^{3\bar\phi}(e^{2\phi}+\vert p\vert^2)^{\gamma-\frac{1}{2}}\vert\nabla_xf\vert\vert\partial_{p_i}f\vert{\rm d}x{\rm d}p\\
&\leq C\int_{\mathbb R^6}e^{\bar\phi}(e^{2\phi}+\vert p\vert^2)^\gamma\vert\nabla_xf\vert^2{\rm d}x{\rm d}p+C\int_{\mathbb R^6}e^{3\bar\phi}(e^{2\phi}+\vert p\vert^2)^\gamma\vert\nabla_pf\vert^2{\rm d}x{\rm d}p,\notag
\end{align*}
and
\begin{equation*}
|I_7|
\leq Ce^{\bar\phi}\int_{\mathbb R^6}e^{3\bar\phi}(e^{2\phi}+\vert p\vert^2)^\gamma\vert\partial_{p_i}f\vert^2{\rm d}x{\rm d}p+Ce^{\bar\phi}\left\|\nabla_x\Phi\right\|_{L^2_x}^2,
\end{equation*}
where we have used Lemma \ref{lemma A3} with $\vert\alpha\vert=0$ and $\vert\beta\vert=1$.

For  $I_8$, by applying \eqref{m-lb} and Lemma \ref{lemma A4} with $\vert\alpha\vert=0$ and $\vert\beta\vert=1$, we get
\begin{equation}
	\begin{split}
I_8&\leq-(1-\eta)\int_{\mathbb R^6}e^{3\bar\phi+2\phi}(e^{2\phi}+\vert p\vert^2)^{\gamma-\frac{1}{2}}(e^{2\phi}\vert\nabla_p\partial_{p_i}f\vert^2+\vert p\cdot\nabla_p\partial_{p_i}f\vert^2){\rm d}x{\rm d}p\\
&\quad+C\mathcal D_{0,0}^\gamma(t)+C_\eta e^{\bar\phi}\sum_{m+n\leq 1}\mathcal E_{m,n}^\gamma(t).\notag
\end{split}
\end{equation}
For $I_9$, by Lemma \ref{lemma A5} with $\vert\alpha\vert=0$ and $\vert\beta\vert=1$, it follows directly that
\begin{align}
 |I_9|&\leq\eta\int_{\mathbb R^6}e^{3\bar\phi+2\phi}(e^{2\phi}+\vert p\vert^2)^{\gamma-\frac{1}{2}}(e^{2\phi}\vert\nabla_p\partial_{p_i}f\vert^2+\vert p\cdot\nabla_p\partial_{p_i}f\vert^2{\rm d}x{\rm d}p\nonumber\\
 &\quad+Ce^{\bar\phi}\int_{\mathbb R^6}e^{3\bar\phi}(e^{2\phi}+\vert p\vert^2)^\gamma\vert\partial_{p_i}f\vert^2{\rm d}x{\rm d}p+C_\eta e^{\bar\phi}\left\|\Phi\right\|_{L^2_x}^2.\notag
\end{align}

Now we turn to deduce the corresponding estimates involving $x$ derivatives.
For this, by applying $\partial_{x_i}$ to \eqref{vnfp-s2}$_1$, and multiplying the equation by $e^{\bar\phi}(e^{2\phi}+\vert p\vert^2)^\gamma\partial_{x_i}f$, and then integrating the resulting equality with respect to $(x,p)\in\mathbb R^3\times\mathbb R^3$, it  yields that
\begin{align}\label{xd-ip}
		\frac{1}{2}\frac{d}{dt}\int_{\mathbb R^6}&e^{\bar\phi}(e^{2\phi}+\vert p\vert^2)^\gamma\vert\partial_{x_i}f\vert^2{\rm d}x{\rm d}p\nonumber\\
		=&\frac{1}{2}\int_{\mathbb R^6}\vert\partial_{x_i}f\vert^2\partial_t(e^{\bar\phi}(e^{2\phi}+\vert p\vert^2)^\gamma){\rm d}x{\rm d}p\nonumber\\
		&+\int_{\mathbb R^6}\big(\nabla_x\partial_{x_i}\sqrt{e^{2\phi}+\vert p\vert^2}\cdot\nabla_pf-\nabla_p\partial_{x_i}\sqrt{e^{2\phi}+\vert p\vert^2}\cdot\nabla_xf\big)e^{\bar\phi}(e^{2\phi}+\vert p\vert^2)^\gamma\partial_{x_i}f{\rm d}x{\rm d}p\nonumber\\
		&+\int_{\mathbb R^6}\nabla_x\partial_{x_i}\sqrt{e^{2\phi}+\vert p\vert^2}\cdot\nabla_p\bar Fe^{\bar\phi}(e^{2\phi}+\vert p\vert^2)^\gamma \partial_{x_i}f{\rm d}x{\rm d}p\nonumber\\&+\int_{\mathbb R^6}\partial_{x_i}(e^{2\phi}\nabla_p\cdot(\Lambda_{\phi,p}\nabla_p f))e^{\bar\phi}(e^{2\phi}+\vert p\vert^2)^\gamma \partial_{x_i}f{\rm d}x{\rm d}p\nonumber\\
		&+\int_{\mathbb R^6}\partial_{x_i}(e^{2\phi}\nabla_p\cdot(\Lambda_{\phi,p}\nabla_p\bar F))e^{\bar\phi}(e^{2\phi}+\vert p\vert^2)^\gamma \partial_{x_i}f{\rm d}x{\rm d}p\nonumber\\
		:=&I_{10}+I_{11}+I_{12}+I_{13}+I_{14}.
	\end{align}
Now we estimate the right-hand side of \eqref{xd-ip} term by term. For $I_{10}$, we directly have
\begin{equation}\label{2.19}
	|I_{10}|\leq C\int_{\mathbb R^6}e^{\bar\phi}(e^{2\phi}+\vert p\vert^2)^\gamma\vert\partial_{x_i}f\vert^2{\rm d}x{\rm d}p.\notag
\end{equation}
To estimate $I_{11}$, we use H\"{o}lder's inequality to obtain
\begin{align}\label{2.20}
		|I_{11}|\leq& C\int_{\mathbb R^6}e^{\frac{3\bar\phi}{2}}(e^{2\phi}+\vert p\vert^2)^{\frac{\gamma}{2}}\vert\nabla_pf\vert e^{\frac{\bar\phi}{2}}(e^{2\phi}+\vert p\vert^2)^{\frac{\gamma}{2}}\vert\partial_{x_i}f\vert(\vert\nabla_x\Phi\vert^2+\vert\nabla_x^2\Phi\vert){\rm d}x{\rm d}p\nonumber\\
		&+C\int_{\mathbb R^6}e^{\bar\phi}(e^{2\phi}+\vert p\vert^2)^\gamma\vert\nabla_xf\vert^2\vert\nabla_x\Phi\vert{\rm d}x{\rm d}p\nonumber\\
		\leq& C\left\|(\nabla_x\Phi,\nabla_x^2\Phi)\right\|_{L^\infty_x}\int_{\mathbb R^6} e^{\bar\phi}(e^{2\phi}+\vert p\vert^2)^{\gamma}\vert\nabla_xf\vert^2{\rm d}x{\rm d}p\notag\\&+C\left\|(\nabla_x\Phi,\nabla_x^2\Phi)\right\|_{L^\infty_x}\int_{\mathbb R^6} e^{3\bar\phi}(e^{2\phi}+\vert p\vert^2)^\gamma\vert\nabla_pf\vert^2{\rm d}x{\rm d}p\nonumber\\
		\leq& C\int_{\mathbb R^6} e^{\bar\phi}(e^{2\phi}+\vert p\vert^2)^{\gamma}\vert\nabla_xf\vert^2{\rm d}x{\rm d}p+C\int_{\mathbb R^6} e^{3\bar\phi}(e^{2\phi}+\vert p\vert^2)^\gamma\vert\nabla_pf\vert^2{\rm d}x{\rm d}p.
	\end{align}
For $I_{12}$ and $I_{14}$, it follows from  Lemma \ref{lemma A3} and Lemma \ref{lemma A5} respectively  that
\begin{align*}
|I_{12}|+|I_{14}|
	\leq &\eta\int_{\mathbb R^6}e^{\bar\phi+2\phi}(e^{2\phi}+\vert p\vert^2)^{\gamma-\frac{1}{2}}(e^{2\phi}\vert\nabla_p\partial_{x_i}f\vert^2+\vert p\cdot\nabla_p\partial_{x_i}f\vert^2){\rm d}x{\rm d}p\\
\quad	&+Ce^{\bar\phi}\int_{\mathbb R^6} e^{\bar\phi}(e^{2\phi}+\vert p\vert^2)^{\gamma}\vert\partial_{x_i}f\vert^2{\rm d}x{\rm d}p+Ce^{\bar\phi}(\left\|\nabla_x\Phi\right\|_{L^2_x}^2+\left\|\nabla_x^2\Phi\right\|_{L^2_x}^2).
\end{align*}
By Lemma \ref{lemma A4} with $\vert\alpha\vert=1$, $\vert\beta\vert=0$, we have from \eqref{m-lb} that
\begin{align}
I_{13}
&\leq -(1-\eta)\int_{\mathbb R^6}e^{\bar\phi+2\phi}(e^{2\phi}+\vert p\vert^2)^{\gamma-\frac{1}{2}}(e^{2\phi}\vert\nabla_p\partial_{x_i}f\vert^2+\vert p\cdot\nabla_p\partial_{x_i}f\vert^2){\rm d}x{\rm d}p\nonumber\\
&\quad+C_\eta \int_{\mathbb R^6}e^{2\phi}(e^{2\phi}+\vert p\vert^2)^{\gamma-\frac{1}{2}}(e^{2\phi}\vert\nabla_p f\vert^2+\vert p\cdot\nabla_pf\vert^2){\rm d}x{\rm d}p+C_\eta e^{\bar\phi}\sum_{m+n\leq1}\mathcal E_{m,n}^\gamma(t).\notag
\end{align}
Combining the above estimates together gives
\begin{align}\label{1s-f}
\frac{d}{dt}\int_{\mathbb R^6}&\big(e^{3\bar\phi}(e^{2\phi}+\vert p\vert^2)^\gamma\vert\partial_{p_i}f\vert^2+e^{\bar\phi}(e^{2\phi}+\vert p\vert^2)^\gamma\vert\partial_{x_i}f\vert^2\big){\rm d}x{\rm d}p\nonumber\\
&+\int_{\mathbb R^6}e^{3\bar\phi+2\phi}(e^{2\phi}+\vert p\vert^2)^{\gamma-\frac{1}{2}}(e^{2\phi}\vert\nabla_p\partial_{p_i}f\vert^2+\vert p\cdot\nabla_p\partial_{p_i}f\vert^2){\rm d}x{\rm d}p\nonumber\\
&+\int_{\mathbb R^6}e^{\bar\phi+2\phi}(e^{2\phi}+\vert p\vert^2)^{\gamma-\frac{1}{2}}(e^{2\phi}\vert\nabla_p\partial_{x_i} f\vert^2+\vert p\cdot\nabla_p\partial_{x_i}f\vert^2){\rm d}x{\rm d}p\nonumber\\
\leq& C\sum_{m+n\leq1}\mathcal E_{m,n}^\gamma(t)+C\mathcal D_{0,0}^\gamma(t)+Ce^{\bar\phi}\left\|\Phi\right\|_{L^2_x}^2.
	\end{align}
Furthermore, by applying $\partial_{x_i}$ to \eqref{vnfp-s2}$_2$, multiplying the resultant identity by $\partial_t\partial_{x_i}\Phi$, and then integrating the resulting equality over $\mathbb R^3$,
it holds
\begin{align*}
\frac{1}{2}\frac{d}{dt}\int_{\mathbb R^3}&(\vert\partial_t\partial_{x_i}\Phi\vert^2+\vert\nabla_x\partial_{x_i}\Phi\vert^2){\rm d}x\\
=&-\int_{\mathbb R^3}\partial_{x_i}\Big(e^{2\phi}\int_{\mathbb R^3}\frac{f}{\sqrt{e^{2\phi}+\vert p\vert^2}}{\rm d}p\Big)\partial_t\partial_{x_i}\Phi{\rm d}x-\int_{\mathbb R^3}\int_{\mathbb R^3}\partial_{x_i}\Big(e^{2\phi}\frac{\bar F}{\sqrt{e^{2\phi}+\vert p\vert^2}}\Big){\rm d}p\partial_t\partial_{x_i}\Phi{\rm d}x\\
	:=&J_3+J_4.
	\end{align*}
Similar to the estimation on  $J_1$ and $J_2$, we have 
\begin{align*}
|J_3|&\leq Ce^{2\phi}\left\|\partial_t\partial_{x_i}\Phi\right\|_{L^2_x}\left\|\nabla_x\Phi\right\|_{L^\infty_x}\left\|\int_{\mathbb R^3}\frac{f}{\sqrt{e^{2\phi}+\vert p\vert^2}}{\rm d}p\right\|_{L^2_x}+ Ce^{2\phi}\left\|\partial_t\partial_{x_i}\Phi\right\|_{L^2_x}\left\|\int_{\mathbb R^3}\frac{\partial_{x_i}f}{\sqrt{e^{2\phi}+\vert p\vert^2}}{\rm d}p\right\|_{L^2_x}\\
&\leq Ce^{2\bar\phi}\Big(\left\|\partial_t\partial_{x_i}\Phi\right\|_{L^2_x}^2+\left\|(e^{2\phi}+\vert p\vert^2)^{\frac{\delta_2}{2}}f\right\|_{L_{x,p}^2}^2+\left\|(e^{2\phi}+\vert p\vert^2)^{\frac{\delta_2}{2}}\partial_{x_i}f\right\|_{L_{x,p}^2}^2\Big),
\end{align*}
and
\begin{equation*}
	\begin{split}
	|	J_4|&\leq Ce^{2\bar\phi}\Big(\left\|\partial_t\partial_{x_i}\Phi\right\|_{L^2_x}^2+\left\|(e^{2\bar\phi}+\vert p\vert^2)^{\frac{\delta_2}{2}}\bar F\right\|_{L_{p}^2}^2\left\|\partial_{x_i}\Phi\right\|_{L^2_x}^2\Big)\\
		&\leq Ce^{2\bar\phi}(\left\|\partial_t\partial_{x_i}\Phi\right\|_{L^2_x}^2+\left\|\partial_{x_i}\Phi\right\|_{L^2_x}^2).
	\end{split}
\end{equation*}
Thus,
\begin{equation}\label{1st-phi}
	\frac{d}{dt}\int_{\mathbb R^3}(\vert\partial_t\partial_{x_i}\Phi\vert^2+\vert\nabla_x\partial_{x_i}\Phi\vert^2){\rm d}x\leq Ce^{\bar\phi}\sum_{ m+n\leq1}\mathcal E_{m,n}^{\delta_2}(t).
\end{equation}
By taking $\gamma\in\{\delta_1,\delta_2\}$ in \eqref{1s-f}, respectively,
and combining the resulting inequalities with \eqref{1st-phi}, we get
\begin{align*}
\frac{d}{dt}\big(\mathcal E_{0,1}^{\delta_1}(t)&+\mathcal E_{0,1}^{\delta_2}(t)+\mathcal E_{1,0}^{\delta_1}(t)+\mathcal E_{1,0}^{\delta_2}(t)\big)+\mathcal D_{0,1}^{\delta_1}(t)+\mathcal D_{0,1}^{\delta_2}(t)+\mathcal D_{1,0}^{\delta_1}(t)+\mathcal D_{1,0}^{\delta_2}(t)\\
&\leq C\sum_{m+n\leq1}(\mathcal E_{m,n}^{\delta_1}(t)+\mathcal E_{m,n}^{\delta_2}(t))+C\mathcal D_{0,0}^{\delta_1}(t)+C\mathcal D_{0,0}^{\delta_2}(t)+Ce^{\bar\phi}\left\|\Phi\right\|_{L^2_x}^2.
\end{align*}
Consequently, we have 
\begin{align}\label{2.26}
\mathcal E_{0,1}^{\delta_1}(t)&+\mathcal E_{0,1}^{\delta_2}(t)+\mathcal E_{1,0}^{\delta_1}+\mathcal E_{1,0}^{\delta_2}(t)+\int_0^t\big(\mathcal D_{0,1}^{\delta_1}(s)+\mathcal D_{0,1}^{\delta_2}(s)
+\mathcal D_{1,0}^{\delta_1}(s)+\mathcal D_{1,0}^{\delta_2}(s)\big){\rm d}s\notag\\
&\leq C(T_c)\sum_{m+n\leq1}\mathcal E_{m,n}^{\delta_2}(0)+C(T_c)\left\|\Phi_0\right\|_{L^2_x}^2.	
\end{align}

\noindent\underline{{\it Step 3. High-order energy estimates.}}
 By taking $2\leq\vert\alpha\vert+\vert\beta\vert\leq4$ and $\vert\beta\vert<4$, applying $\partial_x^\alpha\partial^\beta_{p}$ to \eqref{vnfp-s2}$_1$, multiplying the equation by $e^{(\vert\alpha\vert+3\vert\beta\vert)\bar\phi}(e^{2\phi}+\vert p\vert^2)^\gamma\partial_x^\alpha\partial_p^\beta f$, and then integrating over $(x,p)\in\mathbb R^3\times\mathbb R^3$, it holds that
\begin{align}\label{2.27}
\frac{1}{2}\frac{d}{dt}&\int_{\mathbb R^6}e^{(\vert\alpha\vert+3\vert\beta\vert)\bar\phi}(e^{2\phi}+\vert p\vert^2)^\gamma\vert\partial_x^\alpha\partial_p^\beta f\vert^2{\rm d}x{\rm d}p\nonumber\\
		=&\frac{1}{2}\int_{\mathbb R^6}\vert\partial_x^\alpha\partial_p^\beta f\vert^2\partial_t(e^{(\vert\alpha\vert+3\vert\beta\vert)\bar\phi}(e^{2\phi}+\vert p\vert^2)^\gamma){\rm d}x{\rm d}p\nonumber\\
		&+\int_{\mathbb R^6}e^{(\vert\alpha\vert+3\vert\beta\vert)\bar\phi}\left[\nabla_p\sqrt{e^{2\phi}+\vert p\vert^2}\cdot\nabla_x-\nabla_x\sqrt{e^{2\phi}+\vert p\vert^2}\cdot\nabla_p,\partial_x^\alpha\partial_p^\beta\right]
f(e^{2\phi}+\vert p\vert^2)^\ga\partial_x^\alpha\partial_p^\beta f{\rm d}x{\rm d}p\nonumber\\
		&+\int_{\mathbb R^6}e^{(\vert\alpha\vert+3\vert\beta\vert)\bar\phi}\partial_x^\alpha\partial_p^\beta(\nabla_x\sqrt{e^{2\phi}+\vert p\vert^2}\cdot\nabla_p\bar F)(e^{2\phi}+\vert p\vert^2)^\gamma \partial_x^\alpha\partial_p^\beta f{\rm d}x{\rm d}p\nonumber\\
		&+\int_{\mathbb R^6}e^{(\vert\alpha\vert+3\vert\beta\vert)\bar\phi}\partial_x^\alpha\partial_p^\beta\big(e^{2\phi}\nabla_p\cdot(\Lambda_{\phi,p}\nabla_p f)\big)(e^{2\phi}+\vert p\vert^2)^\gamma \partial_x^\alpha\partial_p^\beta f{\rm d}x{\rm d}p\nonumber\\
		&+\int_{\mathbb R^6}e^{(\vert\alpha\vert+3\vert\beta\vert)\bar\phi}\partial_x^\alpha\partial_p^\beta\big(e^{2\phi}\nabla_p\cdot(\Lambda_{\phi,p}\nabla_p\bar F)-e^{2\bar\phi}\nabla_p\cdot(\Lambda_{\bar\phi,p}\nabla_p\bar F)\big)(e^{2\phi}+\vert p\vert^2)^\gamma \partial_x^\alpha\partial_p^\beta f{\rm d}x{\rm d}p\nonumber\\
		:=&I_{15}+I_{16}+I_{17}+I_{18}+I_{19},
\end{align}
where $[A,B]=AB-BA$ is the commutator operator.
We now estimate  $I_{i}$ $(15\leq i\leq19)$ as follows.
First of all, for all $(\alpha,\beta)$  with $2\leq\vert\alpha\vert+\vert\beta\vert\leq4$ and $\vert\beta\vert<4$,  it holds that 
\begin{equation}\label{I15}
	|I_{15}|\leq C\int_{\mathbb R^6}e^{(\vert\alpha\vert+3\vert\beta\vert)\bar\phi}(e^{2\phi}+\vert p\vert^2)^\gamma\vert\partial_x^\alpha\partial_p^\beta f\vert^2{\rm d}x{\rm d}p.
	\end{equation}
Lemma \ref{lemma A3} implies  that for all $(\alpha,\beta)$,
\begin{equation}\label{I17}
	\begin{split}
	|I_{17}|\leq Ce^{\bar\phi}\int_{\mathbb R^6}e^{(\vert\alpha\vert+3\vert\beta\vert)\bar\phi}(e^{2\phi}+\vert p\vert^2)^\gamma\vert\partial_x^\alpha\partial_p^\beta f\vert^2{\rm d}x{\rm d}p+Ce^{\bar\phi}\left\|\nabla_x\Phi\right\|_{H^{\vert\alpha\vert}_x}^2.
\end{split}
\end{equation}
And Lemma \ref{lemma A4} implies that for all $(\alpha,\beta)$,
\begin{align}\label{I18}
I_{18}
\leq& -(1-\eta)\int_{\mathbb R^6}e^{(\vert\alpha\vert+3\vert\beta\vert)\bar\phi+2\phi}(e^{2\phi}+\vert p\vert^2)^{\gamma-\frac{1}{2}}\big(e^{2\phi}\vert\nabla_p\partial_x^\alpha\partial_p^\beta f\vert^2+\vert p\cdot\na_p\partial_x^\alpha\partial_p^\beta f\vert^2\big){\rm d}x{\rm d}p\nonumber\\
&+Ce^{\bar\phi}\sum_{\substack{m+n\leq\vert\alpha\vert+\vert\beta\vert\\n<4}}\mathcal E^\gamma_{m,n}(t)+C\sum_{m+n<\vert\alpha\vert+\vert\beta\vert}\mathcal D_{m,n}^\gamma(t).
\end{align}
Similarly,  for all $(\alpha,\beta)$, Lemma \ref{lemma A5} gives
\begin{align}\label{I19}
|I_{19}|\leq& \eta\int_{\mathbb R^6}e^{(\vert\alpha\vert+3\vert\beta\vert)\bar\phi+2\phi}(e^{2\phi}+\vert p\vert^2)^{\gamma-\frac{1}{2}}(e^{2\phi}\vert\nabla_p\partial_x^\alpha\partial_p^\beta f\vert^2+\vert p\cdot\nabla_p\partial_x^\alpha\partial_p^\beta f\vert^2){\rm d}x{\rm d}p\nonumber\\
&+Ce^{\bar\phi}\int_{\mathbb R^6}e^{(\vert\alpha\vert+3\vert\beta\vert)\bar\phi}(e^{2\phi}+\vert p\vert^2)^\gamma\vert\partial_x^\alpha\partial_p^\beta f\vert^2{\rm d}x{\rm d}p+C_\eta e^{\bar\phi}\left\|\Phi\right\|_{H^{\vert\alpha\vert}_x}^2.
\end{align}

The estimation on  $I_{16}$ is more complicated that will be divided into following three cases.

\noindent\underline{{\it Case 1. $\vert\alpha\vert+\vert\beta\vert=2$.}}
We also divide this case into three subcases: (a)$|\al|=0$ and $|\beta|=2$, (b)$|\al|=|\beta|=1$,  (c)$|\al|=2$ and $|\beta|=0$.
For the first subcase when $|\al|=0$ and $|\beta|=2$,
by using Sobolev's imbedding inequality and H\"{o}lder's inequality, we have
\begin{align*}
|I_{16}\chi_{|\al|=0,|\beta|=2}|&\leq C\int_{\mathbb R^6}e^{3\bar\phi}(e^{2\phi}+\vert p\vert^2)^{\frac{\gamma}{2}}\vert\nabla_p^2f\vert\\
&\qquad\times\Big(e^{\frac{\bar\phi}{2}}(e^{2\phi}+\vert p\vert^2)^{\frac{\gamma}{2}}\vert\nabla_xf\vert e^{\frac{\bar\phi}{2}}+e^{2\bar\phi}(e^{2\phi}+\vert p\vert^2)^{\frac{\gamma}{2}}\vert\nabla_p\nabla_xf\vert\Big){\rm d}x{\rm d}p\nonumber\\
&\quad+C\int_{\mathbb R^6} e^{3\bar\phi}(e^{2\phi}+\vert p\vert^2)^{\frac{\gamma}{2}}\vert\nabla_p^2f\vert e^{\frac{3\bar\phi}{2}}(e^{2\phi}+\vert p\vert^2)^{\frac{\gamma}{2}}\vert\nabla_pf\vert{\rm d}x{\rm d}p\nonumber\\
&\quad+Ce^{\bar\phi}\int_{\mathbb R^6}e^{6\bar\phi}(e^{2\phi}+\vert p\vert^2)^{\gamma}\vert\nabla_p^2f\vert^2{\rm d}x{\rm d}p\nonumber\\
&\leq C\int_{\mathbb R^6}e^{6\bar\phi}(e^{2\phi}+\vert p\vert^2)^\gamma\vert\nabla_p^2f\vert^2{\rm d}x{\rm d}p+C\int_{\mathbb R^6}e^{4\bar\phi}(e^{2\phi}+\vert p\vert^2)^\gamma\vert\nabla_p\nabla_xf\vert^2{\rm d}x{\rm d}p\nonumber\\
&\quad+C\int_{\mathbb R^6}e^{3\bar\phi}(e^{2\phi}+\vert p\vert^2)^\gamma\vert\nabla_pf\vert^2{\rm d}x{\rm d}p+C\int_{\mathbb R^6}e^{\bar\phi}(e^{2\phi}+\vert p\vert^2)^\gamma\vert\nabla_xf\vert^2{\rm d}x{\rm d}p\nonumber\\
&\leq C\sum_{m+n\leq2}\mathcal E_{m,n}^\gamma(t).
\end{align*}
Next, if $\vert\alpha\vert=\vert\beta\vert=1$,
similarly, one has
\begin{align}
|I_{16}\chi_{\vert\alpha\vert=\vert\beta\vert=1}|&\leq C\int_{\mathbb R^6}e^{4\bar\phi}(e^{2\phi}+\vert p\vert^2)^\gamma\vert\nabla_x\nabla_p f\vert\Big((e^{2\phi}+\vert p\vert^2)^{-\frac{1}{2}}\vert\nabla_x^2 f\vert\notag\\&\qquad\qquad\qquad+\vert\nabla_x\Phi\vert\vert\nabla_p\nabla_x f\vert+(e^{2\phi}+\vert p\vert^2)^{-\frac{1}{2}}\vert\nabla_x\Phi\vert\vert\nabla_x f\vert\Big){\rm d}x{\rm d}p\nonumber\\
&\quad+C\int_{\mathbb R^6}e^{4\bar\phi+2\phi}(e^{2\phi}+\vert p\vert^2)^\gamma\vert\nabla_x\nabla_p f\vert\Big((e^{2\phi}+\vert p\vert^2)^{-\frac{1}{2}}(\vert\nabla_x\Phi\vert^2+\vert\nabla_x^2\Phi\vert)\vert\nabla_p^2f\vert\nonumber\\
&\qquad\qquad+(e^{2\phi}+\vert p\vert^2)^{-1}\vert\nabla_x\Phi\vert\vert\nabla_x\nabla_pf\vert+(e^{2\phi}+\vert p\vert^2)^{-1}(\vert\nabla_x\Phi\vert^2+\vert\nabla_x^2\Phi\vert)\vert\nabla_pf\vert\Big){\rm d}x{\rm d}p\nonumber\\
&\leq C\int_{\mathbb R^6}e^{2\bar\phi}(e^{2\phi}+\vert p\vert^2)^\gamma\vert\nabla_x^2f\vert^2{\rm d}x{\rm d}p+C\int_{\mathbb R^6}e^{4\bar\phi}(e^{2\phi}+\vert p\vert^2)^\gamma\vert\nabla_x\nabla_pf\vert^2{\rm d}x{\rm d}p\nonumber\\
&\quad+C\int_{\mathbb R^6}e^{6\bar\phi}(e^{2\phi}+\vert p\vert^2)^\gamma\vert\nabla_p^2f\vert^2{\rm d}x{\rm d}p+C\int_{\mathbb R^6}e^{\bar\phi}(e^{2\phi}+\vert p\vert^2)^\gamma\vert\nabla_xf\vert^2{\rm d}x{\rm d}p\nonumber\\
&\quad+C\int_{\mathbb R^6}e^{3\bar\phi}(e^{2\phi}+\vert p\vert^2)^\gamma\vert\nabla_pf\vert^2{\rm d}x{\rm d}p\leq C\sum_{m+n\leq2}\mathcal E_{m,n}^\gamma(t).\notag
\end{align}
And then for  $\vert\alpha\vert=2$ and $|\beta|=0$, one has
\begin{align}\label{I16-p1}
|I_{16}\chi_{\vert\alpha\vert=2,|\beta|=0}|&\leq C\left\|(\nabla_x\Phi,\nabla_x^2\Phi)\right\|_{L^\infty_x}\int_{\mathbb R^6}e^{\frac{\bar\phi}{2}}(e^{2\phi}+\vert p\vert^2)^{\frac{\gamma}{2}}\vert\nabla_xf\vert e^{\bar\phi}(e^{2\phi}+\vert p\vert^2)^{\frac{\gamma}{2}}\vert\nabla_x^2f\vert e^{\frac{\bar\phi}{2}}{\rm d}x{\rm d}p\nonumber\\
&\quad+C\left\|\nabla_x\Phi\right\|_{L^\infty_x}\int_{\mathbb R^6}e^{2\bar\phi}(e^{2\phi}+\vert p\vert^2)^{\gamma}\vert\nabla_x^2f\vert^2{\rm d}x{\rm d}p\nonumber\\
&\quad+C\int_{\mathbb R^6}e^{\bar\phi}(e^{2\phi}+\vert p\vert^2)^{\gamma}\vert\nabla_x^2f\vert e^{2\bar\phi}(\vert\nabla_x\Phi\vert+\vert\nabla_x^2\Phi\vert+\vert\nabla_x^3\Phi\vert)\vert\nabla_pf\vert{\rm d}x{\rm d}p\nonumber\\
&\quad+C\int_{\mathbb R^6} e^{\bar\phi}(e^{2\phi}+\vert p\vert^2)^{\gamma}\vert\nabla_x^2f\vert e^{2\bar\phi}(\vert\nabla_x\Phi\vert^2+\vert\nabla_x^2\Phi\vert)\vert\nabla_x\nabla_p f\vert{\rm d}x{\rm d}p\nonumber\\
&\leq C\sum_{m+m\leq2}\mathcal E_{m,n}^\gamma(t).
\end{align}
Substituting \eqref{I15}, \eqref{I17}, \eqref{I18}, \eqref{I19} and \eqref{I16-p1}  into \eqref{2.27} yields
\begin{align}
	\frac{d}{dt}\sum_{|\alpha|+\vert\beta\vert=2}&\int_{\mathbb R^6}e^{(|\alpha|+3|\beta|)\bar\phi}(e^{2\phi}+\vert p\vert^2)^\gamma\vert\partial_x^\alpha\partial_p^\beta f\vert^2{\rm d}x{\rm d}p\notag\\&+\sum_{|\alpha|+\vert\beta\vert=2}\int_{\mathbb R^6}e^{(|\alpha|+3|\beta|)\bar\phi+2\phi}(e^{2\phi}+\vert p\vert^2)^{\gamma-\frac{1}{2}}(e^{2\phi}\vert\nabla_p\partial_x^\alpha\partial_p^\beta f\vert^2+\vert p\cdot\nabla_p\partial_x^\alpha\partial_p^\beta f\vert^2){\rm d}x{\rm d}p
	\notag\\
	\leq&C\sum_{m+n\leq2}\mathcal E_{m,n}^{\gamma}(t)+C\sum_{m+n\leq1}\mathcal D_{m,n}^\gamma(t)+Ce^{\bar\phi}\left\|\Phi\right\|_{L^2_x}^2.\notag
\end{align}

\noindent\underline{{\it Case 2. $\vert\alpha\vert+\vert\beta\vert=3$.}} When $|\beta|=3$, by Sobolev's inequality and H\"{o}lder's inequality, we have
\begin{align*}
|I_{16}\chi_{|\beta|=3}|&\leq C\int_{\mathbb R^6}e^{9\bar\phi}(e^{2\phi}+\vert p\vert^2)^\gamma|\partial_p^\beta f|\Big((e^{2\phi}+\vert p\vert^2)^{-\frac{3}{2}}|\nabla_xf|+(e^{2\phi}+\vert p\vert^2)^{-1}|\nabla_x\nabla_pf|\\
&\qquad\qquad+(e^{2\phi}+\vert p\vert^2)^{-\frac{1}{2}}|\nabla_x\nabla_p^2f|\Big){\rm d}x{\rm d}p\\
&\quad+C\int_{\mathbb R^6}e^{9\bar\phi}(e^{2\phi}+\vert p\vert^2)^\gamma|\partial_p^\beta f| e^{\phi}|\nabla_x\Phi|\Big((e^{2\phi}+\vert p\vert^2)^{-\frac{3}{2}}|\nabla_pf|+(e^{2\phi}+\vert p\vert^2)^{-1}|\nabla_p^2f|\\
&\qquad\qquad+(e^{2\phi}+\vert p\vert^2)^{-\frac{1}{2}}|\nabla_p^3f|\Big){\rm d}x{\rm d}p\\
&\leq C\int_{\mathbb R^6}e^{\frac{9\bar\phi}{2}}(e^{2\phi}+\vert p\vert^2)^{\frac{\gamma}{2}}|\partial_p^\beta f|(e^{2\phi}+\vert p\vert^2)^{\frac{\gamma}{2}}\Big(e^{\frac{\bar\phi}{2}}|\nabla_xf| e^{\bar\phi}+e^{2\bar\phi}|\nabla_x\nabla_pf| e^{\frac{\bar\phi}{2}}\\
&\qquad\qquad+e^{\frac{7\bar\phi}{2}}|\nabla_x\nabla_p^2f|\Big){\rm d}x{\rm d}p\\
&\quad+C\int_{\mathbb R^6}e^{\frac{9\bar\phi}{2}}(e^{2\phi}+\vert p\vert^2)^{\frac{\gamma}{2}}|\partial_p^\beta f|(e^{2\phi}+\vert p\vert^2)^{\frac{\gamma}{2}}\Big(e^{\frac{3\bar\phi}{2}}|\nabla_pf|e^{\bar\phi}+e^{3\bar\phi}|\nabla_p^2f|e^{\frac{\bar\phi}{2}}
+e^{\frac{9\bar\phi}{2}}|\nabla_p^3f|\Big){\rm d}x{\rm d}p\\
&\leq C\int_{\mathbb R^6}(e^{2\phi}+|p|^2)^\gamma(e^{9\bar\phi}|\nabla_p^3f|^2+e^{7\bar\phi}|\nabla_x\nabla_p^2f|^2){\rm d}x{\rm d}p+Ce^{\bar\phi}\sum_{m+n\leq2}\mathcal E_{m,n}^\gamma(t)\\
&\leq C\sum_{m+n\leq3}\mathcal E_{m,n}^\gamma(t).
\end{align*}
Similarly, when $\vert\alpha\vert=1$ and $\vert\beta\vert=2$,
it holds that
\begin{align}
|I_{16}&\chi_{\vert\alpha\vert=1,\vert\beta\vert=2}|\notag\\
&\leq C\int_{\mathbb R^6}e^{7\bar\phi}(e^{2\phi}+\vert p\vert^2)^{\gamma}\vert\partial_x\partial_p^2f\vert\Big((e^{2\phi}+\vert p\vert^2)^{-1}\vert\nabla_xf\vert+(e^{2\phi}+\vert p\vert^2)^{-\frac{1}{2}}\vert\nabla_x\nabla_p f\vert+\vert\nabla_x\nabla_p^2f\vert\nonumber\\
&\qquad\qquad+(e^{2\phi}+\vert p\vert^2)^{-1}\vert\nabla_x^2f\vert+(e^{2\phi}+\vert p\vert^2)^{-\frac{1}{2}}\vert\nabla_p\nabla_x^2f\vert\Big){\rm d}x{\rm d}p\nonumber\\
&\quad+C\int_{\mathbb R^6}e^{7\bar\phi}(e^{2\phi}+\vert p\vert^2)^{\gamma}\vert\partial_x\partial_p^2f\vert\Big(e^{2\phi}(e^{2\phi}+\vert p\vert^2)^{-\frac{1}{2}}\vert\nabla_p\nabla_p^2f\vert+e^{\bar\phi}\vert\nabla_p^2f\vert\notag\\
&\qquad\qquad+e^{\bar\phi}(e^{2\phi}+\vert p\vert^2)^{-1}\vert\nabla_pf\vert\Big){\rm d}x{\rm d}p\nonumber\\
&\leq C\int_{\mathbb R^6}e^{\frac{7\bar\phi}{2}}(e^{2\phi}+\vert p\vert^2)^{\frac{\gamma}{2}}\Big(e^{\frac{\bar\phi}{2}}(e^{2\phi}+\vert p\vert^2)^{\frac{\gamma}{2}}\vert\nabla_xf\vert e^{\bar\phi}+e^{2\bar\phi}(e^{2\phi}+\vert p\vert^2)^{\frac{\gamma}{2}}\vert\nabla_x\nabla_pf\vert e^{\frac{\bar\phi}{2}}\nonumber\\
&\qquad\qquad+e^{\frac{7\bar\phi}{2}}(e^{2\phi}+\vert p\vert^2)^{\frac{\gamma}{2}}\vert\nabla_x\nabla_p^2f\vert+e^{\bar\phi}(e^{2\phi}+\vert p\vert^2)^{\frac{\gamma}{2}}\vert\nabla_x^2f\vert e^{\frac{\bar\phi}{2}}+e^{\frac{5\bar\phi}{2}}(e^{2\phi}+\vert p\vert^2)^{\frac{\gamma}{2}}\vert\nabla_x^2\nabla_pf\vert \Big){\rm d}x{\rm d}p\nonumber\\
&\quad+C\int_{\mathbb R^6}e^{\frac{7\bar\phi}{2}}(e^{2\phi}+\vert p\vert^2)^{\frac{\gamma}{2}}\vert\nabla_x\nabla_p^2f\vert\Big(e^{3\bar\phi+2\phi}(e^{2\phi}+\vert p\vert^2)^{\frac{\gamma}{2}-\frac{1}{4}}\vert\nabla_p\nabla_p^2f\vert\nonumber\\
&\qquad\qquad+e^{3\bar\phi}(e^{2\phi}+\vert p\vert^2)^{\frac{\gamma}{2}}\vert\nabla_p^2f\vert e^{\frac{\bar\phi}{2}}+e^{\frac{3\bar\phi}{2}}(e^{2\phi}+\vert p\vert^2)^{\frac{\gamma}{2}}\vert\nabla_pf\vert e^{\bar\phi}\Big){\rm d}x{\rm d}p\nonumber\\
&\leq C\int_{\mathbb R^6}(e^{2\phi}+\vert p\vert^2)^{\gamma}(e^{7\bar\phi}\vert\nabla_x\nabla_p^2f\vert^2+e^{5\bar\phi}\vert\nabla_x^2\nabla_pf\vert^2+e^{9\bar\phi}\vert\nabla_p^3f\vert^2){\rm d}x{\rm d}p+C\sum_{m+n\leq2}\mathcal E_{m,n}^\gamma(t)\notag\\
&\leq C\sum_{m+n\leq3}\mathcal E_{m,n}^\gamma(t).\notag
\end{align}
And when  $\vert\alpha\vert=2$ and $\vert\beta\vert=1$, we have 
\begin{align}
|I_{16}&\chi_{\vert\alpha\vert=2,|\beta|=1}\notag|\\
\leq& C\int_{\mathbb R^6}e^{5\bar\phi}(e^{2\phi}+\vert p\vert^2)^{\gamma}\vert\partial_x^2\partial_pf\vert\Big((e^{2\phi}+\vert p\vert^2)^{-\frac{1}{2}}\vert\nabla_xf\vert\notag\\&\qquad+(e^{2\phi}+\vert p\vert^2)^{-\frac{1}{2}}\vert\nabla_x^2f\vert+(e^{2\phi}+\vert p\vert^2)^{-\frac{1}{2}}\vert\nabla_x^3f\vert+\vert\partial_x\nabla_pf\vert\Big){\rm d}x{\rm d}p\nonumber\\
	&+C\int_{\mathbb R^6}e^{5\bar\phi}(e^{2\phi}+\vert p\vert^2)^{\gamma}\vert\partial_x^2\partial_pf\vert\Big((1+\vert\nabla_x^3\Phi\vert)\vert\nabla_pf\vert+e^\phi(1+\vert\nabla_x^3\Phi|)\vert\nabla_p^2f\vert+\vert\nabla_x\nabla_pf\vert\nonumber\\
	&\qquad+\vert\nabla_x^2\nabla_pf\vert+\vert\nabla_x\nabla_p^2f\vert\Big){\rm d}x{\rm d}p\nonumber\\
	\leq& C\sum_{m+n\leq3}\mathcal E_{m,n}^\gamma(t).\notag
\end{align}
Finally, when  $\vert\alpha\vert=3$ and $|\beta|=0$, it holds that 
\begin{align}
|I_{16}\chi_{\vert\alpha\vert=3}|\leq& C\int_{\mathbb R^6}e^{3\bar\phi}(e^{2\phi}+\vert p\vert^2)^{\gamma}\vert\partial_x^\alpha f\vert e^{2\phi}(e^{2\phi}+\vert p\vert^2)^{-1}\Big((1+\vert\nabla_x^3\Phi\vert)\vert\nabla_xf\vert+\vert\nabla_x^2f\vert+|\nabla_x^3f\vert\Big){\rm d}x{\rm d}p\nonumber\\
&+C\int_{\mathbb R^6}e^{3\bar\phi}(e^{2\phi}+\vert p\vert^2)^{\gamma}\vert\partial_x^\alpha f\vert e^\phi\Big((1+|\nabla_x^3\Phi|+\vert\nabla_x^4\Phi\vert)\vert\nabla_pf\vert\nonumber\\
&\qquad\qquad+(1+|\nabla_x^3\Phi\vert)\vert\nabla_x\nabla_pf\vert+\vert\nabla_x^2\nabla_pf\vert\Big){\rm d}x{\rm d}p\nonumber\\
\leq& C\int_{\mathbb R^6}(e^{2\phi}+\vert p\vert^2)^{\gamma}(e^{3\bar\phi}\vert\partial_x^\alpha f\vert^2+e^{5\bar\phi}\vert\nabla_x^2\nabla_pf\vert^2){\rm d}x{\rm d}p
+C\sum_{m+n\leq2}\mathcal E_{m,n}^\gamma(t)\notag\\
\leq&C\sum_{m+n\leq3}\mathcal E_{m,n}^\gamma(t).\notag
\end{align}
Combining  all these estimates with \eqref{I15}, \eqref{I17}, \eqref{I18} and \eqref{I19}  we obtain
\begin{align}
	\frac{d}{dt}\sum_{|\alpha|+\vert\beta\vert=3}&\int_{\mathbb R^6}e^{(|\alpha|+3|\beta|)\bar\phi}(e^{2\phi}+\vert p\vert^2)^\gamma\vert\partial_x^\alpha\partial_p^\beta f\vert^2{\rm d}x{\rm d}p\notag\\&+\sum_{|\alpha|+\vert\beta\vert=3}\int_{\mathbb R^6}e^{(|\alpha|+3|\beta|)\bar\phi+2\phi}(e^{2\phi}+\vert p\vert^2)^{\gamma-\frac{1}{2}}(e^{2\phi}\vert\nabla_p\partial_x^\alpha\partial_p^\beta f\vert^2+\vert p\cdot\nabla_p\partial_x^\alpha\partial_p^\beta f\vert^2){\rm d}x{\rm d}p
	\notag\\
	\leq&C\sum_{m+n\leq3}\mathcal E_{m,n}^{\gamma}(t)+C\sum_{m+n\leq2}\mathcal D_{m,n}^\gamma(t)+Ce^{\bar\phi}\left\|\Phi\right\|_{L^2_x}^2.\notag
\end{align}

\noindent\underline{{\it Case 3. $\vert\alpha\vert+\vert\beta\vert=4$ and $|\beta|<4$}.}
 We divide  this case into four subcases: (1)$|\al|=1$ and $|\beta|=3$, (2)$|\al|=|\beta|=2$,  (3)$|\al|=3$ and $|\beta|=1$, (4)$|\al|=4$ and $|\beta|=0$.  Denote the integral $I_{16}$  in the case of $(i)$ by $I_{16}^{(i)}$ for $1\leq i\leq4$. Similar to the arguments for   the cases when  $|\alpha|+|\beta|=2$ and $|\alpha|+|\beta|=3$, we have
\begin{align*}
|I_{16}^{(1)}|&\leq C\int_{\mathbb R^6}e^{10\bar\phi}(e^{2\phi}+|p|^2)^\gamma|\partial_x^\alpha\partial_p^\beta f|\Big((e^{2\phi}+|p|^2)^{-\frac{3}{2}}(|\nabla_xf|+|\nabla_x^2f|)\\
&\qquad\qquad+(e^{2\phi}+|p|^2)^{-1}(|\nabla_x\nabla_pf|+|\nabla_x^2\nabla_pf|)+(e^{2\phi}+|p|^2)^{-\frac{1}{2}}(|\nabla_x\nabla_p^2f|+|\nabla_x^2\nabla_p^2f|)\\
&\qquad\qquad+|\nabla_x\nabla_p^3f|\Big){\rm d}x{\rm d}p\\
&\quad+C\int_{\mathbb R^6}e^{10\bar\phi}(e^{2\phi}+|p|^2)^\gamma|\partial_x^\alpha\partial_p^\beta f|e^{\phi}\Big((e^{2\phi}+|p|^2)^{-\frac{3}{2}}(|\nabla_pf|+|\nabla_x\nabla_pf|)\\
&\qquad\qquad+(e^{2\phi}+|p|^2)^{-1}(|\nabla_p^2f|+|\nabla_x\nabla_p^2f|)+(e^{2\phi}+|p|^2)^{-\frac{1}{2}}(|\nabla_p^3f|+|\nabla_x\nabla_p^3f|)\\
&\qquad\qquad+e^{\phi}(e^{2\phi}+|p|^2)^{-\frac{1}{2}}|\nabla_p\nabla_p^3f|\Big){\rm d}x{\rm d}p\\
&\leq C\int_{\mathbb R^6}e^{5\bar\phi}(e^{2\phi}+|p|^2)^{\frac{\gamma}{2}}|\partial_x^\alpha\partial_p^\beta f|(e^{2\phi}+|p|^2)^{\frac{\gamma}{2}}\Big((e^{\frac{\bar\phi}{2}}|\nabla_xf|+e^{\bar\phi}|\nabla_x^2f|)e^{\bar\phi}\\
&\qquad\qquad+(e^{2\bar\phi}|\nabla_x\nabla_pf|+e^{\frac{5\bar\phi}{2}}|\nabla_x^2\nabla_pf|)e^{\frac{\bar\phi}{2}}+e^{\frac{7\bar\phi}{2}}|\nabla_x\nabla_p^2f|e^{\frac{\bar\phi}{2}}+e^{4\bar\phi}|\nabla_x^2\nabla_p^2f|\Big){\rm d}x{\rm d}p\\
&\quad+C\int_{\mathbb R^6}e^{5\bar\phi}(e^{2\phi}+|p|^2)^{\frac{\gamma}{2}}|\partial_x^\alpha\partial_p^\beta f|(e^{2\phi}+|p|^2)^{\frac{\gamma}{2}}\Big((e^{\frac{3\bar\phi}{2}}|\nabla_pf|+e^{2\bar\phi}|\nabla_x\nabla_pf|)e^{\bar\phi}\\
&\qquad\qquad+(e^{3\bar\phi}|\nabla_p^2f|+e^{\frac{7\bar\phi}{2}}|\nabla_x\nabla_p^2f|)e^{\frac{\bar\phi}{2}}+e^{\frac{9\bar\phi}{2}}|\nabla_p^3f|e^{\frac{\bar\phi}{2}}+e^{5\bar\phi}|\nabla_x\nabla_p^3f|\\
&\qquad\qquad+e^{\frac{9}{2}\bar\phi+2\phi}(e^{2\phi}+|p|^2)^{-\frac{1}{4}}|\nabla_p\nabla_p^3f|\Big){\rm d}x{\rm d}p\\
&\leq C\int_{\mathbb R^6}(e^{2\phi}+|p|^2)^{\gamma}(e^{10\bar\phi}|\nabla_x\nabla_p^3f|^2+e^{8\bar\phi}|\nabla_x^2\nabla_p^2f|^2){\rm d}x{\rm d}p+Ce^{\bar\phi}\sum_{m+n\leq3}\mathcal E_{m,n}^\gamma(t)+C\mathcal D_{0,3}^\gamma(t),
\end{align*}
\begin{align*}
|I_{16}^{(2)}|&\leq C\int_{\mathbb R^6}e^{8\bar\phi}(e^{2\phi}+|p|^2)^\gamma|\partial_x^\alpha\partial_p^\beta f|\Big((e^{2\phi}+|p|^2)^{-1}(|\nabla_xf|+|\nabla_x^2f|+|\nabla_x^3f|)\\
&\qquad\qquad+(e^{2\phi}+|p|^2)^{-\frac{1}{2}}(|\nabla_x\nabla_pf|+|\nabla_x^2\nabla_pf|+|\nabla_x^3\nabla_pf|)+|\nabla_x\nabla_p^2f|+|\nabla_x^2\nabla_p^2f|\Big){\rm d}x{\rm d}p\\
&\quad+C\int_{\mathbb R^6}e^{8\bar\phi}(e^{2\phi}+|p|^2)^\gamma|\partial_x^\alpha\partial_p^\beta f|\Big((e^{2\phi}+|p|^2)^{-\frac{1}{2}}(1+|\nabla_x^3\Phi|)|\nabla_pf|\\
&\qquad\qquad+((1+|\nabla_x^3\Phi|)|\nabla_p^2f|+e^{\phi}\big((1+|\nabla_x^3\Phi|)|\nabla_p^3f|+|\nabla_x\nabla_p^3f|\big)\Big){\rm d}x{\rm d}p\\
&\leq C\int_{\mathbb R^6}e^{4\bar\phi}(e^{2\phi}+|p|^2)^{\frac{\ga}{2}}|\partial_x^\alpha\partial_p^\beta f|(e^{2\phi}+|p|^2)^{\frac{\ga}{2}}\Big((e^{\frac{\bar\phi}{2}}|\nabla_xf|+e^{\bar\phi}|\nabla_x^2f|+e^{\frac{3\bar\phi}{2}}|\nabla_x^3f|)e^{\frac{\bar\phi}{2}}\\
&\qquad\qquad+(e^{2\bar\phi}|\nabla_x\nabla_pf|+e^{\frac{5\bar\phi}{2}}|\nabla_x^2\nabla_pf|)e^{\frac{\bar\phi}{2}}+e^{4\bar\phi}|\nabla_x^3\nabla_pf|+e^{\frac{7\bar\phi}{2}}|\nabla_x\nabla_p^2f|e^{\frac{\bar\phi}{2}}+e^{4\bar\phi}|\nabla_x^2\nabla_p^2f|\Big){\rm }x{\rm d}p\\
&\quad+C\int_{\mathbb R^6}e^{4\bar\phi}(e^{2\phi}+|p|^2)^{\frac{\ga}{2}}|\partial_x^\alpha\partial_p^\beta f|(e^{2\phi}+|p|^2)^{\frac{\ga}{2}}\\
&\qquad\qquad\times\Big(e^{\frac{3\bar\phi}{2}}|\nabla_pf|e^{\frac{3\bar\phi}{2}}+e^{3\bar\phi}|\nabla_p^2f|e^{\bar\phi}+e^{\frac{9\bar\phi}{2}}|\nabla_p^3f|e^{\frac{\bar\phi}{2}}+e^{5\bar\phi}|\nabla_x\nabla_p^3f|\Big){\rm d}x{\rm d}p\\
&\quad+C\int_{\mathbb R^6}e^{4\bar\phi}(e^{2\phi}+|p|^2)^{\frac{\ga}{2}}|\partial_x^\alpha\partial_p^\beta f||\nabla_x^3\Phi|(e^{2\phi}+|p|^2)^{\frac{\ga}{2}}\\
&\qquad\qquad\times\Big(e^{2\bar\phi}|\nabla_pf|e^{\bar\phi}+e^{\frac{7\bar\phi}{2}}|\nabla_p^2f|e^{\frac{\bar\phi}{2}}+e^{5\bar\phi}|\nabla_p^3f|\Big){\rm d}x{\rm d}p\\
&\leq C\int_{\mathbb R^6}(e^{2\phi}+|p|^2)^\ga(e^{8\bar\phi}|\nabla_x^2\nabla_p^2f|+e^{6\bar\phi}|\nabla_x^3\nabla_pf|+e^{10\bar\phi}|\nabla_x\nabla_p^3f|){\rm d}x{\rm d}p+Ce^{\bar\phi}\sum_{m+n\leq3}\mathcal E_{m,n}^\gamma(t),
\end{align*}
\begin{align*}
|I_{16}^{(3)}|&\leq C\int_{\mathbb R^6}e^{6\bar\phi}(e^{2\phi}+|p|^2)^\gamma|\partial_x^\alpha\partial_p^\beta f|\Big((e^{2\phi}+|p|^2)^{-\frac{1}{2}}\big((1+|\nabla_x^3\Phi|)|\nabla_xf|+|\nabla_x^2f|+|\nabla_x^3f|+|\nabla_x^4f|\big)\\
&\qquad\qquad+(1+|\nabla_x^3\Phi|)|\nabla_x\nabla_pf|+|\nabla_x^2\nabla_pf|+|\nabla_x^3\nabla_pf|\Big){\rm d}x{\rm d}p\\
&\quad+C\int_{\mathbb R^6}e^{6\bar\phi}(e^{2\phi}+|p|^2)^\gamma|\partial_x^\alpha\partial_p^\beta f|\Big((1+|\nabla_x^3\Phi|+|\nabla_x^4\Phi|)|\nabla_pf|\\
&\qquad\qquad+e^{\bar\phi}\big((1+|\nabla_x^3\Phi|+|\nabla_x^4\Phi|)|\nabla_p^2f|+(1+|\nabla_x^3\Phi|)|\nabla_x\nabla_p^2f|+|\nabla_x^2\nabla_p^2f|\big)\Big){\rm d}x{\rm d}p\\
&\leq C\int_{\mathbb R^6}e^{3\bar\phi}(e^{2\phi}+|p|^2)^{\frac{\gamma}{2}}|\partial_x^\alpha\partial_p^\beta f|(e^{2\phi}+|p|^2)^{\frac{\ga}{2}}\Big((e^{\frac{\bar\phi}{2}}|\nabla_xf|+e^{\bar\phi}|\nabla_x^2f|\\
&\qquad\qquad+e^{\frac{3\bar\phi}{2}}|\nabla_x^3f|)e^{\frac{\bar\phi}{2}}+e^{2\bar\phi}|\nabla_x^4f|+(e^{2\bar\phi}|\nabla_x\nabla_pf|+e^{\frac{5\bar\phi}{2}}|\nabla_x^2\nabla_pf|)e^{\frac{\bar\phi}{2}}+e^{3\bar\phi}|\nabla_x^3\nabla_pf|\\
&\qquad\qquad+e^{\frac{3\bar\phi}{2}}|\nabla_pf|e^{\frac{\bar\phi}{2}}+(e^{3\bar\phi}|\nabla_p^2f|+e^{\frac{7\bar\phi}{2}}|\nabla_x\nabla_p^2f|)e^{\frac{\bar\phi}{2}}+e^{4\bar\phi}|\nabla_x^2\nabla_p^2f|\Big){\rm d}x{\rm d}p\\
&\quad+C\int_{\mathbb R^6}e^{3\bar\phi}(e^{2\phi}+|p|^2)^{\frac{\gamma}{2}}|\partial_x^\alpha\partial_p^\beta f||\nabla_x^3\Phi|(e^{2\phi}+|p|^2)^{\frac{\ga}{2}}\Big(e^{\bar\phi}|\nabla_xf|e^{\bar\phi}+e^{2\bar\phi}|\nabla_pf|e^{\bar\phi}\\
&\qquad\qquad+e^{\frac{5\bar\phi}{2}}|\nabla_x\nabla_pf|e^{\frac{\bar\phi}{2}}+e^{\frac{7\bar\phi}{2}}|\nabla_p^2f|e^{\frac{\bar\phi}{2}}+e^{4\bar\phi}|\nabla_x\nabla_p^2f|\Big){\rm d}x{\rm d}p\\
&\quad+C\int_{\mathbb R^6}e^{3\bar\phi}(e^{2\phi}+|p|^2)^{\frac{\gamma}{2}}|\partial_x^\alpha\partial_p^\beta f||\nabla_x^4\Phi|(e^{2\phi}+|p|^2)^{\frac{\ga}{2}} e^{\frac{5\bar\phi}{2}}|\nabla_pf|e^{\frac{\bar\phi}{2}}{\rm d}x{\rm d}p\\
&\leq C\int_{\mathbb R^6}(e^{2\phi}+|p|^2)^{\gamma}(e^{6\bar\phi}|\nabla_x^3\nabla_pf|^2+e^{4\bar\phi}|\nabla_x^4f|^2+e^{8\bar\phi}|\nabla_x^2\nabla_p^2f|^2){\rm d}x{\rm d}p+Ce^{\bar\phi}\sum_{m+n\leq3}\mathcal E_{m,n}^\gamma(t),
\end{align*}
and
\begin{align*}
		|I_{16}^{(4)}|&\leq	C\int_{\mathbb R^6}e^{4\bar\phi}(e^{2\phi}+|p|^2)^{\gamma}|\partial_x^\alpha f|\Big((1+|\nabla_x^3\Phi|+|\nabla_x^4\Phi|)|\nabla_xf|+(1+|\nabla_x^3\Phi|)|\nabla_x^2f|\\
		&\qquad\qquad+|\nabla_x^3f|+|\nabla_x^4f|\Big){\rm d}x{\rm d}p\\
		&\quad+C\int_{\mathbb R^6}e^{4\bar\phi}(e^{2\phi}+|p|^2)^{\gamma}|\partial_x^\alpha f|e^\phi\Big((1+|\nabla_x^3\Phi|+|\nabla_x^4\Phi|+|\nabla_x^5\Phi|)|\nabla_pf|\\
		&\qquad\qquad+(1+|\nabla_x^3\Phi|+|\nabla_x^4\Phi|)|\nabla_x\nabla_pf|+(1+|\nabla_x^3\Phi|)|\nabla_x^2\nabla_pf|+|\nabla_x^3\nabla_pf|\Big){\rm d}x{\rm d}p\\
		&\leq C\int_{\mathbb R^6}e^{2\bar\phi}(e^{2\phi}+|p|^2)^{\frac{\gamma}{2}}|\partial_x^\alpha f|\Big((e^{\frac{\bar\phi}{2}}|\nabla_xf|+e^{\bar\phi}|\nabla_x^2f|+e^{\frac{3\bar\phi}{2}}|\nabla_x^3f|)e^{\frac{\bar\phi}{2}}+e^{2\bar\phi}|\nabla_x^4f|\\
		&\qquad\qquad+(e^{\frac{3\bar\phi}{2}}|\nabla_pf|+e^{2\bar\phi}|\nabla_x\nabla_pf|)e^{\frac{\bar\phi}{2}}+e^{\frac{5\bar\phi}{2}}|\nabla_x^2\nabla_pf|e^{\frac{\bar\phi}{2}}+e^{3\bar\phi}|\nabla_x^3\nabla_pf|\Big){\rm d}x{\rm d}p\\
		&\quad+ C\int_{\mathbb R^6}e^{2\bar\phi}(e^{2\phi}+|p|^2)^{\frac{\gamma}{2}}|\partial_x^\alpha f||\nabla_x^3\Phi|\Big(e^{\bar\phi}|\nabla_xf|e^{\bar\phi}+e^{\frac{3\bar\phi}{2}}|\nabla_x^2f|e^{\frac{\bar\phi}{2}}\\
		&\qquad\qquad+(e^{2\bar\phi}|\nabla_pf|+e^{\frac{5\bar\phi}{2}}|\nabla_x\nabla_pf|)e^{\frac{\bar\phi}{2}}+e^{3\bar\phi}|\nabla_x^2\nabla_pf|\Big){\rm d}x{\rm d}p\\
		&\quad+ C\int_{\mathbb R^6}e^{2\bar\phi}(e^{2\phi}+|p|^2)^{\frac{\gamma}{2}}|\partial_x^\alpha f||\nabla_x^4\Phi|\Big(e^{\frac{3\bar\phi}{2}}|\nabla_xf|e^{\frac{\bar\phi}{2}}+e^{\frac{5\bar\phi}{2}}|\nabla_pf|e^{\frac{\bar\phi}{2}}\Big){\rm d}x{\rm d}p\\
		&\quad+ C\int_{\mathbb R^6}e^{2\bar\phi}(e^{2\phi}+|p|^2)^{\frac{\gamma}{2}}|\partial_x^\alpha f||\nabla_x^5\Phi|e^{3\bar\phi}|\nabla_pf|{\rm d}x{\rm d}p\\
		&\leq C\int_{\mathbb R^6}(e^{2\phi}+|p|^2)^{\gamma}(e^{4\bar\phi}|\nabla_x^4 f|^2+e^{5\bar\phi}|\nabla_x^2\nabla_pf|^2+e^{6\bar\phi}|\nabla_x^3\nabla_pf|^2){\rm d}x{\rm d}p+Ce^{\bar\phi}\sum_{m+n\leq3}\mathcal E_{m,n}^\gamma(t).
\end{align*}

Finally, for $2\leq\vert\alpha\vert\leq3$, applying $\partial_{x}^\alpha$ to \eqref{vnfp-s2}$_2$, multiplying the resulting equality by $\partial_t\partial_{x}^\alpha\Phi$, and integrating the resulting identity over $\mathbb R^3$ yield
\begin{align}\label{2.54}
		&\quad\frac{1}{2}\frac{d}{dt}\int_{\mathbb R^3}(\vert\partial_t\partial_{x}^\alpha\Phi\vert^2+\vert\nabla_x\partial_{x}^\alpha\Phi\vert^2){\rm d}x\nonumber\\
		&=-\int_{\mathbb R^3}\partial_{x}^\alpha\Big(e^{2\phi}\int_{\mathbb R^3}\frac{f}{\sqrt{e^{2\phi}+\vert p\vert^2}}{\rm d}p\Big)\partial_t\partial_{x}^\alpha\Phi{\rm d}x-\int_{\mathbb R^3}\int_{\mathbb R^3}\partial_{x}^\alpha\Big(e^{2\phi}\frac{\bar F}{\sqrt{e^{2\phi}+\vert p\vert^2}}\Big){\rm d}p\partial_t\partial_{x}^\alpha\Phi{\rm d}x\nonumber\\
		&	:=J_5+J_6.
\end{align}
By H\"{o}lder's inequality, we have
\begin{align}
		|J_5|&\leq Ce^{\frac{\bar\phi}{2}}\Big(\left\|\partial_t\partial_{x}^\alpha\Phi\right\|_{L^2_x}^2+\sum_{|\alpha'|\leq|\alpha|}\left\|e^{\frac{|\alpha'|\bar\phi}{2}}(e^{2\phi}+\vert p\vert^2)^{\frac{\delta_2}{2}}\nabla_x^{|\alpha'|}f\right\|_{L_{x,p}^2}^2\Big)\nonumber\\
	&\leq Ce^{\frac{\bar\phi}{2}}\sum_{ m+n\leq|\alpha|}\mathcal E_{m,n}^{\gamma}(t).\notag
\end{align}
Similarly, it holds that
\begin{equation}
|J_6|\leq  e^{2\bar\phi}(\left\|\partial_t\partial_x^\alpha\Phi\right\|_{L^2_x}^2+\left\|\nabla_x\Phi\right\|_{H^2_x}^2).\notag
\end{equation}
If $|\alpha|=4$, we directly have
\begin{align}\label{n=4,fin}
\frac{1}{2}&\frac{d}{dt}\int_{\mathbb R^3}e^{\bar\phi}(\vert\partial_t\partial_{x}^\alpha\Phi\vert^2+\vert\nabla_x\partial_{x}^\alpha\Phi\vert^2){\rm d}x\notag\\
&=\frac{1}{2}\int_{\mathbb R^3}\bar\phi'(t)e^{\bar\phi}\vert\partial_t\partial_{x}^\alpha\Phi\vert^2{\rm d}x-\int_{\mathbb R^3}\partial_{x}^\alpha\Big(e^{2\phi}\int_{\mathbb R^3}\frac{f}{\sqrt{e^{2\phi}+\vert p\vert^2}}{\rm d}p\Big)e^{\bar\phi}\partial_t\partial_{x}^\alpha\Phi{\rm d}x\nonumber\\
&\quad-\int_{\mathbb R^3}\int_{\mathbb R^3}\partial_{x}^\alpha\Big(e^{2\phi}\frac{\bar F}{\sqrt{e^{2\phi}+\vert p\vert^2}}\Big){\rm d}pe^{\bar\phi}\partial_t\partial_{x}^\alpha\Phi{\rm d}x\nonumber\\
&\leq C\int_{\mathbb R^3}e^{\bar\phi}\vert\partial_t\partial_{x}^\alpha\Phi\vert^2{\rm d}x+Ce^{\frac{\bar\phi}{2}}\sum_{\substack{ m+n\leq4\\ n<4}}\mathcal E_{m,n}^{\gamma}(t)+Ce^{\bar\phi}\left\|\nabla_x\Phi\right\|_{H^3_x}^2.
\end{align}
To summarize, we deduce when $2\leq\vert\alpha\vert+\vert\beta\vert\leq4$ and $\vert\beta\vert<4$ that
\begin{align}
\frac{d}{dt}\sum_{\substack{2\leq m+n\leq4\\n<4}}&\mathcal E_{m,n}^\gamma(t)+\sum_{\substack{2\leq m+n\leq4\\n<4}}\mathcal D_{m,n}^\gamma(t)
\leq C\sum_{\substack{m+n\leq4\\n<4}}(\mathcal E_{m,n}^{\gamma}(t)+\mathcal E_{m,n}^{\delta_2}(t))+Ce^{\bar\phi}\left\|\Phi\right\|_{L^2_x}^2.\label{th-eng-sum}
\end{align}
Putting \eqref{z-sum3}, \eqref{2.26} and \eqref{th-eng-sum} together,  \eqref{loc-eng} holds
for $t\in[0,T_c]$. This completes the proof of the proposition.

\end{proof}

\subsection{Energy estimates in $(T_c,\infty)$.}\label{sec-ift-eng}
In this subsection, we will extend the energy estimates obtained in \eqref{loc-eng} to all time.
For this, we prove  the following proposition.

\begin{proposition}\label{lg-eng-pro}
Assume $[f,\Phi]$ is a solution to \eqref{vnfp-s2} and \eqref{id-s2} that satisfies \eqref{aps}, then for any $t>T_c$, it holds that
\begin{align}
\sum_{\substack{m+n\leq4\\n<4}}&(\mathcal E_{m,n}^{\delta_1}(t)+\mathcal E_{m,n}^{\delta_2}(t))+\int_0^t(\mathcal D_{0,0}^{\delta_1}(s)+\mathcal D_{0,0}^{\delta_2}(s)){\rm d}s+\sum_{\substack{ 1\leq m+n\leq4\\n<4}}\int_0^t	(\bar{\mathcal D}_{m,n}^{\delta_1}(s)+\bar{\mathcal D}_{m,n}^{\delta_2}(s)){\rm d}s\notag\\
&\leq C(T_c)\Big(\sum_{\substack{m+n\leq4\\n<4}}	
\mathcal E_{m,n}^{\delta_2}(0)+\left\|\Phi_0\right\|_{L^2_x}^2\Big).\notag
\end{align}
\end{proposition}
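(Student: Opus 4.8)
The plan is to repeat the hierarchy of weighted energy estimates from the proof of Proposition \ref{loc-eng-pro}, but carried out on the interval $[T_c,t]$, with Proposition \ref{loc-eng-pro} itself supplying control of every quantity at the starting time $T_c$ and the sign information \eqref{tc} replacing the Gr\"onwall-with-$C(T_c)$ step of the first stage. Concretely, for $\gamma\in\{\delta_1,\delta_2\}$ and each $(\alpha,\beta)$ with $|\alpha|+|\beta|\le4$, $|\beta|<4$, one applies $\partial_x^\alpha\partial_p^\beta$ to \eqref{vnfp-s2}$_1$ and tests against $e^{(|\alpha|+3|\beta|)\bar\phi}(e^{2\phi}+|p|^2)^\gamma\partial_x^\alpha\partial_p^\beta f$ as in \eqref{2.27}, together with the companion test of $\partial_x^\alpha$ of \eqref{vnfp-s2}$_2$ against $e^{d\bar\phi}\partial_t\partial_x^\alpha\Phi$. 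Every term is bounded exactly as in Proposition \ref{loc-eng-pro}; the only structural change concerns the ``weight time-derivative'' contribution $I_{15}=\tfrac12\int_{\mathbb R^6}|\partial_x^\alpha\partial_p^\beta f|^2\,\partial_t\big(e^{(|\alpha|+3|\beta|)\bar\phi}(e^{2\phi}+|p|^2)^\gamma\big)\,{\rm d}x{\rm d}p$ and, at order $|\alpha|=4$, the term $\tfrac12\int_{\mathbb R^3}\bar\phi'e^{\bar\phi}|\partial_t\partial_x^\alpha\Phi|^2\,{\rm d}x$. Using $\partial_t e^{(|\alpha|+3|\beta|)\bar\phi}=(|\alpha|+3|\beta|)\bar\phi'e^{(|\alpha|+3|\beta|)\bar\phi}$, $\partial_t(e^{2\phi}+|p|^2)^\gamma=2\gamma(\partial_t\phi)e^{2\phi}(e^{2\phi}+|p|^2)^{\gamma-1}$, and the bounds $\bar\phi'(t)\le\tfrac12\bar\phi'(\infty)<0$, $\partial_t\phi<0$ from \eqref{tc}, one finds, for every $(\alpha,\beta)\neq(0,0)$,
\begin{equation*}
I_{15}\le-c_0\int_{\mathbb R^6}e^{(|\alpha|+3|\beta|)\bar\phi}(e^{2\phi}+|p|^2)^\gamma|\partial_x^\alpha\partial_p^\beta f|^2\,{\rm d}x{\rm d}p,
\end{equation*}
with $c_0>0$ independent of $t$, and similarly a genuine damping $-c_0\|e^{\bar\phi/2}\partial_t\partial_x^4\Phi\|_{L^2_x}^2$. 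Summed over derivatives these negative terms are precisely the extra quantities in the enhanced rates $\bar{\mathcal D}_{m,n}^\gamma$ for $1\le m+n\le4$; at order $(0,0)$ no such gain is available (none is claimed), so only $\mathcal D_{0,0}^\gamma$ sits on the left there.

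What remains is to verify that all other right-hand side terms are harmless over $[T_c,\infty)$. They split into three classes. \emph{(i) Decaying terms:} those carrying an explicit prefactor $e^{\bar\phi}$, $e^{\bar\phi/2}$ or $e^{2\bar\phi}$, which make up the bulk of $I_{16}$--$I_{19}$, of $J_1$--$J_6$ and of the cross terms in \eqref{n=4,fin}; since $\bar\phi(t)\le C-\beta t$ by \cite{ACP-2014}, these prefactors are integrable in $t$, so after Gr\"onwall they contribute only $C\int_{T_c}^\infty e^{\bar\phi(s)}(\cdots)\,{\rm d}s<\infty$. \emph{(ii) Small terms:} whenever a spatial derivative falls on a transport coefficient, $\nabla_x\sqrt{e^{2\phi}+|p|^2}=e^{2\phi}\nabla_x\Phi(e^{2\phi}+|p|^2)^{-1/2}$ produces a factor $\|\nabla_x^k\Phi\|_{L^\infty_x}\lesssim\sqrt{\mathcal E^{\delta_2}}\lesssim\epsilon_0$ (and an extra $e^{2\phi}<\tfrac12$); these, together with the $\eta$-Young remainders in $I_{18},I_{19}$, are absorbed into the damping $-c_0\mathcal E_{m,n}^\gamma$ and into $\mathcal D_{m,n}^\gamma$ after fixing $\eta$ and then $\epsilon_0$ small. \emph{(iii) Same-order couplings and lower-order dissipation:} besides terms of type (i)--(ii), the transport commutator $I_{16}$ leaves contributions involving derivatives of $f$ of the same total order $|\alpha|+|\beta|$ but with one $p$-derivative traded for an $x$-derivative, as well as $\mathcal D_{m',n'}^\gamma$ with $m'+n'<m+n$. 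One deals with them by running the estimates in increasing order of $m+n$ (so the lower-order dissipations have already been absorbed), and, within a fixed order $L$, by testing against a linear combination $\sum_{m+n=L}\kappa_{m,n}\mathcal E_{m,n}^\gamma$ with the weights $\kappa_{m,n}$ chosen according to the exponents $|\alpha|+3|\beta|$ in \eqref{eng-def}: the coupling is triangular (it always moves to strictly larger $|\alpha|$ and terminates at $|\beta|=0$, where every commutator piece is of type (ii)), and each $\mathcal E_{m,n}^\gamma$ still carries the strictly positive damping $c_0(m+3n)$, so a suitable triangular choice of the $\kappa_{m,n}$ closes the system.

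It then remains only to assemble. Integrating the resulting differential inequalities over $[T_c,t]$, the only coefficient multiplying the energy on the right is an integrable multiple of $e^{\bar\phi}$ plus the already-absorbed $O(\epsilon_0)$ part, so Gr\"onwall gives boundedness of $\sum_{m+n\le4,\,n<4}(\mathcal E_{m,n}^{\delta_1}+\mathcal E_{m,n}^{\delta_2})(t)$ and integrability of $\int_{T_c}^t\big(\mathcal D_{0,0}^{\delta_i}+\sum_{1\le m+n\le4}\bar{\mathcal D}_{m,n}^{\delta_i}\big)$. The value at $T_c$ is bounded by Proposition \ref{loc-eng-pro}, after the initial-data comparison $\mathcal E^{\delta_1}(0)\le C\mathcal E^{\delta_2}(0)$ (valid because $|\phi_0|<\infty$), and the portion of the $\bar{\mathcal D}$-integrals over $[0,T_c]$ is absorbed into $C(T_c)$. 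The term $\|\Phi\|_{L^2_x}$ is treated exactly as in \eqref{phi-l2}: $\|\Phi(t)\|_{L^2_x}^2\le\|\Phi_0\|_{L^2_x}^2+t\int_0^t\|\partial_s\Phi\|_{L^2_x}^2\,{\rm d}s\le\|\Phi_0\|_{L^2_x}^2+Ct^2\epsilon_0^2$ grows at most polynomially, while it always enters multiplied by $e^{\bar\phi}$, so $\int_{T_c}^\infty e^{\bar\phi(s)}\|\Phi(s)\|_{L^2_x}^2\,{\rm d}s<\infty$. I expect the genuine obstacle to be class (iii): checking that, once all derivatives up to fourth order have been distributed, the transport commutator $I_{16}$ never produces a same-order term with an $O(1)$ \emph{self}-coefficient in front of $\mathcal E_{m,n}^\gamma$ and that the residual couplings fit a triangular scheme compatible with the damping weights — this is precisely what is exploited from the choice of the exponents $|\alpha|+3|\beta|$ in \eqref{eng-def}.
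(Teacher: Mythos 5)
Your proposal is correct and follows essentially the same route as the paper's Subsection on energy estimates in $(T_c,\infty)$: rerun the weighted hierarchy of \eqref{2.27} on $[T_c,t]$, extract the damping (which is exactly the extra term in $\bar{\mathcal D}^\gamma_{m,n}$ for $1\le m+n\le 4$) from the time derivative of the weights $e^{(|\alpha|+3|\beta|)\bar\phi}(e^{2\phi}+|p|^2)^\gamma$ using the signs in \eqref{tc}, absorb the remaining terms via the $e^{\bar\phi}$-decay, the $\eps_0$-smallness from \eqref{aps} and the lower-order dissipations already integrated, and initialize at $T_c$ with Proposition \ref{loc-eng-pro} and the $\|\Phi\|_{L^2_x}$ bound of \eqref{phi-l2}. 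The only cosmetic difference is your class (iii): instead of a within-order triangular combination with weights $\kappa_{m,n}$, the paper exploits the mismatch of the exponents $|\alpha|+3|\beta|$ directly, so each same-order commutator coupling automatically carries an integrable prefactor ($e^{\bar\phi}$, or $e^{2\phi}\lesssim e^{2\bar\phi}$, or $\|\nabla_x\Phi\|_{L^\infty_x}\lesssim\eps_0$) or is bounded by a lower-order dissipation $\mathcal D^\gamma_{m',n'}$ with $m'+n'<m+n$; both closings are valid.
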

\begin{proof}
The proof is based on the following three steps.

\noindent\underline{{\it Step 1. Zeroth order energy estimate.}}
From \eqref{vnfp-s2}$_1$, it follows that
\begin{align}\label{3.1}
\frac{1}{2}\frac{d}{dt}\int_{\mathbb R^6}&(e^{2\phi}+\vert p\vert^2)^\gamma\vert f\vert^2{\rm d}x{\rm d}p\nonumber\\
		=&\gamma\int_{\mathbb R^6}\partial_t\phi e^{2\phi}(e^{2\phi}+\vert p\vert^2)^{\gamma-1}\vert f\vert^2{\rm d}x{\rm d}p+\int_{\mathbb R^6}\nabla_x\sqrt{e^{2\phi}+\vert p\vert^2}\cdot\nabla_p\bar F(e^{2\phi}+\vert p\vert^2)^\gamma f{\rm d}x{\rm d}p\nonumber\\
		&+\int_{\mathbb R^6}e^{2\phi}\nabla_p\cdot(\Lambda_{\phi,p}\nabla_p f)(e^{2\phi}+\vert p\vert^2)^\gamma f{\rm d}x{\rm d}p\nonumber\\
		&+\int_{\mathbb R^6}(e^{2\phi}\nabla_p\cdot(\Lambda_{\phi,p}\nabla_p\bar F)-e^{2\bar\phi}\nabla_p\cdot(\Lambda_{\bar\phi,p}\nabla_p\bar F))(e^{2\phi}+\vert p\vert^2)^\gamma f{\rm d}x{\rm d}p\nonumber\\
		:=&\CI_1+\CI_2+\CI_3+\CI_4.
\end{align}
In view of \eqref{tc}, it is clear that $\bar\phi'(t)<\frac{1}{2}\bar\phi'(\infty)<0$ for $t>T_c$. The {\it a priori assumption}
\eqref{aps} implies $\left\|\pa_t\Phi\right\|_{L^\infty}$ is sufficiently small. Consequently, we have
\begin{equation}\label{3.2}
	\CI_1\leq \frac{\gamma}{4}\bar\phi'(\infty)\int_{\mathbb R^6}e^{2\phi}(e^{2\phi}+\vert p\vert^2)^{\gamma-1}\vert f\vert^2{\rm d}x{\rm d}p<0.
\end{equation}
Similar to \eqref{z-ip} in Subsection \ref{sec-ft-eng}, it follows that
\begin{align}\label{3.3}
|\CI_2|+\CI_3+|\CI_4|
\leq&-\frac{1}{2}\int_{\mathbb R^6}e^{2\phi}(e^{2\phi}+\vert p\vert^2)^{\gamma-\frac{1}{2}}(e^{2\phi}\vert\nabla_p f\vert^2+\vert p\cdot\nabla_p f\vert^2){\rm d}x{\rm d}p\nonumber\\
&+C e^{\bar\phi}\int_{\mathbb R^6}(e^{2\phi}+\vert p\vert^2)^\gamma\vert f\vert^2{\rm d}x{\rm d}p+Ce^{\bar\phi}\left\|\nabla_x\Phi\right\|_{L^2_x}^2+Ce^{\bar\phi}\left\|\Phi\right\|^2_{L^2_x}.
\end{align}
Next, by uing the argument for \eqref{el-sg} to estimate\eqref{phi-ip}, we obtain
\begin{equation}\label{3.4}
		\frac{d}{dt}\int_{\mathbb R^3}(\vert\partial_t\Phi\vert^2+\vert\nabla_x\Phi\vert^2){\rm d}x\leq Ce^{2\bar\phi}(\mathcal E_{0,0}^{\delta_1}(t)+\mathcal E_{0,0}^{\delta_2}(t))+Ce^{2\bar\phi}\left\|\Phi\right\|_{L^2_x}^2.
\end{equation}
Substituting \eqref{3.2}, \eqref{3.3} and \eqref{3.4} into \eqref{3.1}, we have
\begin{equation}\label{3.5}
	\frac{\rm d}{dt}\mathcal E_{0,0}^\gamma(t)+\mathcal D_{0,0}^\gamma(t)\leq Ce^{\bar\phi}(\mathcal E_{0,0}^\gamma(t)+\mathcal E_{0,0}^{\delta_1}(t)+\mathcal E_{0,0}^{\delta_2}(t))+Ce^{\bar\phi}\left\|\Phi\right\|_{L^2_x}^2.
\end{equation}
Taking the summation of \eqref{3.5} with $\gamma=\delta_1$ and $\gamma=\delta_2$, then one has
\begin{equation}
	\frac{\rm d}{dt}(\mathcal E_{0,0}^{\delta_1}(t)+\mathcal E_{0,0}^{\delta_2}(t))+(\mathcal D_{0,0}^{\delta_1}(t)+\mathcal D_{0,0}^{\delta_2}(t))\leq Ce^{\bar\phi}(\mathcal E_{0,0}^{\delta_1}(t)+\mathcal E_{0,0}^{\delta_2}(t))+Ce^{\bar\phi}\left\|\Phi\right\|_{L^2_x}^2.\notag
\end{equation}
As a consequence, Gr\"{o}nwall's inequality over $(T_c,t)$ gives
\begin{align}\label{3.7}
\mathcal	E_{0,0}^{\delta_1}(t)+\mathcal E_{0,0}^{\delta_2}(t)+\int_{T_c}^t(\mathcal D_{0,0}^{\delta_1}(s)+\mathcal D_{0,0}^{\delta_2}(s)){\rm d}s&\leq	C(T_c)(\mathcal E_{0,0}^{\delta_1}(T_c)+\mathcal E_{0,0}^{\delta_2}(T_c))+C\left\|\Phi(T_c)\right\|^2_{L^2_x},
\end{align}
for $t>T_c$. This together with \eqref{z-sum3}  implies
\begin{equation}\label{z-lt}
	\begin{split}
\mathcal	E_{0,0}^{\delta_1}(t)+\mathcal E_{0,0}^{\delta_2}(t)+\int_{0}^t(\mathcal D_{0,0}^{\delta_1}(s)+\mathcal D_{0,0}^{\delta_2}(s)){\rm d}s
	\leq C(T_c)(\mathcal E_{0,0}^{\delta_2}(0)+\left\|\Phi_0\right\|_{L^2_x}^2).
	\end{split}
\end{equation}

\noindent\underline{{\it Step 2. First-order energy estimates.}}
In this step, we will estimate the corresponding terms on the right-hand side of \eqref{2.12} and \eqref{xd-ip} over the interval $(T_c,\infty)$ denoted by:
\begin{align}
\CI_5=&\frac{1}{2}\int_{\mathbb R^6}\vert\partial_{p_i}f\vert^2\partial_t(e^{3\bar\phi}(e^{2\phi}+\vert p\vert^2)^\gamma){\rm d}x{\rm d}p,\nonumber\\
\CI_6=&\int_{\mathbb R^6}(\nabla_x\partial_{p_i}\sqrt{e^{2\phi}+\vert p\vert^2}\cdot\nabla_p f-\nabla_p\partial_{p_i}\sqrt{e^{2\phi}+\vert p\vert^2}\cdot\nabla_x f)e^{3\bar\phi}(e^{2\phi}+\vert p\vert^2)\partial_{p_i}f{\rm d}x{\rm d}p,\nonumber\\
\CI_7=&\int_{\mathbb R^6}\partial_{p_i}(\nabla_x\sqrt{e^{2\phi}+\vert p\vert^2}\cdot\nabla_p\bar F)e^{3\bar\phi}(e^{2\phi}+\vert p\vert^2)^\gamma \partial_{p_i}f{\rm d}x{\rm d}p,\nonumber\\
\CI_8=&\int_{\mathbb R^6}\partial_{p_i}(e^{2\phi}\nabla_p\cdot(\Lambda_{\phi,p}\nabla_p f))e^{3\bar\phi}(e^{2\phi}+\vert p\vert^2)^\gamma \partial_{p_i}f{\rm d}x{\rm d}p,\nonumber\\
\CI_9=&\int_{\mathbb R^6}\partial_{p_i}(e^{2\phi}\nabla_p\cdot(\Lambda_{\phi,p}\nabla_p\bar F)-e^{2\bar\phi}\nabla_p\cdot(\Lambda_{\bar\phi,p}\nabla_p\bar F))e^{3\bar\phi}(e^{2\phi}+\vert p\vert^2)^\gamma \partial_{p_i}f{\rm d}x{\rm d}p,\nonumber\\
\CI_{10}=&\frac{1}{2}\int_{\mathbb R^6}\vert\partial_{x_i}f\vert^2\partial_t(e^{\bar\phi}(e^{2\phi}+\vert p\vert^2)^\gamma){\rm d}x{\rm d}p,\nonumber\\
\CI_{11}=&\int_{\mathbb R^6}\big(\nabla_x\partial_{x_i}\sqrt{e^{2\phi}+\vert p\vert^2}\cdot\nabla_pf-\nabla_p\partial_{x_i}\sqrt{e^{2\phi}+\vert p\vert^2}\cdot\nabla_xf\big)e^{\bar\phi}(e^{2\phi}+\vert p\vert^2)^\gamma\partial_{x_i}f{\rm d}x{\rm d}p,\nonumber\\
\CI_{12}=&\int_{\mathbb R^6}\nabla_x\partial_{x_i}\sqrt{e^{2\phi}+\vert p\vert^2}\cdot\nabla_p\bar Fe^{\bar\phi}(e^{2\phi}+\vert p\vert^2)^\gamma \partial_{x_i}f{\rm d}x{\rm d}p,\nonumber\\
\CI_{13}=&\int_{\mathbb R^6}\partial_{x_i}(e^{2\phi}\nabla_p\cdot(\Lambda_{\phi,p}\nabla_p f))e^{\bar\phi}(e^{2\phi}+\vert p\vert^2)^\gamma \partial_{x_i}f{\rm d}x{\rm d}p,\nonumber\\
\CI_{14}=&\int_{\mathbb R^6}\partial_{x_i}(e^{2\phi}\nabla_p\cdot(\Lambda_{\phi,p}\nabla_p\bar F))e^{\bar\phi}(e^{2\phi}+\vert p\vert^2)^\gamma \partial_{x_i}f{\rm d}x{\rm d}p.\nonumber
	\end{align}
For $\CI_5$, since $\bar\phi'(t)\leq\frac{\bar\phi'(\infty)}{2}<0$ and $\left\|\partial_t\phi\right\|_{L^\infty_x}$ is sufficiently small,   there exists constant $C>0$ such that
\begin{equation}
	\CI_5\leq C\bar\phi'(\infty)\int_{\mathbb R^6}e^{3\bar\phi}(e^{2\phi}+\vert p\vert^2)^\gamma\vert\partial_{p_i}f\vert^2{\rm d}x{\rm d}p.\notag
\end{equation}
For $\CI_6$, we directly have by the {\it a priori} assumption \eqref{aps} that
\begin{align}
		|\CI_6|\leq& C\int_{\mathbb R^6}e^{2\phi}(e^{2\phi}+\vert p\vert^2)^{\frac{\gamma}{2}-\frac{1}{4}}\vert\nabla_pf\vert e^{\frac{\bar\phi}{2}}(e^{2\phi}+\vert p\vert^2)^{\frac{\gamma}{2}}\vert\nabla_xf\vert{\rm d}x{\rm d}p\nonumber\\
		&+C\left\|\nabla_x\Phi\right\|_{L^\infty_x}\int_{\mathbb R^6}e^{2\phi}(e^{2\phi}+\vert p\vert^2)^{\gamma-\frac{1}{2}}\vert\nabla_p f\vert^2{\rm d}x{\rm d}p\nonumber\\
		\leq& \eta\int_{\mathbb R^6}e^{\bar\phi}(e^{2\phi}+\vert p\vert^2)^\gamma\vert\nabla_xf\vert^2{\rm d}x{\rm d}p+C_\eta\int_{\mathbb R^6}e^{4\phi}(e^{2\phi}+\vert p\vert^2)^{\gamma-\frac{1}{2}}\vert\nabla_pf\vert^2{\rm d}x{\rm d}p.\notag
\end{align}
By Lemmas \ref{lemma A3}, \ref{lemma A4} and \ref{lemma A5}, $\CI_7$, $\CI_8$ and $\CI_9$ can be bounded as
\begin{align}\label{3.10}
	|\CI_7|+\CI_8+|\CI_9|&\leq-\frac{1}{2}\int_{\mathbb R^6}e^{3\bar\phi+2\phi}(e^{2\phi}+\vert p\vert^2)^{\gamma-\frac{1}{2}}(e^{2\phi}\vert\nabla_p\partial_{p_i}f\vert^2+\vert p\cdot\nabla_p\partial_{p_i}f\vert^2){\rm d}x{\rm d}p\nonumber\\
	&\quad+Ce^{\bar\phi}\sum_{m+n\leq1}\mathcal E_{m,n}^\gamma(t)+C\mathcal D_{0,0}^\gamma(t)
+C e^{\bar\phi}\left\|\Phi\right\|_{L^2_x}^2.\notag
	\end{align}
Thus
\begin{align}
\frac{d}{dt}&\int_{\mathbb R^6}e^{3\bar\phi}(e^{2\phi}+\vert p\vert^2)^\gamma\vert\partial_{p_i}f\vert^2{\rm d}x{\rm d}p-C\bar\phi'(\infty)\int_{\mathbb R^6}e^{3\bar\phi}(e^{2\phi}+\vert p\vert^2)^\gamma\vert\partial_{p_i}f\vert^2{\rm d}x{\rm d}p\notag\\
&\quad+\int_{\mathbb R^6}e^{3\bar\phi+2\phi}(e^{2\phi}+\vert p\vert^2)^{\gamma-\frac{1}{2}}(e^{2\phi}\vert\nabla_p\partial_{p_i}f\vert^2+\vert p\cdot\nabla_p\partial_{p_i}f\vert^2){\rm d}x{\rm d}p\notag\\
		\leq& \eta\int_{\mathbb R^6}e^{\bar\phi}(e^{2\phi}+\vert p\vert^2)^{\gamma}\vert\nabla_xf\vert^2{\rm d}x{\rm d}p+Ce^{\bar\phi}\sum_{m+n\leq1}\mathcal E_{m,n}^\gamma(t)+C\mathcal D_{0,0}^\gamma(t)
+C e^{\bar\phi}\left\|\Phi\right\|_{L^2_x}^2.\notag
\end{align}
Next, for $\CI_{10}$, it is straightforward to show
\begin{equation*}
	\CI_{10}\leq C\bar\phi'(\infty)\int_{\mathbb R^6}e^{\bar\phi}(e^{2\phi}+\vert p\vert^2)^\gamma\vert\partial_{x_i}f\vert^2{\rm d}x{\rm d}p.
\end{equation*}
For $\CI_{11}$, unlike the case in finite time interval as shown in  \eqref{2.20}, we need to use the smallness of $\left\|\nabla_x\Phi\right\|_{L^\infty_x}$ from the {\it a priori} assumption \eqref{aps} to deduce
\begin{align*}
		|\CI_{11}|&\leq C\int_{\mathbb R^6}e^{2\phi}(e^{2\phi}+\vert p\vert^2)^{\frac{\gamma}{2}-\frac{1}{4}}\vert\nabla_pf\vert e^{\frac{\bar\phi}{2}}(e^{2\phi}+\vert p\vert^2)^{\frac{\gamma}{2}}\vert\partial_{x_i}f\vert(\vert\nabla_x\Phi\vert+\vert\nabla_x^2\Phi\vert){\rm d}x{\rm d}p\nonumber\\
		&\quad+C\int_{\mathbb R^6}e^{\bar\phi}(e^{2\phi}+\vert p\vert^2)^\gamma\vert\nabla_xf\vert^2\vert\nabla_x\Phi\vert{\rm d}x{\rm d}p\nonumber\\
		&\leq C\left\|(\nabla_x\Phi,\nabla_x^2\Phi)\right\|_{L^\infty_x}\int_{\mathbb R^6} e^{\bar\phi}(e^{2\phi}+\vert p\vert^2)^{\gamma}\vert\nabla_xf\vert^2{\rm d}x{\rm d}p+C\int_{\mathbb R^6} e^{2\phi}(e^{2\phi}+\vert p\vert^2)^{\gamma-\frac{1}{2}}\vert\nabla_pf\vert^2{\rm d}x{\rm d}p\nonumber\\
		&\leq C\eps_0\int_{\mathbb R^6} e^{\bar\phi}(e^{2\phi}+\vert p\vert^2)^{\gamma}\vert\nabla_xf\vert^2{\rm d}x{\rm d}p+C\int_{\mathbb R^6} e^{4\phi}(e^{2\phi}+\vert p\vert^2)^{\gamma-\frac{1}{2}}\vert\nabla_pf\vert^2{\rm d}x{\rm d}p.
\end{align*}
Next, Lemmas \ref{lemma A3}, \ref{lemma A4} and \ref{lemma A5} give
\begin{align*}
|\CI_{12}|+\CI_{13}+|\CI_{14}|\leq&-\frac{1}{2}\int_{\mathbb R^6}e^{\bar\phi+2\phi}(e^{2\phi}+\vert p\vert^2)^{\gamma-\frac{1}{2}}(e^{2\phi}\vert\nabla_p\partial_{x_i}f\vert^2+\vert p\cdot\nabla_p\partial_{x_i}f\vert^2){\rm d}x{\rm d}p\nonumber\\
	&+Ce^{\bar\phi}\sum_{m+n\leq1}\mathcal E_{m,n}^\gamma(t)+C\mathcal D_{0,0}^\gamma(t)
+C e^{\bar\phi}\left\|\Phi\right\|_{L^2_x}^2.
\end{align*}
Putting the above estimates together, we have
\begin{align}\label{3.12}
\frac{d}{dt}\int_{\mathbb R^6}&(e^{3\bar\phi}(e^{2\phi}+\vert p\vert^2)^\gamma\vert\partial_{p_i}f\vert^2+e^{\bar\phi}(e^{2\phi}+\vert p\vert^2)^\gamma\vert\partial_{x_i}f\vert^2){\rm d}x{\rm d}p\nonumber\\
		&-C\bar\phi'(\infty)\int_{\mathbb R^6}\big(e^{3\bar\phi}(e^{2\phi}+\vert p\vert^2)^\gamma\vert\partial_{p_i}f\vert^2+e^{\bar\phi}(e^{2\phi}+\vert p\vert^2)^\gamma\vert\partial_{x_i}f\vert^2\big){\rm d}x{\rm d}p\nonumber\\
		&+\int_{\mathbb R^6}e^{3\bar\phi+2\phi}(e^{2\phi}+\vert p\vert^2)^{\gamma-\frac{1}{2}}(e^{2\phi}\vert\nabla_p\partial_{p_i}f\vert^2+\vert p\cdot\nabla_p\partial_{p_i}f\vert^2){\rm d}x{\rm d}p\nonumber\\
		&+\int_{\mathbb R^6}e^{\bar\phi+2\phi}(e^{2\phi}+\vert p\vert^2)^{\gamma-\frac{1}{2}}(e^{2\phi}\vert\nabla_p\partial_{x_i} f\vert^2+\vert p\cdot\nabla_p\partial_{x_i}f\vert^2){\rm d}x{\rm d}p\nonumber\\
		\leq& Ce^{\bar\phi}\sum_{m+n\leq1}\mathcal E_{m,n}^\gamma(t)+C\mathcal D_{0,0}^\gamma(t)+C e^{\bar\phi}\left\|\Phi\right\|_{L^2_x}^2.
\end{align}
Similar to the derivation of \eqref{3.4}, we obtain the energy estimate for the gravitational field as follows
\begin{equation}\label{3.13}
		\frac{d}{dt}\int_{\mathbb R^3}(\vert\partial_t\partial_{x_i}\Phi\vert^2+\vert\nabla_x\partial_{x_i}\Phi\vert^2){\rm d}x\leq Ce^{\frac{\bar\phi}{2}}\sum_{ m+n\leq1}(\mathcal E_{m,n}^{\delta_1}(t)+\mathcal E_{m,n}^{\delta_2}(t)).
\end{equation}
From \eqref{3.12} and \eqref{3.13}, it follows directly that
\begin{align}
\frac{d}{dt}\sum_{m+n=1}&(\mathcal E_{m,n}^{\delta_1}(t)+\mathcal E_{m,n}^{\delta_2}(t))	-C\bar\phi'(\infty)\sum_{|\alpha|+|\beta|=1}\int_{\mathbb R^6}e^{(|\alpha|+3|\beta|)\bar\phi}(e^{2\phi}+\vert p\vert^2)^\gamma\vert\partial_x^\alpha\partial_p^\beta f\vert^2{\rm d}x{\rm d}p\notag\\
&\quad+\sum_{m+n=1}(\mathcal D_{m,n}^{\delta_1}(t)+\mathcal D_{m,n}^{\delta_2}(t))\notag\\
		\leq& Ce^{\frac{\bar\phi}{2}}\sum_{ m+n\leq1}(\mathcal E_{m,n}^{\delta_1}(t)+\mathcal E_{m,n}^{\delta_2}(t))+C(\mathcal D_{0,0}^{\delta_1}(t)+\mathcal D_{0,0}^{\delta_2}(t))+Ce^{\bar\phi}\left\|\Phi\right\|_{L^2_x}^2.\notag
\end{align}
Applying Gr\"{o}nwall's inequality to the above inequality and using \eqref{z-lt} yield
\begin{align}\label{3.15}
\sum_{m+n=1}&(\mathcal E_{m,n}^{\delta_1}(t)+\mathcal E_{m,n}^{\delta_2}(t))+\sum_{m+n=1}\int_0^t\big(\bar{\mathcal D}_{m,n}^{\delta_1}(s)+\bar{\mathcal D}_{m,n}^{\delta_2}(s)\big){\rm d}s\notag\\
		\leq& C(T_c)\sum_{m+n\leq1}\mathcal E_{m,n}^{\delta_2}(0)+C(T_c)\left\|\Phi_0\right\|_{L^2_x}^2.	
\end{align}
\underline{{\it Step 3. High-order energy estimates.}}
In this step, similar to \eqref{2.27}, we will esitmate the terms on  the right-hand side in the following equality:
\begin{align}\label{3.16}
		\frac{1}{2}\frac{d}{dt}\int_{\mathbb R^6}&e^{(\vert\alpha\vert+3\vert\beta\vert)\bar\phi}(e^{2\phi}+\vert p\vert^2)^\gamma\vert\partial_x^\alpha\partial_p^\beta f\vert^2{\rm d}x{\rm d}p\nonumber\\
		=&\frac{1}{2}\int_{\mathbb R^6}\vert\partial_x^\alpha\partial_p\beta f\vert^2\partial_t(e^{(\vert\alpha\vert+3\vert\beta\vert)\bar\phi}(e^{2\phi}+\vert p\vert^2)^\gamma){\rm d}x{\rm d}p\nonumber\\
		&+\int_{\mathbb R^6}e^{(\vert\alpha\vert+3\vert\beta\vert)\bar\phi}[\nabla_p\sqrt{e^{2\phi}+\vert p\vert^2}\cdot\nabla_x-\nabla_x\sqrt{e^{2\phi}+\vert p\vert^2}\cdot\nabla_p,\partial_x^\alpha\partial_p^\beta]f(e^{2\phi}+\vert p\vert^2)\partial_x^\alpha\partial_p^\beta f{\rm d}x{\rm d}p\nonumber\\
		&+\int_{\mathbb R^6}e^{(\vert\alpha\vert+3\vert\beta\vert)\bar\phi}\partial_x^\alpha\partial_p^\beta(\nabla_x\sqrt{e^{2\phi}+\vert p\vert^2}\cdot\nabla_p\bar F)(e^{2\phi}+\vert p\vert^2)^\gamma \partial_x^\alpha\partial_p^\beta f{\rm d}x{\rm d}p\nonumber\\
		&+\int_{\mathbb R^6}e^{(\vert\alpha\vert+3\vert\beta\vert)\bar\phi}\partial_x^\alpha\partial_p^\beta(e^{2\phi}\nabla_p\cdot(\Lambda_\phi\nabla_p f))(e^{2\phi}+\vert p\vert^2)^\gamma \partial_x^\alpha\partial_p^\beta f{\rm d}x{\rm d}p\nonumber\\
		&+\int_{\mathbb R^6}e^{(\vert\alpha\vert+3\vert\beta\vert)\bar\phi}\partial_x^\alpha\partial_p^\beta(e^{2\phi}\nabla_p\cdot(\Lambda_{\phi}\nabla_p\bar F)-e^{2\bar\phi}\nabla_p\cdot(\Lambda_{\bar\phi}\nabla_p\bar F))(e^{2\phi}+\vert p\vert^2)^\gamma \partial_x^\alpha\partial_p^\beta f{\rm d}x{\rm d}p\nonumber\\
		:=&\CI_{15}+\CI_{16}+\CI_{17}+\CI_{18}+\CI_{19}.\notag
\end{align}
First of all, it holds directly that
\begin{equation}\label{I15-2}
	\CI_{15}\leq C\bar\phi'(\infty)\int_{\mathbb R^6}e^{(\vert\alpha\vert+3\vert\beta\vert)\bar\phi}(e^{2\phi}+\vert p\vert^2)^\gamma\vert\partial_x^\alpha\partial_p^\beta f\vert^2{\rm d}x{\rm d}p.
\end{equation}
For $\CI_{17}$, $\CI_{18}$ and $\CI_{19}$, similar to \eqref{I17}, \eqref{I18} and \eqref{I19}, by  Lemmas \ref{lemma A3}, \ref{lemma A4} and
\ref{lemma A5},
we have
\begin{align}
|\CI_{17}|+\CI_{18}+|\CI_{19}|\leq& -\frac{1}{2}\int_{\mathbb R^6}e^{(\vert\alpha\vert+3\vert\beta\vert)\bar\phi+2\phi}(e^{2\phi}+\vert p\vert^2)^{\gamma-\frac{1}{2}}\big(e^{2\phi}\vert\nabla_p\partial_x^\alpha\partial_p^\beta f\vert^2+\vert p\cdot\na_p\partial_x^\alpha\partial_p^\beta f\vert^2\big){\rm d}x{\rm d}p\nonumber\\
&+Ce^{\bar\phi}\sum_{\substack{m+n\leq\vert\alpha\vert+\vert\beta\vert\\n<4}}\mathcal E^\gamma_{m,n}(t)+C\sum_{m+n<\vert\alpha\vert+\vert\beta\vert}\mathcal D_{m,n}^\gamma(t)+Ce^{\bar\phi}\left\|\Phi\right\|_{L^2_x}^2.\label{I17-19-2}
\end{align}
The estimation on  $\CI_{16}$ over the time interval $(T_c,+\infty)$ is different  from those in finite time interval given in  \eqref{2.27}. For this, the estimation is  divided into following three cases.

\noindent\underline{{\it Case 1. $\vert\alpha\vert+\vert\beta\vert=2$.}}
In this case, if $|\al|=0$ and $|\beta|=2$,
by using H\"{o}lder's inequality and Cauchy-Schwarz's inequality with $\eta>0$, we get
\begin{align}\label{I16-c21}
		|\CI_{16}&\chi_{|\al|=0,|\beta|=2}|\notag\\
		\leq& C\int_{\mathbb R^6}e^{3\bar\phi}(e^{2\phi}+\vert p\vert^2)^{\frac{\gamma}{2}}\vert\nabla_p^2f\vert\Big(e^{\frac{\bar\phi}{2}}(e^{2\phi}+\vert p\vert^2)^{\frac{\gamma}{2}}\vert\nabla_xf\vert e^{\frac{\bar\phi}{2}}+e^{\frac{\bar\phi}{2}+2\phi}(e^{2\phi}+\vert p\vert^2)^{\frac{\gamma}{2}-\frac{1}{4}}\vert\nabla_p\nabla_xf\vert\Big){\rm d}x{\rm d}p\nonumber\\
		&+C\int_{\mathbb R^6} e^{3\bar\phi}(e^{2\phi}+\vert p\vert^2)^{\frac{\gamma}{2}}\vert\nabla_p^2f\vert e^{2\phi}(e^{2\phi}+\vert p\vert^2)^{\frac{\gamma}{2}-\frac{1}{4}}\vert\nabla_pf\vert{\rm d}x{\rm d}p\notag\\&+C\int_{\mathbb R^6}e^{3\bar\phi+4\phi}(e^{2\phi}+\vert p\vert^2)^{\gamma-\frac{1}{2}}\vert\nabla_p^2f\vert^2{\rm d}x{\rm d}p\nonumber\\
		\leq&\eta\int_{\mathbb R^6}e^{6\bar\phi}(e^{2\phi}+\vert p\vert^2)^\gamma\vert\nabla_p^2f\vert^2{\rm d}x{\rm d}p+C_\eta\int_{\mathbb R^6}e^{\bar\phi+4\phi}(e^{2\phi}+\vert p\vert^2)^{\gamma-\frac{1}{2}}\vert\nabla_p\nabla_xf\vert^2{\rm d}x{\rm d}p\nonumber\\
		&+C_\eta\int_{\mathbb R^6}e^{4\phi}(e^{2\phi}+\vert p\vert^2)^{\gamma-\frac{1}{2}}\vert\nabla_pf\vert^2{\rm d}x{\rm d}p+C_\eta e^{\bar\phi}\int_{\mathbb R^6}e^{\bar\phi}(e^{2\phi}+\vert p\vert^2)^\gamma\vert\nabla_xf\vert^2{\rm d}x{\rm d}p\nonumber\\
		&+C\int_{\mathbb R^6}e^{3\bar\phi+4\phi}(e^{2\phi}+\vert p\vert^2)^{\gamma-\frac{1}{2}}\vert\nabla_p^2f\vert^2{\rm d}x{\rm d}p\nonumber\\
		\leq&\eta\int_{\mathbb R^6}e^{6\bar\phi}(e^{2\phi}+\vert p\vert^2)^\gamma\vert\nabla_p^2f\vert^2{\rm d}x{\rm d}p+C_\eta\sum_{m+n\leq1}(e^{\bar\phi}\mathcal E_{m,n}^\gamma(t)+\mathcal D_{m,n}^\gamma(t)).	
	\end{align}
If $\vert\alpha\vert=\vert\beta\vert=1$, by using H\"{o}lder's inequality and Cauchy-Schwarz's inequality with $\eta>0$ again,
we obtain
\begin{align}\label{I16-c22}
		|\CI_{16}&\chi_{\vert\alpha\vert=\vert\beta\vert=1}|\notag\\
		\leq& C\int_{\mathbb R^6}e^{4\bar\phi}(e^{2\phi}+\vert p\vert^2)^\gamma\vert\nabla_x\nabla_p f\vert\Big((e^{2\phi}+\vert p\vert^2)^{-\frac{1}{2}}\vert\nabla_x^2 f\vert+e^{2\phi}(e^{2\phi}+\vert p\vert^2)^{-1}\vert\nabla_x\Phi\vert\vert\nabla_p\nabla_x f\vert\nonumber\\
		&\qquad\qquad+e^{2\phi}(e^{2\phi}+\vert p\vert^2)^{-\frac{3}{2}}\vert\nabla_x\Phi\vert\vert\nabla_x f\vert\Big){\rm d}x{\rm d}p\nonumber\\
		&+C\int_{\mathbb R^6}e^{4\bar\phi+2\phi}(e^{2\phi}+\vert p\vert^2)^\gamma\vert\nabla_x\nabla_p f\vert\Big((e^{2\phi}+\vert p\vert^2)^{-\frac{1}{2}}(\vert\nabla_x\Phi\vert^2+\vert\nabla_x^2\Phi)\vert\nabla_p^2f\vert\nonumber\\
		&\qquad\qquad+(e^{2\phi}+\vert p\vert^2)^{-1}\vert\nabla_x\Phi\vert\vert\nabla_x\nabla_pf\vert+(e^{2\phi}+\vert p\vert^2)^{-1}(\vert\nabla_x\Phi\vert^2+\vert\nabla_x^2\Phi\vert)\vert\nabla_pf\vert\Big){\rm d}x{\rm d}p\nonumber\\
		\leq& \eta\int_{\mathbb R^6}e^{2\bar\phi}(e^{2\phi}+\vert p\vert^2)^\gamma\vert\nabla_x^2f\vert^2{\rm d}x{\rm d}p+C_\eta\int_{\mathbb R^6}e^{\bar\phi+4\phi}(e^{2\phi}+\vert p\vert^2)^{\gamma-\frac{1}{2}}\vert\nabla_x\nabla_pf\vert^2{\rm d}x{\rm d}p\nonumber\\
		&+C\int_{\mathbb R^6}e^{3\bar\phi+4\phi}(e^{2\phi}+\vert p\vert^2)^{\gamma-\frac{1}{2}}\vert\nabla_p^2f\vert^2{\rm d}x{\rm d}p+Ce^{\bar\phi}\int_{\mathbb R^6}e^{\bar\phi}(e^{2\phi}+\vert p\vert^2)^\gamma\vert\nabla_xf\vert^2{\rm d}x{\rm d}p\nonumber\\
		&+C\int_{\mathbb R^6}e^{4\phi}(e^{2\phi}+\vert p\vert^2)^{\gamma-\frac{1}{2}}\vert\nabla_pf\vert^2{\rm d}x{\rm d}p\nonumber\\
		\leq& \eta\int_{\mathbb R^6}e^{2\bar\phi}(e^{2\phi}+\vert p\vert^2)^\gamma\vert\nabla_x^2f\vert^2{\rm d}x{\rm d}p+C\int_{\mathbb R^6}e^{6\bar\phi}(e^{2\phi}+\vert p\vert^2)^\gamma\vert\nabla_p^2f\vert^2{\rm d}x{\rm d}p
\nonumber\\
		&+C_\eta\sum_{m+n\leq1}\mathcal D_{m,n}^\gamma(t)
+Ce^{\bar\phi}\int_{\mathbb R^6}e^{\bar\phi}(e^{2\phi}+\vert p\vert^2)^\gamma\vert\nabla_xf\vert^2{\rm d}x{\rm d}p.
\end{align}
If $\vert\alpha\vert=2,$
by using H\"{o}lder's inequality and \eqref{aps}, it follows that
\begin{align}\label{I16-c23}
		|\CI_{16}\chi_{\vert\alpha\vert=2}|&\leq C\left\|(\nabla_x\Phi,\nabla_x^2\Phi\right\|_{L^\infty_x}\int_{\mathbb R^6}e^{\frac{\bar\phi}{2}}(e^{2\phi}+\vert p\vert^2)^{\frac{\gamma}{2}}\vert\nabla_xf\vert e^{\bar\phi}(e^{2\phi}+\vert p\vert^2)^{\frac{\gamma}{2}}\vert\nabla_x^2f\vert{\rm d}x{\rm d}p\nonumber\\
		&\quad+C\left\|\nabla_x\Phi\right\|_{L^\infty_x}\int_{\mathbb R^6}e^{2\bar\phi}(e^{2\phi}+\vert p\vert^2)^{\gamma}\vert\nabla_x^2f\vert^2{\rm d}x{\rm d}p\nonumber\\
		&\quad+C\int_{\mathbb R^6}e^{\bar\phi}(e^{2\phi}+\vert p\vert^2)^{\gamma}\vert\nabla_x^2f\vert e^{2\bar\phi}(\vert\nabla_x\Phi\vert^2+\vert\nabla_x\Phi\vert\nabla_x^2\Phi\vert+\vert\nabla_x^2\Phi\vert)\vert\nabla_pf\vert{\rm d}x{\rm d}p\nonumber\\
		&\quad+C\int_{\mathbb R^6} e^{\bar\phi}(e^{2\phi}+\vert p\vert^2)^{\gamma}\vert\nabla_x^2f\vert e^{2\bar\phi}(\vert\nabla_x\Phi\vert^2+\vert\nabla_x^2f\vert)\vert\nabla_x\nabla_p f\vert{\rm d}x{\rm d}p\nonumber\\
		&\leq Ce^{\bar\phi}\int_{\mathbb R^6}\Big(e^{\bar\phi}(e^{2\phi}+\vert p\vert^2)^{\gamma}\vert\nabla_xf\vert+e^{2\bar\phi}(e^{2\phi}+\vert p\vert^2)^{\gamma}\vert\nabla_x^2f\vert+e^{3\bar\phi}(e^{2\phi}+\vert p\vert^2)^{\gamma}\vert\nabla_pf\vert^2\Big){\rm d}x{\rm d}p\nonumber\\
		&\quad+C\left\|\nabla_x\Phi\right\|_{H^2_x}^2\int_{\mathbb R^6}\Big(e^{2\bar\phi}(e^{2\phi}+\vert p\vert^2)^{\gamma}
	\vert\nabla_x^2f\vert^2+e^{4\bar\phi}(e^{2\phi}+\vert p\vert^2)^{\gamma}\vert\nabla_x\nabla_pf\vert^2\Big){\rm d}x{\rm d}p\nonumber\\
	&\leq C\eps_0\int_{\mathbb R^6}\Big(e^{2\bar\phi}(e^{2\phi}+\vert p\vert^2)^{\gamma}
	\vert\nabla_x^2f\vert^2+e^{4\bar\phi}(e^{2\phi}+\vert p\vert^2)^{\gamma}\vert\nabla_x\nabla_pf\vert^2\Big){\rm d}x{\rm d}p\nonumber\\
	&\quad+Ce^{\bar\phi}\sum_{m+n\leq2}\mathcal E_{m,n}^\gamma(f,\Phi)(t).
\end{align}
By combining the above estimates \eqref{I15-2}, \eqref{I17-19-2}, \eqref{I16-c21}, \eqref{I16-c22} and \eqref{I16-c23},
and applying \eqref{el-sg} in \eqref{2.54} with $|\al|=2$, we conclude
\begin{align}
\frac{d}{dt}\sum_{\substack{ m+n=2}}&(\mathcal E_{m,n}^{\delta_1}(t)+\mathcal E_{m,n}^{\delta_2}(t))+\sum_{\substack{ m+n=2}}(\mathcal D_{m,n}^{\delta_1}(t)+\mathcal D_{m,n}^{\delta_2}(t))\notag\\
&\quad-C\bar\phi'(\infty)\sum_{|\al|+|\bet|=2}\int_{\mathbb R^6} e^{(|\alpha|+3|\beta|)\bar\phi}\Big((e^{2\phi}+|p|^2)^{\delta_1}+(e^{2\phi}+|p|^2)^{\delta_2}\Big)|\partial_x^\alpha\partial_p^\beta f|^2{\rm d}x{\rm d}p\notag\\
		\leq& Ce^{\frac{\bar\phi}{2}}\sum_{m+n\leq2}(\mathcal E_{m,n}^{\delta_1}(t)+\mathcal E_{m,n}^{\delta_2}(t))+C\sum_{ m+n\leq1}(\mathcal D_{m,n}^{\delta_1}(t)+\mathcal D_{m,n}^{\delta_2}(t))+Ce^{\bar\phi}\left\|\Phi\right\|_{L^2_x}^2,\notag
\end{align}
which, together with \eqref{loc-eng}, \eqref{3.7} and \eqref{3.15}, further implies for  $t\in(T_c,+\infty)$,
\begin{align}\label{3.26}
\sum_{\substack{m+n=2}}&(\mathcal E_{m,n}^{\delta_1}(t)+\mathcal E_{m,n}^{\delta_2}(t))
+\sum_{\substack{m+n=2}}\int_0^t(\bar{\mathcal D}_{m,n}^{\delta_1}(s)+\bar{\mathcal D}_{m,n}^{\delta_2}(s)){\rm d}s\notag\\
		\leq& C(T_c)\Big(\sum_{m+n\leq2}\mathcal E_{m,n}^{\delta_2}(0)+\left\|\Phi_0\right\|_{L^2_x}^2\Big).
\end{align}
\underline{\noindent{\it Case 2. $\vert\alpha\vert+\vert\beta\vert=3$.}} First, we consider the subcase when $|\alpha|=0$ and $|\beta|=3$. For this, we have
\begin{align}\label{i16-c30}
		| \CI_{16}&\chi_{|\alpha|=0,|\beta|=3}|\notag\\
		&\leq C\int_{\mathbb R^6}e^{9\bar\phi}(e^{2\phi}+\vert p\vert^2)^\gamma|\partial_p^\beta f|\Big((e^{2\phi}+\vert p\vert^2)^{-\frac{3}{2}}|\nabla_xf|+(e^{2\phi}+\vert p\vert^2)^{-1}|\nabla_x\nabla_pf|\notag\\
		&\qquad\qquad+(e^{2\phi}+\vert p\vert^2)^{-\frac{1}{2}}|\nabla_x\nabla_p^2f|\Big){\rm d}x{\rm d}p\notag\\
		&\quad+C\int_{\mathbb R^6}e^{9\bar\phi}(e^{2\phi}+\vert p\vert^2)^\gamma|\partial_p^\beta f| e^{\phi}|\nabla_x\Phi|\Big((e^{2\phi}+\vert p\vert^2)^{-\frac{3}{2}}|\nabla_pf|+(e^{2\phi}+\vert p\vert^2)^{-1}|\nabla_p^2f|\notag\\
		&\qquad\qquad+(e^{2\phi}+\vert p\vert^2)^{-\frac{1}{2}}|\nabla_p^3f|\Big)\notag\\
		&\leq C\int_{\mathbb R^6}e^{3\bar\phi+2\phi}(e^{2\phi}+\vert p\vert^2)^{\frac{\gamma}{2}-\frac{1}{4}}|\partial_p^\beta f|(e^{2\phi}+\vert p\vert^2)^{\frac{\gamma}{2}}\Big(e^{\frac{\bar\phi}{2}}|\nabla_xf| e^{\bar\phi}+e^{\frac{9\bar\phi}{2}}|\nabla_x\nabla_pf| e^{\frac{\bar\phi}{2}}\notag\\
		&\qquad\qquad+e^{2\bar\phi+2\phi}(e^{2\phi}+|p|^2)^{-\frac{1}{4}}|\nabla_x\nabla_p^2f|\Big){\rm d}x{\rm d}p\notag\\
		&\quad+C\int_{\mathbb R^6}e^{3\bar\phi+2\phi}(e^{2\phi}+\vert p\vert^2)^{\frac{\gamma}{2}-\frac{1}{4}}|\partial_p^\beta f|(e^{2\phi}+\vert p\vert^2)^{\frac{\gamma}{2}}\Big(e^{\frac{3\bar\phi}{2}}|\nabla_pf|e^{\bar\phi}+e^{3\bar\phi}|\nabla_p^2f|e^{\frac{\bar\phi}{2}}\notag\\
		&\qquad\qquad+e^{3\bar\phi+2\phi}(e^{2\phi}+|p|^2)^{-\frac{1}{4}}|\nabla_p^3f|\Big){\rm d}x{\rm d}p\notag\\
		&\leq Ce^{\bar\phi}\sum_{m+n\leq2}\mathcal E_{m,n}^\gamma(t)+C\sum_{m+n=2}\mathcal D_{m,n}^\gamma(t).
\end{align}
On the other hand, if $\vert\alpha\vert=1$ and $\vert\beta\vert=2$, by Sobolev's inequality and H\"{o}lder's inequality, we deduce that
\begin{align}\label{i16-c31}
		|\CI_{16}&\chi_{\vert\alpha\vert=1,\vert\beta\vert=2}|\notag\\
		\leq& C\int_{\mathbb R^6}e^{7\bar\phi}(e^{2\phi}+\vert p\vert^2)^{\gamma}\vert\partial_x\partial_p^2f\vert\Big((e^{2\phi}+\vert p\vert^2)^{-1}\vert\nabla_xf\vert+(e^{2\phi}+\vert p\vert^2)^{-\frac{1}{2}}\vert\nabla_x\nabla_p f\vert\nonumber\\
		&\qquad+\vert\nabla_x\nabla_p^2f\vert\nabla_x\Phi\vert+(e^{2\phi}+\vert p\vert^2)^{-1}\vert\nabla_x^2f\vert+(e^{2\phi}+\vert p\vert^2)^{-\frac{1}{2}}\vert\nabla_x^2\nabla_pf\vert\Big){\rm d}x{\rm d}p\nonumber\\
		&+C\int_{\mathbb R^6}e^{7\bar\phi}(e^{2\phi}+\vert p\vert^2)^{\gamma}\vert\partial_x\partial_p^2f\vert e^{2\phi}\Big((e^{2\phi}+\vert p\vert^2)^{-\frac{1}{2}}\vert\nabla_p\nabla_p^2f\vert\notag\\&\qquad\qquad+(e^{2\phi}+\vert p\vert^2)^{-1}\vert\nabla_p^2f\vert+(e^{2\phi}+\vert p\vert^2)^{-\frac{3}{2}}\vert\nabla_pf\vert\Big){\rm d}x{\rm d}p\nonumber\\
		\leq& C\int_{\mathbb R^6}e^{\frac{7\bar\phi}{2}}(e^{2\phi}+\vert p\vert^2)^{\frac{\gamma}{2}}\vert\partial_x\partial_p^2f\vert\Big(e^{\frac{\bar\phi}{2}}(e^{2\phi}+\vert p\vert^2)^{\frac{\gamma}{2}}\vert\nabla_xf\vert e^{\bar\phi}+e^{2\bar\phi}(e^{2\phi}+\vert p\vert^2)^{\frac{\gamma}{2}}\vert\nabla_x\nabla_pf\vert e^{\frac{\bar\phi}{2}}\nonumber\\
		&\qquad+\vert\nabla_x\Phi\vert e^{\frac{7\bar\phi}{2}}(e^{2\phi}+\vert p\vert^2)^{\frac{\gamma}{2}}\vert\nabla_x\nabla_p^2f\vert+e^{\bar\phi}(e^{2\phi}+\vert p\vert^2)^{\frac{\gamma}{2}}\vert\nabla_x^2f\vert e^{\frac{\bar\phi}{2}}\nonumber\\
		&\qquad+e^{\bar\phi+2\phi}(e^{2\phi}+\vert p\vert^2)^{\frac{\gamma}{2}-\frac{1}{4}}\vert\nabla_x^2\nabla_pf\vert\Big){\rm d}x{\rm d}p\nonumber\\
		&+C\int_{\mathbb R^6}e^{\frac{7\bar\phi}{2}}(e^{2\phi}+\vert p\vert^2)^{\frac{\gamma}{2}}\vert\nabla_x\nabla_p^2f\vert\Big(e^{3\bar\phi+2\phi}(e^{2\phi}+\vert p\vert^2)^{\frac{\gamma}{2}-\frac{1}{4}}\vert\nabla_p\nabla_p^2f\vert\nonumber\\
		&\qquad\qquad+e^{3\bar\phi}(e^{2\phi}+\vert p\vert^2)^{\frac{\gamma}{2}}\vert\nabla_p^2f\vert e^{\frac{\bar\phi}{2}}+e^{\frac{3\bar\phi}{2}}(e^{2\phi}+\vert p\vert^2)^{\frac{\gamma}{2}}\vert\nabla_pf\vert e^{\bar\phi}\Big){\rm d}x{\rm d}p\nonumber\\
	\leq& C\eps_0\int_{\mathbb R^6}e^{7\bar\phi}(e^{2\phi}+\vert p\vert^2)^{\gamma}\vert\nabla_x\nabla_p^2f\vert^2{\rm d}x{\rm d}p\notag\\&+Ce^{\bar\phi}\Big(\int_{\mathbb R^6}e^{7\bar\phi}(e^{2\phi}+\vert p\vert^2)^{\gamma}\vert\nabla_x\nabla_p^2f\vert^2{\rm d}x{\rm d}p+\sum_{m+n\leq2}\mathcal E^\gamma_{m,n}(t)\Big)\nonumber\\
		&+C(\mathcal D_{2,0}^\gamma(t)+\mathcal D_{0,2}^\gamma(t)).
\end{align}
Moreover, if $\vert\alpha\vert=2$ and $\vert\beta\vert=1$,
by Sobolev's inequality and Cauchy-Schwarz's inequality with $\eta>0$, we get
\begin{align}\label{i16-c32}
		|\CI_{16}&\chi_{\vert\alpha\vert=2,\vert\beta\vert=1}|\notag\\
		&\leq C\int_{\mathbb R^6}e^{5\bar\phi}(e^{2\phi}+\vert p\vert^2)^{\gamma}\vert\nabla_x^2\nabla_pf\vert\nonumber\\
		&\qquad\qquad\times\Big((e^{2\phi}+\vert p\vert^2)^{-\frac{1}{2}}\vert\nabla_xf\vert+(e^{2\phi}+\vert p\vert^2)^{-\frac{1}{2}}\vert\nabla_x^2f\vert+(e^{2\phi}+\vert p\vert^2)^{-\frac{1}{2}}\vert\nabla_x^3f\vert+\vert\nabla_x\nabla_pf\vert\Big){\rm d}x{\rm d}p\nonumber\\
		&\quad+C\int_{\mathbb R^6}e^{5\bar\phi}(e^{2\phi}+\vert p\vert^2)^{\gamma}\vert\nabla_x^2\nabla_pf\vert\Big((1+\vert\nabla_x^3\Phi\vert)|\nabla_pf\vert+e^\phi(1+|\nabla_x^3\Phi\vert)\vert\nabla_p^2f\vert\notag\\
		&\qquad\qquad+\vert\nabla_x\nabla_pf\vert+\vert\nabla_x\Phi\vert\vert\nabla_x^2\nabla_pf\vert\Big){\rm d}x{\rm d}p\nonumber\\
		&\leq C(\left\|\nabla_x\Phi\right\|_{H^3_x}+e^{\bar\phi})\int_{\mathbb R^6}e^{5\bar\phi}(e^{2\phi}+\vert p\vert^2)^{\gamma}\vert\nabla_x^2\nabla_pf\vert^2{\rm d}x{\rm d}p\nonumber\\
		&\quad+C\left\|\nabla_x^3\Phi\right\|_{H_x^1}\int_{\mathbb R^6}e^{7\bar\phi}(e^{2\phi}+\vert p\vert^2)^{\gamma}\vert\nabla_x\nabla_p^2f\vert^2{\rm d}x{\rm d}p\nonumber\\
		&\quad+\eta\int_{\mathbb R^6}e^{3\bar\phi}(e^{2\phi}+\vert p\vert^2)^{\gamma}\vert\nabla_x^3f\vert^2{\rm d}x{\rm d}p+C_\eta\int_{\mathbb R^6}e^{2\bar\phi+4\phi}(e^{2\phi}+\vert p\vert^2)^{\gamma-\frac{1}{2}}\vert\nabla_p\nabla_x^2f\vert^2{\rm d}x{\rm d}p\nonumber\\
		&\quad+Ce^{\bar\phi}\sum_{m+n\leq 2}\mathcal E_{m,n}^\gamma(t).
	\end{align}
Finally, if $\vert\alpha\vert=3$, by applying Sobolev's inequality and H\"{o}lder's inequality, we
have
\begin{align}\label{i16-c33}
		|\CI_{16}\chi_{|\al|=3}|&\leq C\int_{\mathbb R^6}e^{3\bar\phi}(e^{2\phi}+\vert p\vert^2)^{\gamma}\vert\partial_x^\alpha f\vert e^{2\phi}(e^{2\phi}+\vert p\vert^2)^{-1}((1+\vert\nabla_x^3\Phi\vert)\nabla_xf\vert\notag\\
		&\qquad\qquad+\vert\nabla_x^2f\vert+\vert\nabla_x\Phi\vert\vert\nabla_x^3f\vert){\rm d}x{\rm d}p\nonumber\\
		&\quad+C\int_{\mathbb R^6}e^{3\bar\phi}(e^{2\phi}+\vert p\vert^2)^{\gamma}\vert\partial_x^\alpha f\vert e^\phi((1+|\nabla_x^3\Phi|+\vert\nabla_x^4\Phi\vert)\vert\nabla_pf\vert+(1+\vert\nabla_x^3\Phi\vert)\vert\nabla_x\nabla_pf\vert\nonumber\\
		&\qquad\qquad+\left\|\nabla_x^2\Phi\right\|_{L^\infty_x}\vert\nabla_x^2\nabla_pf\vert){\rm d}x{\rm d}p\nonumber\\
		&\leq C(e^{\bar\phi}+\left\|\nabla_x^2\Phi\right\|_{L^\infty_x})\int_{\mathbb R^6}e^{3\bar\phi}(e^{2\phi}+\vert p\vert^2)^{\gamma}\vert\partial_x^\alpha f\vert^2{\rm d}x{\rm d}p\nonumber\\
		&\quad+C\left\|\nabla_x\Phi\right\|_{H_x^3}\int_{\mathbb R^6}e^{5\bar\phi}(e^{2\phi}+\vert p\vert^2)^{\gamma}\vert\nabla_x^2\nabla_pf\vert^2{\rm d}x{\rm d}p+Ce^{\bar\phi}\sum_{m+n\leq2}\mathcal E_{m,n}^\gamma(t).
\end{align}
Now, combining \eqref{I15-2}, \eqref{I17-19-2}, \eqref{i16-c30} \eqref{i16-c31}, \eqref{i16-c32}, and \eqref{i16-c33} with  $\vert\alpha\vert+\vert\beta\vert=3$, and using \eqref{el-sg} in \eqref{2.54} with $|\al|=3$, we obtain
\begin{align}\label{m+n=3}
	\frac{d}{dt}&\sum_{m+n=3}\mathcal E_{m,n}^\gamma(t)+\sum_{m+n=3}\mathcal D_{m,n}^\gamma(t)\notag\\
	&\quad-C\bar\phi'(\infty)\sum_{|\al|+|\beta|=3}\int_{\mathbb R^6} e^{(|\alpha|+3|\beta|)\bar\phi}\Big((e^{2\phi}+|p|^2)^{\delta_1}+(e^{2\phi}+|p|^2)^{\delta_2}\Big)|\partial_x^\alpha\partial_p^\beta f|^2{\rm d}x{\rm d}p\notag\\	
		&\leq Ce^{\frac{\bar\phi}{2}}\sum_{\ m+n=3}(\mathcal E_{m,n}^{\delta_1}(t)+\mathcal E_{m,n}^{\delta_2}(t))+C\sum_{m+n\leq2}\mathcal D_{m,n}^\gamma(t).
\end{align}
\noindent\underline{{\it Case 3. $\vert\alpha\vert+\vert\beta\vert=4$ and $|\beta|<4$}.}
Similar to the estimation on  $I_{16}^{(i)}$ given in Case 3 in Subsection \ref{sec-ft-eng},
we have
\begin{align*}
	|\CI_{16}^{(1)}|&\leq C\int_{\mathbb R^6}e^{10\bar\phi}(e^{2\phi}+|p|^2)^\gamma|\partial_x^\alpha\partial_p^\beta f|\Big((e^{2\phi}+|p|^2)^{-\frac{3}{2}}(|\nabla_xf|+|\nabla_x^2f|)\\
	&\qquad\qquad+(e^{2\phi}+|p|^2)^{-1}(|\nabla_x\nabla_pf|+|\nabla_x^2\nabla_pf|)+(e^{2\phi}+|p|^2)^{-\frac{1}{2}}(|\nabla_x\nabla_p^2f|+|\nabla_x^2\nabla_p^2f|)\\
	&\qquad\qquad+|\nabla_x\Phi||\nabla_x\nabla_p^3f|\Big){\rm d}x{\rm d}p\\
	&\quad+C\int_{\mathbb R^6}e^{10\bar\phi}(e^{2\phi}+|p|^2)^\gamma|\partial_x^\alpha\partial_p^\beta f|e^{\phi}\Big((e^{2\phi}+|p|^2)^{-\frac{3}{2}}(|\nabla_pf|+|\nabla_x\nabla_pf|)\\
	&\qquad\qquad+(e^{2\phi}+|p|^2)^{-1}(|\nabla_p^2f|+|\nabla_x\nabla_p^2f|)+(e^{2\phi}+|p|^2)^{-\frac{1}{2}}(|\nabla_p^3f|+|\nabla_x\Phi||\nabla_x\nabla_p^3f|)\\
	&\qquad\qquad+e^{\phi}(e^{2\phi}+|p|^2)^{-\frac{1}{2}}|\nabla_p\nabla_p^3f|\Big){\rm d}x{\rm d}p\\
	&\leq C\int_{\mathbb R^6}e^{\frac{7\bar\phi}{2}+2\phi}(e^{2\phi}+|p|^2)^{\frac{\gamma}{2}-\frac{1}{4}}|\partial_x^\alpha\partial_p^\beta f|(e^{2\phi}+|p|^2)^{\frac{\gamma}{2}}\Big((e^{\frac{\bar\phi}{2}}|\nabla_xf|+e^{\bar\phi}|\nabla_x^2f|)e^{\bar\phi}\\
	&\qquad\qquad+(e^{2\bar\phi}|\nabla_x\nabla_pf|+e^{\frac{5\bar\phi}{2}}|\nabla_x^2\nabla_pf|)e^{\frac{\bar\phi}{2}}+e^{\frac{7\bar\phi}{2}}|\nabla_x\nabla_p^2f|e^{\frac{\bar\phi}{2}}\\
	&\qquad\qquad+e^{\frac{5\bar\phi}{2}+2\phi}(e^{2\phi}+|p|^2)^{-\frac{1}{4}}|\nabla_x^2\nabla_p^2f|\Big){\rm d}x{\rm d}p\\
	&\quad+ C\left\|\nabla_x\Phi\right\|_{L^\infty_x}\int_{\mathbb R^6}e^{10\bar\phi}(e^{2\phi}+|p|^2)^{\gamma}|\nabla_x\nabla_p^3f|^2{\rm d}x{\rm d}p\\
	&\quad+C\int_{\mathbb R^6}e^{\frac{7\bar\phi}{2}+2\phi}(e^{2\phi}+|p|^2)^{\frac{\gamma}{2}-\frac{1}{4}}|\partial_x^\alpha\partial_p^\beta f|(e^{2\phi}+|p|^2)^{\frac{\gamma}{2}}\Big((e^{\frac{3\bar\phi}{2}}|\nabla_pf|+e^{2\bar\phi}|\nabla_x\nabla_pf|)e^{\bar\phi}\\
	&\qquad\qquad+(e^{3\bar\phi}|\nabla_p^2f|+e^{\frac{7\bar\phi}{2}}|\nabla_x\nabla_p^2f|)e^{\frac{\bar\phi}{2}}+e^{\frac{9\bar\phi}{2}}|\nabla_p^3f|e^{\frac{\bar\phi}{2}}+e^{\frac{9}{2}\bar\phi+2\phi}(e^{2\phi}+|p|^2)^{-\frac{1}{4}}|\nabla_p\nabla_p^3f|\Big){\rm d}x{\rm d}p\\
	&\leq C\left\|\nabla_x\Phi\right\|_{L^\infty_x}\int_{\mathbb R^6}e^{10\bar\phi}(e^{2\phi}+|p|^2)^{\gamma}|\nabla_x\nabla_p^3f|^2{\rm d}x{\rm d}p+C \sum_{m+n\leq3}(e^{\bar\phi}\mathcal E_{m,n}^\gamma(t)+\mathcal D_{m,n}^\gamma(t)),
\end{align*}
and
\begin{align*}
	|\CI_{16}^{(2)}|&\leq C\int_{\mathbb R^6}e^{8\bar\phi}(e^{2\phi}+|p|^2)^\gamma|\partial_x^\alpha\partial_p^\beta f|\Big((e^{2\phi}+|p|^2)^{-1}(|\nabla_xf|+|\nabla_x^2f|+|\nabla_x^3f|)\\
	&\qquad\qquad+(e^{2\phi}+|p|^2)^{-\frac{1}{2}}(|\nabla_x\nabla_pf|+|\nabla_x^2\nabla_pf|+|\nabla_x^3\nabla_pf|)
	+\eps_0|\nabla_x\nabla_p^2f|+\eps_0|\nabla_x^2\nabla_p^2f|\Big){\rm d}x{\rm d}p\\
	&\quad+C\int_{\mathbb R^6}e^{8\bar\phi}(e^{2\phi}+|p|^2)^\gamma|\partial_x^\alpha\partial_p^\beta f|\Big((e^{2\phi}+|p|^2)^{-\frac{1}{2}}(\eps_0+|\nabla_x^3\Phi|)|\nabla_pf|\\
	&\qquad\qquad+(\eps_0+|\nabla_x^3\Phi|)|\nabla_p^2f|+e^{\phi}\big((\eps_0+|\nabla_x^3\Phi|)|\nabla_p^3f|+\eps_0|\nabla_x\nabla_p^3f|\big)\Big){\rm d}x{\rm d}p\\
	&\leq C\int_{\mathbb R^6}e^{\frac{5\bar\phi}{2}+2\phi}(e^{2\phi}+|p|^2)^{\frac{\ga}{2}-\frac{1}{4}}|\partial_x^\alpha\partial_p^\beta f|(e^{2\phi}+|p|^2)^{\frac{\ga}{2}}\Big((e^{\frac{\bar\phi}{2}}|\nabla_xf|+e^{\bar\phi}|\nabla_x^2f|+e^{\frac{3\bar\phi}{2}}|\nabla_x^3f|)e^{\frac{\bar\phi}{2}}\\
	&\qquad\qquad+(e^{2\bar\phi}|\nabla_x\nabla_pf|+e^{\frac{5\bar\phi}{2}}|\nabla_x^2\nabla_pf|)e^{\frac{\bar\phi}{2}}+e^{\frac{3\bar\phi}{2}}(e^{2\phi}+|p|^2)^{-\frac{1}{4}}|\nabla_x^3\nabla_pf|\Big){\rm }x{\rm d}p\\
	&\quad+C\eps_0\int_{\mathbb R^6}e^{4\bar\phi}(e^{2\phi}+|p|^2)^{\frac{\ga}{2}}|\partial_x^\alpha\partial_p^\beta f|(e^{2\phi}+|p|^2)^{\frac{\ga}{2}}\Big(e^{\frac{7\bar\phi}{2}}|\nabla_x\nabla_p^2f|e^{\frac{\bar\phi}{2}}+e^{4\bar\phi}|\nabla_x^2\nabla_p^2f|\\
	&\qquad\qquad+e^{\frac{3\bar\phi}{2}}|\nabla_pf|e^{\bar\phi}+e^{3\bar\phi}|\nabla_p^2f|e^{\bar\phi}+e^{\frac{9\bar\phi}{2}}|\nabla_p^3f|e^{\frac{\bar\phi}{2}}+e^{5\bar\phi}|\nabla_x\nabla_p^3f|\Big){\rm d}x{\rm d}p\\
	&\quad+C\int_{\mathbb R^6}e^{4\bar\phi}(e^{2\phi}+|p|^2)^{\frac{\ga}{2}}|\partial_x^\alpha\partial_p^\beta f||\nabla_x^3\Phi|(e^{2\phi}+|p|^2)^{\frac{\ga}{2}}\\
	&\qquad\qquad\times\Big(e^{2\bar\phi}|\nabla_pf|e^{\bar\phi}+e^{\frac{7\bar\phi}{2}}|\nabla_p^2f|e^{\bar\phi}+e^{5\bar\phi}|\nabla_p^3f|\Big){\rm d}x{\rm d}p\\
	&\leq C\eps_0\int_{\mathbb R^6}(e^{2\phi}+|p|^2)^\ga(e^{8\bar\phi}|\nabla_x^2\nabla_p^2f|^2+e^{10\bar\phi}|\nabla_x\nabla_p^3f|^2){\rm d}x{\rm d}p\notag\\&\quad+C\sum_{m+n\leq3}\Big(e^{\bar\phi}\mathcal E_{m,n}^\gamma(t)+\mathcal D_{m,n}^\ga(t)\Big),
\end{align*}
and
\begin{align*}
	|\CI_{16}^{(3)}|&\leq C\int_{\mathbb R^6}e^{6\bar\phi}(e^{2\phi}+|p|^2)^\gamma|\partial_x^\alpha\partial_p^\beta f|\Big((e^{2\phi}+|p|^2)^{-\frac{1}{2}}\big((\eps_0+|\nabla_x^3\Phi|)|\nabla_xf|+|\nabla_x^2f|+|\nabla_x^3f|+|\nabla_x^4f|\big)\\
	&\qquad\qquad+(\eps+|\nabla_x^3\Phi|)|\nabla_x\nabla_pf|+\eps_0|\nabla_x^2\nabla_pf|+\eps_0|\nabla_x^3\nabla_pf|\Big){\rm d}x{\rm d}p\\
	&\quad+C\int_{\mathbb R^6}e^{6\bar\phi}(e^{2\phi}+|p|^2)^\gamma|\partial_x^\alpha\partial_p^\beta f|\Big((\eps_0+|\nabla_x^3\Phi|+|\nabla_x^4\Phi|)|\nabla_pf|+(\eps_0+|\nabla_x^3\Phi|)|\nabla_x\nabla_pf|\\
	&\qquad\qquad+\eps_0|\nabla_x^2\nabla_pf|+\eps_0|\nabla_x^3\nabla_pf|+e^{\bar\phi}\big((\eps_0+|\nabla_x^3\Phi|+|\nabla_x^4\Phi|)|\nabla_p^2f|
\\&\qquad\qquad+(\eps_0+|\nabla_x^3\Phi|)|\nabla_x\nabla_p^2f|+\eps_0|\nabla_x^2\nabla_p^2f|\big)\Big){\rm d}x{\rm d}p\\
	&\leq C\int_{\mathbb R^6}e^{\frac{3\bar\phi}{2}+2\phi}(e^{2\phi}+|p|^2)^{\frac{\gamma}{2}-\frac{1}{4}}|\partial_x^\alpha\partial_p^\beta f|(e^{2\phi}+|p|^2)^{\frac{\gamma}{2}}\\
	&\qquad\qquad\times\Big((e^{\frac{\bar\phi}{2}}|\nabla_xf|+e^{\bar\phi}|\nabla_x^2f|
+e^{\frac{3\bar\phi}{2}}|\nabla_x^3f|)e^{\frac{\bar\phi}{2}}e^{2\bar\phi}|\nabla_x^4f|\Big){\rm d}x{\rm d}p\\
		&\quad+C\eps_0\int_{\mathbb R^6}e^{3\bar\phi}(e^{2\phi}+|p|^2)^{\frac{\gamma}{2}}|\partial_x^\alpha\partial_p^\beta f|(e^{2\phi}+|p|^2)^{\frac{\gamma}{2}}\Big(e^{\frac{\bar\phi}{2}}|\nabla_xf|e^{\frac{3\bar\phi}{2}}+e^{2\bar\phi}|\nabla_x\nabla_pf|e^{\bar\phi}\\
		&\qquad\qquad+e^{\frac{5\bar\phi}{2}}|\nabla_x^2\nabla_pf|e^{\frac{\bar\phi}{2}}+e^{3\bar\phi}|\nabla_x^3\nabla_pf|+(e^{\frac{3\bar\phi}{2}}|\nabla_pf|+e^{2\bar\phi}|\nabla_x\nabla_pf|)e^{\frac{\bar\phi}{2}}\\
	&\qquad\qquad+(e^{3\bar\phi}|\nabla_p^2f|+e^{\frac{7\bar\phi}{2}}|\nabla_x\nabla_p^2f|)e^{\frac{\bar\phi}{2}}+e^{4\bar\phi}|\nabla_x^2\nabla_p^2f|\Big){\rm d}x{\rm d}p\\
	&\quad+C\int_{\mathbb R^6}e^{3\bar\phi}(e^{2\phi}+|p|^2)^{\frac{\gamma}{2}}|\partial_x^\alpha\partial_p^\beta f|(e^{2\phi}+|p|^2)^{\frac{\gamma}{2}}|\nabla_x^3\Phi|\Big(e^{\bar\phi}|\nabla_xf|e^{\bar\phi}+e^{\frac{5\bar\phi}{2}}|\nabla_x\nabla_pf|e^{\frac{\bar\phi}{2}}\\
	&\qquad\qquad+e^{2\bar\phi}|\nabla_pf|e^{\bar\phi}+e^{\frac{7\bar\phi}{2}}|\nabla_p^2f|e^{\frac{\bar\phi}{2}}+e^{4\bar\phi}|\nabla_x\nabla_p^2f|\Big){\rm d}x{\rm d}p\\
&\quad+C\int_{\mathbb R^6}e^{3\bar\phi}(e^{2\phi}+|p|^2)^{\frac{\gamma}{2}}|\partial_x^\alpha\partial_p^\beta f|(e^{2\phi}+|p|^2)^{\frac{\gamma}{2}}|\nabla_x^4\Phi|\Big(e^{\frac{5\bar\phi}{2}}|\nabla_pf|e^{\frac{\bar\phi}{2}}+e^{4\bar\phi}|\nabla_p^2f|\Big){\rm d}x{\rm d}p\\
	&\leq C\eps_0\int_{\mathbb R^6}(e^{2\phi}+|p|^2)^{\gamma}(e^{6\bar\phi}|\nabla_x^3\nabla_pf|^2+e^{7\bar\phi}|\nabla_x\nabla_p^2f|^2+e^{8\bar\phi}|\nabla_x^2\nabla_p^2f|^2){\rm d}x{\rm d}p\\
	&\quad+\eta\int_{\mathbb R^6}e^{4\bar\phi}(e^{2\phi}+|p|^2)^{\gamma}|\nabla_x^4f|^2{\rm d}x{\rm d}p+C_\eta \sum_{m+n\leq3}\Big(e^{\bar\phi}\mathcal E_{m,n}^\gamma(t)+\mathcal D_{m,n}^\gamma(t)\Big),
\end{align*}
and
\begin{align*}
	|\CI_{16}^{(4)}|&\leq	C\int_{\mathbb R^6}e^{4\bar\phi}(e^{2\phi}+|p|^2)^{\gamma}|\partial_x^\alpha f|\Big((\eps_0+|\nabla_x^3\Phi|+|\nabla_x^4\Phi|)|\nabla_xf|+(\eps_0+|\nabla_x^3\Phi|)|\nabla_x^2f|\\
	&\qquad\qquad+\eps_0|\nabla_x^3f|+\eps_0|\nabla_x^4f|\Big){\rm d}x{\rm d}p\\
	&\quad+C\int_{\mathbb R^6}e^{4\bar\phi}(e^{2\phi}+|p|^2)^{\gamma}|\partial_x^\alpha f|e^\phi\Big((\eps_0+|\nabla_x^3\Phi|+|\nabla_x^4\Phi|+|\nabla_x^5\Phi|)|\nabla_pf|\\
	&\qquad\qquad+(\eps_0+|\nabla_x^3\Phi|+|\nabla_x^4\Phi|)|\nabla_x\nabla_pf|+(\eps_0+|\nabla_x^3\Phi|)|\nabla_x^2\nabla_pf|+\eps_0|\nabla_x^3\nabla_pf|\Big){\rm d}x{\rm d}p\\
	&\leq C\eps_0\int_{\mathbb R^6}e^{2\bar\phi}(e^{2\phi}+|p|^2)^{\frac{\gamma}{2}}|\partial_x^\alpha f|\Big((e^{\frac{\bar\phi}{2}}|\nabla_xf|+e^{\bar\phi}|\nabla_x^2f|+e^{\frac{3\bar\phi}{2}}|\nabla_x^3f|)e^{\frac{\bar\phi}{2}}+e^{2\bar\phi}|\nabla_x^4f|\\
	&\qquad\qquad+(e^{\frac{3\bar\phi}{2}}|\nabla_pf|+e^{2\bar\phi}|\nabla_x\nabla_pf|)e^{\frac{\bar\phi}{2}}+e^{\frac{5\bar\phi}{2}}|\nabla_x^2\nabla_pf|e^{\frac{\bar\phi}{2}}+e^{3\bar\phi}|\nabla_x^3\nabla_pf|\Big){\rm d}x{\rm d}p\\
		&\quad+ C\int_{\mathbb R^6}e^{2\bar\phi}(e^{2\phi}+|p|^2)^{\frac{\gamma}{2}}|\partial_x^\alpha f||\nabla_x^3\Phi|\Big(e^{\bar\phi}|\nabla_xf|e^{\bar\phi}+e^{\frac{3\bar\phi}{2}}|\nabla_x^2f|e^{\frac{\bar\phi}{2}}\\
	&\qquad\qquad+(e^{2\bar\phi}|\nabla_pf|+e^{\frac{5\bar\phi}{2}}|\nabla_x\nabla_pf|)e^{\frac{\bar\phi}{2}}+e^{3\bar\phi}|\nabla_x^2\nabla_pf|\Big){\rm d}x{\rm d}p\\
&\quad+ C\int_{\mathbb R^6}e^{2\bar\phi}(e^{2\phi}+|p|^2)^{\frac{\gamma}{2}}|\partial_x^\alpha f||\nabla_x^4\Phi|\Big(e^{\frac{3\bar\phi}{2}}|\nabla_xf|e^{\frac{\bar\phi}{2}}+e^{\frac{5\bar\phi}{2}}|\nabla_pf|e^{\frac{\bar\phi}{2}}\Big){\rm d}x{\rm d}p\\
&\quad+ C\int_{\mathbb R^6}e^{2\bar\phi}(e^{2\phi}+|p|^2)^{\frac{\gamma}{2}}|\partial_x^\alpha f||\nabla_x^5\Phi|e^{3\bar\phi}|\nabla_pf|{\rm d}x{\rm d}p\\
	&\leq C\eps_0\int_{\mathbb R^6}(e^{2\phi}+|p|^2)^{\gamma}(e^{4\bar\phi}|\nabla_x^4 f|^2+e^{5\bar\phi}|\nabla_x^2\nabla_pf|^2+e^{6\bar\phi}|\nabla_x^3\nabla_pf|^2){\rm d}x{\rm d}p+Ce^{\bar\phi}\sum_{m+n\leq3}\mathcal E_{m,n}^\gamma(t).
\end{align*}
Finally, for $|\alpha|=4$, we bound \eqref{n=4,fin} by using \eqref{el-sg} to have
\begin{align}
		\frac{1}{2}&\frac{d}{dt}\int_{\mathbb R^3}e^{\bar\phi}(\vert\partial_t\partial_{x}^\alpha\Phi\vert^2+\vert\nabla_x\partial_{x}^\alpha\Phi\vert^2){\rm d}x\notag\\
		&\leq\frac{1}{2}\int_{\mathbb R^3}\bar\phi'(t)e^{\bar\phi}\vert\partial_t\partial_{x}^\alpha\Phi\vert^2{\rm d}x\nonumber\\
		&\quad-\int_{\mathbb R^3}\partial_{x}^\alpha\Big(e^{2\phi}\int_{\mathbb R^3}\frac{f}{\sqrt{e^{2\phi}+\vert p\vert^2}}{\rm d}p\Big)\partial_t\partial_{x}^\alpha\Phi{\rm d}x-\int_{\mathbb R^3}\int_{\mathbb R^3}\partial_{x}^\alpha\Big(e^{2\phi}\frac{\bar F}{\sqrt{e^{2\phi}+\vert p\vert^2}}\Big){\rm d}p\partial_t\partial_{x}^\alpha\Phi{\rm d}x\nonumber\\
		&\leq Ce^{\frac{\bar\phi}{2}}\sum_{\substack{ m+n\leq4\\ n<4}}(\mathcal E_{m,n}^{\delta_1}(t)+\mathcal E_{m,n}^{\delta_2}(t))+Ce^{\bar\phi}\left\|\nabla_x\Phi\right\|_{H^3_x}^2.\notag
\end{align}
Therefore,
\begin{align}\label{m+n=4}
	\frac{d}{dt}&\sum_{\substack{m+n=4\\ n<4}}\mathcal E_{m,n}^\gamma(t)+\sum_{{\substack{m+n\leq4\\ n<4}}}\mathcal D_{m,n}^\gamma(t)\notag\\
	&\quad-C\bar\phi'(\infty)\sum_{\substack{|\al|+|\beta|=4\\|\beta|<4}}\int_{\mathbb R^6} e^{(|\alpha|+3|\beta|)\bar\phi}\Big((e^{2\phi}+|p|^2)^{\delta_1}+(e^{2\phi}+|p|^2)^{\delta_2}\Big)|\partial_x^\alpha\partial_p^\beta f|^2{\rm d}x{\rm d}p\notag\\	
	&\leq Ce^{\frac{\bar\phi}{2}}\sum_{{\substack{m+n\leq4\\ n<4}}}(\mathcal E_{m,n}^{\delta_1}(t)+\mathcal E_{m,n}^{\delta_2}(t))+C\sum_{m+n\leq3}\mathcal D_{m,n}^\gamma(t)+Ce^{\bar\phi}\left\|\Phi\right\|_{L^2_x}^2.
\end{align}
By applying Gr\"{o}nwall's inequality to \eqref{m+n=3}, \eqref{m+n=4} and using \eqref{3.7}, \eqref{3.15} and \eqref{3.26}, one has
\begin{equation}
	\begin{split}
\sum_{\substack{m+n\leq4\\n<4}}&	(\mathcal E_{m,n}^{\delta_1}(t)+\mathcal E_{m,n}^{\delta_2}(t))+\int_0^t(\mathcal D_{0,0}^{\delta_1}(s)+\mathcal D_{0,0}^{\delta_2}(s)){\rm d}s+\sum_{\substack{ 1\leq m+n\leq4\\n<4}}\int_0^t	(\bar{\mathcal D}_{m,n}^{\delta_1}(s)+\bar{\mathcal D}_{m,n}^{\delta_2}(s)){\rm d}s\\
	&	\leq C(T_c)\Big(\sum_{\substack{m+n\leq4\\n<4}}	\mathcal E_{m,n}^{\delta_2}(0)+\left\|\Phi_0\right\|_{L^2_x}^2\Big).\notag
	\end{split}
\end{equation}
This completes the proof of Proposition \ref{lg-eng-pro}.
\end{proof}

\subsection{Long time behavior}
In this subsection, we will focus on proving the long time behavior as shown in \eqref{long-t} based on the {\it a priori} energy estimate \eqref{eng-tt}
proved in Subsections \ref{sec-ft-eng} and \ref{sec-ift-eng}.
From \eqref{eng-tt}, it follows
\begin{equation*}
\sum_{\substack{\vert\alpha\vert+\vert\beta\vert\leq4\\\vert\beta\vert<4}}\int_0^t\int_{\mathbb R^6}e^{(\vert\alpha\vert+3\vert\beta\vert)\bar\phi+4\bar\phi}\vert\nabla_p\partial_x^\alpha\partial_p^\beta f\vert^2{\rm d}x{\rm d}p{\rm d}s\leq C(\mathcal E^{\delta_2}(0)+\left\|\Phi_0\right\|_{L^2_x}^2),
\end{equation*}
which implies
\begin{equation}\label{4.1}
		\sum_{\substack{|\alpha|\leq2,\vert\beta\vert\leq1}}\int_0^t\int_{\mathbb R^6}e^{9\bar\phi}\vert\nabla_p\partial_x^\alpha\partial_p^\beta f\vert^2{\rm d}x{\rm d}p{\rm d}s\leq C(\mathcal E^{\delta_2}(0)+\left\|\Phi_0\right\|_{L^2_x}^2).
\end{equation}
Since
\begin{align}\label{4.2}
	\sum_{\substack{|\alpha|\leq2,\vert\beta\vert\leq1}}&\int_0^t\int_{\mathbb R^6}e^{10\bar\phi}\vert\nabla_p\partial_x^\alpha\partial_p^\beta \partial_tf\vert^2{\rm d}x{\rm d}p{\rm d}s\nonumber\\
	&\leq C\Big(\mathcal E^{\delta_2}(0)+\left\|(1+|p|^2)^{\frac{\delta_2}{2}}\nabla_p^4f_0\right\|_{H^1_{x,p}}^2+\left\|\Phi_0\right\|_{L^2_x}^2\Big),
\end{align}
then \eqref{4.1} and \eqref{4.2} give
\begin{align*}
\int_{\mathbb R^6}\Big\vert\frac{d}{dt}\left\| e^{5\bar\phi}(\nabla_p f,\nabla_p^2f)\right\|_{L^2_pH^2_x}^2\Big\vert{\rm d}s\leq& C\int_0^t\left\| e^{5\bar\phi}(\nabla_p f,\nabla_p^2f,\nabla_p\partial_t f,\nabla_p^2\partial_t f)\right\|_{L^2_pH^2_x}^2{\rm d}s\\
\leq& C\Big(\mathcal E^{\delta_2}(0)+\left\|(1+|p|^2)^{\frac{\delta_2}{2}}\nabla_p^4f_0\right\|_{H^1_{x,p}}^2+\left\|\Phi_0\right\|_{L^2_x}^2\Big).
\end{align*}
Thus, $\left\|e^{5\bar\phi}\nabla_pf\right\|_{H_p^1H_x^2}\in W^{1,1}(\R^+)$  implies that
\begin{align}\label{H2-lim}
\left\|e^{5\bar\phi}f\right\|_{L^\infty_{x,p}}\rightarrow0,\ \textrm{as}\ t\rightarrow\infty.
\end{align}
Now we turn to prove \eqref{4.2}.
For this, set $\tilde f=\partial_t f$, apply $\partial_t$ to \eqref{vnfp-s2}$_1$, to obtain
\begin{equation*}
		\partial_t \tilde f+\nabla_p\sqrt{e^{2\phi}+\vert p\vert^2}\cdot\nabla_x \tilde f-\nabla_x\sqrt{e^{2\phi}+\vert p\vert^2}\cdot\nabla_p\tilde f-e^{2\phi}\nabla_p\cdot(\Lambda_{\phi,p}\nabla_p\tilde f)=\mathcal R
\end{equation*}
with
\begin{equation*}
	\begin{split}
\mathcal R&=\nabla_x\partial_t\sqrt{e^{2\phi}+\vert p\vert^2}\cdot\nabla_p f-\nabla_p\partial_t\sqrt{e^{2\phi}+\vert p\vert^2}\cdot\nabla_x f+\partial_t\big(e^{2\phi}\nabla_p\cdot(\Lambda_\phi\nabla_pf)\big)-e^{2\phi}\nabla_p\cdot(\Lambda_\phi\nabla_p\tilde f)\\
&\quad+\partial_t\Big(\nabla_x\sqrt{e^{2\phi}+\vert p\vert^2}\cdot\nabla_p\bar F+(e^{2\phi}\nabla_p\cdot(\Lambda_{\phi,p}\nabla_p\bar F)-e^{2\bar\phi}\nabla_p\cdot(\Lambda_{\bar\phi,p}\nabla_p\bar F))\Big).
	\end{split}
\end{equation*}
We first consider the case when $t\in[0,T_c]$. Similar to the discussion in Subsection \ref{sec-ft-eng}, we can obtain
\begin{align}\label{4.4}
\frac{d}{dt}&\sum_{\vert\alpha\vert+ \vert\beta\vert\leq3}\int_{\mathbb R^6}e^{(1+\vert\alpha\vert+3\vert\beta\vert)\bar\phi}(e^{2\phi}+\vert p\vert^2)^{\frac{1}{2}}\vert\partial_x^\alpha\partial_p^\beta \tilde f\vert^2{\rm d}x{\rm d}p\nonumber\\
&+\sum_{\vert\alpha\vert+ \vert\beta\vert\leq3}\int_{\mathbb R^6}e^{(\vert\alpha\vert+3\vert\beta\vert+3)\bar\phi+2\Phi}(e^{2\phi}\vert\nabla_p\partial_x^\alpha\partial_p^\beta \tilde f\vert^2+\vert p\cdot\nabla_p\partial_x^\alpha\partial_p^\beta \tilde f\vert^2){\rm d}x{\rm d}p\nonumber\\
\leq& C\sum_{\vert\alpha\vert+ \vert\beta\vert\leq3}\int_{\mathbb R^6}e^{(1+\vert\alpha\vert+3\vert\beta\vert)\bar\phi}(e^{2\phi}+\vert p\vert^2)^{\frac{1}{2}}\vert\partial_x^\alpha\partial_p^\beta \tilde f\vert^2{\rm d}x{\rm d}p\nonumber\\
&+\underbrace{\sum_{\vert\alpha\vert+\vert\beta\vert\leq3}\int_{\mathbb R^6}e^{(1+\vert\alpha\vert+3\vert\beta\vert)\bar\phi}(e^{2\phi}+\vert p\vert^2)^{\frac{1}{2}}\partial_x^\alpha\partial_p^\beta\mathcal R\partial_x^\alpha\partial_p^\beta \tilde f{\rm d}x{\rm d}p}_{\mathcal R_0}.
\end{align}
Next, by using Sobolev's inequality and H\"{o}lder's inequality, $\mathcal R_0$ can be bounded by
\begin{align}\label{r0}
C \Big(&\sum_{\vert\alpha\vert+\vert\beta\vert\leq3}\left\|e^{\frac{1}{2}(1+\vert\alpha\vert+3\vert\beta\vert)\bar\phi}(e^{2\phi}+\vert p\vert^2)^{\frac{1}{4}}\partial_x^\alpha\partial_p^\beta\tilde f\right\|_{L^2_{x,p}}\Big)\nonumber\\
	&\times\Big(\sum_{\vert\alpha\vert+\vert\beta\vert\leq3}\left\|e^{\frac{1}{2}(1+\vert\alpha\vert+3\vert\beta\vert)\bar\phi}(e^{2\phi}+\vert p\vert^2)^{\frac{1}{4}}\nabla_x\partial_x^\alpha\partial_p^\beta f\right\|_{L^2_{x,p}}\notag\\&\qquad\qquad\qquad\qquad+\sum_{\substack{|\alpha|+|\beta|\leq3\\|\beta<3}}\left\|e^{\frac{1}{2}(3+\vert\alpha\vert+3\vert\beta\vert)\bar\phi}(e^{2\phi}+\vert p\vert^2)^{\frac{1}{4}}\nabla_p\partial_x^\alpha\partial_p^\beta f\right\|_{L^2_{x,p}}\Big)\nonumber\\
	&+\eta\sum_{\vert\alpha\vert+ \vert\beta\vert\leq3}\int_{\mathbb R^6}e^{(\vert\alpha\vert+3\vert\beta\vert+3)\bar\phi+2\Phi}(e^{2\phi}\vert\nabla_p\partial_x^\alpha\partial_p^\beta \tilde f\vert^2+\vert p\cdot\nabla_p\partial_x^\alpha\partial_p^\beta \tilde f\vert^2){\rm d}x{\rm d}p\nonumber\\
	&+Ce^{\bar\phi}\sum_{\vert\alpha\vert+\vert\beta\vert\leq3}\left\|e^{\frac{1}{2}(1+\vert\alpha\vert+3\vert\beta\vert)\bar\phi}(e^{2\phi}+\vert p\vert^2)^{\frac{1}{4}}\partial_x^\alpha\partial_p^\beta\tilde f\right\|_{L^2_{x,p}}^2+Ce^{\bar\phi}(\left\|(\partial_t\Phi,\nabla_x\Phi)\right\|_{H_x^3}^2+\left\|\Phi\right\|_{L^2_x}^2)\nonumber\\
	&+C_\eta\sum_{\vert\alpha\vert+ \vert\beta\vert\leq3}\int_{\mathbb R^6}e^{(\vert\alpha\vert+3\vert\beta\vert+2)\bar\phi+2\Phi}(e^{2\phi}\vert\nabla_p\partial_x^\alpha\partial_p^\beta f\vert^2+\vert p\cdot\nabla_p\partial_x^\alpha\partial_p^\beta  f\vert^2){\rm d}x{\rm d}p,
\end{align}
where we have used the fact that for $|\beta|=3$
\begin{align*}
\int_{\mathbb R^6}&e^{10\bar\phi}(e^{2\phi}+|p|^2)^{\frac{1}{2}}\nabla_x\partial_t\sqrt{e^{2\phi}+|p|^2}\cdot\nabla_p\partial_p^\beta f\partial_p^\beta\tilde f{\rm d}x{\rm d}p\\
&\leq C\int_{\mathbb R^6}e^{\frac{9\bar\phi}{2}+2\phi}|\nabla_p\partial_p^\beta f|e^{\frac{7\bar\phi}{2}+2\phi}|\partial_p^\beta\tilde f|{\rm d}x{\rm d}x\\
&\leq \eta\int_{\mathbb R^3}e^{7\bar\phi+4\phi}|\nabla_p^3\tilde f|^2{\rm d}x{\rm d}p+C_\eta\int_{\mathbb R^6}e^{9\bar\phi+4\phi}|\nabla_p^4f|^2{\rm d}x{\rm d}p.
\end{align*}
Substituting \eqref{r0} into \eqref{4.4}, we then have
\begin{align*}
\frac{d}{dt}\sum_{\vert\alpha\vert+ \vert\beta\vert\leq3}&\int_{\mathbb R^6}e^{(1+\vert\alpha\vert+3\vert\beta\vert)\bar\phi}(e^{2\phi}+\vert p\vert^2)^{\frac{1}{2}}\vert\partial_x^\alpha\partial_p^\beta \tilde f\vert^2{\rm d}x{\rm d}p\nonumber\\
&+\sum_{\vert\alpha\vert+ \vert\beta\vert\leq3}\int_{\mathbb R^6}e^{(\vert\alpha\vert+3\vert\beta\vert+3)\bar\phi+2\Phi}(e^{2\phi}\vert\nabla_p\partial_x^\alpha\partial_p^\beta \tilde f\vert^2+\vert p\cdot\nabla_p\partial_x^\alpha\partial_p^\beta \tilde f\vert^2){\rm d}x{\rm d}p\nonumber\\
\leq& C\sum_{\vert\alpha\vert+ \vert\beta\vert\leq3}\int_{\mathbb R^6}e^{(1+\vert\alpha\vert+3\vert\beta\vert)\bar\phi}(e^{2\phi}+\vert p\vert^2)^{\frac{1}{2}}\vert\partial_x^\alpha\partial_p^\beta \tilde f\vert^2{\rm d}x{\rm d}p\\
&+C\sum_{\vert\alpha\vert+\vert\beta\vert\leq3}\left\|e^{\frac{1}{2}(1+\vert\alpha\vert+3\vert\beta\vert)\bar\phi}(e^{2\phi}+\vert p\vert^2)^{\frac{1}{4}}\nabla_x\partial_x^\alpha\partial_p^\beta f\right\|^2_{L^2_{x,p}}\\
&+C\sum_{\substack{\vert\alpha\vert+\vert\beta\vert\leq3\\|\beta|<3}}\left\|e^{\frac{1}{2}(3+\vert\alpha\vert+3\vert\beta\vert)\bar\phi}(e^{2\phi}+\vert p\vert^2)^{\frac{1}{4}}\nabla_p\partial_x^\alpha\partial_p^\beta f\right\|^2_{L^2_{x,p}}\\
	&+Ce^{\bar\phi}\sum_{\vert\alpha\vert+\vert\beta\vert\leq3}
\left\|e^{\frac{1}{2}(1+\vert\alpha\vert+3\vert\beta\vert)\bar\phi}(e^{2\phi}+\vert p\vert^2)^{\frac{1}{4}}\partial_x^\alpha\partial_p^\beta\tilde f\right\|_{L^2_{x,p}}^2\notag\\&
+Ce^{\bar\phi}\left(\left\|(\partial_t\Phi,\nabla_x\Phi)\right\|_{H_x^3}^2+\left\|\Phi\right\|_{L^2_x}^2\right)\nonumber\\
&+C_\eta\sum_{\vert\alpha\vert+ \vert\beta\vert\leq3}\int_{\mathbb R^6}e^{(\vert\alpha\vert+3\vert\beta\vert+2)\bar\phi+2\Phi}(e^{2\phi}\vert\nabla_p\partial_x^\alpha\partial_p^\beta f\vert^2+\vert p\cdot\nabla_p\partial_x^\alpha\partial_p^\beta  f\vert^2){\rm d}x{\rm d}p.
\end{align*}
Next, applying Gronwall's inequality to the above inequality and using \eqref{eng-tt} yields that
	\begin{align*}
		\sum_{\vert\alpha\vert+ \vert\beta\vert\leq3}&\int_{\mathbb R^6}e^{(1+\vert\alpha\vert+3\vert\beta\vert)\bar\phi}(e^{2\phi}+\vert p\vert^2)^{\frac{1}{2}}\vert\partial_x^\alpha\partial_p^\beta \tilde f\vert^2{\rm d}x{\rm d}p\nonumber\\
		&+\sum_{\vert\alpha\vert+ \vert\beta\vert\leq3}\int_0^t\int_{\mathbb R^6}e^{(\vert\alpha\vert+3\vert\beta\vert+3)\bar\phi+2\Phi}(e^{2\phi}\vert\nabla_p\partial_x^\alpha\partial_p^\beta \tilde f\vert^2+\vert p\cdot\nabla_p\partial_x^\alpha\partial_p^\beta \tilde f\vert^2){\rm d}x{\rm d}p{\rm d}s\nonumber\\
		\leq& C(T_c)\Big(\sum_{\vert\alpha\vert+ \vert\beta\vert\leq3}\int_{\mathbb R^6}(1+\vert p\vert^2)^{\frac{1}{2}}\vert\partial_x^\alpha\partial_p^\beta \tilde f(0)\vert^2{\rm d}x{\rm d}p+\mathcal E^{\delta_2}(0)+\left\|\Phi_0\right\|_{L^2_x}^2\Big),
	\end{align*}
	for $t\in[0,T_c]$. Here, $\tilde f(0)$ is given by
	\begin{equation*}
		\begin{split}
			\tilde f(0)&=\nabla_x\sqrt{e^{2\phi_0}+\vert p\vert^2}\cdot\nabla_p f_0-\nabla_p\sqrt{e^{2\phi_0}+\vert p\vert^2}\cdot\nabla_x f_0+\nabla_x\sqrt{e^{2\phi_0}+\vert p\vert^2}\cdot\nabla_p\bar F_0\\
			&\quad+e^{2\phi_0}\nabla_p\cdot(\Lambda_{\phi_0,p}\nabla_pf_0)+(e^{2\phi_0}\nabla_p\cdot(\Lambda_{\phi_0,p}\nabla_p\bar F_0)-e^{2\bar\phi_0}\nabla_p\cdot(\Lambda_{\bar\phi_0,p}\nabla_p\bar F_0)).
		\end{split}
	\end{equation*}
	It is direct to check that
	\begin{align*}
		\sum_{\vert\alpha\vert+ \vert\beta\vert\leq3}&\int_{\mathbb R^6}(1+\vert p\vert^2)^{\frac{1}{2}}\vert\partial_x^\alpha\partial_p^\beta \tilde f(0)\vert^2{\rm d}x{\rm d}p\notag\\
		\leq& C(T_c)\Big(\left\|\nabla_x\Phi_0\right\|_{H^3_x}^2\left\|(1+|p|^2)^{\frac{1}{4}}\nabla_p f_0\right\|_{H_{x,p}^3}^2+(1+\left\|\nabla_x\Phi_0\right\|_{H^2_x}^2)\left\|(1+|p|^2)^{\frac{1}{4}}\nabla_x f_0\right\|_{H_{x,p}^3}^2\notag\\
		&+\left\|\nabla_x\Phi_0\right\|_{H^3_x}^2\left\|(1+|p|^2)^{\frac{1}{4}}\nabla_p \bar F_0\right\|_{H_{p}^3}^2+(1+\left\|\nabla_x\Phi_0\right\|_{H^2_x}^2)\left\|(1+|p|^2)^{\frac{1}{4}}\nabla_p f_0\right\|_{H_{x,p}^3}^2\notag\\&+(1+\left\|\nabla_x\Phi_0\right\|_{H^2_x}^2)\left\|(1+|p|^2)^{\frac{3}{4}}\nabla_p^2 f_0\right\|_{H_{x,p}^3}^2
	+\left\|\Phi_0\right\|_{H^3_x}^2\left\|(1+|p|^2)^{\frac{1}{4}}\nabla_p \bar F_0\right\|_{H_{p}^3}^2\notag\\&+\left\|\Phi_0\right\|_{H^3_x}^2\left\|(1+|p|^2)^{\frac{3}{4}}\nabla_p^2 \bar F_0\right\|_{H_{p}^3}^2
		+\left\|\Phi_0\right\|_{H^3_x}^2\left\|(1+|p|^2)^{\frac{1}{4}}\nabla_p \bar F_0\right\|_{H_{p}^4}^2\Big)\notag\\
		\leq& C(T_c)\Big(\left\|(1+|p|^2)^{\frac{3}{4}} f_0\right\|_{H_{x,p}^5}^2+\left\|\Phi_0\right\|_{H^4_x}^2\Big)
		\leq C(T_c)\Big(\left\|(1+|p|^2)^{\frac{\delta_2}{2}} f_0\right\|_{H_{x,p}^5}^2+\left\|\Phi_0\right\|_{H^4_x}^2\Big)\notag\\
		\leq& C(T_c)\Big(\mathcal E^{\delta_2}(0)+\left\|(1+|p|^2)^{\frac{\delta_2}{2}}\nabla_p^4f_0\right\|_{H_{x,p}^1}^2+\left\|\Phi_0\right\|_{L^2_x}^2\Big),
	\end{align*}
where we have used the assumption that $\delta_2\geq\frac{3}{2}$.
	Then we conclude that
	\begin{align}\label{4.7}
		\sum_{\vert\alpha\vert+ \vert\beta\vert\leq3}&\int_{\mathbb R^6}e^{(1+\vert\alpha\vert+3\vert\beta\vert)\bar\phi}(e^{2\phi}+\vert p\vert^2)^{\frac{1}{2}}\vert\partial_x^\alpha\partial_p^\beta \tilde f\vert^2{\rm d}x{\rm d}p\nonumber\\
		&+\sum_{\vert\alpha\vert+ \vert\beta\vert\leq3}\int_0^t\int_{\mathbb R^6}e^{(\vert\alpha\vert+3\vert\beta\vert+3)\bar\phi+2\Phi}(e^{2\phi}\vert\nabla_p\partial_x^\alpha\partial_p^\beta \tilde f\vert^2+\vert p\cdot\nabla_p\partial_x^\alpha\partial_p^\beta \tilde f\vert^2){\rm d}x{\rm d}p{\rm d}s\nonumber\\
		\leq& C(T_c)\Big(\mathcal E^{\delta_2}(0)+\left\|(1+|p|^2)^{\frac{\delta_2}{2}}\nabla_p^4f_0\right\|_{H_{x,p}^1}^2+\left\|\Phi_0\right\|_{L^2_x}^2\Big),
	\end{align}
	for $t\in[0,T_c]$.

Next we prove that \eqref{4.2} holds for $t\in(T_c,\infty)$. By a similar argument as in Subsection \ref{sec-ift-eng}, one has
\begin{align}\label{4.8}
\frac{d}{dt}\sum_{\vert\alpha\vert+ \vert\beta\vert\leq3}&\int_{\mathbb R^6}e^{(1+\vert\alpha\vert+3\vert\beta\vert)\bar\phi}(e^{2\phi}+\vert p\vert^2)^{\frac{1}{2}}\vert\partial_x^\alpha\partial_p^\beta \tilde f\vert^2{\rm d}x{\rm d}p\nonumber\\
&-C\bar\phi'(\infty)\sum_{\vert\alpha\vert+ \vert\beta\vert\leq3}\int_{\mathbb R^6}e^{(1+\vert\alpha\vert+3\vert\beta\vert)\bar\phi}(e^{2\phi}+\vert p\vert^2)^{\frac{1}{2}}\vert\partial_x^\alpha\partial_p^\beta \tilde f\vert^2{\rm d}x{\rm d}p\nonumber\\
&+\sum_{\vert\alpha\vert+ \vert\beta\vert\leq3}\int_{\mathbb R^6}e^{(\vert\alpha\vert+3\vert\beta\vert+3)\bar\phi+2\Phi}(e^{2\phi}\vert\nabla_p\partial_x^\alpha\partial_p^\beta \tilde f\vert^2+\vert p\cdot\nabla_p\partial_x^\alpha\partial_p^\beta \tilde f\vert^2){\rm d}x{\rm d}p\nonumber\\
\leq& Ce^{\bar\phi}\sum_{\vert\alpha\vert+ \vert\beta\vert\leq3}\int_{\mathbb R^6}e^{(1+\vert\alpha\vert+3\vert\beta\vert)\bar\phi}(e^{2\phi}+\vert p\vert^2)^{\frac{1}{2}}\vert\partial_x^\alpha\partial_p^\beta \tilde f\vert^2{\rm d}x{\rm d}p+\mathcal R_0.
\end{align}
Unlike \eqref{r0}, we bound the first integral in $\CR_0$ by using Cauchy-Schwarz's inequality with a small parameter $\eta>0$ as follows
\begin{align}\label{4.9} \eta\sum_{\vert\alpha\vert+\vert\beta\vert\leq3}&\left\|e^{\frac{1}{2}(1+\vert\alpha\vert+3\vert\beta\vert)\bar\phi}(e^{2\phi}+\vert p\vert^2)^{\frac{1}{4}}\partial_x^\alpha\partial_p^\beta\tilde f\right\|_{L^2_{x,p}}^2\notag\\
		&+C_\eta\sum_{\vert\alpha\vert+\vert\beta\vert\leq3}
\left\|e^{\frac{1}{2}(1+\vert\alpha\vert+3\vert\beta\vert)\bar\phi}(e^{2\phi}+\vert p\vert^2)^{\frac{1}{4}}\nabla_x\partial_x^\alpha\partial_p^\beta f\right\|_{L^2_{x,p}}^2\notag\\
&+C_\eta\sum_{\substack{\vert\alpha\vert+\vert\beta\vert\leq3\\|\beta|<3}}
\left\|e^{\frac{1}{2}(3+\vert\alpha\vert+3\vert\beta\vert)\bar\phi}(e^{2\phi}+\vert p\vert^2)^{\frac{1}{4}}\nabla_p\partial_x^\alpha\partial_p^\beta f\right\|^2_{L^2_{x,p}}\nonumber\\
&+\eta\sum_{\vert\beta\vert=2}\int_{\mathbb R^6}e^{7\bar\phi+4\phi}\vert\nabla_p\partial_p^\beta \tilde f\vert^2{\rm d}x{\rm d}p+C_\eta\sum_{\vert\beta\vert=3}\int_{\mathbb R^6}e^{9\bar\phi+4\phi}\vert\nabla_p\partial_p^\beta  f\vert^2{\rm d}x{\rm d}p.
\end{align}
Consequently, from \eqref{r0} with \eqref{4.9} and \eqref{4.8}, we have
\begin{align*}
\frac{d}{dt}\sum_{\vert\alpha\vert+ \vert\beta\vert\leq3}&\int_{\mathbb R^6}e^{(1+\vert\alpha\vert+3\vert\beta\vert)\bar\phi}(e^{2\phi}+\vert p\vert^2)^{\frac{1}{2}}\vert\partial_x^\alpha\partial_p^\beta \tilde f\vert^2{\rm d}x{\rm d}p\nonumber\\
	&-C\bar\phi'(\infty)\sum_{\vert\alpha\vert+ \vert\beta\vert\leq3}\int_{\mathbb R^6}e^{(1+\vert\alpha\vert+3\vert\beta\vert)\bar\phi}(e^{2\phi}+\vert p\vert^2)^{\frac{1}{2}}\vert\partial_x^\alpha\partial_p^\beta \tilde f\vert^2{\rm d}x{\rm d}p\nonumber\\
	&+\sum_{\vert\alpha\vert+ \vert\beta\vert\leq3}\int_{\mathbb R^6}e^{(\vert\alpha\vert+3\vert\beta\vert+3)\bar\phi+2\Phi}(e^{2\phi}\vert\nabla_p\partial_x^\alpha\partial_p^\beta \tilde f\vert^2+\vert p\cdot\nabla_p\partial_x^\alpha\partial_p^\beta \tilde f\vert^2){\rm d}x{\rm d}p\nonumber\\
	\leq& Ce^{\bar\phi}\sum_{\vert\alpha\vert+ \vert\beta\vert\leq3}\int_{\mathbb R^6}e^{(1+\vert\alpha\vert+3\vert\beta\vert)\bar\phi}(e^{2\phi}+\vert p\vert^2)^{\frac{1}{2}}\vert\partial_x^\alpha\partial_p^\beta \tilde f\vert^2{\rm d}x{\rm d}p\nonumber\\
	&+C_\eta\sum_{\vert\alpha\vert+\vert\beta\vert\leq3}
\left\|e^{\frac{1}{2}(1+\vert\alpha\vert+3\vert\beta\vert)\bar\phi}(e^{2\phi}+\vert p\vert^2)^{\frac{1}{4}}\nabla_x\partial_x^\alpha\partial_p^\beta f\right\|^2_{L^2_{x,p}}\\
	&+C_\eta\sum_{\substack{\vert\alpha\vert+\vert\beta\vert\leq3\\|\beta|<3}}
\left\|e^{\frac{1}{2}(3+\vert\alpha\vert+3\vert\beta\vert)\bar\phi}(e^{2\phi}+\vert p\vert^2)^{\frac{1}{4}}\nabla_p\partial_x^\alpha\partial_p^\beta f\right\|^2_{L^2_{x,p}}\\
&+Ce^{\bar\phi}(\left\|(\partial_t\Phi,\nabla_x\Phi)\right\|_{H_x^3}^2+\left\|\Phi\right\|_{L^2_x}^2)\nonumber\\
	&+C_\eta\sum_{\vert\alpha\vert+ \vert\beta\vert\leq3}\int_{\mathbb R^6}e^{(\vert\alpha\vert+3\vert\beta\vert+2)\bar\phi+2\Phi}(e^{2\phi}\vert\nabla_p\partial_x^\alpha\partial_p^\beta f\vert^2+\vert p\cdot\nabla_p\partial_x^\alpha\partial_p^\beta  f\vert^2){\rm d}x{\rm d}p.
\end{align*}
Next, applying Gr\"{o}nwall's inequality to the above inequality and using \eqref{eng-tt} and \eqref{4.7}, we get
\begin{align}\label{4.10}
\sum_{\vert\alpha\vert+ \vert\beta\vert\leq3}&\int_{\mathbb R^6}e^{(1+\vert\alpha\vert+3\vert\beta\vert)\bar\phi}(e^{2\phi}+\vert p\vert^2)^{\frac{1}{2}}\vert\partial_x^\alpha\partial_p^\beta \tilde f\vert^2{\rm d}x{\rm d}p\nonumber\\
	&+\sum_{\vert\alpha\vert+ \vert\beta\vert\leq3}\int_0^t\int_{\mathbb R^6}e^{(\vert\alpha\vert+3\vert\beta\vert+3)\bar\phi+2\Phi}(e^{2\phi}\vert\nabla_p\partial_x^\alpha\partial_p^\beta \tilde f\vert^2+\vert p\cdot\nabla_p\partial_x^\alpha\partial_p^\beta \tilde f\vert^2){\rm d}x{\rm d}p{\rm d}s\nonumber\\
\leq& C(T_c)\Big(\mathcal E^{\delta_2}(f,\Phi)(0)+\left\|(1+|p|^2)^{\frac{\delta_2}{2}}\nabla_p^4f_0\right\|_{H^1_{x,p}}^2+\left\|\Phi_0\right\|_{L^2_x}^2\Big),
\end{align}
for $t\in(T_c,\infty)$. Combining \eqref{4.7} and \eqref{4.10} gives \eqref{4.2}. Thus, \eqref{long-t} follows from \eqref{H2-lim}
and Sobolev's inequality.

\subsection{Local existence}\label{sec-loc}
In this subsection, we porve the local-in-time existence of the Cauchy problem \eqref{vnfp-s2}. We begin by constructing the following sequence of approximate solutions:
\begin{equation}\label{fn-eq}
	\left\{
	\begin{array}{l}
		\partial_t F^{n+1}+\nabla_p\sqrt{e^{2\phi^n}+\vert p\vert^2}\cdot\nabla_x F^{n+1}-\nabla_x\sqrt{e^{2\phi^n}+\vert p\vert^2}\cdot\nabla_p F^{n+1}=e^{2\phi^n}\nabla_p\cdot(\Lambda_{\phi^n,p}\nabla_pF^{n+1}),\\
		\partial_t^2\phi^{n+1}-\Delta_x\phi^{n+1}=-e^{2\phi^n}\int_{\mathbb R^3}\frac{F^{n+1}}{\sqrt{e^{2\phi^n}+\vert p\vert^2}}{\rm d}p,\ t>0,\ x\in\mathbb R^3,\ p\in\mathbb R^3,
	\end{array}\right.
\end{equation}
with initial data
\begin{equation}\label{fn-id}
	F^{n+1}(0,x,p)=F_0(x,p),\ \phi^{n+1}(0,x)=\phi_0(x),\ \partial_t\phi^{n+1}(0,x)=\phi_1(x),
\end{equation}
and starting from $F^0(t,x,p)=F_0(x,p)$, $\phi^0(t,x)=\phi_0(x)$.

Denote $f^{n+1}=F^{n+1}-\bar F$, $\Phi^{n+1}=\phi^{n+1}-\bar\phi$.  $[f^{n+1}(t,x,p),\Phi^{n+1}(t,x)]$ satisfies
\begin{align}
	\left\{\begin{array}{rll}
		&\partial_t f^{n+1}+\nabla_p\sqrt{e^{2\phi^n}+\vert p\vert^2}\cdot\nabla_x f^{n+1}-\nabla_x\sqrt{e^{2\phi^n}+\vert p\vert^2}\cdot\nabla_p f^{n+1}\\[2mm]&\quad=\nabla_x\sqrt{e^{2\phi^n}+\vert p\vert^2}\cdot\nabla_p\bar F
+e^{2\phi^n}\nabla_p\cdot(\Lambda_{\phi^n,p}\nabla_pf^{n+1})\\[2mm]&\qquad\qquad+\big(e^{2\phi^n}\nabla_p\cdot(\Lambda_{\phi^n,p}\nabla_p\bar F)-e^{2\bar\phi}\nabla_p\cdot(\Lambda_{\bar\phi,p}\nabla_p\bar F)\big),\\[2mm]
		&\partial_t^2\Phi^{n+1}-\Delta_x\Phi^{n+1}=-e^{2\phi^n}\int_{\mathbb R^3}\frac{f^{n+1}}{\sqrt{e^{2\phi^n}+\vert p\vert^2}}{\rm d}p-\int_{\mathbb R^3}\Big(\frac{e^{2\phi^n}\bar F}{\sqrt{e^{2\phi^n}+\vert p\vert^2}}-\frac{e^{2\bar\phi}\bar F}{\sqrt{e^{2\bar\phi}+\vert p\vert^2}}\Big){\rm d}p,\notag
\end{array}
\right.
\end{align}
with
\begin{equation*}
	f^{n+1}(0,x,p)=f_0(x,p),\ \Phi^{n+1}(0,x)=\Phi_0(x),\ \partial_t\Phi^{n+1}(0,x)=\Phi_1(x),
\end{equation*}
and
\begin{equation*}
	f^{0}(t,x,p)=f_0(x,p),\ \Phi^{0}(t,x)=\Phi_0(x).
\end{equation*}
Let us now define the following energy functionals
\begin{align}
	E^\gamma(f^n,\Phi^n)(t)=&\sum_{\substack{ \vert\alpha\vert+\vert\beta\vert\leq4\\\vert\beta\vert<4}}\int_{\mathbb R^6}e^{(\vert\alpha\vert+3\vert\beta\vert)\bar\phi}(e^{2\phi^{n-1}}+\vert p\vert^2)^\gamma\vert\partial_x^\alpha\partial_p^\beta f^{n}\vert^2{\rm d}x{\rm d}p\notag\\&+\sum_{\vert\alpha\vert\leq3}\int_{\mathbb R^3}(\vert\partial_t\partial_x^\alpha\Phi^{n}\vert^2+\vert\nabla_x\partial_x^\alpha\Phi^{n}\vert^2){\rm d}x+\sum_{\vert\alpha\vert=4}\int_{\mathbb R^3}e^{\bar\phi}(\vert\partial_t\partial_x^\alpha\Phi^{n}\vert^2+\vert\nabla_x\partial_x^\alpha\Phi^{n}\vert^2){\rm d}x,\notag
\end{align}
and
\begin{align}
		E_0^\gamma(f,\Phi)(t)=&\sum_{\substack{0\leq \vert\alpha\vert+\vert\beta\vert\leq4\\\vert\beta\vert<4}}\int_{\mathbb R^6}e^{(\vert\alpha\vert+3\vert\beta\vert)\bar\phi}(1+\vert p\vert^2)^\gamma\vert\partial_x^\alpha\partial_p^\beta f\vert^2{\rm d}x{\rm d}p\notag\\&+\sum_{\vert\alpha\vert\leq3}\int_{\mathbb R^3}(\vert\partial_x^\alpha\pa_t\Phi\vert^2+\vert\nabla_x\partial_x^\alpha\Phi\vert^2){\rm d}x+\sum_{\vert\alpha\vert=4}\int_{\mathbb R^3}e^{\bar\phi}(\vert\partial_x^\alpha\pa_t\Phi\vert^2+\vert\nabla_x\partial_x^\alpha\Phi\vert^2){\rm d}x.\notag
\end{align}
Then the local existence of the Cauchy problem of \eqref{vnfp-s2} and \eqref{id-s2} can be stated in the following theorem.

\begin{theorem}[Local existence]\label{loc-ex}
There exists constant $M_0>0$ such that if $E_0^\gamma(f_0,\Phi_0)\leq M_0$, where $\partial_x^\alpha\pa_t\Phi=\pa_x^\al \Phi_1$, then there exists a $T_0>0$ such that the Cauchy problem of \eqref{vnfp-s2} and \eqref{id-s2} admits a unique strong solution $[f,\Phi]$ satisfying
\begin{align}
\CE^\ga(f,\Phi)(t)\leq 4M_0,\label{loc-bd}
\end{align}
on the time interval $[0,T_0].$
\end{theorem}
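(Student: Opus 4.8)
The plan is to run a standard Picard-type iteration on the linear scheme \eqref{fn-eq}--\eqref{fn-id}, carried out in the weighted energy space measured by $\CE^\ga$ (with $E_0^\ga$ for the data), in three steps. \emph{Step 1 (solvability of each linear iterate).} Given $\phi^n$, equivalently $\Phi^n$, with $\CE^\ga(f^n,\Phi^n)(t)\le 4M_0$ on $[0,T_0]$, the equation \eqref{fn-eq}$_1$ for $F^{n+1}$ is a \emph{linear} kinetic Fokker--Planck equation: its transport part $\na_p\sqrt{e^{2\phi^n}+|p|^2}\cdot\na_x-\na_x\sqrt{e^{2\phi^n}+|p|^2}\cdot\na_p$ is divergence free in $(x,p)$, it carries no zeroth-order term, and by \eqref{m-lb} the diffusion matrix $\Lambda_{\phi^n,p}$ is nonnegative and uniformly elliptic in $p$ on every bounded momentum ball (degenerating only as $|p|\to\infty$, which is exactly what the momentum weights absorb). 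I would solve it in the weighted Sobolev class by adding an artificial viscosity $\eps\langle p\rangle^{2}\Delta_p$ (and, if needed, $\eps\Delta_x$), constructing the approximate solution by a Galerkin/fixed-point argument, and then letting $\eps\to 0$ using the $\eps$-uniform weighted estimates of Step 2. The wave equation \eqref{fn-eq}$_2$ for $\Phi^{n+1}$, with source built from $F^{n+1}$ and $\phi^n$, is solved by the classical energy method. Since \eqref{fn-eq}$_1$ has no zeroth-order term and a Hamiltonian transport field, the weak maximum principle — or an energy estimate for $\min(F^{n+1},0)$ — gives $F^{n+1}\ge 0$ whenever $F_0\ge 0$.

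\emph{Step 2 (uniform-in-$n$ bound \eqref{loc-bd}).} I would repeat the three-step energy computation of Proposition \ref{loc-eng-pro} (zeroth, first, and higher $\pa_x^\al\pa_p^\beta$ estimates, plus the wave energy for $\Phi^{n+1}$), now for the linear iterate. The simplification over the nonlinear estimate is that on the compact interval $[0,T_0]$ all factors built from the homogeneous profile $\bar\phi,\bar\phi',\bar F$ and their derivatives are bounded by \cite{ACP-2014}, so every right-hand side term is either $\le C\,\CE^\ga(f^{n+1},\Phi^{n+1})(t)$ or controlled by $\CE^\ga(f^n,\Phi^n)(t)\le 4M_0$ together with $\|\Phi^n\|_{L^2_x}^2$; the mismatch between the weights $(e^{2\phi^{n-1}}+|p|^2)^\ga$ and $(e^{2\phi^n}+|p|^2)^\ga$ that appear in consecutive energy functionals is harmless, since $|\phi^{n-1}|,|\phi^n|$ are bounded by the induction hypothesis, so the two weights are comparable with constants depending only on $M_0,T_0$. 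Controlling $\|\Phi^{n+1}(t)\|_{L^2_x}^2\le\|\Phi_0\|_{L^2_x}^2+t\int_0^t\|\pa_s\Phi^{n+1}\|_{L^2_x}^2\,{\rm d}s$ as in \eqref{phi-l2} and applying Gr\"onwall's inequality on $[0,T_0]$ gives $\CE^\ga(f^{n+1},\Phi^{n+1})(t)\le C(T_0)\big(E_0^\ga(f_0,\Phi_0)+T_0\cdot(\text{bounded in }M_0)\big)$; choosing first $M_0$ and then $T_0$ small makes the right-hand side $\le 4M_0$, which closes the induction and proves \eqref{loc-bd}.

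\emph{Step 3 (convergence and uniqueness).} Subtracting the systems at levels $n+1$ and $n$, the differences $f^{n+1}-f^n$, $\Phi^{n+1}-\Phi^n$ solve the same type of linear system with right-hand sides that are (multi)linear in $(f^n-f^{n-1},\Phi^n-\Phi^{n-1})$ with coefficients among the already-bounded iterates. Running only the zeroth- and first-order weighted energy estimates for the differences (losing one derivative, which is affordable since $N=4$) yields
\begin{equation*}
\sup_{[0,T_0]}\mathcal{N}(f^{n+1}-f^n,\Phi^{n+1}-\Phi^n)\le C(M_0)\,T_0\,\sup_{[0,T_0]}\mathcal{N}(f^{n}-f^{n-1},\Phi^n-\Phi^{n-1})
\end{equation*}
for a suitable lower-order norm $\mathcal{N}$; for $T_0$ small the sequence is Cauchy, hence converges to a limit $[f,\Phi]$ which, by weak lower semicontinuity, still satisfies \eqref{loc-bd}, and a routine interpolation/regularity argument identifies it as a strong solution of \eqref{vnfp-s2}--\eqref{id-s2}. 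Uniqueness follows from the same difference estimate applied to two solutions with the same data. The hard part will be Step 1: making the linear kinetic Fokker--Planck solvability rigorous in the weighted class, i.e. choosing a regularization whose approximate solutions obey \emph{exactly} the momentum-weighted estimates of Proposition \ref{loc-eng-pro} uniformly, so that the weighted structure of $\CE^\ga$ — and in particular the control of the $|p|\to\infty$ degeneracy of $\Lambda_{\phi^n,p}$ — survives the limit; once that is in place, Steps 2 and 3 are essentially a bookkeeping repetition of the a priori estimates already carried out.
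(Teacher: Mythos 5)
Your proposal follows essentially the same route as the paper: the same linear iteration scheme \eqref{fn-eq}--\eqref{fn-id}, uniform-in-$n$ bounds obtained by rerunning the energy estimates of Proposition \ref{loc-eng-pro} with $\|\Phi^{n+1}\|_{L^2_x}$ controlled as in \eqref{phi-l2} and Gr\"onwall with $M_0,T_0$ small, and then a contraction argument for the differences in a low-order weighted $L^2$ norm, with the weight equivalence $(e^{2\phi^n}+|p|^2)^\gamma\sim(1+|p|^2)^\gamma$ closing the bound \eqref{loc-bd}. The only differences are cosmetic: the paper treats the solvability of each linear iterate as standard (where you sketch a regularization/Galerkin construction) and contracts using only the zeroth-order difference estimate rather than zeroth plus first order.
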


Before proving  Theorem \ref{loc-ex}, we first prove the following lemma about  the uniform bound on the approximate solution sequence $[f^n,\Phi^n].$

\begin{lemma}\label{loc-tt}
There exist $M_0>0$ and $T_\ast>0$, such that if $E_0^\gamma(f_0,\Phi_0)\leq M_0$ and $$\mathop{\sup}_{t\in[0,T_\ast]}(E^{\delta_1}(f^k,\Phi^k)(t)+E^{\delta_2}(f^k,\Phi^k)(t))\leq 2M_0,$$
for all $k\in[1,n]$, then
the Cauchy problem \eqref{fn-eq} and \eqref{fn-id} admits a unique solution $[f^{n+1},\Phi^{n+1}]$ satisfying
\begin{equation}\label{4.5}
	\mathop{\sup}_{t\in[0,T_\ast]}\left(E^{\delta_1}(f^{n+1},\Phi^{n+1})(t)+E^{\delta_2}(f^{n+1},\Phi^{n+1})(t)\right)\leq 2M_0.
\end{equation}	
	\end{lemma}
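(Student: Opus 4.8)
\medskip
\noindent\textbf{Proof proposal.} The plan is to exploit the triangular structure of the linearized system \eqref{fn-eq}: the equation \eqref{fn-eq}$_1$ for $F^{n+1}$ involves only the previous iterate $\phi^n$, and once $F^{n+1}$ is known the equation \eqref{fn-eq}$_2$ for $\phi^{n+1}$ is a linear inhomogeneous wave equation with a given right-hand side. I would therefore solve first for $f^{n+1}=F^{n+1}-\bar F$, then for $\Phi^{n+1}=\phi^{n+1}-\bar\phi$, and finally close the uniform bound \eqref{4.5} by transcribing the energy hierarchy already built for the nonlinear problem.

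First I would record, from the induction hypothesis $\sup_{[0,T_\ast]}\big(E^{\delta_1}(f^n,\Phi^n)+E^{\delta_2}(f^n,\Phi^n)\big)\le 2M_0$, the embedding $H^2_x\hookrightarrow L^\infty_x$, and the known bounds on $\bar\phi$ and $\bar\phi'$, that $\phi^n=\bar\phi+\Phi^n$ together with its $x$- and $t$-derivatives up to the order appearing in $E^\gamma$ is bounded on $[0,T_\ast]$ by a constant $C(M_0)$; hence the transport coefficients $\nabla_{x,p}\sqrt{e^{2\phi^n}+|p|^2}$ and the diffusion matrix $e^{2\phi^n}\Lambda_{\phi^n,p}$ are smooth with controlled derivatives. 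Equation \eqref{fn-eq}$_1$ is then a linear transport equation with a $p$-degenerate second-order part, and I would construct its solution in the weighted spaces underlying $E^\gamma$ by a regularization argument: add an artificial viscosity $\vartheta\Delta_x$ (or a Galerkin truncation in $(x,p)$), solve the regularized linear problem by standard parabolic theory, derive the uniform-in-$\vartheta$ weighted estimates of the last step below, and pass to the limit $\vartheta\to 0$ by weak compactness; uniqueness follows from the zeroth-order weighted estimate applied to the difference of two solutions. Positivity of $F^{n+1}$ is not needed here, being a separate consequence of the maximum principle for \eqref{fn-eq}$_1$.

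Next, with $F^{n+1}=\bar F+f^{n+1}$ in hand, the source of \eqref{fn-eq}$_2$ is a given function of $(t,x)$: using the splitting \eqref{el-sg} with exponents $\delta_1,\delta_2$ it lies in $C([0,T_\ast];H^3_x)$ (with the weighted top-order norm as in $E^\gamma$), and the linear wave equation with data \eqref{fn-id} is solved by the usual energy method. Then I would obtain \eqref{4.5} by repeating the computations of Proposition \ref{loc-eng-pro} (Steps 1--3) essentially verbatim for the linearized system: multiply $\partial_x^\alpha\partial_p^\beta$ of \eqref{fn-eq}$_1$ by $e^{(|\alpha|+3|\beta|)\bar\phi}(e^{2\phi^n}+|p|^2)^\gamma\partial_x^\alpha\partial_p^\beta f^{n+1}$, apply Lemmas \ref{lemma A3}--\ref{lemma A5} to the diffusion and the $\bar F$-source terms, and pair $\partial_x^\alpha$ of \eqref{fn-eq}$_2$ with $\partial_t\partial_x^\alpha\Phi^{n+1}$ (times $e^{\bar\phi}$ for $|\alpha|=4$); the only difference is that the field in the coefficients and the weight is now the external $\phi^n$, with norms bounded — though not small — by $C(M_0)$ on $[0,T_\ast]$, while $\|\Phi^{n+1}\|_{L^2_x}$ is handled as in \eqref{phi-l2}. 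All commutator terms (the analogues of $I_{16}$) are thereby absorbed into $C(M_0)$ times the energy plus a fixed inhomogeneity, giving
\[
\frac{d}{dt}\big(E^{\delta_1}+E^{\delta_2}\big)(f^{n+1},\Phi^{n+1})(t)\le C(M_0)\big(E^{\delta_1}+E^{\delta_2}\big)(f^{n+1},\Phi^{n+1})(t)+C(M_0),\quad t\in[0,T_\ast].
\]
Since $\phi^{n+1}(0,\cdot)=\phi_0$ is the fixed bounded datum, $\big(E^{\delta_1}+E^{\delta_2}\big)(f^{n+1},\Phi^{n+1})(0)\le C\,E_0^{\delta_2}(f_0,\Phi_0)\le CM_0$ with $C$ depending only on $\|\phi_0\|_{L^\infty_x}$, so Grönwall's inequality gives $\big(E^{\delta_1}+E^{\delta_2}\big)(f^{n+1},\Phi^{n+1})(t)\le\big(CM_0+C(M_0)T_\ast\big)e^{C(M_0)T_\ast}$, and fixing $M_0$ and then $T_\ast$ small makes the right-hand side $\le 2M_0$, which is \eqref{4.5}.

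The main obstacle I anticipate is the first step: solving the linear relativistic Fokker--Planck--transport equation \eqref{fn-eq}$_1$ in the momentum-weighted high-order spaces. The diffusion degenerates both as $|p|\to\infty$ — the effective ellipticity in the $(e^{2\phi^n}+|p|^2)^\gamma$-weight is only of order $|p|^{-1}$, cf.\ \eqref{m-lb} — and near $p=0$ it must be compatible with the momentum-singularity bound \eqref{el-sg}, so the regularization and the passage to the limit have to respect exactly the weighted energy structure used to close \eqref{4.5}. By contrast, once linear solvability is granted, the energy step is a routine (though lengthy) transcription of Proposition \ref{loc-eng-pro}, as $\phi^n$ enters only as a given coefficient and no smallness of $\phi^n$ is required on the short interval $[0,T_\ast]$.
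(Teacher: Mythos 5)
Your proposal follows essentially the same route as the paper: derive the weighted energy inequality for the linearized iterate by repeating the computations of Proposition \ref{loc-eng-pro} with $\phi^n$ entering only as a given coefficient controlled by the induction hypothesis (using Lemmas \ref{lemma A3}--\ref{lemma A5} and the analogue of \eqref{phi-l2} for $\|\Phi^n\|_{L^2_x}$), then close \eqref{4.5} by Gr\"onwall together with smallness of $T_\ast$ (and $M_0$); the paper additionally records that the inhomogeneity is proportional to $M_0(1+t^2)$ rather than a generic $C(M_0)$, but this does not change the argument since $M_0$ is fixed before $T_\ast$. Your extra sketch of solvability of the linear degenerate Fokker--Planck equation (regularization/Galerkin plus the same weighted estimates) supplies detail the paper simply declares standard, so the two proofs are in substance the same.
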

\begin{proof}
We only prove the bound \eqref{4.5}, the existence of solution to the linear system
 \eqref{fn-eq} and \eqref{fn-id} is standard.
By a similar argument used to derive \eqref{loc-eng} in Subsection \ref{sec-ft-eng}, we obtain
	\begin{equation*}
		\begin{split}
			\frac{d}{dt}&\left(E^{\delta_1}(f^{n+1},\Phi^{n+1})(t)+E^{\delta_2}(f^{n+1},\Phi^{n+1})(t)\right)\\
\leq& Ce^{CM_0^{1/2}}\left(E^{\delta_1}(f^{n+1},\Phi^{n+1})(t)+E^{\delta_2}(f^{n+1},\Phi^{n+1})(t)
+\left\|\nabla_x\Phi^n\right\|_{H^3_x}^2+\left\|\Phi^n\right\|_{L^2_x}^2\right)\\
			&\leq Ce^{CM_0^{1/2}}\left(E^{\delta_1}(f^{n+1},\Phi^{n+1})(t)+E^{\delta_2}(f^{n+1},\Phi^{n+1})(t)+M_0+t^2M_0\right).
		\end{split}
	\end{equation*}
Applying Gr\"{o}nwall's inequality to the above inequality yields
	\begin{equation}
		\begin{split}
E^{\delta_1}(f^{n+1},\Phi^{n+1})(t)+E^{\delta_2}(f^{n+1},\Phi^{n+1})(t)
\leq e^{Ce^{CM_0^{1/2}}T}\left(M_0+M_0(T+\frac{T^3}{3})\right)\notag
		\end{split}
	\end{equation}
	for all $t\in[0,T]$ for some $T>0$.  By choosing $T_\ast$ to be small enough such that $T_\ast+\frac{T_\ast^3}{3}<\frac{1}{2}$ and $M_0$ to be sufficiently small such that $e^{Ce^{CM_0^{1/2}}T_\ast}<\frac{4}{3}$,  \eqref{4.5} holds. This completes the proof of  the lemma.
\end{proof}

The following lemma is a direct consequence of  Lemma \ref{loc-ex} by noting
\begin{align}
	(e^{2\phi^{n}}+\vert p\vert^2)^\gamma\sim(e^{2\phi}+\vert p\vert^2)^\gamma\sim(1+\vert p\vert^2)^\ga,\label{eq-w}
\end{align}
for $t\in[0,T_\ast]$.

\begin{lemma}\label{ubd}
Let $M_0$ and $T_\ast$ be  given in Lemma \ref{loc-tt}. It holds that
\begin{align}\label{fn-bd}
	\sum_{\substack{\vert\alpha\vert+\vert\beta\vert\leq4\\\vert \beta\vert<4}}&\int_{\mathbb R^6}\big((1+\vert p\vert^2)^{\delta_1}+(1+\vert p\vert^2)^{\delta_2}\big)\vert\partial_x^\alpha\partial_p^\beta f^n\vert^2{\rm d}x{\rm d}p\notag\\&+\sum_{\vert\alpha\vert\leq4}\int_{\mathbb R^3}(\vert\partial_t\partial_x^\alpha\Phi^n\vert^2+\vert\nabla_x\partial_x^\alpha\Phi^n\vert^2){\rm d}x\leq 3M_0,
\end{align}
for all $t\in[0,T_\ast]$ and $n\in\mathbb N^+$.
\end{lemma}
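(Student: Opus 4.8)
The plan is to deduce Lemma \ref{ubd} from the conditional bound of Lemma \ref{loc-tt} by a short induction, and then to discard the $\bar\phi$- and $\phi^{n-1}$-dependent weights using the equivalence \eqref{eq-w} on the fixed interval $[0,T_\ast]$.

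First, I would upgrade Lemma \ref{loc-tt} to an unconditional statement by strong induction on $n$. For the base case apply Lemma \ref{loc-tt} with $n=0$: its hypothesis then concerns the empty index set $k\in[1,0]$ and is vacuously satisfied, so, using only $E_0^\gamma(f_0,\Phi_0)\le M_0$ and the explicit starting iterates $f^0=f_0$, $\Phi^0=\Phi_0$, we obtain that $[f^1,\Phi^1]$ exists on $[0,T_\ast]$ and that $\sup_{[0,T_\ast]}\big(E^{\delta_1}(f^1,\Phi^1)+E^{\delta_2}(f^1,\Phi^1)\big)\le 2M_0$. For the inductive step, if this bound holds with $f^1,\Phi^1$ replaced by $f^k,\Phi^k$ for every $k\in[1,n]$, then Lemma \ref{loc-tt} yields both the solvability of \eqref{fn-eq} and \eqref{fn-id} at level $n+1$ and the bound \eqref{4.5} for $k=n+1$. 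Hence
\[
\sup_{t\in[0,T_\ast]}\big(E^{\delta_1}(f^n,\Phi^n)(t)+E^{\delta_2}(f^n,\Phi^n)(t)\big)\le 2M_0,\qquad n\in\mathbb N^+.
\]

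Next, I would pass to the unweighted quantity in \eqref{fn-bd}. On $[0,T_\ast]$ one has $|\bar\phi(t)|\le|\phi_{in}|+\|\bar\phi'\|_{L^\infty}T_\ast$, with $\bar\phi'$ bounded by \cite{ACP-2014}, so every factor $e^{(|\alpha|+3|\beta|)\bar\phi(t)}$ and $e^{\bar\phi(t)}$ occurring in $E^{\delta_i}(f^n,\Phi^n)$ lies between two positive constants depending only on $\phi_{in}$ and $T_\ast$. Moreover $\phi^{n-1}=\bar\phi+\Phi^{n-1}$ with $\|\Phi^{n-1}\|_{L^\infty_x}$ small uniformly in $n$ — it is controlled through Sobolev's embedding by $M_0^{1/2}$ together with the data smallness, via $\|\Phi^{n-1}(t)\|_{L^2_x}\le\|\Phi_0\|_{L^2_x}+T_\ast\sup_{[0,T_\ast]}\|\partial_s\Phi^{n-1}\|_{L^2_x}$ and the bound just obtained — so $e^{2\phi^{n-1}}$ is bounded above and below by positive constants, uniformly in $n$ and $(t,x)\in[0,T_\ast]\times\R^3$; this is exactly \eqref{eq-w}, whence $(e^{2\phi^{n-1}}+|p|^2)^{\delta_i}\sim(1+|p|^2)^{\delta_i}$ with $n$-independent constants. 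Combining these comparisons with the bound from the previous step estimates the left-hand side of \eqref{fn-bd} by a fixed multiple of $E^{\delta_1}(f^n,\Phi^n)+E^{\delta_2}(f^n,\Phi^n)\le 2M_0$, which, after a suitable (small) choice of $M_0$ and $T_\ast$, gives \eqref{fn-bd}.

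The delicate point — the place I would check most carefully — is that \emph{none of the constants above depends on $n$}: the factor $Ce^{CM_0^{1/2}}$ in the Gr\"onwall step of Lemma \ref{loc-tt}, and the weight-equivalence constants in \eqref{eq-w}, all depend on the iterates only through $\|\Phi^n\|_{L^\infty_x}$ (hence through the a priori ceiling $2M_0$ and the fixed data $\|\Phi_0\|_{L^2_x}$) and on $\bar\phi$ only through its behavior on the fixed interval $[0,T_\ast]$. Once this $n$-uniformity is verified, the two steps above close the argument.
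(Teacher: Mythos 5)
Your proposal is correct and follows essentially the same route as the paper: the paper gives no details at all, stating only that the lemma is a direct consequence of the iteration bound of Lemma \ref{loc-tt} together with the weight equivalence \eqref{eq-w} on $[0,T_\ast]$, and your two steps (strong induction in $n$ to make the $2M_0$ bound unconditional, then removal of the $e^{(|\al|+3|\bet|)\bar\phi}$, $e^{\bar\phi}$ and $e^{2\phi^{n-1}}$ weights using the boundedness of $\bar\phi$ on $[0,T_\ast]$ and the uniform smallness of $\Phi^{n-1}$ via Sobolev embedding and $\|\Phi^{n-1}(t)\|_{L^2_x}\le\|\Phi_0\|_{L^2_x}+T_\ast\sup_s\|\partial_s\Phi^{n-1}\|_{L^2_x}$) are precisely what the paper leaves implicit. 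One small caveat about your closing sentence: choosing $M_0$ small cannot absorb the weight-comparison constant, since both sides of \eqref{fn-bd} scale linearly in $M_0$, and that constant also depends on $\phi_{in}$ through $e^{\bar\phi}$ on $[0,T_\ast]$, not only on quantities you can shrink; strictly speaking one obtains $C(\phi_{in},T_\ast)M_0$ rather than literally $3M_0$. This imprecision is, however, already present in the paper's own statement, and the specific numerical constant plays no role in the subsequent contraction argument, so it does not affect the substance of your proof.
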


\begin{proof}[Proof of Theorem \ref{loc-ex}]
With Lemmas \ref{loc-tt} and \ref{ubd}, to prove Theorem \ref{loc-ex}, it remains  to verify that $[f^{n},\Phi^n]$ is
a Cauchy sequence in some weighted $L^2_{x,p}\times L^2_x$. For this,
Let us denote $h^{n+1}=f^{n+1}-f^n$, $\Psi^{n+1}=\Phi^{n+1}-\Phi^n$, and consider the following Cauchy problem
\begin{equation}\label{4.69}
	\left\{
	\begin{array}{rll}
		\partial_t h^{n+1}+&\nabla_p\sqrt{e^{2\phi^n}+\vert p\vert^2}\cdot\nabla_x h^{n+1}+\nabla_x\sqrt{e^{2\phi^n}+\vert p\vert^2}\cdot\nabla_p h^{n+1}\\
		=&(\nabla_p\sqrt{e^{2\phi^{n-1}}+\vert p\vert^2}-\nabla_p\sqrt{e^{2\phi^n}+\vert p\vert^2})\cdot\nabla_x f^n\\&+(\nabla_x\sqrt{e^{2\phi^{n-1}}+\vert p\vert^2}-\nabla_x\sqrt{e^{2\phi^n}+\vert p\vert^2})\cdot\nabla_pf^n\\
		&+(\nabla_x\sqrt{e^{2\phi^{n-1}}+\vert p\vert^2}-\nabla_x\sqrt{e^{2\phi^n}+\vert p\vert^2})\cdot\nabla_p\bar F+e^{2\phi^n}\nabla_p\cdot(\Lambda_{\phi^n,p}\nabla_ph^{n+1})\\
		&+e^{2\phi^n}\nabla_p\cdot(\Lambda_{\phi^n,p}\nabla_pf^n)-e^{2\phi^{n-1}}\nabla_p\cdot(\Lambda_{\phi^{n-1},p}\nabla_p f^n)\\
		&+(e^{2\phi^n}\nabla_p\cdot(\Lambda_{\phi^n,p}\nabla_p\bar F)-e^{2\phi^{n-1}}\nabla_p\cdot(\Lambda_{\phi^{n-1},p}\nabla_p\bar F)),\\
		\partial_t^2\Psi^{n+1}&-\Delta_x\Psi^{n+1}=-e^{2\phi^n}\int_{\mathbb R^3}\frac{g^{n+1}}{\sqrt{e^{2\phi^n}+\vert p\vert^2}}{\rm d}p-\int_{\mathbb R^3}\Big(\frac{e^{2\phi^n}f^n}{\sqrt{e^{2\phi^n}+\vert p\vert^2}}-\frac{e^{2\phi^{n-1}}f^n}{\sqrt{e^{2\phi^{n-1}}+\vert p\vert^2}}\Big){\rm d}p\\
		&\qquad\qquad\qquad-\int_{\mathbb R^3}\Big(\frac{e^{2\phi^n}\bar F}{\sqrt{e^{2\phi^n}+\vert p\vert^2}}-\frac{e^{2\phi^{n-1}}\bar F}{\sqrt{e^{\phi^{n-1}}+\vert p\vert^2}}\Big){\rm d}p,
	\end{array}\right.
\end{equation}
with
\begin{equation*}
	h^{n+1}(0)=0,\ \Psi^{n+1}(0)=0,\ \partial_t\Psi^{n+1}(0)=0.
\end{equation*}
Multiplying \eqref{4.69}$_1$ by $(e^{2\phi^n}+\vert p\vert^2)^\gamma h^{n+1}$ and \eqref{4.69}$_2$ by $\partial_t\Psi^{n+1}$, respectively, and taking the summation, we have by using \eqref{fn-bd} that
\begin{align}
\frac{1}{2}\frac{d}{dt}\int_{\mathbb R^6}&(e^{2\phi^n}+\vert p\vert^2)^\gamma\vert h^{n+1}\vert^2{\rm d}x{\rm d}p+\frac{1}{2}\int_{\mathbb R^3}(\vert\partial_t\Psi^{n+1}\vert^2+\vert\nabla_x\Psi^{n+1}\vert^2){\rm d}x\nonumber\\
&+\int_{\mathbb R^6} e^{2\phi^n}(e^{2\phi^n}+\vert p\vert^2)^{\gamma-\frac{1}{2}}(e^{2\phi^n}\vert\nabla_p h^{n+1}\vert^2+\vert p\cdot\nabla_p h^{n+1}){\rm d}x{\rm d}p\nonumber\\
\leq& Ce^{CM_0^{1/2}}\Big(\int_{\mathbb R^6}(e^{2\phi^n}+\vert p\vert^2)^{\gamma}\vert h^{n+1}\vert^2{\rm d}x{\rm d}p+\int_{\mathbb R^3}(\vert\partial_t\Psi^n\vert^2+\vert\nabla_x\Psi^n\vert^2+\vert\partial_t\Psi^{n+1}\vert^2){\rm d}x\Big).\notag
\end{align}
Applying Gr\"{o}nwall's inequality to the above inequality, we  obtain for $t\in[0,T_\ast]$ that
\begin{equation}
	\begin{split}
\int_{\mathbb R^6}(e^{2\phi^n}+\vert p\vert^2)^\gamma&\vert g^{n+1}\vert^2{\rm d}x{\rm d}p+\frac{1}{2}\int_{\mathbb R^3}(\vert\partial_t\Psi^{n+1}\vert^2+\vert\nabla_x\Psi^{n+1}\vert^2){\rm d}x\\
\leq& e^{e^{CM_0^{1/2}}T_\ast}T_\ast	\Big(\int_{\mathbb R^6}(e^{2\phi^{n-1}}+\vert p\vert^2)^\gamma\vert g^{n}\vert^2{\rm d}x{\rm d}p+\frac{1}{2}\int_{\mathbb R^3}(\vert\partial_t\Psi^{n}\vert^2+\vert\nabla_x\Psi^{n}\vert^2){\rm d}x\Big).\notag
\end{split}
\end{equation}
Choose $T_0\in(0,T_\ast]$  such that $e^{e^{CM_0^{1/2}}T_0}T_0<1$. Consequently, $(f^n,\Phi^n)$ is a Cauchy sequence in the following function space
\begin{align}
	X(T_0)=\Big\{
(f,\Phi)\Big|\sup\limits_{t\in[0,T_0]}&\int_{\mathbb R^6}\big((1+\vert p\vert^2)^{\delta_1}+(1+\vert p\vert^2)^{\delta_2}\big)\vert f\vert^2{\rm d}x{\rm d}p\notag\\&+\sup\limits_{t\in[0,T_0]}\int_{\mathbb R^3}(\vert\partial_t\Phi\vert^2+\vert\Phi\vert^2){\rm d}x<+\infty\notag
\Big\}.
\end{align}
By taking $n\rightarrow\infty$, we thus obtain a unique strong solution  to \eqref{vnfp-s2} and \eqref{id-s2} in the time interval $[0,T_0]$.
Moreover, \eqref{loc-bd} follows from \eqref{fn-bd} and \eqref{eq-w}. This completes the proof of Theorem \ref{loc-ex}.
\end{proof}

Now, we are in a position to complete the proof of Theorem \ref{thm1}.
\begin{proof}[The proof of Theorem \ref{thm1}] The global existence of \eqref{vnfp-s2} and \eqref{id-s2} follows from the local existence  in Theorem \ref{loc-ex} and the \textit{a priori} energy estimate \eqref{eng-tt} proved in Subsection \ref{sec-ft-eng} and Subsection \ref{sec-ift-eng} through the continuity argument. Moreover, since $\bar{F}(t,p)\geq0$ was proved in \cite[Theorem 2.1, pp.3703]{ACP-2014}, the non-negativity of $F(t,x,p)$ follows from the large time behavior \eqref{long-t}. This completes the proof of Theorem \ref{thm1}.
\end{proof}

\section{Self-similar solution}\label{sec-sss}
In this section, we will show that the Cauchy problem \ref{pt-vnfp} and \ref{pt-id} admits a self-similar solution in the form of
\eqref{sss}. Specifically, we will prove Theorem \ref{thm2}  which is about the the global existence of the following Cauchy problem
\begin{eqnarray}\label{g-eq-s3}
	\left\{
	\begin{array}{l}
		\partial_t g-3\bar\phi' g-\bar\phi' q\cdot\nabla_qg+\nabla_q\sqrt{e^{2\Phi}+\vert q\vert^2}\cdot\nabla_x g-\nabla_x\sqrt{e^{2\Phi}+\vert q\vert^2}\cdot\nabla_q g=\nabla_x\sqrt{e^{2\Phi}+\vert q\vert^2}\cdot\nabla_q\bar G\\[2mm]
		\quad+e^{\bar\phi+2\Phi}\nabla_q\cdot(\Lambda_{\Phi,q}\nabla_qg)+(e^{\bar\phi+2\Phi}\nabla_q\cdot(\Lambda_{\Phi,q}\nabla_q\bar G)-e^{\bar\phi}\nabla_q\cdot(\Lambda_{0,q}\nabla_q\bar G)),\\[2mm]
		\partial_t^2\Phi-\Delta_x\Phi=-e^{\bar\phi+2\Phi}\int_{\mathbb R^3}\frac{g}{\sqrt{e^{2\Phi}+\vert q\vert^2}}{\rm d}q-\int_{\mathbb R^3}\Big(\frac{e^{\bar\phi+2\Phi}\bar G}{\sqrt{e^{2\Phi}+\vert q\vert^2}}-\frac{e^{\bar\phi}\bar G}{\sqrt{1+\vert p\vert^2}}\Big){\rm d}q,
	\end{array}\right.
\end{eqnarray}
with initial data
\begin{equation}
	(g(0,x,q),\Phi(0,x),\partial_t\Phi(0,x))=(g_0(x,q),\Phi_0(x),\Phi_1(x)),\ x\in\mathbb R^3,\ q\in\mathbb R^3.\notag
\end{equation}
\begin{proof}[The proof of
Theorem \ref{thm2}] The proof of Theorem \ref{thm2} follows from a similar approach as the one for   Theorem \ref{thm1}. We will first verify the \textit{a priori} estimate \eqref{eng-es-ss}  under the {\it a priori} assumption
\begin{equation}\label{g-aps}
\tilde{\mathcal E}^\lambda(t)\leq 2\eps_1,
\end{equation}
for any $t>0$.
The proof is divided into the following three steps.

\noindent\underline{{\it Step 1. Zeroth-order energy estimates.}}
Choosing $\la\in(\frac{1}{2},\frac{3}{2})$, multiplying \eqref{g-eq-s3}$_{1}$ by $(e^{2\Phi}+\vert q\vert^2)^\lambda g$, and integrating the resulting equality with respect to $(x,q)\in\mathbb R^3\times\mathbb R^3$, one has
\begin{align}\label{6.1}
	\frac{1}{2}\frac{d}{dt}\int_{\mathbb R^6}&(e^{2\Phi}+\vert q\vert^2)^\lambda\vert g\vert^2{\rm d}x{\rm d}q\nonumber\\
	=&\int_{\mathbb R^6}\Big(\lambda\partial_t\Phi e^{2\Phi}(e^{2\Phi}+\vert q\vert^2)^{\lambda-1}\vert g\vert^2+(3\bar\phi' g+\bar\phi' q\cdot\nabla_qg)(e^{2\Phi}+\vert q\vert^2)^{\lambda}g\Big){\rm d}x{\rm d}q\nonumber\\
	&+\int_{\mathbb R^6}\nabla_x\sqrt{e^{2\Phi}+\vert q\vert^2}\cdot\nabla_q\bar F(e^{2\Phi}+\vert q\vert^2)^\lambda g{\rm d}x{\rm d}q\nonumber\\
	&+\int_{\mathbb R^6}e^{\bar\phi+2\Phi}\nabla_q\cdot(\Lambda_{\Phi,q}\nabla_q g)(e^{2\Phi}+\vert q\vert^2)^\lambda g{\rm d}x{\rm d}q\nonumber\\
	&+\int_{\mathbb R^6}(e^{\bar\phi+2\phi}\nabla_q\cdot(\Lambda_{\Phi,q}\nabla_q\bar G)-e^{\bar\phi}\nabla_q\cdot(\Lambda_{0,q}\nabla_q\bar G))(e^{2\Phi}+\vert q\vert^2)^\lambda g{\rm d}x{\rm d}q.\nonumber\\
	:=&K_1+K_2+K_3+K_4.
\end{align}
We now  estimate $K_i$ $(1\leq i\leq4)$ term by term.
For $K_1$, by noting that $|\partial_t\bar\phi|$ is bounded (see \cite[Theorem 2.1, pp.3703]{ACP-2014})) and using Sobolev's inequality, we have
\begin{equation}\label{6.2}
	\begin{split}
		K_1&=\bar\phi'(\frac{3}{2}-\lambda)\int_{\mathbb R^6}(e^{2\Phi}+\vert q\vert^2)^\lambda\vert g\vert^2{\rm d}x{\rm d}q+\lambda(\bar\phi'+\partial_t\Phi)\int_{\mathbb R^6}e^{2\Phi}(e^{2\Phi}+\vert q\vert^2 )^{\lambda-1}\vert g\vert^2{\rm d}x{\rm d}q\\
		&\leq \bar\phi'(\frac{3}{2}-\lambda)\int_{\mathbb R^6}(e^{2\Phi}+\vert q\vert^2)^\lambda\vert g\vert^2{\rm d}x{\rm d}q+C(\partial_t\phi)_+\int_{\mathbb R^6}(e^{2\Phi}+\vert q\vert^2 )^{\lambda}\vert g\vert^2{\rm d}x{\rm d}q.
	\end{split}
\end{equation}
For $K_2$, $K_3$ and $K_4$, by  Lemma \ref{lemma A6}, we get
\begin{align}\label{K2}
|	K_2|+K_3+|K_4|&\leq -(1-\eta)\int_{\mathbb R^6}e^{\bar\phi+2\Phi}(e^{2\Phi}+\vert q\vert^2)^{\lambda-\frac{1}{2}}(e^{2\Phi}\vert\nabla_q g\vert^2+\vert q\cdot\nabla_q g\vert^2){\rm d}x{\rm d}q\nonumber\\
&\quad+ Ce^{(\frac{3}{2}-\lambda)\bar\phi}\int_{\mathbb R^6}(e^{2\Phi}+\vert q\vert^2)^\lambda\vert g\vert^2{\rm d}x{\rm d}q+Ce^{(\frac{3}{2}-\lambda)\bar\phi}\left\|\nabla_x\Phi\right\|_{L^2_x}^2\nonumber\\
&\quad+C_\eta e^{\bar\phi}\left\|\Phi\right\|_{L^2_x}^2\int_{\mathbb R^3}(1+\vert q\vert^2)^{\lambda-\frac{1}{2}}(\vert\nabla_q\bar G\vert^2+\vert q\cdot\nabla_q\bar G\vert^2){\rm d}q.
\end{align}
Substituting \eqref{6.2} and \eqref{K2}into \eqref{6.1}, we have
\begin{align}\label{6.6}
		\frac{d}{dt}&\int_{\mathbb R^6}(e^{2\Phi}+\vert q\vert^2)^\lambda\vert g\vert^2{\rm d}x{\rm d}q+\int_{\mathbb R^6}e^{\bar\phi+2\Phi}(e^{2\Phi}+\vert q\vert^2)^{\lambda-\frac{1}{2}}(e^{2\Phi}\vert\nabla_q g\vert^2+\vert q\cdot\nabla_q g\vert^2){\rm d}x{\rm d}q\nonumber\\
		&\leq \Big(2\bar\phi'(\frac{3}{2}-\lambda)+C(\partial_t\phi)_+\Big)\int_{\mathbb R^6}(e^{2\Phi}+\vert q\vert^2)^\lambda\vert g\vert^2{\rm d}x{\rm d}q+Ce^{(\frac{3}{2}-\lambda)\bar\phi}\tilde{\mathcal E}_{0,0}^\lambda(t)\notag\\
		&\quad+Ce^{\bar\phi}\left\|\Phi\right\|_{L^2_x}^2\int_{\mathbb R^3}(1+\vert p\vert^2)^{\lambda-\frac{1}{2}}(\vert\nabla_q\bar G\vert^2+\vert q\cdot\nabla_q\bar G\vert^2){\rm d}q.
\end{align}
We now turn to estimate the gravitational field $\Phi$. Multiplying \eqref{g-eq-s3}$_2$ by $\partial_t\Phi$ and integrating over $\mathbb R^3$ yield
\begin{align}\label{6.7}
	\frac{1}{2}\frac{d}{dt}\int_{\mathbb R^3}&(\vert\partial_t\Phi\vert^2+\vert\nabla_x\Phi\vert^2){\rm d}x\nonumber\\
	&=-\int_{\mathbb R^3}-e^{\bar\phi+2\Phi}\int_{\mathbb R^3}\frac{g}{\sqrt{e^{2\Phi}+\vert q\vert^2}}{\rm d}q-\int_{\mathbb R^3}\int_{\mathbb R^3}\Big(\frac{e^{\bar\phi+2\Phi}\bar G}{\sqrt{e^{2\Phi}+\vert q\vert^2}}-\frac{e^{\bar\phi}\bar G}{\sqrt{1+\vert p\vert^2}}\Big){\rm d}p\partial_t\Phi{\rm d}x\nonumber\\
	&:=L_1+L_2.
\end{align}
Then it is direct to check that
\begin{equation*}
	|L_1|\leq Ce^{\bar\phi}\left\|\partial_t\Phi\right\|_{L^2_x}\left\|\int_{\mathbb R^3}\frac{g}{\sqrt{e^{2\Phi}+\vert q\vert^2}}{\rm d}q\right\|_{L^2_x}\\
	\leq Ce^{\bar\phi}\Big(\left\|\partial_t\Phi\right\|_{L^2_x}^2+\left\|(e^{2\Phi}+\vert p\vert^2)^{\frac{\lambda}{2}}g\right\|_{L^2_{x,q}}^2\Big),
\end{equation*}
and
\begin{align*}
	|L_2|\leq&C\int_{\mathbb R^3}\int_{\mathbb R^3}\vert\Phi\vert\frac{e^{\bar\phi}}{\sqrt{1+\vert q\vert^2}}\vert\bar G\vert{\rm d}q\vert\partial_t\Phi\vert{\rm d}x\leq Ce^{\bar\phi}\Big(\left\|\partial_t\Phi\right\|_{L^2_x}^2+\left\|\Phi\right\|_{L^2_x}^2\int_{\mathbb R^3}(1+\vert q\vert^2)^{\lambda}\vert\bar G\vert^2{\rm d}q\Big)\notag\\
\leq& Ce^{\bar\phi}(\left\|\partial_t\Phi\right\|_{L^2_x}^2+\left\|\Phi\right\|_{L^2_x}^2).
\end{align*}
By substituting the above two inequalities into \eqref{6.7} and by Lemma \ref{lemma A2}, we obtain
\begin{equation}
	\frac{d}{dt}\int_{\mathbb R^3}(\vert\partial_t\Phi\vert^2+\vert\nabla_x\Phi\vert^2){\rm d}x\leq Ce^{\bar\phi}\tilde{\mathcal E}_{0,0}^{\lambda}(t)+Ce^{\bar\phi}\left\|\Phi\right\|_{L^2_x}^2,\notag
\end{equation}
which combines with \eqref{6.6} and Lemma \ref{lemma A2} gives
\begin{align}\label{6.9}
	\frac{\rm d}{dt}&\tilde{\mathcal E}_{0,0}^\lambda(t)+\tilde{\mathcal D}^\lambda_{0,0}(t)\notag\\
	&\leq \Big(2\bar\phi'(\frac{3}{2}-\lambda)+C(\partial_t\phi)_+\Big)\int_{\mathbb R^6}(e^{2\Phi}+\vert q\vert^2)^\lambda\vert g\vert^2{\rm d}x{\rm d}q+Ce^{(\frac{3}{2}-\lambda)\bar\phi}\tilde{\mathcal E}_{0,0}^\lambda(t)\notag\\
	&\quad+Ce^{\bar\phi}\left\|\Phi\right\|_{L^2_x}^2\int_{\mathbb R^3}(1+\vert p\vert^2)^{\lambda-\frac{1}{2}}(\vert\nabla_q\bar G\vert^2+\vert q\cdot\nabla_q\bar G\vert^2){\rm d}q+Ce^{(\frac{3}{2}-\lambda)\bar\phi}\left\|\Phi\right\|_{L^2_x}^2.
\end{align}
Note that $\frac{3}{2}-\lambda>0$ and $\bar\phi'(\infty)<0$, and  $C(\partial_t\phi)_++e^{(\frac{3}{2}-\la)\bar\phi}$ is integrable in time.
Similar to the estimate for  \eqref{z-lt}, we have from \eqref{6.9} that
\begin{equation}
	\tilde{\mathcal E}_{0,0}^{\lambda}(t)+\int_0^t\tilde{\mathcal D}_{0,0}^{\lambda}(s){\rm d}s\leq	C(T_c)(\tilde{\mathcal E}_{0,0}^{\la}(0)+\left\|\Phi_0\right\|_{L^2_x}^2),\notag
\end{equation}
where $0<T_c<\infty$ is given in \eqref{tc}.

Moreover, from \eqref{6.6}, we also have
\begin{align*}
		&\frac{d}{dt}\int_{\mathbb R^6}(e^{2\Phi}+\vert q\vert^2)^\lambda\vert g\vert^2{\rm d}x{\rm d}q-\frac{1}{2}\bar\phi'(\frac{3}{2}-\lambda)\int_{\mathbb R^6}(e^{2\Phi}+\vert q\vert^2)^\lambda\vert g\vert^2{\rm d}x{\rm d}q+C\tilde{\mathcal D}_{0,0}^\lambda(g,\Phi)(t)\\
		&\leq \Big(\frac{3}{2}\bar\phi'(\frac{3}{2}-\lambda)+C(\partial_t\phi)_+\Big)\int_{\mathbb R^6}(e^{2\Phi}+\vert q\vert^2)^\lambda\vert g\vert^2{\rm d}x{\rm d}q\\
			&\quad+Ce^{(\frac{3}{2}-\lambda)\bar\phi}\Big(\int_{\mathbb R^6}(e^{2\Phi}+\vert q\vert^2)^\lambda\vert g\vert^2{\rm d}x{\rm d}q+C(\tilde{\mathcal E}_{0,0}^{\lambda}(0)+\left\|\Phi_0\right\|_{L^2_x}^2)\Big)\\
		&\quad+C(T_c)t^2e^{\bar\phi}(\tilde{\mathcal E}_{0,0}^{\lambda}(0)+\left\|\Phi_0\right\|_{L^2_x}^2)\int_{\mathbb R^3}(1+\vert p\vert^2)^{\lambda-\frac{1}{2}}(\vert\nabla_q\bar G\vert^2+\vert q\cdot\nabla_q\bar G\vert^2){\rm d}q.
\end{align*}
As a consequence, Gr\"{o}nwall's inequality  and Lemma \ref{lemma A2} give
\begin{align}\label{6.11}
	e^{\frac{1}{2}\bar\phi(\lambda-\frac{3}{2})}&\int_{\mathbb R^6}(e^{2\Phi}+\vert q\vert^2)^\lambda\vert g\vert^2{\rm d}x{\rm d}q+C\int_0^te^{\frac{1}{2}\bar\phi(\frac{3}{2}-\lambda)}\bar{\tilde{\mathcal D}}_{0,0}^\lambda(s){\rm d}s\notag\\
\leq& C(T_c)(\tilde{\mathcal E}_{0,0}^{\lambda}(0)+\left\|\Phi_0\right\|_{L^2_x}^2).
\end{align}

\noindent\underline{{\it Step 2. First-order energy estimates.}}
Applying $\partial_{q_i}$ to \eqref{g-eq-s3}$_1$, multiplying the resulting equality by $(e^{2\Phi}+\vert q\vert^2)^\lambda\partial_{q_i}g$, and integrating the resulting equality over $\mathbb R^6$ yield
\begin{align}\label{6.12}
\frac{1}{2}\frac{d}{dt}\int_{\mathbb R^6}&(e^{2\Phi}+\vert q\vert^2)^\lambda\vert\partial_{q_i}g\vert^2{\rm d}x{\rm d}q\nonumber\\
	=&\int_{\mathbb R^6}\Big(\lambda\partial_t\Phi e^{2\Phi}(e^{2\Phi}+\vert q\vert^2)^{\lambda-1}\vert\partial_{q_i}g\vert^2+\partial_{q_i}(3\bar\phi' g+\bar\phi' q\cdot\nabla_qg)(e^{2\Phi}+\vert q\vert^2)^{\lambda}\partial_{q_i}g\Big){\rm d}x{\rm d}q\nonumber\\
	&+\int_{\mathbb R^6}(\nabla_x\partial_{q_i}\sqrt{e^{2\Phi}+\vert q\vert^2}\cdot\nabla_qg-\nabla_q\partial_{q_i}\sqrt{e^{2\Phi}+\vert q\vert^2}\cdot\nabla_xg)(e^{2\Phi}+\vert q\vert^2)^\lambda\partial_{q_i}g{\rm d}x{\rm d}q\nonumber\\
	&+\int_{\mathbb R^6}\partial_{q_i}(\nabla_x\sqrt{e^{2\Phi}+\vert q\vert^2}\cdot\nabla_q\bar G)(e^{2\Phi}+\vert q\vert^2)^\lambda \partial_{q_i}g{\rm d}x{\rm d}q\nonumber\\
	&+\int_{\mathbb R^6}\partial_{q_i}(e^{\bar\phi+2\Phi}\nabla_q\cdot(\Lambda_{\Phi,q}\nabla_q g))(e^{2\Phi}+\vert q\vert^2)^\lambda \partial_{q_i}g{\rm d}x{\rm d}q\nonumber\\
	&+\int_{\mathbb R^6}\partial_{q_i}(e^{\bar\phi+2\phi}\nabla_q\cdot(\Lambda_{\Phi,q}\nabla_q\bar G)-e^{\bar\phi}\nabla_q\cdot(\Lambda_{0,q}\nabla_q\bar G))(e^{2\Phi}+\vert q\vert^2)^\lambda \partial_{q_i}g{\rm d}x{\rm d}q\nonumber\\
	:=&K_5+K_6+K_7+K_8+K_9.
\end{align}
We estimate $K_{i}$ $(5\leq i\leq9)$ separately.
Similar to $K_1$,
\begin{equation}
	\begin{split}
		K_5&=\bar\phi'(\frac{5}{2}-\lambda)\int_{\mathbb R^6}(e^{2\Phi}+\vert q\vert^2)^\lambda\vert \partial_{q_i}g\vert^2{\rm d}x{\rm d}q+\lambda(\bar\phi'+\partial_t\Phi)\int_{\mathbb R^6}(e^{2\Phi}+\vert q\vert^2 )^{\lambda-1}\vert\partial_{q_i}g\vert^2{\rm d}x{\rm d}q\\
		&\leq \bar\phi'(\frac{5}{2}-\lambda)\int_{\mathbb R^6}e^{2\Phi}(e^{2\Phi}+\vert q\vert^2)^\gamma\vert \partial_{q_i}g\vert^2{\rm d}x{\rm d}q+C(\partial_t\phi)_+\int_{\mathbb R^6}(e^{2\Phi}+\vert q\vert^2)^\lambda\vert \partial_{q_i}g\vert^2{\rm d}x{\rm d}q.\notag
	\end{split}
\end{equation}
For $K_6$ and $K_7$,
by  H\"{o}lder's inequality and Lemma \ref{lemma A6}, it holds
\begin{equation*}
	\begin{split}
		|K_6|&\leq \left\|\nabla_x\Phi\right\|_{L^\infty_x}\int_{\mathbb R^6}(e^{2\Phi}+\vert q\vert^2)^\lambda\vert\nabla_q g\vert\vert\partial_{q_i}g\vert{\rm d}x{\rm d}q+\int_{\mathbb R^6}(e^{2\Phi}+\vert q\vert^2)^{\lambda}\vert\nabla_xg\vert\vert\partial_{q_i}g\vert{\rm d}x{\rm d}q,
	\end{split}
\end{equation*}
and
\begin{equation}
	\begin{split}
		|K_7|\leq Ce^{(\frac{3}{2}-\lambda)\bar\phi}\int_{\mathbb R^6}(e^{2\Phi}+\vert q\vert^2)^\lambda\vert\partial_{q_i}g\vert^2{\rm d}x{\rm d}q+Ce^{(\frac{3}{2}-\lambda)\bar\phi}\left\|\nabla_x\Phi\right\|_{L^2_x}^2.\notag
	\end{split}
\end{equation}
For $K_8$ and $K_9$, we apply Lemma \ref{lemma A6} to obtain
\begin{align*}
	K_8
	\leq&-(1-\eta)\int_{\mathbb R^6}e^{\bar\phi+2\Phi}(e^{2\Phi}+\vert q\vert^2)^{\lambda-\frac{1}{2}}(e^{2\Phi}\vert\nabla_q\partial_{q_i}g\vert^2+\vert q\cdot\nabla_q\partial_{q_i}g\vert^2){\rm d}x{\rm d}q
\notag\\&+C_\eta e^{\bar\phi}\int_{\mathbb R^6}(e^{2\Phi}+\vert q\vert^2)^\lambda\vert \partial_{q_i} g\vert^2{\rm d}x{\rm d}q+C\tilde{\mathcal D}^\la_{0,0}(t),
\end{align*}
and
\begin{align}
	|K_9|
	&\leq\eta\int_{\mathbb R^6} e^{\bar\phi+2\Phi}(e^{2\Phi}+\vert q\vert^2)^{\lambda-\frac{1}{2}}(e^{2\Phi}\vert\nabla_q\partial_{q_i}g\vert^2+\vert q\cdot\nabla_q\partial_{q_i}g\vert^2){\rm d}x{\rm d}q\nonumber\\
	&\quad+Ce^{\bar\phi}\int_{\mathbb R^6}(e^{2\Phi}+\vert q\vert^2)^\lambda\vert\nabla_qf\vert^2{\rm d}x{\rm d}q\nonumber\\
	&\quad+C_\eta\sum_{\vert\beta'\vert\leq1} e^{\bar\phi}\left\|\Phi\right\|_{L^2_x}^2\int_{\mathbb R^3}(1+\vert q\vert^2)^{\lambda-\frac{1}{2}}(\vert\nabla_q\partial^{\beta'}_{q}\bar G\vert^2+\vert q\cdot\nabla_q\partial_{q}^{\beta'}\bar G\vert^2){\rm d}q.\notag
\end{align}
Hence, substituting the above estimates into \eqref{6.12} gives
\begin{align}\label{6.16}
\frac{d}{dt}\int_{\mathbb R^6}&(e^{2\Phi}+\vert q\vert^2)^\lambda\vert\partial_{q_i}g\vert^2{\rm d}x{\rm d}q+\int_{\mathbb R^6}e^{\bar\phi}(e^{2\Phi}+\vert q\vert^2)^{\lambda-\frac{1}{2}}(e^{2\Phi}\vert\nabla_q\partial_{q_i}g\vert^2+\vert q\cdot\nabla_q\partial_{q_i}g\vert^2){\rm d}x{\rm d}q\nonumber\\
	\leq& \Big(2\bar\phi'(\frac{3}{2}-\lambda)+C(\partial_t\phi)_++Ce^{(\frac{3}{2}-\lambda)\bar\phi}\Big)\int_{\mathbb R^6}(e^{2\Phi}+\vert q\vert^2)^\lambda\vert\partial_{q_i}g\vert^2{\rm d}x{\rm d}p\nonumber\\
	&+(2\bar\phi'+C\left\|\nabla_x\Phi\right\|_{L^\infty_x})\int_{\mathbb R^6}(e^{2\Phi}+\vert q\vert^2)^\lambda\vert\nabla_qg\vert^2{\rm d}x{\rm d}q+C\tilde{\mathcal D}^\la_{0,0}(t)\notag\\&+\int_{\mathbb R^6}(e^{2\Phi}+\vert q\vert^2)^\lambda\vert\nabla_xg\vert\vert\nabla_qg\vert{\rm d}x{\rm d}q+Ce^{(\frac{3}{2}-\lambda)\bar\phi}\left\|\nabla_x\Phi\right\|_{L^2_x}^2\nonumber\\
	&+C_\eta\sum_{\vert\beta'\vert\leq1} e^{\bar\phi}\left\|\Phi\right\|_{L^2_x}^2\int_{\mathbb R^3}(1+\vert q\vert^2)^{\lambda-\frac{1}{2}}(\vert\nabla_q\partial^{\beta'}_{q}\bar G\vert^2+\vert q\cdot\nabla_q\partial_{q}^{\beta'}\bar G\vert^2){\rm d}q.
\end{align}
Next, applying $\partial_{x_i}$ to \eqref{g-eq-s3}$_1$, multiplying the resulting equality by $(e^{2\Phi}+\vert q\vert^2)^\lambda\partial_{x_i}g$, and integrating the resulting equality yield that
\begin{align}\label{6.17}
\frac{1}{2}\frac{d}{dt}\int_{\mathbb R^6}&(e^{2\Phi}+\vert q\vert^2)^\lambda\vert\partial_{x_i}g\vert^2{\rm d}x{\rm d}q\nonumber\\
	=&\int_{\mathbb R^6}\Big(\lambda\partial_t\Phi e^{2\Phi}(e^{2\Phi}+\vert q\vert^2)^{\lambda-1}\vert\partial_{x_i}g\vert^2+\partial_{x_i}(3\bar\phi' g+\bar\phi' q\cdot\nabla_qg)(e^{2\Phi}+\vert q\vert^2)^{\lambda}\partial_{x_i}g\Big){\rm d}x{\rm d}q\nonumber\\
	&+\int_{\mathbb R^6}(\nabla_x\partial_{x_i}\sqrt{e^{2\Phi}+\vert q\vert^2}\cdot\nabla_qg-\nabla_q\partial_{x_i}\sqrt{e^{2\Phi}+\vert q\vert^2}\cdot\nabla_xg)(e^{2\Phi}+\vert q\vert^2)^\lambda\partial_{x_i}g{\rm d}x{\rm d}q\nonumber\\
	&+\int_{\mathbb R^6}\nabla_x\partial_{x_i}\sqrt{e^{2\Phi}+\vert q\vert^2}\cdot\nabla_q\bar G(e^{2\Phi}+\vert q\vert^2)^\lambda \partial_{x_i}g{\rm d}x{\rm d}q\nonumber\\&+\int_{\mathbb R^6}\partial_{x_i}\big(e^{\bar\phi+2\Phi}\nabla_q\cdot(\Lambda_{\Phi,q}\nabla_q g)\big)(e^{2\Phi}+\vert q\vert^2)^\lambda \partial_{x_i}g{\rm d}x{\rm d}q\nonumber\\
	&+\int_{\mathbb R^6}\partial_{x_i}\big(e^{\bar\phi+2\Phi}\nabla_q\cdot(\Lambda_{\Phi,q}\nabla_q\bar G)\big)(e^{2\Phi}+\vert q\vert^2)^\lambda \partial_{x_i}g{\rm d}x{\rm d}q\nonumber\\
:=&K_{10}+K_{11}+K_{12}+K_{13}+K_{14}.
\end{align}
We estimate $K_{i}$ $(10\leq i\leq14)$ term by term.
Similar to  \eqref{6.16}, we have
\begin{equation}
	\begin{split}
	K_{10}&=\bar\phi'(\frac{3}{2}-\lambda)\int_{\mathbb R^6}(e^{2\Phi}+\vert q\vert^2)^\lambda\vert\partial_{x_i}g\vert^2{\rm d}x{\rm d}q+\lambda(\bar\phi'+\partial_t\Phi)\int_{\mathbb R^6}e^{2\Phi}(e^{2\Phi}+\vert q\vert^2 )^{\lambda-1}\vert \partial_{x_i}g\vert^2{\rm d}x{\rm d}q\\
		&\leq\bar\phi'(\frac{3}{2}-\lambda)\int_{\mathbb R^6}(e^{2\Phi}+\vert q\vert^2)^\lambda\vert\partial_{x_i}g\vert^2{\rm d}x{\rm d}q+C(\partial_t\phi)_+\int_{\mathbb R^6}(e^{2\Phi}+\vert q\vert^2 )^{\lambda}\vert \partial_{x_i}g\vert^2{\rm d}x{\rm d}q,\notag
	\end{split}
\end{equation}
\begin{align}
	|K_{11}|&\leq C\int_{\mathbb R^6}(e^{2\Phi}+\vert q\vert^2)^{\frac{\lambda}{2}}\vert\nabla_qg\vert (e^{2\Phi}+\vert q\vert^2)^{\frac{\lambda}{2}}\vert\partial_{x_i}g\vert(\vert\nabla_x\Phi\vert^2+\vert\nabla_x^2\Phi\vert){\rm d}x{\rm d}q\nonumber\\
	&\quad+C\int_{\mathbb R^6}(e^{2\Phi}+\vert q\vert^2)^\lambda\vert\nabla_xg\vert^2\vert\nabla_x\Phi\vert{\rm d}x{\rm d}q\nonumber\\
	&\leq C\left\|(\nabla_x\Phi,\nabla_x^2\Phi)\right\|_{L^\infty_x}\int_{\mathbb R^6}(e^{2\Phi}+\vert q\vert^2)^{\frac{\lambda}{2}}\vert\nabla_qg\vert (e^{2\Phi}+\vert q\vert^2)^{\frac{\gamma}{2}}\vert\partial_{x_i}g\vert{\rm d}x{\rm d}q\nonumber\\
	&\quad+C\left\|\nabla_x\Phi\right\|_{L^\infty_x}\int_{\mathbb R^6}(e^{2\Phi}
+\vert q\vert^2)^\lambda\vert\nabla_xg\vert^2{\rm d}x{\rm d}q,\notag
\end{align}
\begin{align}
|K_{12}|\leq Ce^{(\frac{3}{2}-\lambda)\bar\phi}\int_{\mathbb R^6} (e^{2\Phi}+\vert q\vert^2)^{\lambda}\vert\partial_{x_i}g\vert^2{\rm d}x{\rm d}q+Ce^{(\frac{3}{2}-\lambda)\bar\phi}(\left\|\nabla_x\Phi\right\|_{L^2_x}^2+\left\|\nabla_x^2\Phi\right\|_{L^2_x}^2),\notag
\end{align}
\begin{align}
	K_{13}&\leq -(1-\eta)\int_{\mathbb R^6}e^{\bar\phi+2\Phi}(e^{2\Phi}+\vert q\vert^2)^{\lambda-\frac{1}{2}}(e^{2\Phi}\vert\nabla_q\partial_{x_i}g\vert^2+\vert q\cdot\nabla_q\partial_{x_i}g\vert^2){\rm d}x{\rm d}q\nonumber\\
	&\quad+C_\eta\int_{\mathbb R^6}e^{\bar\phi}(e^{2\Phi}+\vert q\vert^2)^{\lambda-\frac{1}{2}}(\vert\nabla_q g\vert^2+\vert q\cdot\nabla_qg\vert^2){\rm d}x{\rm d}q+C_\eta e^{\bar\phi}\int_{\mathbb R^6}(e^{2\Phi}+\vert q\vert^2)^\lambda\vert\partial_{x_i}g\vert^2{\rm d}x{\rm d}q,\notag
\end{align}
and
\begin{align}
	|K_{14}|
	\leq& \eta\int_{\mathbb R^6}e^{\bar\phi+2\Phi}(e^{2\Phi}+\vert q\vert^2)^{\lambda-\frac{1}{2}}(e^{2\Phi}\vert\nabla_q\partial_{x_i}g\vert^2+\vert q\cdot\nabla_q\partial_{x_i}g\vert^2){\rm d}x{\rm d}q\notag\\
	&+Ce^{\bar\phi}\int_{\mathbb R^3}(e^{2\Phi}+\vert q\vert^2)^\lambda\vert\nabla_xf\vert^2{\rm d}x{\rm d}q\nonumber\\
	&+C_\eta e^{\bar\phi}\left\|\partial_{x_i}\Phi\right\|_{L^2_x}^2\int_{\mathbb R^3}(1+\vert p\vert^2)^{\lambda-\frac{1}{2}}(\vert\nabla_q\bar G\vert^2+\vert q\cdot\nabla_q\bar G\vert^2){\rm d}q.\notag
\end{align}
By substituting the above estimates for $K_{i}$ $(10\leq i\leq14)$ into \eqref{6.17} and combining them with \eqref{6.16} and using the {\it a priori} assumption \eqref{g-aps}, we get
\begin{align}\label{6.23}
\frac{d}{dt}\int_{\mathbb R^6}&(e^{2\Phi}+\vert q\vert^2)^\lambda(\vert\partial_{q_i}g\vert^2+A\vert\partial_{x_i}g\vert^2){\rm d}x{\rm d}q\nonumber\\
	&+\int_{\mathbb R^6}e^{\bar\phi+2\Phi}(e^{2\Phi}+\vert q\vert^2)^{\lambda-\frac{1}{2}}(e^{2\Phi}\vert\nabla_q\partial_{q_i}g\vert^2+\vert q\cdot\nabla_q\partial_{q_i}g\vert^2){\rm d}x{\rm d}q\nonumber\\
	&+A\int_{\mathbb R^6}e^{\bar\phi+2\Phi}(e^{2\Phi}+\vert q\vert^2)^{\lambda-\frac{1}{2}}(e^{2\Phi}\vert\nabla_q\partial_{x_i} g\vert^2+\vert q\cdot\nabla_q\partial_{x_i}g\vert^2){\rm d}x{\rm d}q\nonumber\\
	\leq& \Big(2\bar\phi'(\frac{3}{2}-\lambda)+C(\partial_t\phi)_++Ce^{(\frac{3}{2}-\lambda)\bar\phi}+C\eps_1\Big)\int_{\mathbb R^6}(e^{2\Phi}+\vert q\vert^2)^\lambda(\vert\partial_{q_i}g\vert^2+A\vert\partial_{x_i}g\vert^2){\rm d}x{\rm d}q\nonumber\\
	&+C_\eta\sum_{\vert\beta'\vert\leq1} e^{\bar\phi}\left\|(\Phi,\nabla_x\Phi)\right\|_{L^2_x}^2\int_{\mathbb R^3}(1+\vert p\vert^2)^{\lambda-\frac{1}{2}}(\vert\nabla_q\partial_{q}^{\beta'}\bar G\vert^2+\vert q\cdot\nabla_q\partial_{q}^{\beta'}\bar G\vert^2){\rm d}q\nonumber\\
	&+\int_{\mathbb R^6}(e^{2\Phi}+\vert q\vert^2)^{\lambda}\vert\partial_{x_i} g\vert\vert\partial_{q_i} g\vert{\rm d}x{\rm d}q+C\tilde{\mathcal D}_{0,0}^\lambda(t)+Ce^{(\frac{3}{2}-\lambda)\bar\phi}\left\|(\Phi,\nabla_x\Phi,\nabla_x^2\Phi)\right\|_{L^2_x}^2,
\end{align}
where $A>0$ can be suitably large.
Note that if $t>T_c$, we have $-\bar\phi'(t)>-\frac{\bar\phi'(\infty)}{2}$. Then 
\begin{equation*}
	\int_{\mathbb R^6}(e^{2\Phi}+\vert q\vert^2)^{\gamma}\vert\partial_{x_i} g\vert\vert\partial_{q_i} g\vert{\rm d}x{\rm d}q
\end{equation*}
can be absorbed by
\begin{equation}
	-\bar\phi'(t)(\frac{3}{2}-\gamma)\int_{\mathbb R^6}(e^{2\Phi}+\vert q\vert^2)^\gamma(\vert\partial_{q_i}g\vert^2+A\vert\partial_{x_i}g\vert^2){\rm d}x{\rm d}q,\notag
\end{equation}
for large $t$ and large $A$.

Finally, applying $\partial_{x_i}$ to \eqref{g-eq-s3}$_2$, multiplying the resulting identity by $\partial_t\partial_{x_i}\Phi$, and integrating the resulting equality over $\mathbb R^3$ yield
\begin{align*}
\frac{1}{2}\frac{d}{dt}\int_{\mathbb R^3}&(\vert\partial_t\partial_{x_i}\Phi\vert^2+\vert\nabla_x\partial_{x_i}\Phi\vert^2){\rm d}x\\
	=&-\int_{\mathbb R^3}\partial_{x_i}\Big(e^{\bar\phi+2\Phi}\int_{\mathbb R^3}\frac{g}{\sqrt{e^{2\Phi}+\vert q\vert^2}}{\rm d}q\Big)\partial_t\partial_{x_i}\Phi{\rm d}x-\int_{\mathbb R^3}\int_{\mathbb R^3}\partial_{x_i}\Big(e^{\bar\phi}\frac{\bar G}{\sqrt{e^{2\Phi}+\vert q\vert^2}}\Big){\rm d}q\partial_t\partial_{x_i}\Phi{\rm d}x\\
:=&L_3+L_4.
\end{align*}
Direct calculation yields
\begin{equation*}\begin{split}
		|L_3|\leq e^{\bar\phi}\Big(\left\|\partial_t\partial_{x_i}\Phi\right\|_{L^2_x}^2+\left\|(e^{2\Phi}+\vert q\vert^2)^{\frac{\lambda}{2}}g\right\|_{L_{x,q}^2}^2+\left\|(e^{2\Phi}+\vert q\vert^2)^{\frac{\lambda}{2}}\partial_{x_i}g\right\|_{L_{x,q}^2}^2\Big),
	\end{split}
\end{equation*}
and
\begin{equation*}
	\begin{split}
		|L_4|&\leq e^{\bar\phi}\Big(\left\|\partial_t\partial_{x_i}\Phi\right\|_{L^2_x}^2+\left\|(1+\vert q\vert^2)^{\frac{\lambda}{2}}\bar G\right\|_{L_{q}^2}^2\left\|\partial_{x_i}\Phi\right\|_{L^2_x}^2\Big)\\
		&\leq e^{\bar\phi}(\left\|\partial_t\partial_{x_i}\Phi\right\|_{L^2_x}^2+\left\|\partial_{x_i}\Phi\right\|_{L^2_x}^2).
	\end{split}
\end{equation*}
Thus, we have the energy estimate for gravitational field $\Phi$
\begin{equation}\label{6.25}
	\frac{d}{dt}\int_{\mathbb R^3}(\vert\partial_t\partial_{x_i}\Phi\vert^2+\vert\nabla_x\partial_{x_i}\Phi\vert^2){\rm d}x\leq Ce^{\bar\phi}\sum_{m+n\leq1}\tilde{\mathcal E}_{m,n}^{\lambda}(t).
\end{equation}
Combining \eqref{6.23} and \eqref{6.25}, we have
\begin{align*}
\frac{d}{dt}&\big(\tilde{\mathcal E}_{0,1}^{\lambda}(t)+A\tilde{\mathcal E}_{1,0}^{\lambda}(t)\big)+\tilde{\mathcal D}_{0,1}^{\lambda}(t)+A\tilde{\mathcal D}_{1,0}^{\lambda}(t)\\
	\leq& \Big(2\bar\phi'(\frac{3}{2}-\lambda)+C(\partial_t\phi)_++Ce^{(\frac{3}{2}-\lambda)\bar\phi}
+C\eps_1\Big)\int_{\mathbb R^6}(e^{2\Phi}+\vert q\vert^2)^\lambda(\vert\partial_{q_i}g\vert^2+A\vert\partial_{x_i}g\vert^2){\rm d}x{\rm d}q\\
	&+Ce^{\bar\phi}\sum_{m+n\leq1}\tilde{\mathcal E}_{m,n}^{\lambda}(t)+\int_{\mathbb R^6}(e^{2\Phi}+\vert q\vert^2)^{\lambda}\vert\partial_{x_i} g\vert\vert\partial_{q_i} g\vert{\rm d}x{\rm d}q+C\tilde{\mathcal D}_{0,0}^{\gamma}(t)+Ce^{(\frac{3}{2}-\lambda)\bar\phi}\left\|\Phi\right\|_{L^2_x}^2\\
	&+C_\eta\sum_{\vert\beta'\vert\leq1} e^{\bar\phi}\left\|(\Phi,\partial_{x_i}\Phi)\right\|_{L^2_x}^2\int_{\mathbb R^3}(1+\vert p\vert^2)^{\lambda-\frac{1}{2}}(\vert\nabla_q\partial_{q}^{\beta'}\bar G\vert^2+\vert q\cdot\nabla_q\partial_{q}^{\beta'}\bar G\vert^2){\rm d}q,
\end{align*}
which, together with \eqref{6.11}, gives
\begin{equation}\label{6.26}
	\begin{split}
\tilde{\mathcal E}_{0,1}^{\lambda}(t)&+\tilde{\mathcal E}_{1,0}^{\lambda}(t)
+C\int_0^t(\tilde{\mathcal D}_{0,1}^{\lambda}(s)+\tilde{\mathcal D}_{1,0}^{\lambda}(s)){\rm d}s\\
		&\leq C(T_c)\sum_{m+n\leq1}\tilde{\mathcal E}_{m,n}^{\lambda}(0)+C(T_c)\left\|\Phi_0\right\|_{L^2_x}^2,	
	\end{split}
\end{equation}
and
\begin{equation}
	\begin{split}
e^{\frac{1}{2}\bar\phi(\lambda-\frac{3}{2})}&\sum_{\vert\alpha\vert+\vert\beta\vert=1}\int_{\mathbb R^6}(e^{2\Phi}+\vert q\vert^2)^\lambda\vert\partial_x^\alpha\partial_q^\beta g\vert^2{\rm d}x{\rm d}q+\int_0^t	e^{\frac{1}{2}\bar\phi(\lambda-\frac{3}{2})}(\bar{\tilde{\mathcal D}}_{0,1}^{\lambda}(s)+\bar{\tilde{\mathcal D}}_{1,0}^{\lambda}(s)){\rm d}s\\
		\leq& C(T_c)\sum_{m+n\leq1}\tilde{\mathcal E}_{m,n}^{\lambda}(0)+C(T_c)\left\|\Phi_0\right\|_{L^2_x}^2.\notag
	\end{split}
\end{equation}
Here, we apply a similar approach as for obtaining \eqref{z-lt}.

\noindent\underline{{\it Step 3. High-order energy estimates.}}
Let $1\leq\vert\alpha\vert+\vert\beta\vert\leq N-1$. $\partial_x^\alpha\partial_q^\beta g$ and $\partial_x^\alpha\Phi$ satisfy the following equations
\begin{align}\label{6.28}
\left\{
\begin{array}{rll}
&\partial_t\partial_x^\alpha\partial_q^\beta g-3\bar\phi'\partial_x^\alpha\partial_q^\beta g-\bar\phi' q\cdot\nabla_q\partial_x^\alpha\partial_q^\beta g+\nabla_q\sqrt{e^{2\Phi}+\vert q\vert^2}\cdot\nabla_x\partial_x^\alpha\partial_q^\beta g-\nabla_x\sqrt{e^{2\Phi}+\vert q\vert^2}\cdot\nabla_q\partial_x^\alpha\partial_q^\beta g\\[2mm]
&\qquad=\nabla_x\sqrt{e^{2\Phi}+\vert q\vert^2}\cdot\nabla_q\partial_x^\alpha\partial_q^\beta\bar G
	+e^{\bar\phi+2\Phi}\nabla_q\cdot(\Lambda_{\Phi,q}\nabla_q\partial_x^\alpha\partial_q^\beta g)\\[2mm]
&\qquad\qquad+(e^{\bar\phi+2\Phi}\nabla_q\cdot(\Lambda_{\Phi,q}\nabla_q\partial_x^\alpha\partial_q^\beta\bar G)-e^{\bar\phi}\nabla_q\cdot(\Lambda_{\bar\phi,q}\nabla_q\partial_x^\alpha\partial_q^\beta\bar G))+\mathcal R_1,\\[2mm]
		&\partial_t^2\partial_x^\alpha\Phi-\Delta_x\partial_x^\alpha\Phi=-e^{\bar\phi+2\Phi}\int_{\mathbb R^3}\frac{\partial_x^\alpha g}{\sqrt{e^{2\Phi}+\vert q\vert^2}}{\rm d}q-\int_{\mathbb R^3}\Big(\frac{e^{\bar\phi+2\Phi}\partial_x^\alpha\bar G}{\sqrt{e^{2\Phi}+\vert q\vert^2}}-\frac{e^{\bar\phi}\partial_x^\alpha\bar G}{\sqrt{1+\vert p\vert^2}}\Big){\rm d}q+\mathcal R_2,
	\end{array}\right.
\end{align}
with
\begin{align}\label{cr12}
	\left\{
	\begin{array}{rll}
		\mathcal R_1=&[-\bar\phi' q\cdot\nabla_q+\nabla_q\sqrt{e^{2\Phi}+\vert q\vert^2}\cdot\nabla_x-\nabla_x\sqrt{e^{2\Phi}+\vert q\vert^2}\cdot\nabla_q,\partial_x^\alpha\partial_q^\beta ]g\\[2mm]&+[\partial_x^\alpha\partial_q^\beta,\nabla_x\sqrt{e^{2\Phi}+\vert q\vert^2}\cdot\nabla_q]\bar G+\partial_x^\alpha\partial_q^\beta \big(e^{\bar\phi+2\Phi}\nabla_q\cdot(\Lambda_{\Phi,q}\nabla_q g)\big)- e^{\bar\phi+2\Phi}\nabla_q\cdot(\Lambda_{\Phi,q}\nabla_q\partial_x^\alpha\partial_q^\beta g)\\[2mm]
		&+\Big(\partial_x^\alpha\partial_q^\beta(e^{\bar\phi+2\Phi}\nabla_q\cdot(\Lambda_{\Phi,q}\nabla_q\bar G)-e^{\bar\phi}\nabla_q\cdot(\Lambda_{\bar\phi,q}\nabla_q\bar G))\\[2mm]
		&-(e^{\bar\phi+2\Phi}\nabla_q\cdot(\Lambda_{\Phi,q}\nabla_q\partial_x^\alpha\partial_q^\beta\bar G)-e^{\bar\phi}\nabla_q\cdot(\Lambda_{\bar\phi,q}\nabla_q\partial_x^\alpha\partial_q^\beta\bar G))\Big),\\[2mm]
		\mathcal R_2=&-\partial_x^\alpha\Big(e^{\bar\phi+2\Phi}\int_{\mathbb R^3}\frac{ g}{\sqrt{e^{2\Phi}+\vert q\vert^2}}{\rm d}q\Big)+e^{\bar\phi+2\Phi}\int_{\mathbb R^3}\frac{\partial_x^\alpha g}{\sqrt{e^{2\Phi}+\vert q\vert^2}}{\rm d}q\\[2mm]
		&-\Big(\partial_x^\alpha\int_{\mathbb R^3}\Big(\frac{e^{\bar\phi+2\Phi}\bar G}{\sqrt{e^{2\Phi}+\vert q\vert^2}}-\frac{e^{\bar\phi}\bar G}{\sqrt{1+\vert p\vert^2}}\Big){\rm d}q\Big)+\int_{\mathbb R^3}\Big(\frac{e^{\bar\phi+2\Phi}\partial_x^\alpha\bar G}{\sqrt{e^{2\Phi}+\vert q\vert^2}}-\frac{e^{\bar\phi}\partial_x^\alpha\bar G}{\sqrt{1+\vert p\vert^2}}\Big){\rm d}q.
	\end{array}
	\right.
\end{align}	
Similar to the derivation of equations \eqref{6.23} and \eqref{6.25}, we obtain from \eqref{6.28} that for sufficiently large $A>0$
\begin{align}\label{6.29}
\frac{d}{dt}\int_{\mathbb R^6}&(e^{2\Phi}+\vert q\vert^2)^\lambda(\vert\partial_{q_i}\partial_x^\alpha\partial_q^\beta g\vert^2+A\vert\partial_{x_i}\partial_x^\alpha\partial_q^\beta g\vert^2){\rm d}x{\rm d}q\nonumber\\
	&+\int_{\mathbb R^6}e^{\bar\phi+2\Phi}(e^{2\Phi}+\vert q\vert^2)^{\lambda-\frac{1}{2}}(e^{2\Phi}\vert\nabla_q\partial_{q_i}\partial_x^\alpha\partial_q^\beta g\vert^2+\vert q\cdot\nabla_q\partial_{q_i}\partial_x^\alpha\partial_q^\beta g\vert^2){\rm d}x{\rm d}q\nonumber\\
	&+A\int_{\mathbb R^6}e^{\bar\phi+2\Phi}(e^{2\Phi}+\vert q\vert^2)^{\lambda-\frac{1}{2}}(e^{2\Phi}\vert\nabla_q\partial_{x_i}\partial_x^\alpha\partial_q^\beta g\vert^2+\vert q\cdot\nabla_q\partial_{x_i}\partial_x^\alpha\partial_q^\beta g\vert^2){\rm d}x{\rm d}q\nonumber\\
	\leq& \Big(2\bar\phi'(\frac{3}{2}-\lambda)+C(\partial_t\phi)_++Ce^{(\frac{3}{2}-\lambda)\bar\phi}
+C\eps_1\Big)\int_{\mathbb R^6}(e^{2\Phi}+\vert q\vert^2)^\lambda(\vert\partial_{q_i}\partial_x^\alpha\partial_q^\beta g\vert^2+A\vert\partial_{x_i}\partial_x^\alpha\partial_q^\beta g\vert^2){\rm d}x{\rm d}q\nonumber\\
	&+\int_{\mathbb R^6}(e^{2\Phi}+\vert q\vert^2)^{\lambda}\vert\partial_{x_i}\partial_x^\alpha\partial_q^\beta g\vert\vert\partial_{q_i}\partial_x^\alpha\partial_q^\beta g\vert{\rm d}x{\rm d}q\nonumber\\
	&+C_\eta\sum_{0\leq\vert\beta'\vert\leq1} e^{\bar\phi}\left\|(\Phi,\partial_{x_i}\Phi)\right\|_{L^2_x}^2\int_{\mathbb R^3}(1+\vert p\vert^2)^{\lambda-\frac{1}{2}}(\vert\nabla_q\partial_{q_i}\partial_q^\beta\bar G\vert^2+\vert q\cdot\nabla_q\partial_{q_i}\partial_q^\beta\bar G\vert^2){\rm d}q\nonumber\\
	&+C\sum_{m=\vert\alpha\vert, n=\vert\beta\vert}\tilde{\mathcal D}_{m,n}^\lambda(t)+Ce^{(\frac{3}{2}-\lambda)\bar\phi}\left\|(\Phi,\nabla_x\Phi,\nabla_x^2\Phi)\right\|_{L^2_x}^2\nonumber\\
	&+\underbrace{\int_{\mathbb R^6}(e^{2\Phi}+\vert q\vert^2)^{\lambda}(\partial_{q_i}\mathcal R_1\partial_{q_i}\partial_x^\alpha\partial_q^\beta g+A\partial_{x_i}\mathcal R_1\partial_{x_i}\partial_x^\alpha\partial_q^\beta g){\rm d}x{\rm d}q}_{\CK_1},
\end{align}	
and
\begin{align}\label{6.30}
	\frac{d}{dt}\int_{\mathbb R^3}(\vert\partial_t\partial_{x_i}\partial_x^\alpha\Phi\vert^2+\vert\nabla_x\partial_{x_i}\partial_x^\alpha\Phi\vert^2){\rm d}x\leq Ce^{\bar\phi}\sum_{ m\leq\vert\alpha\vert+1}\tilde{\mathcal E}_{m,0}^{\lambda}(t)+\underbrace{\int_{\mathbb R^3}\partial_{x_i}\mathcal R_2\partial_{x_i}\partial_x^\alpha\partial_t\Phi{\rm d}x}_{\CK_2}.
\end{align}
We now turn to estimate $\CK_1$ and $\CK_2$. Recall the definition for $\CR_1$ in
\eqref{cr12}.
If $\vert\beta\vert=0$, then $[-\partial_t\bar\phi q\cdot\nabla_q,\partial_x^\alpha\partial_q^\beta]g=0$, and the term
\begin{equation*}
	\int_{\mathbb R^6}(e^{2\Phi}+\vert q\vert^2)^{\lambda}\Big(\partial_{q_i}[-\bar\phi' q\cdot\nabla_q,\partial_x^\alpha\partial_q^\beta]g\partial_x^\alpha\partial_q^\beta\partial_{q_i} g+A\partial_{x_i}[-\bar\phi' q\cdot\nabla_q,\partial_x^\alpha\partial_q^\beta]g\partial_x^\alpha\partial_q^\beta\partial_{x_i} g\Big){\rm d}x{\rm d}q
\end{equation*}
vanishes. If $\vert\beta\vert>0$, without loss of generality, we assume $\partial_q^\beta=\partial_{q_j}\partial_q^{\beta-e_j}$. Note that
\begin{equation*}
	[-\partial_t\bar\phi q\cdot\nabla_q,\partial_x^\alpha\partial_q^\beta]g=\bar\phi'(t)\partial_x^\alpha\partial_q^\beta g,	
\end{equation*}
then
\begin{align*}
\int_{\mathbb R^6}&(e^{2\Phi}+\vert q\vert^2)^{\lambda}\Big(\partial_{q_i}[-\partial_t\bar\phi q\cdot\nabla_q,\partial_x^\alpha\partial_q^\beta]g\partial_x^\alpha\partial_q^\beta\partial_{q_i} g+A\partial_{x_i}[-\partial_t\bar\phi q\cdot\nabla_q,\partial_x^\alpha\partial_q^\beta]g\partial_x^\alpha\partial_q^\beta\partial_{x_i} g{\rm d}x{\rm d}q\\
	=&\bar\phi'\int_{\mathbb R^6}(e^{2\Phi}+\vert q\vert^2)^{\lambda}(\vert\partial_{q_i}\partial_x^\alpha\partial_q^\beta g\vert^2+A\vert\partial_{x_i} \partial_x^\alpha\partial_q^\beta g\vert^2){\rm d}x{\rm d}q.
\end{align*}
Next, by using the smallness of  $x$-derivatives of $\sqrt{e^{2\Phi}+\vert q\vert^2}$, along with
\eqref{g-aps} and Cauchy-Schwarz's inequality, there exists a small parameter $\eta>0$ such that
\begin{align*}
\int_{\mathbb R^6}&(e^{2\Phi}+\vert q\vert^2)^{\lambda}\Big(\partial_{q_i}\left[\nabla_q\sqrt{e^{2\Phi}+\vert q\vert^2}\cdot\nabla_x-\nabla_x\sqrt{e^{2\Phi}+\vert q\vert^2}\cdot\nabla_q,\partial_x^\alpha\partial_q^\beta \right]g\partial_x^\alpha\partial_q^\beta\partial_{q_i} g\\
	&+A\partial_{x_i}\left[\nabla_q\sqrt{e^{2\Phi}+\vert q\vert^2}\cdot\nabla_x-\nabla_x\sqrt{e^{2\Phi}+\vert q\vert^2}\cdot\nabla_q,\partial_x^\alpha\partial_q^\beta\right ]g\partial_x^\alpha\partial_q^\beta\partial_{x_i} g\Big){\rm d}x{\rm d}q\\
	\leq& C\eps_1\sum_{\vert\alpha'\vert+\vert\beta'\vert\leq\vert\alpha\vert+\vert\beta\vert+1}\int_{\mathbb R^6}(e^{2\Phi}+\vert q\vert^2)^\lambda\vert\partial_x^{\alpha'}\partial_q^{\beta'}g\vert^2{\rm d}x{\rm d}q+\eta\int_{\mathbb R^6}(e^{2\Phi}+\vert q\vert^2)^\lambda\vert\partial_{p_i}\partial_x^\alpha\partial_q^\beta g\vert^2{\rm d}x{\rm d}q\\
	&+C_\eta\sum_{\vert\alpha'\vert+\vert\beta'\vert\leq\vert\alpha\vert+\vert\beta\vert}\int_{\mathbb R^6}(e^{2\Phi}+\vert q\vert^2)^\lambda\vert\partial_x^{\alpha'}\partial_q^{\beta'}g\vert^2{\rm d}x{\rm d}q\\
	&+|\beta|\sum_{|\beta'|=1}\sum_{|\alpha'|=1}\int_{\mathbb R^6}(e^{2\Phi}+\vert q\vert^2)^\lambda|\partial_x^{\alpha'}\partial_x^{\alpha}\partial_q^{\beta-\beta'}\partial_{q_i}g|
|\partial_{q_i}\partial_x^\alpha\partial_q^\beta g|{\rm d}x{\rm d}q.
\end{align*}
For the term involving the second component in $\CR_1$,  Lemma \ref{lemma A2} implies that
\begin{align*}
\int_{\mathbb R^6}&(e^{2\Phi}+\vert q\vert^2)^{\lambda}\Big(\partial_{q_i}[\partial_x^\alpha\partial_q^\beta,\nabla_x\sqrt{e^{2\Phi}+\vert q\vert^2}\cdot\nabla_q]\bar G\partial_{q_i}\partial_x^\alpha\partial_q^\beta g\\&\qquad\qquad+A\partial_{x_i}[\partial_x^\alpha\partial_q^\beta,\nabla_x\sqrt{e^{2\Phi}+\vert q\vert^2}\cdot\nabla_q]\bar G\partial_{x_i}\partial_x^\alpha\partial_q^\beta g\Big){\rm d}x{\rm d}q\\
	\leq& Ce^{(\frac{3}{2}-\lambda)\bar\phi}\int_{\mathbb R^6}(e^{2\Phi}+\vert q\vert^2)^{\lambda}(\vert\partial_{q_i}\partial_x^\alpha\partial_q^\beta g\vert^2+A\vert\partial_{x_i}\partial_x^\alpha\partial_q^\beta g\vert^2){\rm d}x{\rm d}q+Ce^{(\frac{3}{2}-\lambda)\bar\phi}\left\|\nabla_x\Phi\right\|_{H^{\vert\alpha\vert+1}_x}^2.	
\end{align*}
For terms involving diffusion, we get from Cauchy-Schwarz's inequality that 
\begin{align*}
\int_{\mathbb R^6}&\partial_{q_i}\Big(\partial_x^\alpha\partial_q^\beta \big(e^{\bar\phi+2\Phi}\nabla_q\cdot(\Lambda_{\Phi,q}\nabla_q g)\big)- e^{\bar\phi+2\Phi}\nabla_q\cdot(\Lambda_{\Phi,q}\nabla_q\partial_x^\alpha\partial_q^\beta g)\Big)(e^{2\Phi}+\vert q\vert^2)^{\lambda}\partial_{q_i}\partial_x^\alpha\partial_q^\beta g{\rm d}x{\rm d}q\\
	\leq& \eta\int_{\mathbb R^6}e^{\bar\phi+2\Phi}(e^{2\Phi}+\vert q\vert^2)^{\lambda-\frac{1}{2}}(e^{2\Phi}\vert\nabla_q\partial_{q_i}\partial_x^\alpha\partial_q^\beta g\vert^2+\vert q\cdot\nabla_q\partial_{q_i}\partial_x^\alpha\partial_q^\beta g\vert^2){\rm d}x{\rm d}q\\
	&+Ce^{\bar\phi}\int_{\mathbb R^6} \int_{\mathbb R^6}(e^{2\Phi}+\vert q\vert^2)^{\lambda}\vert\partial_{q_i}\partial_x^\alpha\partial_q^\beta g\vert^2{\rm d}x{\rm d}q+ C_\eta\sum_{\vert\alpha'\vert+\vert\beta'\vert\leq\vert\alpha\vert+\vert\beta\vert}\tilde{\mathcal D}^\lambda_{\vert\alpha'\vert,\vert\beta'\vert}(g,\Phi),
\end{align*}
\begin{align*}
\int_{\mathbb R^6}&\partial_{x_i}\Big(\partial_x^\alpha\partial_q^\beta \big(e^{\bar\phi+2\Phi}\nabla_q\cdot(\Lambda_{\Phi,q}\nabla_q g)\big)- e^{\bar\phi+2\Phi}\nabla_q\cdot(\Lambda_{\Phi,q}\nabla_q\partial_x^\alpha\partial_q^\beta g)\Big)(e^{2\Phi}+\vert q\vert^2)^{\lambda}\partial_{x_i}\partial_x^\alpha\partial_q^\beta g{\rm d}x{\rm d}q\\
	\leq& \eta\int_{\mathbb R^6}e^{\bar\phi+2\Phi}(e^{2\Phi}+\vert q\vert^2)^{\lambda-\frac{1}{2}}(e^{2\Phi}\vert\nabla_q\partial_{x_i}\partial_x^\alpha\partial_q^\beta g\vert^2+\vert q\cdot\nabla_q\partial_{x_i}\partial_x^\alpha\partial_q^\beta g\vert^2){\rm d}x{\rm d}q\\
	&+Ce^{\bar\phi}\int_{\mathbb R^6}(e^{2\Phi}+\vert q\vert^2)^{\lambda}\vert\partial_{x_i}\partial_x^\alpha\partial_q^\beta g\vert^2{\rm d}x{\rm d}q+ C_\eta\sum_{\vert\alpha'\vert+\vert\beta'\vert\leq\vert\alpha\vert+\vert\beta\vert}\tilde{\mathcal D}_{\vert\alpha'\vert,\vert\beta'\vert}^\la(t),
\end{align*}
and
\begin{align*}
\int_{\mathbb R^6}&\partial_{q_i}\Big(\partial_x^\alpha\partial_q^\beta(e^{\bar\phi+2\Phi}\nabla_q\cdot(\Lambda_{\Phi,q}\nabla_q\bar G)-e^{\bar\phi}\nabla_q\cdot(\Lambda_{\bar\phi,q}\nabla_q\bar G))\\
	&-(e^{\bar\phi+2\Phi}\nabla_q\cdot(\Lambda_{\Phi,q}\nabla_q\partial_x^\alpha\partial_q^\beta\bar G)-e^{\bar\phi}\nabla_q\cdot(\Lambda_{\bar\phi,q}\nabla_q\partial_x^\alpha\partial_q^\beta\bar G))\Big)(e^{2\Phi}+\vert q\vert^2)^{\lambda}	\partial_{q_i}\partial_x^\alpha\partial_q^\beta g{\rm d}x{\rm d}q\\
	&+A\int_{\mathbb R^6}\partial_{x_i}\Big(\partial_x^\alpha\partial_q^\beta(e^{\bar\phi+2\Phi}\nabla_q\cdot(\Lambda_{\Phi,q}\nabla_q\bar G)-e^{\bar\phi}\nabla_q\cdot(\Lambda_{\bar\phi,q}\nabla_q\bar G))\\
	&-(e^{\bar\phi+2\Phi}\nabla_q\cdot(\Lambda_{\Phi,q}\nabla_q\partial_x^\alpha\partial_q^\beta\bar G)-e^{\bar\phi}\nabla_q\cdot(\Lambda_{\bar\phi,q}\nabla_q\partial_x^\alpha\partial_q^\beta\bar G))\Big)(e^{2\Phi}+\vert q\vert^2)^{\lambda}	\partial_{x_i}\partial_x^\alpha\partial_q^\beta g{\rm d}x{\rm d}q\\
	\leq& \eta\int_{\mathbb R^6}e^{\bar\phi+2\Phi}(e^{2\Phi}+\vert q\vert^2)^{\lambda-\frac{1}{2}}(e^{2\Phi}\vert\nabla_q\partial_{q_i}\partial_x^\alpha\partial_q^\beta g\vert^2+\vert q\cdot\nabla_q\partial_{q_i}\partial_x^\alpha\partial_q^\beta g\vert^2){\rm d}x{\rm d}q\\
	&+\eta\int_{\mathbb R^6}e^{\bar\phi+2\Phi}(e^{2\Phi}+\vert q\vert^2)^{\lambda-\frac{1}{2}}(e^{2\Phi}\vert\nabla_q\partial_{x_i}\partial_x^\alpha\partial_q^\beta g\vert^2+\vert q\cdot\nabla_q\partial_{x_i}\partial_x^\alpha\partial_q^\beta g\vert^2){\rm d}x{\rm d}q\\
	&+Ce^{\bar\phi}\int_{\mathbb R^3}(e^{2\Phi}+\vert q\vert^2)^\gamma(\vert\nabla_p\partial_x^\alpha\partial_q^\beta g\vert^2+A\vert\nabla_x\partial_x^\alpha\partial_q^\beta g\vert^2){\rm d}x{\rm d}q\\
	&+C_\eta\sum_{\vert\beta'\vert\leq\vert\beta\vert+1} e^{\bar\phi}\left\|\Phi\right\|_{H^{\vert\alpha\vert+1}_x}^2\int_{\mathbb R^3}(1+\vert q\vert^2)^{\lambda-\frac{1}{2}}(\vert\nabla_q\partial^{\beta'}_{q}\bar G\vert^2+\vert q\cdot\nabla_q\partial_{q}^{\beta'}\bar G\vert^2){\rm d}q.
\end{align*}
By combining the above estimates together, we conclude that for $\vert\beta\vert>0$
\begin{align}\label{6.31}
	\CK_1\leq& \eta\int_{\mathbb R^6}e^{\bar\phi+2\Phi}(e^{2\Phi}+\vert q\vert^2)^{\lambda-\frac{1}{2}}(e^{2\Phi}\vert\nabla_q\partial_{q_i}\partial_x^\alpha\partial_q^\beta g\vert^2+\vert q\cdot\nabla_q\partial_{q_i}\partial_x^\alpha\partial_q^\beta g\vert^2){\rm d}x{\rm d}q\nonumber\\
	&+\eta\int_{\mathbb R^6}e^{\bar\phi+2\Phi}(e^{2\Phi}+\vert q\vert^2)^{\lambda-\frac{1}{2}}(e^{2\Phi}\vert\nabla_q\partial_{x_i}\partial_x^\alpha\partial_q^\beta g\vert^2+\vert q\cdot\nabla_q\partial_{x_i}\partial_x^\alpha\partial_q^\beta g\vert^2){\rm d}x{\rm d}q\nonumber\\
	&+\eta\int_{\mathbb R^6}(e^{2\Phi}+\vert q\vert^2)^\lambda\vert\partial_{p_i}\partial_x^\alpha\partial_q^\beta g\vert^2{\rm d}x{\rm d}q+C_\eta\sum_{\vert\alpha'\vert+\vert\beta'\vert\leq\vert\alpha\vert+\vert\beta\vert}\int_{\mathbb R^6}(e^{2\Phi}+\vert q\vert^2)^\lambda\vert\partial_x^{\alpha'}\partial_q^{\beta'}g\vert^2{\rm d}x{\rm d}q\nonumber\\
	&+|\beta|\sum_{|\beta'|=1}\sum_{|\alpha'|=1}\int_{\mathbb R^6}(e^{2\Phi}+\vert q\vert^2)^\lambda|\partial_x^{\alpha'}\partial_x^{\alpha}\partial_q^{\beta-\beta'}\partial_{q_i}g||\partial_{q_i}\partial_x^\alpha\partial_q^\beta g|{\rm d}x{\rm d}q\notag\\
	&+\bar\phi'\int_{\mathbb R^6}(e^{2\Phi}+\vert q\vert^2)^{\lambda}(\vert\partial_{q_i}\partial_x^\alpha\partial_q^\beta g\vert^2+A\vert\partial_{x_i} \partial_x^\alpha\partial_q^\beta g\vert^2){\rm d}x{\rm d}q\nonumber\\
	&\quad+C\eps_1\sum_{\vert\alpha'\vert+\vert\beta'\vert\leq\vert\alpha\vert+\vert\beta\vert+1}\int_{\mathbb R^6}(e^{2\Phi}+\vert q\vert^2)^\gamma\vert\partial_x^{\alpha'}\partial_q^{\beta'}g\vert^2{\rm d}x{\rm d}q\nonumber\\
	&+Ce^{(\frac{3}{2}-\lambda)\bar\phi}\int_{\mathbb R^6}(e^{2\Phi}+\vert q\vert^2)^{\lambda}(\vert\partial_{q_i}\partial_x^\alpha\partial_q^\beta g\vert^2+A\vert\partial_{x_i}\partial_x^\alpha\partial_q^\beta g\vert^2){\rm d}x{\rm d}q+Ce^{(\frac{3}{2}-\lambda)\bar\phi}\left\|\nabla_x\Phi\right\|_{H^{\vert\alpha\vert+1}_x}^2\nonumber\\
	&+C_\eta\sum_{\vert\beta'\vert\leq\vert\beta\vert+1} e^{\bar\phi}\left\|\Phi\right\|_{H^{\vert\alpha\vert+1}_x}^2\int_{\mathbb R^3}(1+\vert q\vert^2)^{\lambda-\frac{1}{2}}(\vert\nabla_q\partial^{\beta'}_{q}\bar G\vert^2+\vert q\cdot\nabla_q\partial_{q}^{\beta'}\bar G\vert^2){\rm d}q,
\end{align}
 and for $\vert\beta\vert=0$,
\begin{align}\label{6.32}
	|\CK_1|\leq& \eta\int_{\mathbb R^6}e^{\bar\phi+2\Phi}(e^{2\Phi}+\vert q\vert^2)^{\lambda-\frac{1}{2}}(e^{2\Phi}\vert\nabla_q\partial_{q_i}\partial_x^\alpha g\vert^2+\vert q\cdot\nabla_q\partial_{q_i}\partial_x^\alpha g\vert^2){\rm d}x{\rm d}q\nonumber\\
	&+\eta\int_{\mathbb R^6}e^{\bar\phi+2\Phi}(e^{2\Phi}+\vert q\vert^2)^{\lambda-\frac{1}{2}}(e^{2\Phi}\vert\nabla_q\partial_{x_i}\partial_x^\alpha g\vert^2+\vert q\cdot\nabla_q\partial_{x_i}\partial_x^\alpha g\vert^2){\rm d}x{\rm d}q\nonumber\\
	&+\eta\int_{\mathbb R^6}(e^{2\Phi}+\vert q\vert^2)^\lambda\vert\partial_{p_i}\partial_x^\alpha g\vert^2{\rm d}x{\rm d}q+C_\eta\sum_{\vert\alpha'\vert\leq\vert\alpha\vert}\int_{\mathbb R^6}(e^{2\Phi}+\vert q\vert^2)^\lambda\vert\partial_x^{\alpha'}g\vert^2{\rm d}x{\rm d}q\nonumber\\
	&+C\eps_1\sum_{\vert\alpha'\vert+\vert\beta'\vert\leq\vert\alpha\vert+1}\int_{\mathbb R^6}(e^{2\Phi}+\vert q\vert^2)^\gamma\vert\partial_x^{\alpha'}\partial_q^{\beta'}g\vert^2{\rm d}x{\rm d}q\nonumber\\
	&+Ce^{(\frac{3}{2}-\lambda)\bar\phi}\int_{\mathbb R^6}(e^{2\Phi}+\vert q\vert^2)^{\lambda}(\vert\partial_{q_i}\partial_x^\alpha g\vert^2+A\vert\partial_{x_i}\partial_x^\alpha g\vert^2){\rm d}x{\rm d}q+Ce^{(\frac{3}{2}-\lambda)\bar\phi}\left\|\nabla_x\Phi\right\|_{H^{\vert\alpha\vert+1}_x}^2\nonumber\\
	&+C_\eta\sum_{\vert\beta'\vert\leq1} e^{\bar\phi}\left\|\Phi\right\|_{H^{\vert\alpha\vert+1}_x}^2\int_{\mathbb R^3}(1+\vert q\vert^2)^{\lambda-\frac{1}{2}}(\vert\nabla_q\partial^{\beta'}_{q}\bar G\vert^2+\vert q\cdot\nabla_q\partial_{q}^{\beta'}\bar G\vert^2){\rm d}q.
\end{align}
As for $\CK_2$,
it is straightforward to show
\begin{align}\label{6.33}
	|\CK_2|\leq Ce^{\bar\phi}\sum_{ m\leq\vert\alpha\vert+1}\tilde{\mathcal E}_{m,0}^{\lambda}(t).
\end{align}

Now, plugging either \eqref{6.31} or \eqref{6.32} into \eqref{6.29}, and substituting \eqref{6.33} into \eqref{6.30}, we obtain for $\vert\beta\vert>0$
\begin{align}
\frac{d}{dt}\int_{\mathbb R^6}&(e^{2\Phi}+\vert q\vert^2)^\lambda(\vert\partial_{q_i}\partial_x^\alpha\partial_q^\beta g\vert^2+A\vert\partial_{x_i}\partial_x^\alpha\partial_q^\beta g\vert^2){\rm d}x{\rm d}q\nonumber\\
	&+\int_{\mathbb R^6}e^{\bar\phi+2\Phi}(e^{2\Phi}+\vert q\vert^2)^{\lambda-\frac{1}{2}}(e^{2\Phi}\vert\nabla_q\partial_{q_i}\partial_x^\alpha\partial_q^\beta g\vert^2+\vert q\cdot\nabla_q\partial_{q_i}\partial_x^\alpha\partial_q^\beta g\vert^2){\rm d}x{\rm d}q\nonumber\\
	&+A\int_{\mathbb R^6}e^{\bar\phi+2\Phi}(e^{2\Phi}+\vert q\vert^2)^{\lambda-\frac{1}{2}}(e^{2\Phi}\vert\nabla_q\partial_{x_i}\partial_x^\alpha\partial_q^\beta g\vert^2+\vert q\cdot\nabla_q\partial_{x_i}\partial_x^\alpha\partial_q^\beta g\vert^2){\rm d}x{\rm d}q\nonumber\\
\leq& \Big(2\bar\phi'(\frac{5}{2}-\lambda)+C(\partial_t\phi)_++Ce^{(\frac{3}{2}-\lambda)\bar\phi}+C\eps_1\Big)\notag\\
&\quad\times\int_{\mathbb R^6}(e^{2\Phi}+\vert q\vert^2)^\lambda(\vert\partial_{q_i}\partial_x^\alpha\partial_q^\beta g\vert^2+A\vert\partial_{x_i}\partial_x^\alpha\partial_q^\beta g\vert^2){\rm d}x{\rm d}q\nonumber\\
	&	+(1+|\beta|)\sum_{|\beta'|=1}\sum_{|\alpha'|=1}\int_{\mathbb R^6}(e^{2\Phi}+\vert q\vert^2)^\lambda|\partial_x^{\alpha'}\partial_x^{\alpha}\partial_q^{\beta-\beta'}\partial_{q_i}g||\partial_{q_i}\partial_x^\alpha\partial_q^\beta g|{\rm d}x{\rm d}q\nonumber\\
		&+\eta\int_{\mathbb R^6}(e^{2\Phi}+\vert q\vert^2)^\lambda\vert\partial_{p_i}\partial_x^\alpha\partial_q^\beta g\vert^2{\rm d}x{\rm d}q+C_\eta\sum_{\vert\alpha'\vert+\vert\beta'\vert\leq\vert\alpha\vert+\vert\beta\vert}\int_{\mathbb R^6}(e^{2\Phi}+\vert q\vert^2)^\lambda\vert\partial_x^{\alpha'}\partial_q^{\beta'}g\vert^2{\rm d}x{\rm d}q\nonumber\\
	&+C\sum_{ m+n\leq\vert\alpha\vert+\vert\beta\vert}\tilde{\mathcal D}_{m,n}^\lambda(t)^2+Ce^{(\frac{3}{2}-\lambda)\bar\phi}(\left\|\Phi\right\|_{L^2_x}^2
+\left\|\nabla_x\Phi\right\|_{H^{\vert\alpha\vert+1}_x}^2)\notag\\
	&+C\eps_1\sum_{\vert\alpha'\vert+\vert\beta'\vert\leq\vert\alpha\vert+\vert\beta\vert+1}\int_{\mathbb R^6}(e^{2\Phi}+\vert q\vert^2)^\lambda\vert\partial_x^{\alpha'}\partial_q^{\beta'}g\vert^2{\rm d}x{\rm d}q\notag\\
	&+Ce^{(\frac{5}{2}-\gamma)\bar\phi}\int_{\mathbb R^6}(e^{2\Phi}+\vert q\vert^2)^{\lambda}(\vert\partial_{q_i}\partial_x^\alpha\partial_q^\beta g\vert^2+A\vert\partial_{x_i}\partial_x^\alpha\partial_q^\beta g\vert^2){\rm d}x{\rm d}q\notag\\
	&+C_\eta\sum_{\vert\beta'\vert\leq\vert\beta\vert+1} e^{\bar\phi}\left\|\Phi\right\|_{H^{\vert\alpha\vert+1}_x}^2\int_{\mathbb R^3}(1+\vert q\vert^2)^{\lambda-\frac{1}{2}}(\vert\nabla_q\partial^{\beta'}_{q}\bar G\vert^2
+\vert q\cdot\nabla_q\partial_{q}^{\beta'}\bar G\vert^2){\rm d}q,\label{hg1}
\end{align}
 and for $\vert\beta\vert=0$,
\begin{align}
\frac{d}{dt}&\int_{\mathbb R^6}(e^{2\Phi}+\vert q\vert^2)^\lambda(\vert\partial_{q_i}\partial_x^\alpha\partial_q^\beta g\vert^2+A\vert\partial_{x_i}\partial_x^\alpha\partial_q^\beta g\vert^2){\rm d}x{\rm d}q\nonumber\\
	&+\int_{\mathbb R^6}e^{\bar\phi+2\Phi}(e^{2\Phi}+\vert q\vert^2)^{\lambda-\frac{1}{2}}(e^{2\Phi}\vert\nabla_q\partial_{q_i}\partial_x^\alpha\partial_q^\beta g\vert^2+\vert q\cdot\nabla_q\partial_{q_i}\partial_x^\alpha\partial_q^\beta g\vert^2){\rm d}x{\rm d}q\nonumber\\
	&+A\int_{\mathbb R^6}e^{\bar\phi+2\Phi}(e^{2\Phi}+\vert q\vert^2)^{\lambda-\frac{1}{2}}(e^{2\Phi}\vert\nabla_q\partial_{x_i}\partial_x^\alpha\partial_q^\beta g\vert^2+\vert q\cdot\nabla_q\partial_{x_i}\partial_x^\alpha\partial_q^\beta g\vert^2){\rm d}x{\rm d}q\nonumber\\
	\leq& \Big(2\partial_t\bar\phi(\frac{3}{2}-\lambda)+C(\partial_t\phi)_++Ce^{(\frac{3}{2}-\lambda)\bar\phi}+C\eps_1\Big)\notag\\
	&\quad\times\int_{\mathbb R^6}(e^{2\Phi}+\vert q\vert^2)^\lambda(\vert\partial_{q_i}\partial_x^\alpha\partial_q^\beta g\vert^2+A\vert\partial_{x_i}\partial_x^\alpha\partial_q^\beta g\vert^2){\rm d}x{\rm d}q\nonumber\\
	&	+\int_{\mathbb R^6}(e^{2\Phi}+\vert q\vert^2)^{\lambda}\vert\partial_{x_i}\partial_x^\alpha\partial_q^\beta g\vert\vert\partial_{q_i}\partial_x^\alpha\partial_q^\beta g\vert{\rm d}x{\rm d}q\nonumber\\
	&+\eta\int_{\mathbb R^6}(e^{2\Phi}+\vert q\vert^2)^\lambda\vert\partial_{p_i}\partial_x^\alpha\partial_q^\beta g\vert^2{\rm d}x{\rm d}q+C_\eta\sum_{\vert\alpha'\vert+\vert\beta'\vert\leq\vert\alpha\vert+\vert\beta\vert}\int_{\mathbb R^6}(e^{2\Phi}+\vert q\vert^2)^\lambda\vert\partial_x^{\alpha'}\partial_q^{\beta'}g\vert^2{\rm d}x{\rm d}q\nonumber\\
	&+C\sum_{ m+n\leq\vert\alpha\vert+\vert\beta\vert}\tilde{\mathcal D}_{m,n}^\lambda(t)^2+Ce^{(\frac{3}{2}-\lambda)\bar\phi}(\left\|\Phi\right\|_{L^2_x}^2
+\left\|\nabla_x\Phi\right\|_{H^{\vert\alpha\vert+1}_x}^2)\notag\\
	&+C\tilde{\mathcal E}^\lambda(t)\sum_{\vert\alpha'\vert+\vert\beta'\vert\leq\vert\alpha\vert+\vert\beta\vert+1}\int_{\mathbb R^6}(e^{2\Phi}+\vert q\vert^2)^\lambda\vert\partial_x^{\alpha'}\partial_q^{\beta'}g\vert^2{\rm d}x{\rm d}q\notag\\
	&+Ce^{(\frac{3}{2}-\gamma)\bar\phi}\int_{\mathbb R^6}(e^{2\Phi}+\vert q\vert^2)^{\lambda}(\vert\partial_{q_i}\partial_x^\alpha\partial_q^\beta g\vert^2+A\vert\partial_{x_i}\partial_x^\alpha\partial_q^\beta g\vert^2){\rm d}x{\rm d}q\notag\\
	&+C_\eta\sum_{\vert\beta'\vert\leq\vert\beta\vert+1} e^{\bar\phi}\left\|\Phi\right\|_{H^{\vert\alpha\vert+1}_x}^2\int_{\mathbb R^3}(1+\vert q\vert^2)^{\lambda-\frac{1}{2}}(\vert\nabla_q\partial^{\beta'}_{q}\bar G\vert^2
+\vert q\cdot\nabla_q\partial_{q}^{\beta'}\bar G\vert^2){\rm d}q,	\label{hg2}
\end{align}
 and
\begin{align}
	\frac{d}{dt}\int_{\mathbb R^3}(\vert\partial_t\partial_{x_i}\partial_x^\alpha\Phi\vert^2+\vert\nabla_x\partial_{x_i}\partial_x^\alpha\Phi\vert^2){\rm d}x\leq Ce^{\bar\phi}\sum_{ m\leq\vert\alpha\vert+1}\tilde{\mathcal E}_{m,0}^{\lambda}(t).\label{hg3}
\end{align}
Finally, by applying Gr\"{o}nwall's inequality to \eqref{hg1}, \eqref{hg2} and \eqref{hg3} and taking a linear combination  with \eqref{6.11} and \eqref{6.26},
we get \eqref{eng-es-ss} and \eqref{eng-de-ss}. This completes the proof of Theorem \ref{thm2}.

\end{proof}

\section{Appendix}\label{ad}
In this Appendix, we collect some  estimates used in the previous sections. The first lemma is about some  properties of the spatially homogeneous solution $(\bar F,\bar\phi)$ of \eqref{vnpf-ho} and \eqref{ho-id} constructed in \cite{ACP-2014}.

\begin{lemma}\label{lemma A1}
Assume $(\bar F,\bar\phi)$ is the spatially homogeneous solution of $(\bar F,\bar\phi)$ of \eqref{vnpf-ho} and \eqref{ho-id} given  in \cite[Theorem 2.1, pp.3703]{ACP-2014}. For any  $\gamma>0$, if the initial data satisfies
\begin{equation*}
	\int_{\mathbb R^3}\big((1+\vert p\vert^2)^{\gamma+1}\vert\nabla_p^3\bar F_{in}\vert^2+(1+\vert p\vert^2)^{\gamma+2}\vert\nabla_p^4\bar F_{in}\vert^2\big){\rm d}p<\infty,
\end{equation*}
then
\begin{equation}\label{A1}
\int_{\mathbb R^3}(e^{2\bar\phi}+\vert p\vert^2)^{\gamma}\vert\bar F\vert^2{\rm d}p<\infty,
\end{equation}
and
\begin{equation}\label{A2}
	\int_{\mathbb R^3}(e^{2\bar\phi}+\vert p\vert^2)^{\gamma_k}\vert\nabla^k_p\bar F\vert^2{\rm d}p<C(1+t)^{k-1},\ \textrm{for}\ k=1,2,3,4.
\end{equation}
Here, $\ga_1=\ga_2=\ga$, $\ga_3=\ga+1$ and $\ga_4=\ga+2$.
Moreover, it holds that
\begin{align}
\lim\limits_{t\rightarrow+\infty}\frac{d\bar{\phi}}{dt}\ \textrm{exists and is negative}.\notag
\end{align}

\end{lemma}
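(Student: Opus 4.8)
\textbf{Proof proposal for Lemma \ref{lemma A1}.}

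The plan is to re-examine the spatially homogeneous system \eqref{vnpf-ho}--\eqref{ho-id} and extract weighted-in-momentum bounds for $\bar F$ and its momentum derivatives, building on the qualitative picture established in \cite{ACP-2014}: namely that $\bar\phi$ is bounded above and below by affine functions of $t$ with negative slopes, that $\int_0^\infty e^{2\bar\phi}\,\mathrm{d}t<\infty$, and that the diffusion coefficient $e^{2\bar\phi}$ decays exponentially. First I would establish \eqref{A1}. Multiply \eqref{vnpf-ho}$_1$ by $(e^{2\bar\phi}+|p|^2)^\gamma\bar F$ and integrate over $p\in\mathbb R^3$; the time derivative of the weight produces a term proportional to $\bar\phi'\,e^{2\bar\phi}(e^{2\bar\phi}+|p|^2)^{\gamma-1}|\bar F|^2$, which (since $\bar\phi'$ is bounded and $e^{2\bar\phi}\le C$) is dominated by $C(e^{2\bar\phi}+|p|^2)^\gamma|\bar F|^2$, while the diffusion term, after integration by parts and using the identity \eqref{m-lb}, contributes a good (negative) dissipation term plus a lower-order term again controlled by the weighted $L^2$ norm and the dissipation, exactly as in the zeroth-order estimate of Proposition \ref{loc-eng-pro}. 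Gr\"onwall's inequality on a finite interval then gives a bound growing at most like $e^{Ct}$; to upgrade this to the uniform bound \eqref{A1} one uses that the coefficient multiplying the weighted norm on the right-hand side is in fact $\lesssim e^{2\bar\phi}$ once one tracks the powers carefully (the bad terms all carry an extra $e^{2\bar\phi}$ factor from the diffusion operator), and $\int_0^\infty e^{2\bar\phi}\,\mathrm{d}t<\infty$ by \cite{ACP-2014}, so the Gr\"onwall factor is bounded uniformly in $t$. The hypothesis on $\nabla_p^3\bar F_{in}$ and $\nabla_p^4\bar F_{in}$ guarantees the initial weighted norms are finite.

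Next I would prove \eqref{A2} by an induction on $k$. Differentiating \eqref{vnpf-ho}$_1$ $k$ times in $p$, multiplying by $(e^{2\bar\phi}+|p|^2)^{\gamma_k}\nabla_p^k\bar F$, and integrating, the main new feature compared with the $k=0$ case is the commutator between $\nabla_p^k$ and the operator $e^{2\bar\phi}\nabla_p\cdot(\Lambda_{\bar\phi,p}\nabla_p\,\cdot\,)$: since $\Lambda_{\bar\phi,p}$ and its $p$-derivatives are bounded in terms of powers of $(e^{2\bar\phi}+|p|^2)^{-1/2}$, each such commutator term is controlled by lower-order dissipation norms times $e^{2\bar\phi}$, together with a borderline contribution that does not close with the same weight $\gamma$ — this is precisely why the weight is shifted to $\gamma_k=\gamma+(k-1)_+$ for $k\ge 2$ (one extra power of the momentum weight is spent per derivative beyond the first, to absorb the loss coming from $\nabla_p^{k}$ hitting the coefficient $p\otimes p/\sqrt{e^{2\bar\phi}+|p|^2}$). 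After these estimates one arrives at a differential inequality of the form
\begin{equation*}
\frac{d}{dt}\,Y_k(t)\le C\,Y_k(t)+C\sum_{j<k}Y_j(t),
\end{equation*}
where $Y_k(t)=\int_{\mathbb R^3}(e^{2\bar\phi}+|p|^2)^{\gamma_k}|\nabla_p^k\bar F|^2\,\mathrm{d}p$; by the inductive hypothesis $Y_j(t)\le C(1+t)^{j-1}$ for $j<k$, so integrating gives $Y_k(t)\le C(1+t)^{k-1}$, which is \eqref{A2}. (The polynomial-in-$t$ growth, rather than a uniform bound, is acceptable here because in the main theorems these quantities always appear multiplied by exponentially decaying factors such as $e^{\bar\phi}$ or $\|\Phi\|_{L^2_x}^2$.)

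Finally, the statement that $\lim_{t\to\infty}\bar\phi'(t)$ exists and is negative is not reproved here: it is quoted directly from \cite[Theorem 2.1, pp.3703]{ACP-2014}, where it follows from the second-order ODE \eqref{vnpf-ho}$_2$ together with the fact that the right-hand side $-e^{2\bar\phi}\int_{\mathbb R^3}\bar F/\sqrt{e^{2\bar\phi}+|p|^2}\,\mathrm{d}p$ is integrable in time (its absolute value being $\lesssim e^{2\bar\phi}\|(e^{2\bar\phi}+|p|^2)^{\gamma/2}\bar F\|_{L^2_p}$ for $\gamma>1/2$ by the Cauchy--Schwarz argument in \eqref{el-sg}, hence integrable by \eqref{A1} and $\int_0^\infty e^{2\bar\phi}\,\mathrm{d}t<\infty$), so that $\bar\phi''\in L^1(0,\infty)$ and $\bar\phi'$ converges; its limit is negative because $\bar\phi(t)\to-\infty$ linearly. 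The main obstacle in the argument is the bookkeeping in the inductive step: one must verify that every commutator term generated by $\nabla_p^k$ acting on the singular diffusion matrix $\Lambda_{\bar\phi,p}$ can indeed be absorbed into the dissipation $e^{2\bar\phi}\int(e^{2\bar\phi}+|p|^2)^{\gamma_k-1/2}(e^{2\bar\phi}|\nabla_p^{k+1}\bar F|^2+|p\cdot\nabla_p^{k+1}\bar F|^2)\,\mathrm{d}p$ plus lower-order pieces with the chosen shifted weight $\gamma_k$, and that no term forces a further weight loss — this is the calculation that fixes the precise exponents $\gamma_1=\gamma_2=\gamma$, $\gamma_3=\gamma+1$, $\gamma_4=\gamma+2$.
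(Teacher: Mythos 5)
Your overall strategy (weighted-in-$p$ energy estimates for $\nabla_p^k\bar F$, a shifted weight for the higher derivatives, and quoting \cite{ACP-2014} for the behavior of $\bar\phi'$) is in the same spirit as the paper, which however only carries out the computation for $k=3,4$ and cites \cite{ACP-2014} for \eqref{A1} and for $k=1,2$. The genuine gap is in your concluding step: from
\[
\frac{d}{dt}Y_k(t)\le C\,Y_k(t)+C\sum_{j<k}Y_j(t)
\]
with a time-independent constant $C>0$, Gr\"onwall yields $Y_k(t)\lesssim e^{Ct}$, i.e.\ exponential growth, and \emph{not} the polynomial bound $C(1+t)^{k-1}$ of \eqref{A2}; ``integrating'' is only legitimate if the coefficient multiplying $Y_k$ is itself integrable in time (or nonpositive). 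This is precisely what the paper's computation is organized to exhibit: after integration by parts the top-order coefficient consists of $(\gamma+1)\bar\phi'(t)$ (from $\partial_t(e^{2\bar\phi}+|p|^2)^{\gamma+1}$, nonpositive for large $t$ since $\bar\phi'\to\bar\phi'(\infty)<0$, and merely bounded on a finite interval) plus a term of size $e^{\bar\phi}$ (from the cross term $p\cdot\nabla_p\partial_p^\beta\bar F\,\partial_p^\beta\bar F$ after Cauchy--Schwarz, using $e^{2\bar\phi}(e^{2\bar\phi}+|p|^2)^{-1/2}\le e^{\bar\phi}$), which is integrable because $\bar\phi(t)\le C-\beta t$. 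With the Gr\"onwall factor thus uniformly bounded, the lower-order forcing $\int_{\mathbb R^3}(e^{2\bar\phi}+|p|^2)^{\gamma}(|\nabla_p\bar F|^2+|\nabla_p^2\bar F|^2)\,{\rm d}p\le C(1+t)$ (i.e.\ \eqref{A2} with $k=1,2$) integrates to $C(1+t)^2$, giving $Y_3\le C(1+t)^2$ and then $Y_4\le C(1+t)^3$. You must carry this sign/integrability structure through your commutator estimates rather than collapsing everything into a constant $C$; as written, your inequality does not imply \eqref{A2}.

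Two smaller inaccuracies: your weight rule ``$\gamma_k=\gamma+(k-1)_+$ for $k\ge2$'' contradicts the statement ($\gamma_2=\gamma$; the shift starts only at $k=3$, one extra power per derivative beyond the \emph{second}). And in your re-derivation of \eqref{A1}, the term produced by $\partial_t(e^{2\bar\phi}+|p|^2)^{\gamma}$ equals $2\gamma\bar\phi'\,e^{2\bar\phi}(e^{2\bar\phi}+|p|^2)^{\gamma-1}|\bar F|^2$, whose coefficient relative to the weighted norm is $|\bar\phi'|$, not $e^{2\bar\phi}$ (for small $|p|$ and very negative $\bar\phi$ one cannot trade $(e^{2\bar\phi}+|p|^2)^{-1}$ for a constant); the estimate still closes because this term is eventually nonpositive, but not for the reason you give --- and in any case the paper simply quotes \eqref{A1} and the $k=1,2$ bounds, as well as the existence and negativity of $\lim_{t\to\infty}\bar\phi'(t)$, from \cite{ACP-2014}.
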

\begin{proof}
	We only need to prove \eqref{A2} for $k\ge 3$, since the other statements were  proved in \cite{ACP-2014}. Note that $(\bar F,\bar\phi)$ satisfies
	\begin{equation}\label{A6}
\left\{
\begin{array}{l}
\partial_t\bar F=e^{2\bar\phi}\nabla_p\cdot(\Lambda_{\bar\phi}\nabla_p\bar F),\\
\frac{d^2\bar{\phi}}{dt^2}=-e^{2\bar\phi}\int_{\mathbb R^3}\frac{\bar F}{\sqrt{e^{2\bar\phi}+\vert p\vert^2}}{\rm d}p.
\end{array}\right.
	\end{equation}
For $\vert\beta\vert=3$, applying $\partial_p^\beta$ to $\eqref{A6}_1$, multiplying the resulting identity by $(e^{2\bar\phi}+\vert p\vert^2)^{\gamma+1}\partial_p^\beta\bar F$, then the resulting equality yields that
\begin{align*}
\frac{1}{2}\frac{d}{dt}\int_{\mathbb R^3}&(e^{2\bar\phi}+\vert p\vert^2)^{\gamma+1}\vert\partial_p^\beta\bar F\vert^2{\rm d}p\nonumber\\
=&(\gamma+1)\int_{\mathbb R^3}\bar\phi'(t)(e^{2\bar\phi}+\vert p\vert^2)^{\gamma}\vert\partial_p^\beta\bar F\vert^2{\rm d}p+\int_{\mathbb R^3}e^{2\bar\phi}\partial_p^\beta\nabla_p\cdot(\Lambda_{\bar\phi}\nabla_p\bar F)(e^{2\bar\phi}+\vert p\vert^2)^{\gamma+1}\partial_p^\beta\bar F{\rm d}p\nonumber\\
\leq& (\gamma+1)\int_{\mathbb R^3}\bar\phi'(t)(e^{2\bar\phi}+\vert p\vert^2)^{\gamma}\vert\partial_p^\beta\bar F\vert^2{\rm d}p-\int_{\mathbb R^3}e^{2\bar\phi}(e^{2\bar\phi}+\vert p\vert^2)^{\gamma-\frac{1}{2}}(e^{2\bar\phi}\vert\nabla_p\partial_p^\beta\bar F\vert^2+\vert p\cdot\nabla_p\partial_p^\beta\bar F\vert){\rm d}p\nonumber\\
&+\int_{\mathbb R^3}e^{\bar\phi}(e^{2\bar\phi}+\vert p\vert^2)^{\frac{\gamma}{2}}\vert p\cdot\nabla_p\partial_p^\beta\bar F\vert (e^{2\bar\phi}+\vert p\vert^2)^{\frac{\gamma}{2}+\frac{1}{2}}\vert\partial_p^\beta\bar F\vert{\rm d}p\nonumber\\
&+\int_{\mathbb R^3}e^{2\bar\phi}\Big((e^{2\bar\phi}+\vert p\vert^2)^{-1}\vert\nabla_p\bar F\vert\vert\nabla_p^2\bar F\vert+(e^{2\bar\phi}+\vert p\vert^2)^{-\frac{1}{2}\vert\nabla_p\bar F\vert^2}\Big)(e^{2\bar\phi}+\vert p\vert^2)^{\gamma+1}{\rm d}p\nonumber\\
&+\int_{\mathbb R^3}e^{2\bar\phi}\Big((e^{2\bar\phi}+\vert p\vert^2)^{-1}\vert\nabla_p^2\bar F\vert\vert\nabla_p^2\bar F\vert+(e^{2\bar\phi}+\vert p\vert^2)^{-\frac{1}{2}}\vert\nabla^3_p\bar F\vert\Big)(e^{2\bar\phi}+\vert p\vert^2)^{\gamma+1}\vert\partial_p^\beta\bar F\vert{\rm d}p\nonumber\\
\leq&(\gamma+1)\int_{\mathbb R^3}\bar\phi'(t)(e^{2\bar\phi}+\vert p\vert^2)^{\gamma}\vert\partial_p^\beta\bar F\vert^2{\rm d}p\notag\\
&-\int_{\mathbb R^3}e^{2\bar\phi}(e^{2\bar\phi}+\vert p\vert^2)^{\gamma-\frac{1}{2}}(e^{2\bar\phi}\vert\nabla_p\partial_p^\beta\bar F\vert^2+\vert p\cdot\nabla_p\partial_p^\beta\bar F\vert){\rm d}p\nonumber\\
&+e^{\bar\phi}\int_{\mathbb R^3}(e^{2\bar\phi}+\vert p\vert^2)^{\gamma+1}\vert\partial_p^\beta\bar F\vert^2{\rm d}p+\int_{\mathbb R^3}(e^{2\bar\phi}+\vert p\vert^2)^{\gamma}(\vert\nabla_p\bar F\vert^2+\vert\nabla_p^2\bar F\vert^2){\rm d}p.	
	\end{align*}
By using \eqref{A1} and \eqref{A2} with $k=1,2$, one has
\begin{equation}
	\begin{split}
\frac{d}{dt}&\int_{\mathbb R^3}(e^{2\bar\phi}+\vert p\vert^2)^{\gamma+1}\vert\partial_p^\beta\bar F\vert^2{\rm d}p\\
&\leq	(\gamma+1)\int_{\mathbb R^3}\bar\phi'(t)(e^{2\bar\phi}+\vert p\vert^2)^{\gamma}\vert\partial_p^\beta\bar F\vert^2{\rm d}p+e^{\bar\phi}\int_{\mathbb R^3}(e^{2\bar\phi}+\vert p\vert^2)^{\gamma+1}\vert\partial_p^\beta\bar F\vert^2{\rm d}p+C(1+t).\notag
\end{split}
\end{equation}
The Gr\"{o}nwall's inequality then gives
\begin{equation}
\int_{\mathbb R^3}(e^{2\bar\phi}+\vert p\vert^2)^{\gamma+1}\vert\partial_p^\beta\bar F\vert^2{\rm d}p\leq C(1+t)^2.\notag
\end{equation}
This shows \eqref{A2} holds for $k= 3$. The case for $k=4$ is similar.
This completes the proof of Lemma \ref{lemma A1}.
\end{proof}

We now proceed to derive the corresponding estimates for the spatially homogeneous solution $\bar G(t,q)$, which represents the self-similar form of $\bar F(t,q)$.
Note that
\begin{equation*}
\bar F(t,p)=e^{-3\bar\phi}\bar G(t,e^{-\bar\phi}p)=:e^{-3\bar\phi}\bar G(t,q),
\end{equation*}
and
$(\bar G(t,q),\bar \phi(t))$ satisfies
\begin{equation}\label{A8}
	\left\{
	\begin{array}{l}
		\partial_t\bar G-3\bar\phi' \bar G-\bar\phi' q\cdot\nabla_q\bar G=e^{\bar\phi}\nabla_q\cdot(\Lambda_{0,q}\nabla_q\bar G),\\
		\frac{d^2\bar\phi}{dt^2}=-e^{\bar\phi}\int_{\mathbb R^3}\frac{\bar G}{\sqrt{1+\vert q\vert^2}}{\rm d}q,
	\end{array}\right.
\end{equation}
with
\begin{equation}\label{A9}
\bar G(0,q)=\bar G_0(q),\ \bar\phi(0)=\phi_{in},\ \partial_t\bar\phi(0)=\psi_{in}.
\end{equation}
The estimates on $\bar{G}$ are given in the following lemma.

\begin{lemma}\label{lemma A2}
Let $\frac{1}{2}<\lambda<\frac{3}{2}$. Assume  the initial data satisfies
	\begin{equation*}
		\sum_{\vert\beta\vert\leq N+1}\int_{\mathbb R^3}(1+\vert q\vert^2)^{\lambda}\vert\partial_q^{\beta} \bar G_0\vert^2{\rm d}p<\infty.
		\end{equation*}
Then the solution to \eqref{A8} and \eqref{A9} satisfies
	\begin{equation}\label{A10}
		\sum_{\vert\beta\vert\leq N+1}\int_{\mathbb R^3}(1+\vert q\vert^2)^{\lambda}\vert\partial_q^{\beta} \bar G\vert^2{\rm d}p\leq Ce^{2(\frac{3}{2}-\lambda)\bar\phi},
	\end{equation}
and
	\begin{equation}\label{A11}
		\sum_{\vert\beta\vert\leq N+1}\int_0^te^{(2\gamma-2)\bar\phi}\int_{\mathbb R^3}(1+\vert q\vert^2)^{\lambda-\frac{1}{2}}(\vert\nabla_q\partial_q^\beta\bar G\vert^2+\vert q\cdot\nabla_q\partial_q^\beta\bar G\vert^2){\rm d}q{\rm d}s\leq C.
	\end{equation}
Here, $C$ is a constant that depends only on $\lambda$ and the initial data.
\end{lemma}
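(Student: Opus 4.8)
The plan is to prove \eqref{A10} and \eqref{A11} by a weighted energy estimate performed on \eqref{A8}$_1$ for each multi-index $\beta$ with $|\beta|\le N+1$, proceeding by induction on $|\beta|$; compared with the estimates of Section~\ref{gs-vnpf} this argument is considerably simpler because the self-similar scaling terms $-3\bar\phi'\bar G-\bar\phi'\,q\cdot\nabla_q\bar G$ furnish a genuine damping once $\bar\phi'<0$. First I would apply $\partial_q^\beta$ to \eqref{A8}$_1$, multiply by $(1+|q|^2)^\lambda\partial_q^\beta\bar G$, and integrate over $\mathbb R^3$. Using $[\partial_q^\beta,q\cdot\nabla_q]=|\beta|\,\partial_q^\beta$, the scaling terms produce $(3+|\beta|)\bar\phi'\int_{\mathbb R^3}(1+|q|^2)^\lambda|\partial_q^\beta\bar G|^2\,dq$ together with $\bar\phi'\int_{\mathbb R^3}(1+|q|^2)^\lambda(q\cdot\nabla_q\partial_q^\beta\bar G)\,\partial_q^\beta\bar G\,dq$; integrating the latter by parts and using $\nabla_q\cdot\big((1+|q|^2)^\lambda q\big)=(3+2\lambda)(1+|q|^2)^\lambda-2\lambda(1+|q|^2)^{\lambda-1}$ turns the whole scaling contribution into
\begin{equation*}
\Big(\tfrac32-\lambda+|\beta|\Big)\bar\phi'\int_{\mathbb R^3}(1+|q|^2)^\lambda|\partial_q^\beta\bar G|^2\,dq\;+\;\lambda\,\bar\phi'\int_{\mathbb R^3}(1+|q|^2)^{\lambda-1}|\partial_q^\beta\bar G|^2\,dq .
\end{equation*}
The hypothesis $\lambda<\tfrac32$ makes $\tfrac32-\lambda+|\beta|>0$, so by Lemma~\ref{lemma A1} (the limit $\bar\phi'(\infty)$ exists and is strictly negative, hence $\bar\phi'(t)\le\tfrac12\bar\phi'(\infty)<0$ for $t\ge T_c$, cf.\ \eqref{tc}) both terms are $\le 0$ for $t\ge T_c$ and contribute a damping with rate $2(\tfrac32-\lambda)\bar\phi'$; on the bounded interval $[0,T_c]$ the coefficients are merely bounded, so there the estimate is a routine Gr\"{o}nwall argument.

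Next I would treat the diffusion term $e^{\bar\phi}\int_{\mathbb R^3}\partial_q^\beta\big(\nabla_q\cdot(\Lambda_{0,q}\nabla_q\bar G)\big)(1+|q|^2)^\lambda\partial_q^\beta\bar G\,dq$. Splitting $\partial_q^\beta\big(\nabla_q\cdot(\Lambda_{0,q}\nabla_q\bar G)\big)=\nabla_q\cdot(\Lambda_{0,q}\nabla_q\partial_q^\beta\bar G)$ plus a commutator in which some $q$-derivatives fall on $\Lambda_{0,q}=\tfrac{\FI+q\otimes q}{\sqrt{1+|q|^2}}$, an integration by parts on the principal part together with the identity \eqref{m-lb} (taken with $\phi=0$, $p=q$) produces the dissipation $\mathcal D_\beta:=e^{\bar\phi}\int_{\mathbb R^3}(1+|q|^2)^{\lambda-\frac12}\big(|\nabla_q\partial_q^\beta\bar G|^2+|q\cdot\nabla_q\partial_q^\beta\bar G|^2\big)\,dq$ with a favorable sign, plus a weight-derivative cross term and lower-order commutator terms, all of which are dominated by Cauchy-Schwarz with a small parameter $\eta$: the top-order pieces are absorbed into $\mathcal D_\beta$ and the rest is bounded by $Ce^{\bar\phi}$ times energies of order $\le|\beta|$ plus $C\sum_{|\beta'|<|\beta|}\mathcal D_{\beta'}$. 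Writing $E_\beta(t):=\int_{\mathbb R^3}(1+|q|^2)^\lambda|\partial_q^\beta\bar G|^2\,dq$, one arrives for $t\ge T_c$ at
\begin{equation*}
\frac{d}{dt}E_\beta+\mathcal D_\beta\le 2\Big(\tfrac32-\lambda\Big)\bar\phi'\,E_\beta+Ce^{\bar\phi}\sum_{|\beta'|\le|\beta|}E_{\beta'}+C\sum_{|\beta'|<|\beta|}\mathcal D_{\beta'} .
\end{equation*}
The lower-order terms are controlled by the induction hypothesis, and since $\int_0^\infty e^{\bar\phi}\,dt<+\infty$ (by the explicit linear upper bound on $\bar\phi$ from \cite{ACP-2014}, see also \eqref{lt-ho} and Lemma~\ref{lemma A1}), multiplying by the integrating factor $\exp\{-2(\tfrac32-\lambda)\bar\phi-C\int_0^t e^{\bar\phi}\,ds\}$, which is comparable to $e^{-2(\frac32-\lambda)\bar\phi}$, and applying Gr\"{o}nwall gives \eqref{A10}. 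Integrating the same differential inequality in time, and inserting \eqref{A10} together with the integrability of $e^{\bar\phi}$, then yields the space-time dissipation bound $\int_0^t\mathcal D_\beta\,ds\le C$, which is \eqref{A11}.

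The main obstacle is bookkeeping rather than a conceptual difficulty: after distributing $\partial_q^\beta$ over the diffusion operator one must check that every commutator term, in which some of the $q$-derivatives land on $\Lambda_{0,q}$ and hence on the singular factor $(1+|q|^2)^{-1/2}$, can be rewritten with exactly the right powers of $(1+|q|^2)$ so that it is absorbable either into the dissipation $\mathcal D_\beta$ or into $Ce^{\bar\phi}$ times same- and lower-order energies with a small constant, the $\lambda$-weight $(1+|q|^2)^\lambda$ being tracked carefully to keep the exponent arithmetic consistent. A secondary point that must be gotten right is the precise value $\tfrac32-\lambda+|\beta|$ of the leading scaling coefficient: its positivity is exactly what forces $\lambda<\tfrac32$ and fixes the decay exponent $e^{2(\frac32-\lambda)\bar\phi}$ in \eqref{A10}.
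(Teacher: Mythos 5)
Your argument is essentially the paper's own proof: a weighted $L^2$ estimate on $\partial_q^\beta\bar G$ for each $|\beta|\le N+1$, with the scaling terms computed exactly as in the paper (the coefficient $(\tfrac32-\lambda+|\beta|)\bar\phi'$ together with $\lambda\bar\phi'\int_{\mathbb R^3}(1+|q|^2)^{\lambda-1}|\partial_q^\beta\bar G|^2\,{\rm d}q$), the dissipation extracted from the Fokker--Planck structure as in \eqref{m-lb} with $\phi=0$, commutator and weight-derivative cross terms absorbed by Cauchy--Schwarz, and Gr\"onwall with an integrating factor comparable to $e^{-2(\frac32-\lambda)\bar\phi}$, followed by induction on $|\beta|$. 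The paper keeps $(\partial_t\bar\phi)_+$ (which is integrable in time) instead of splitting at $T_c$, and peels one $q$-derivative at a time rather than using $[\partial_q^\beta,q\cdot\nabla_q]=|\beta|\partial_q^\beta$ on the full multi-index, but these are cosmetic differences.

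One point to tighten: your closing sentence identifies \eqref{A11} with $\int_0^t\mathcal D_\beta\,{\rm d}s\le C$, but \eqref{A11} carries the extra weight $e^{(2\lambda-3)\bar\phi}$ on $\mathcal D_\beta$ (equivalently $e^{(2\lambda-2)\bar\phi}$ on the dissipation without the $e^{\bar\phi}$ prefactor), and this weight grows since $\bar\phi\to-\infty$ and $2\lambda-3<0$; so \eqref{A11} is strictly stronger than the unweighted bound you state, and does not follow from integrating the unweighted inequality. It does follow from your own scheme: multiply by the integrating factor first and then integrate in time, so that the dissipation appears exactly with the weight of \eqref{A11}; the lower-order contributions $e^{(2\lambda-3)\bar\phi}\sum_{|\beta'|<|\beta|}\mathcal D_{\beta'}$ are then controlled by the induction hypothesis in its weighted form \eqref{A11}, not merely by the unweighted bound $\int_0^t\mathcal D_{\beta'}\,{\rm d}s\le C$.
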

\begin{proof}
By multiplying \eqref{A8} by $(1+\vert q\vert^2)^\lambda\bar G$ and integrating  over $\mathbb R^3$, we have
	\begin{align}\label{A12}
\frac{1}{2}\frac{d}{dt}\int_{\mathbb R^3}&(1+\vert q\vert^2)^\lambda\vert\bar G\vert^2{\rm d}q\nonumber\\
		=&\int_{\mathbb R^3}\bar\phi'(t)(3 \bar G+q\cdot\nabla_q\bar G)(1+\vert q\vert^2)^{\gamma}\bar G{\rm d}q+\int_{\mathbb R^3}e^{\bar\phi}\nabla_q\cdot(\Lambda_{0,q}\nabla_q\bar G)(1+\vert q\vert^2)^{\gamma}\bar G{\rm d}q\nonumber\\
		=&\bar\phi'(t)(\frac{3}{2}-\gamma)\int_{\mathbb R^3}(1+\vert q\vert^2)^\gamma\vert\bar G\vert^2{\rm d}q+\gamma\partial_t\bar\phi\int_{\mathbb R^3}(1+\vert q\vert^2)^{\gamma-1}\vert\bar G\vert^2{\rm d}q\nonumber\\
		&-e^{\bar\phi}\int_{\mathbb R^3}(1+\vert q\vert^2)^{\gamma-\frac{1}{2}}(\vert\nabla_q\bar G\vert^2+\vert q\cdot\nabla_q\bar G\vert^2){\rm d}q-2\gamma e^{\bar\phi}\int_{\mathbb R^3}(1+\vert q\vert^2)^{\gamma-\frac{1}{2}}q\cdot\nabla_q\bar G\bar G{\rm d}q\nonumber\\
		\leq& \Big(\bar\phi'(t)(\frac{3}{2}-\gamma)+(\partial_t\bar\phi)_++C_\eta e^{\bar\phi})\int_{\mathbb R^3}(1+\vert q\vert^2)^\gamma\vert\bar G\vert^2{\rm d}q\notag\\&-(1-\eta)e^{\bar\phi}\int_{\mathbb R^3}(1+\vert q\vert^2)^{\gamma-\frac{1}{2}}(\vert\nabla_q\bar G\vert^2+\vert q\cdot\nabla_q\bar G\vert^2){\rm d}q.\notag
	\end{align}
	For $\frac{1}{2}<\lambda<\frac{3}{2}$, applying Gr\"{o}nwall's inequality to the above inequality yields that
	\begin{equation}
		e^{-2(\frac{3}{2}-\lambda)\bar\phi}\int_{\mathbb R^3}(1+\vert q\vert^2)^\gamma\vert\bar G\vert^2{\rm d}q+\int_0^te^{(2\lambda-2)\bar\phi}\int_{\mathbb R^3}(1+\vert q\vert^2)^{\gamma-\frac{1}{2}}(\vert\nabla_q\bar G\vert^2+\vert q\cdot\nabla_q\bar G\vert^2){\rm d}q{\rm d}s\leq C.\notag
	\end{equation}
Thus \eqref{A10} and \eqref{A11} hold  for $\beta=0$.

Next  assume $1\leq\vert\beta\vert\leq N+1$. By applying $\partial_q^\beta:=\partial_{q_i}\partial_q^{\beta-e_i}$ to \eqref{A8}, we have
	\begin{equation}
		\begin{split}
			\partial_t\partial_q^\beta \bar G&-4\partial_t\bar\phi\partial_q^\beta \bar G
-\partial_t\bar\phi q\cdot\nabla_q\partial_q^\beta\bar G\\&	=e^{\bar\phi}\nabla_q\cdot(\Lambda_{0,q}\nabla_q\partial_q^\beta\bar G)+e^{\bar\phi}\partial_q^\beta\big(\nabla_q\cdot(\Lambda_{0,q}\nabla_q\bar G)\big)
-\nabla_q\cdot(\Lambda_{0,q}\nabla_q\partial_q^\beta\bar G).\notag
		\end{split}
	\end{equation}
Similarly, we can show that
\begin{align}\label{A16}
\frac{1}{2}\frac{d}{dt}\int_{\mathbb R^3}&(1+\vert q\vert^2)^\lambda\vert\partial_q^\beta\bar G\vert^2{\rm d}q\nonumber\\
		\leq& \Big(\bar\phi'(t)(\frac{5}{2}-\lambda)+(\partial_t\bar\phi)_++C_\eta e^{\bar\phi})\int_{\mathbb R^3}(1+\vert q\vert^2)^\lambda\vert\partial_q^\beta\bar G\vert^2{\rm d}q\notag\\&-(1-\eta)e^{\bar\phi}\int_{\mathbb R^3}(1+\vert q\vert^2)^{\lambda-\frac{1}{2}}(\vert\nabla_q\partial_q^\beta\bar G\vert^2+\vert q\cdot\nabla_q\partial_q^\beta\bar G\vert^2){\rm d}q\nonumber\\
		&+\int_{\mathbb R^3}e^{\bar\phi}\Big(\partial_q^\beta\big(\nabla_q\cdot(\Lambda_{0,q}\nabla_q\bar G)\big)-\nabla_q\cdot(\Lambda_{0,q}\nabla_q\partial_q^\beta\bar G)\Big)(1+\vert q\vert^2)^\lambda\partial_q^\beta\bar G{\rm d}q.
	\end{align}
	By a direct calculation, the last terms in \eqref{A16} can be bounded as
	\begin{equation*}
		\int_{\mathbb R^3}e^{\bar\phi}\Big(\partial_q^\beta\big(\nabla_q\cdot(\Lambda_{0,q}\nabla_q\bar G)\big)-\nabla_q\cdot(\Lambda_{0,q}\nabla_q\partial_q^\beta\bar G)\Big)(1+\vert q\vert^2)^\gamma\partial_q^\beta\bar G{\rm d}q\leq C^{\bar\phi}\sum_{\vert\beta'\vert\leq \vert\beta\vert}\int_{\mathbb R^3}(1+\vert q\vert^2)^\gamma\vert\partial_q^{\beta'}\bar G\vert^2{\rm d}q.
	\end{equation*}
Consequently, \eqref{A10} and \eqref{A11} for $\beta>0$ can be obtained by a standard induction argument. This completes the proof Lemma \ref{lemma A2}.
	\end{proof}


The estimates in the following three lemmas have been used in the proof of Theorem \ref{thm1}.

\begin{lemma}\label{lemma A3}
	Assume $(f,\Phi)$ is a solution to \eqref{pt-vnfp} and \eqref{pt-id}, and $(\bar F,\bar\phi)$ is a solution to \eqref{vnpf-ho} and \eqref{ho-id}. If $\vert\alpha\vert+\vert\beta\vert\leq4$ with $\vert\beta\vert<4$, then for $\ga>0$, we have
	\begin{equation}\label{A017}
		\begin{split}
\int_{\mathbb R^6}&e^{(\vert\alpha\vert+3\vert\beta\vert)\bar\phi}\partial_x^\alpha\partial_p^\beta\big(\nabla_x\sqrt{e^{2\phi}+\vert p\vert^2}\cdot\nabla_p\bar F\big)	(e^{2\phi}+\vert p\vert^2)^\gamma\partial_x^\alpha\partial_p^\beta f{\rm d}x{\rm d}p\\
\leq& Ce^{\bar\phi}\int_{\mathbb R^6}e^{(\vert\alpha\vert+3\vert\beta\vert)\bar\phi}(e^{2\phi}+\vert p\vert^2)^\gamma\vert\partial_x^\alpha\partial_p^\beta f\vert^2{\rm d}x{\rm d}p+Ce^{\bar\phi}\left\|\nabla_x\Phi\right\|_{H^{\vert\alpha\vert}_x}^2
\end{split}
	\end{equation}
for all $t>0$.
\end{lemma}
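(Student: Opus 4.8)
\textbf{Proof strategy for Lemma \ref{lemma A3}.}
The plan is to expand the product $\partial_x^\alpha\partial_p^\beta\big(\nabla_x\sqrt{e^{2\phi}+|p|^2}\cdot\nabla_p\bar F\big)$ by the Leibniz rule, and to exploit two structural features: first, every $x$-derivative that lands on $\sqrt{e^{2\phi}+|p|^2}$ produces a factor $\nabla_x\Phi$ (or a higher $x$-derivative of $\Phi$) since $\phi=\bar\phi+\Phi$ and $\bar\phi$ is $x$-independent, so these terms carry the smallness/regularity that will be absorbed into $\|\nabla_x\Phi\|_{H^{|\alpha|}_x}$; second, the weighted $L^2$-bounds on the homogeneous profile $\bar F$ from Lemma \ref{lemma A1}, together with the self-similar rescaling $\bar F=e^{-3\bar\phi}\bar G(t,e^{-\bar\phi}p)$, convert $p$-derivatives of $\bar F$ into powers of $e^{\bar\phi}$ that supply the overall $e^{\bar\phi}$ prefactor on the right-hand side.

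The key steps, in order, are as follows. (i) Write $\sqrt{e^{2\phi}+|p|^2}$ and note that for any multi-index $\alpha'\le\alpha$ with $|\alpha'|\ge1$ one has $|\partial_x^{\alpha'}\sqrt{e^{2\phi}+|p|^2}|\lesssim (e^{2\phi}+|p|^2)^{-1/2}e^{2\phi}\,P(\nabla_x\Phi,\dots,\nabla_x^{|\alpha'|}\Phi)$ for a polynomial $P$ vanishing at the origin, while $|\partial_p^{\beta'}\sqrt{e^{2\phi}+|p|^2}|\lesssim (e^{2\phi}+|p|^2)^{(1-|\beta'|)/2}$; combining with one more $x$-derivative for the gradient $\nabla_x$, the factor $\partial_x^{\alpha'}\partial_p^{\beta'}\nabla_x\sqrt{e^{2\phi}+|p|^2}$ is controlled by $(e^{2\phi}+|p|^2)^{-|\beta'|/2}\|\nabla_x\Phi\|_{H^{|\alpha|}_x}$ up to constants (using $H^2_x\hookrightarrow L^\infty_x$ to put the low-order $\Phi$-factors in $L^\infty$). (ii) Use Lemma \ref{lemma A1}: $\|(e^{2\bar\phi}+|p|^2)^{\gamma_k/2}\nabla_p^k\bar F\|_{L^2_p}\lesssim (1+t)^{(k-1)/2}$, and then rescale to $\bar G$ so that $(e^{2\phi}+|p|^2)^\gamma$-weighted norms of $\partial_p^{\beta''}\bar F$ against a power of $e^{\bar\phi}$ are finite and, crucially, carry at least one net power $e^{\bar\phi}$ after accounting for the $e^{-3\bar\phi}$ in the scaling and the weight matching $(e^{2\phi}+|p|^2)^\gamma\sim e^{2\bar\phi}(1+|q|^2)^\gamma$. (iii) Apply Cauchy--Schwarz in $(x,p)$: pair $\partial_x^\alpha\partial_p^\beta f$ with its own weight $e^{\frac12(|\alpha|+3|\beta|)\bar\phi}(e^{2\phi}+|p|^2)^{\gamma/2}$, and bound the remaining factor — the product of the $x$-derivative-of-$\sqrt{\cdot}$ piece (an $H^{|\alpha|}_x$-norm of $\nabla_x\Phi$) with a weighted $p$-derivative of $\bar F$ — in $L^2$; the pure $\|\nabla_x\Phi\|^2_{H^{|\alpha|}_x}$ term arises when all derivatives land on the transport coefficient and none on $f$, exactly as in the estimate of $I_2$ in Proposition \ref{loc-eng-pro}. (iv) Finally, reconcile the weight exponents: check that the power of $e^{\bar\phi}$ produced in step (ii) is always $\ge1$ (this is where the choice of exponents $|\alpha|+3|\beta|$ in the energy functional and $\gamma_3=\gamma+1$, $\gamma_4=\gamma+2$ in Lemma \ref{lemma A1} conspire), absorbing the polynomial-in-$t$ growth $(1+t)^{(k-1)/2}$ since $e^{\bar\phi}\to0$ at an exponential rate dominates any polynomial.

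The main obstacle I expect is the weight bookkeeping in step (iv): one must verify, case by case over the finitely many splittings $\alpha=\alpha'+\alpha''$, $\beta=\beta'+\beta''$ with $|\alpha''|+|\beta''|\le|\alpha|+|\beta|\le 4$, that after distributing the fixed weight $e^{(|\alpha|+3|\beta|)\bar\phi}(e^{2\phi}+|p|^2)^\gamma$ between the factor $\partial_x^\alpha\partial_p^\beta f$ and the factor $\partial_x^{\alpha'}\partial_p^{\beta'}(\nabla_x\sqrt{\cdot})\,\partial_p^{\beta''}\bar F$, the latter is genuinely bounded by $e^{\bar\phi}$ times a finite constant depending only on $t$ through a factor that $e^{\bar\phi}$ kills; the potentially dangerous case is $|\beta''|=|\beta|$ maximal (up to $3$ momentum derivatives on $\bar F$), where one leans on $\gamma_3=\gamma+1$ from Lemma \ref{lemma A1} and on $(e^{2\phi}+|p|^2)^{-|\beta'|/2}\le (e^{2\phi}+|p|^2)^{1/2}$-type inequalities to rebalance. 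Everything else is a routine Leibniz expansion plus Cauchy--Schwarz, entirely parallel to the treatment of $I_2$, $I_7$, $I_{12}$ in Section \ref{gs-vnpf}.
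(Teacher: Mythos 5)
Your skeleton — Leibniz expansion, pointwise bounds on derivatives of $\sqrt{e^{2\phi}+|p|^2}$ in terms of $\nabla_x\Phi$, Cauchy--Schwarz with the weight split between the $f$-factor and the $\bar F$-factor, Lemma \ref{lemma A1} for the weighted norms of $\nabla_p^k\bar F$, and absorption of the polynomial growth $(1+t)^{(k-1)/2}$ by a spare exponential — is exactly the paper's route. However, the step you designate as ``crucially'' producing the $e^{\bar\phi}$ prefactor in \eqref{A017} is wrong. In step (ii) you claim the rescaling to $\bar G$ converts $p$-derivatives of $\bar F$ into positive powers of $e^{\bar\phi}$; in fact $\nabla_p^k\bar F(t,p)=e^{-(3+k)\bar\phi}(\nabla_q^k\bar G)(t,e^{-\bar\phi}p)$, so the rescaling produces \emph{negative} powers of $e^{\bar\phi}$, i.e.\ exponentially growing factors, and your weight identity is also off: $(e^{2\bar\phi}+|p|^2)^{\gamma}=e^{2\gamma\bar\phi}(1+|q|^2)^{\gamma}$, not $e^{2\bar\phi}(1+|q|^2)^{\gamma}$. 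Carrying the detour through and invoking Lemma \ref{lemma A2} one only gets bounds of size $e^{-2k\bar\phi}$ for the weighted norm of $\nabla_p^k\bar F$, which is far worse than the direct $(1+t)^{k-1}$ of Lemma \ref{lemma A1} and certainly supplies no gain. The actual source of the $e^{\bar\phi}$ on the right-hand side is the coefficient itself: $\nabla_x\sqrt{e^{2\phi}+|p|^2}=e^{2\phi}\nabla_x\Phi\,(e^{2\phi}+|p|^2)^{-1/2}$, so every term of the Leibniz expansion carries $e^{2\phi}\lesssim e^{2\bar\phi}$ (indeed $e^{2\phi}(e^{2\phi}+|p|^2)^{-1/2}\le e^{\phi}\lesssim e^{\bar\phi}$); one power of $e^{\bar\phi}$ is kept in front, and the leftover exponential decay of $e^{\bar\phi}$ swallows the polynomial growth from Lemma \ref{lemma A1}. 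This is precisely how the paper argues, working with $\bar F$ directly and never rescaling inside this lemma. You actually wrote the $e^{2\phi}$ factor in your step (i), but then discarded it ``up to constants'' and looked to the rescaling for the gain — that is the gap to close.

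A smaller point: the rebalancing inequality in step (iv), $(e^{2\phi}+|p|^2)^{-|\beta'|/2}\le(e^{2\phi}+|p|^2)^{1/2}$, is false exactly in the relevant regime (small $|p|$ and large $t$, where $e^{2\phi}+|p|^2<1$), so it cannot be used to compensate the negative powers created when momentum derivatives fall on the coefficient. The correct bookkeeping, as in the paper's case $|\beta|\ge2$, is to shift the surplus momentum weight onto the $\bar F$-factor and estimate it in norms of the form $\|(e^{2\bar\phi}+|p|^2)^{\frac{\gamma}{2}+\frac{1}{2}(|\beta|-|\beta'|)}\nabla_p\partial_p^{\beta'}\bar F\|_{L^2_p}$, which are finite by Lemma \ref{lemma A1} thanks to the enlarged exponents $\gamma_3=\gamma+1$, $\gamma_4=\gamma+2$ — the feature you correctly identified, but whose use requires this weight transfer rather than the pointwise inequality you quoted.
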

\begin{proof} By using Lemma \ref{lemma A1} and H\"{o}lder's inequality, it follows that for $\vert\beta\vert<2$
	\begin{align*}
\int_{\mathbb R^6}&e^{(\vert\alpha\vert+3\vert\beta\vert)\bar\phi}\partial_x^\alpha\partial_p^\beta\big(\nabla_x\sqrt{e^{2\phi}+\vert p\vert^2}\cdot\nabla_p\bar F\big)	(e^{2\phi}+\vert p\vert^2)^\gamma\partial_x^\alpha\partial_p^\beta f{\rm d}x{\rm d}p\nonumber\\
	\leq& C\int_{\mathbb R^6}e^{\frac{1}{2}(\vert\alpha\vert+3\vert\beta\vert)\bar\phi}(e^{2\phi}+\vert p\vert^2)^{\frac{\gamma}{2}}\vert\partial_x^\alpha\partial_p^\beta f\vert\sum_{0\leq\vert\alpha'\vert\leq\vert\alpha\vert}\vert\nabla_x\partial_x^{\alpha'}\Phi\vert\nonumber\\
		&\times(e^{2\bar\phi}+\vert p\vert^2)^{\frac{\gamma}{2}}\sum_{0\leq\vert\beta'\vert\leq\vert\beta\vert}\vert\nabla_p\partial_p^{\beta'}\bar F\vert e^{2\bar\phi}{\rm d}x{\rm d}p\nonumber\\
		\leq& Ce^{\bar\phi}\int_{\mathbb R^6}e^{(\vert\alpha\vert+3\vert\beta\vert)\bar\phi}(e^{2\phi}+\vert p\vert^2)^\gamma\vert\partial_x^\alpha\partial_p^\beta f\vert^2{\rm d}x{\rm d}p+Ce^{\bar\phi}\left\|\nabla_x\Phi\right\|_{H^{\vert\alpha\vert}_x}^2,
	\end{align*}
 and for $\vert\beta\vert\geq2$
	\begin{align*}
\int_{\mathbb R^6}&e^{(\vert\alpha\vert+3\vert\beta\vert)\bar\phi}\partial_x^\alpha\partial_p^\beta\big(\nabla_x\sqrt{e^{2\phi}+\vert p\vert^2}\cdot\nabla_p\bar F\big)	(e^{2\phi}+\vert p\vert^2)^\gamma\partial_x^\alpha\partial_p^\beta f{\rm d}x{\rm d}p\nonumber\\
	\leq& C\int_{\mathbb R^6}e^{\frac{1}{2}(\vert\alpha\vert+3\vert\beta\vert)\bar\phi}(e^{2\phi}+\vert p\vert^2)^{\frac{\gamma}{2}}\vert\partial_x^\alpha\partial_p^\beta f\vert\sum_{0\leq\vert\alpha'\vert\leq\vert\alpha\vert}\vert\nabla_x\partial_x^{\alpha'}\Phi\vert\\
		&\times\Big((e^{2\bar\phi}+\vert p\vert^2)^{\frac{\gamma}{2}}\sum_{0\leq\vert\beta'\vert\leq1}\vert\nabla_p\partial_p^{\beta'}\bar F\vert+\sum_{2\leq|\beta'|\leq|\beta|}(e^{2\bar\phi}+\vert p\vert^2)^{\frac{\gamma}{2}+\frac{1}{2}(|\beta|-|\beta'|)}\vert\nabla_p\partial_p^\beta f\vert\Big) e^{2\bar\phi}{\rm d}x{\rm d}p
\\ \leq& Ce^{\bar\phi}\int_{\mathbb R^6}e^{(\vert\alpha\vert+3\vert\beta\vert)\bar\phi}(e^{2\phi}+\vert p\vert^2)^\gamma\vert\partial_x^\alpha\partial_p^\beta f\vert^2{\rm d}x{\rm d}p+Ce^{\bar\phi}\left\|\nabla_x\Phi\right\|_{H^{\vert\alpha\vert}_x}^2.
	\end{align*}
Thus, \eqref{A017} holds. This completes the proof Lemma \ref{lemma A3}.
	\end{proof}

\begin{lemma}\label{lemma A4}
	Assume $(f,\Phi)$ is a solution to \eqref{pt-vnfp} and \eqref{pt-id}, and $(\bar F,\bar\phi)$ is a solution to \eqref{vnpf-ho} and \eqref{ho-id}. If $1\leq\vert\alpha\vert+\vert\beta\vert\leq4$ with $\vert\beta\vert<4$, then for $\ga>0$, it holds that
\begin{align}\label{A 17}
\int_{\mathbb R^6}&e^{(\vert\alpha\vert+3\vert\beta\vert)\bar\phi}\partial_x^\alpha\partial_p^\beta\big(e^{2\phi}\nabla_p\cdot(\Lambda_{\phi,p}\nabla_pf)\big)(e^{2\phi}+\vert p\vert^2)^\gamma\partial_x^\alpha\partial_p^\beta f{\rm d}x{\rm d}p\nonumber\\
&\leq -(1-\eta)\int_{\mathbb R^6}e^{(\vert\alpha\vert+3\vert\beta\vert)\bar\phi+2\phi}(e^{2\phi}+\vert p\vert^2)^{\gamma-\frac{1}{2}}\big(e^{2\phi}\vert\nabla_p\partial_x^\alpha\partial_p^\beta f\vert^2+\vert p\cdot\nabla_p\partial_x^\alpha\partial_p^\beta f\vert^2\big){\rm d}x{\rm d}p\nonumber\\
&\quad+Ce^{\bar\phi}\sum_{\substack{m+n\leq\vert\alpha\vert+\vert\beta\vert\\\ n<4}}\mathcal E^\gamma_{m,n}(t)+C\sum_{m+n<\vert\alpha\vert+\vert\beta\vert}\mathcal D_{m,n}^\gamma(t).
\end{align}
\end{lemma}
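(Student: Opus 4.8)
The plan is to expand the operator $\partial_x^\alpha\partial_p^\beta\big(e^{2\phi}\nabla_p\cdot(\Lambda_{\phi,p}\nabla_pf)\big)$ by the Leibniz rule, isolate the single ``principal'' term in which all derivatives $\partial_x^\alpha\partial_p^\beta$ hit $f$, and treat every remaining term as a lower-order commutator that can be absorbed into $\mathcal E^\gamma_{m,n}$, $\mathcal D^\gamma_{m,n}$, or a small multiple of the top-order dissipation. First I would write
\[
\partial_x^\alpha\partial_p^\beta\big(e^{2\phi}\nabla_p\cdot(\Lambda_{\phi,p}\nabla_pf)\big)
= e^{2\phi}\nabla_p\cdot(\Lambda_{\phi,p}\nabla_p\partial_x^\alpha\partial_p^\beta f) + \mathcal C_{\alpha,\beta},
\]
where $\mathcal C_{\alpha,\beta}$ collects all terms with at least one derivative falling on the coefficients $e^{2\phi}$ or $\Lambda_{\phi,p}$ (equivalently on $\phi$ or on the explicit $p$-dependence of $\Lambda_{\phi,p}$). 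For the principal term, integrating by parts in $p$ and using the basic identity \eqref{m-lb}, $\nabla_p u\,\Lambda_{\phi,p}[\nabla_p u]^T = (e^{2\phi}+|p|^2)^{-1/2}(e^{2\phi}|\nabla_p u|^2 + |p\cdot\nabla_p u|^2)$ with $u = \partial_x^\alpha\partial_p^\beta f$, produces exactly $-\int e^{(|\alpha|+3|\beta|)\bar\phi+2\phi}(e^{2\phi}+|p|^2)^{\gamma-\frac12}(e^{2\phi}|\nabla_p\partial_x^\alpha\partial_p^\beta f|^2 + |p\cdot\nabla_p\partial_x^\alpha\partial_p^\beta f|^2)$, plus a crossterm coming from $\nabla_p\big((e^{2\phi}+|p|^2)^\gamma\big) = 2\gamma(e^{2\phi}+|p|^2)^{\gamma-1}p$ acting on the test function; this crossterm is handled exactly as in the estimate of $I_3$ in Proposition~\ref{loc-eng-pro} (Cauchy--Schwarz with a small parameter $\eta$), at the cost of an $\eta$-fraction of the dissipation and a $C_\eta e^{\bar\phi}\mathcal E^\gamma_{|\alpha|,|\beta|}(t)$ term. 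This already gives the displayed leading term $-(1-\eta)\int(\cdots)$.

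Next I would estimate $\mathcal C_{\alpha,\beta}$ term by term against $(e^{2\phi}+|p|^2)^\gamma\partial_x^\alpha\partial_p^\beta f$ after the weight $e^{(|\alpha|+3|\beta|)\bar\phi}$. Each derivative on $\phi$ contributes a factor $\partial_x\Phi$ (or higher $x$-derivatives of $\Phi$), whose $L^\infty_x$ norm is controlled by $H^2_x\hookrightarrow L^\infty_x$ and hence by $\sqrt{\mathcal E^\gamma}$; each derivative on the explicit $p$-dependence of $\Lambda_{\phi,p}$ lowers the homogeneity in $(e^{2\phi}+|p|^2)$ by $\tfrac12$, which is precisely compensated by the powers of $(e^{2\phi}+|p|^2)$ appearing in $\mathcal D^\gamma_{m,n}$ and $\mathcal E^\gamma_{m,n}$. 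The key bookkeeping point — and this is exactly why the $e^{k\bar\phi}$ weights are designed as in \eqref{eng-def} — is that the weight $e^{(|\alpha|+3|\beta|)\bar\phi}$ attached to the top-order term can always be split, via $e^{(|\alpha|+3|\beta|)\bar\phi} = e^{\bar\phi}\cdot e^{(m+3n)\bar\phi}\cdot e^{(|\alpha|+3|\beta|-1-m-3n)\bar\phi}$ with $m+3n < |\alpha|+3|\beta|$ for any lower multi-index $(m,n)$ arising from a genuine commutator, so that one spare factor $e^{\bar\phi}$ (or at worst $e^{\frac{\bar\phi}{2}}$, which on $[0,T_c]$ is just a bounded constant $C(T_c)$) is left over in front of the lower-order energy; this is the mechanism by which the right-hand side of \eqref{A 17} carries the prefactor $Ce^{\bar\phi}$ on the $\mathcal E^\gamma_{m,n}$ sum. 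For the terms where the lost momentum weight is balanced by a factor $e^\phi$ coming from $\Lambda_{\phi,p}$ (and $e^{2\phi}<\tfrac12$ for $t>T_c$, or simply $e^\phi$ bounded on $[0,T_c]$), one recovers the $\mathcal D^\gamma_{m,n}$ contributions with $m+n<|\alpha|+|\beta|$ exactly as stated. A Sobolev product/commutator estimate of Gagliardo--Nirenberg type is used whenever the derivatives on the coefficient and on $f$ are of comparable order, placing the lower-regularity factor in $L^\infty$ and the higher in $L^2$; since $|\alpha|+|\beta|\le 4$ and $H^2_x\times H^2_p\hookrightarrow L^\infty_{x,p}$ locally, all such splittings close.

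I expect the main obstacle to be the careful case analysis for $\mathcal C_{\alpha,\beta}$ when $|\beta|$ is large (say $|\beta|=3$), because then the diffusion operator $\nabla_p\cdot(\Lambda_{\phi,p}\nabla_p\,\cdot\,)$ differentiated three or four times in $p$ produces terms with the worst momentum-weight loss, of order $(e^{2\phi}+|p|^2)^{\gamma-\frac32}$ without a compensating $e^{2\phi}$; these must be matched against $\mathcal D^\gamma_{m,n}$ with $n\le |\beta|-1$, using that $\mathcal D^\gamma_{m,n}$ contains the term $|p\cdot\nabla_p\partial_x^\alpha\partial_p^\beta f|^2$ weighted by $(e^{2\phi}+|p|^2)^{\gamma-\frac12}$ — i.e.\ the $p\cdot\nabla_p$ structure, not a bare $\nabla_p$, is what saves the borderline case. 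Getting this pairing right, and confirming that the residual weight is always at least $e^{\bar\phi}$ (and never a negative power), is the delicate part; everything else is routine Leibniz expansion, integration by parts, Cauchy--Schwarz with small $\eta$, and Sobolev embedding.
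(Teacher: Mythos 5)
Your proposal is correct and follows essentially the same route as the paper's proof of Lemma~\ref{lemma A4}: isolate the principal term, integrate by parts in $p$, use the identity \eqref{m-lb} to produce the $-(1-\eta)$ top-order dissipation, absorb the cross term coming from $\nabla_p\big((e^{2\phi}+\vert p\vert^2)^\gamma\big)$ by Cauchy--Schwarz with a small $\eta$, and control the Leibniz/commutator remainders by $Ce^{\bar\phi}\mathcal E^\gamma_{m,n}$ and lower-order $\mathcal D^\gamma_{m,n}$ through the same $e^{\bar\phi}$-weight bookkeeping and the $p\cdot\nabla_p$ structure of the dissipation. The only difference is cosmetic: the paper keeps $\Lambda_{\phi,p}$ undifferentiated in the principal part of \eqref{A 18} (letting the derivatives fall on $e^{2\phi}\nabla_p f$) and treats the derivatives of $\Lambda_{\phi,p}$ as the separate commutator bounded in \eqref{A 20}, whereas you extract only the pure leading term, but the resulting estimates coincide.
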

\begin{proof}
First of all, note that
	\begin{align}\label{A 18}
\int_{\mathbb R^6}&e^{(\vert\alpha\vert+3\vert\beta\vert)\bar\phi}\partial_x^\alpha\partial_p^\beta\big(e^{2\phi}\nabla_p\cdot(\Lambda_{\phi,p}\nabla_pf)\big)(e^{2\phi}+\vert p\vert^2)^\gamma\partial_x^\alpha\partial_p^\beta f{\rm d}x{\rm d}p\nonumber\\
=&\int_{\mathbb R^6}e^{(\vert\alpha\vert+3\vert\beta\vert)\bar\phi}\Lambda_{\phi,p}\partial_x^\alpha\partial_p^\beta(e^{2\phi}\nabla_pf)\big(\nabla_p(e^{2\phi}+\vert p\vert^2)^\gamma\partial_x^\alpha\partial_p^\beta f+(e^{2\phi}+\vert p\vert^2)^\gamma\nabla_p\partial_x^\alpha\partial_p^\beta f\big){\rm d}x{\rm d}p\nonumber\\
&+\int_{\mathbb R^6}e^{(\vert\alpha\vert+3\vert\beta\vert)\bar\phi}\Big(\partial_x^\alpha\partial_p^\beta\big(\nabla_p\cdot(\Lambda_{\phi,p}e^{2\phi}\nabla_pf)\big)-\nabla_p\cdot(\Lambda_{\phi,p}e^{2\phi}\nabla_p\partial_x^\alpha\partial_p^\beta f)\Big)\notag\\
&\quad\times(e^{2\phi}+\vert p\vert^2)^\gamma\partial_x^\alpha\partial_p^\beta f{\rm d}x{\rm d}p.
	\end{align}
The first term on the right hand side of \eqref{A 18} can be bounded as
\begin{align}\label{A 19}
\int_{\mathbb R^6}&e^{(\vert\alpha\vert+3\vert\beta\vert)\bar\phi}\Lambda_{\phi,p}\partial_x^\alpha\partial_p^\beta(e^{2\phi}\nabla_pf)\big(\nabla_p(e^{2\phi}+\vert p\vert^2)^\gamma\partial_x^\alpha\partial_p^\beta f+(e^{2\phi}+\vert p\vert^2)^\gamma\nabla_p\partial_x^\alpha\partial_p^\beta f\big){\rm d}x{\rm d}p\nonumber\\
\leq&-\int_{\mathbb R^6}e^{(\vert\alpha\vert+3\vert\beta\vert)\bar\phi+2\phi}(e^{2\phi}+\vert p\vert^2)^{\gamma-\frac{1}{2}}(e^{2\phi}\vert\nabla_p\partial_x^\alpha\partial_p^\beta f\vert^2+\vert p\cdot\nabla_p\partial_x^\alpha\partial_p^\beta f\vert^2){\rm d}x{\rm d}p\nonumber\\
&+\int_{\mathbb R^6}e^{\frac{1}{2}(\vert\alpha+3\vert\beta\vert)\bar\phi+\phi}(e^{2\phi}+\vert p\vert^2)^{\frac{\gamma}{2}-\frac{1}{4}}\vert p\cdot\nabla_p\partial_x^\alpha\partial_p^\beta f\vert e^{\frac{\bar\phi}{2}}(e^{\phi}+\vert p\vert^2)^{\frac{\gamma}{2}}\vert\partial_x^\alpha\partial_p^\beta f\vert{\rm d}x{\rm d}p\nonumber\\
&+Ce^{\bar\phi}\sum_{\substack{\vert\alpha'\vert+\vert\beta'\vert\leq\vert\alpha\vert+\vert\beta\vert\\|\beta'|<4}}\left\|e^{\frac{1}{2}(\vert\alpha\vert+3\vert\beta\vert)\bar\phi}\partial_x^\alpha\partial_p^\beta f\right\|_{L^2_{x,v}}\left\|e^{\frac{1}{2}(\vert\alpha'\vert+3\vert\beta'\vert)\bar\phi}\partial_x^{\alpha'}\partial_p^{\beta'}f\right\|_{L^2_{x,p}}\nonumber\\
&+C\sum_{\vert\alpha'\vert+\vert\beta'\vert<\vert\alpha\vert+\vert\beta\vert}\int_{\mathbb R^6}e^{\frac{1}{2}(\vert\alpha\vert+3\vert\beta\vert)\bar\phi+\frac{1}{2}(\vert\alpha'\vert+3\vert\beta'\vert)\bar\phi+2\phi}(e^{2\phi}+\vert p\vert^2)^{\gamma-\frac{1}{2}}\nonumber\\
&\times\Big(e^{2\phi}\vert\nabla_p\partial_x^\alpha\partial_p^\beta f\vert\vert\nabla_p\partial_x^{\alpha'}\partial_p^{\beta'} f\vert+\vert p\cdot\nabla_p\partial_x^\alpha\partial_p^\beta f\vert\vert p\cdot\nabla_p\partial_x^{\alpha'}\partial_p^{\beta'} f\vert\Big){\rm d}x{\rm d}p\nonumber\\
\leq& -(1-\eta)\int_{\mathbb R^6}e^{(\vert\alpha\vert+3\vert\beta\vert)\bar\phi+2\phi}(e^{2\phi}+\vert p\vert^2)^{\gamma-\frac{1}{2}}(e^{2\phi}\vert\nabla_p\partial_x^\alpha\partial_p^\beta f\vert^2+\vert p\cdot\nabla_p\partial_x^\alpha\partial_p^\beta f\vert^2){\rm d}x{\rm d}p\nonumber\\
&+Ce^{\bar\phi}\sum_{\substack{m+n\leq\vert\alpha\vert+\vert\beta\vert\\n<4}}\mathcal E^\gamma_{m,n}(t)+C\sum_{m+n<\vert\alpha\vert+\vert\beta\vert}\mathcal D_{m,n}^\gamma(t).
	\end{align}
As for the second term on the right hand side of \eqref{A 18}, it is straightforward to check that
\begin{equation}\label{A 20}
	\begin{split}
\int_{\mathbb R^6}&e^{(\vert\alpha\vert+3\vert\beta\vert)\bar\phi}\Big(\partial_x^\alpha\partial_p^\beta\big(\nabla_p\cdot(\Lambda_{\phi,p}e^{2\phi}\nabla_pf)\big)-\nabla_p\cdot\big(\Lambda_{\phi,p}\partial_x^\alpha\partial_p^\beta(e^{2\phi}\nabla_p f)\big)\Big)(e^{2\phi}+\vert p\vert^2)^\gamma\partial_x^\alpha\partial_p^\beta f{\rm d}x{\rm d}p\\
\leq& Ce^{\bar\phi}\sum_{\substack{m+n\leq\vert\alpha\vert+\vert\beta\vert\\n<4}}\mathcal E^\gamma_{m,n}(t).
\end{split}
\end{equation}
Substituting \eqref{A 19} and \eqref{A 20} into \eqref{A 18} gives \eqref{A 17}.
This completes the proof of  Lemma \ref{lemma A4}.
\end{proof}

\begin{lemma}\label{lemma A5}
	Assume $(f,\Phi)$ is a solution to \eqref{pt-vnfp} and \eqref{pt-id}, and $(\bar F,\bar\phi)$ is a solution to \eqref{vnpf-ho} and \eqref{ho-id}. If $\vert\alpha\vert+\vert\beta\vert\leq4$ with $\vert\beta\vert<4$, then for $\ga>0$, it holds that
	\begin{align}\label{A 22}
\int_{\mathbb R^6}&e^{(\vert\alpha\vert+3\vert\beta\vert)\bar\phi}\partial_x^\alpha\partial_p^\beta\Big(e^{2\phi}\nabla_p\cdot(\Lambda_{\phi,p}\nabla_p\bar F)-e^{2\bar\phi}\nabla_p\cdot(\Lambda_{\bar\phi,p}\nabla_p\bar F)\Big)(e^{2\phi}+\vert p\vert^2)^\gamma \partial_x^\alpha\partial_p^\beta f{\rm d}x{\rm d}p\nonumber\\
\leq& \eta\int_{\mathbb R^6}e^{(\vert\alpha\vert+3\vert\beta\vert)\bar\phi+2\phi}(e^{2\phi}+\vert p\vert^2)^{\gamma-\frac{1}{2}}(e^{2\phi}\vert\nabla_p\partial_x^\alpha\partial_p^\beta f\vert^2+\vert p\cdot\nabla_p\partial_x^\alpha\partial_p^\beta f\vert^2){\rm d}x{\rm d}p\nonumber\\
&+Ce^{\bar\phi}\int_{\mathbb R^6}e^{(\vert\alpha\vert+3\vert\beta\vert)\bar\phi}(e^{2\phi}+\vert p\vert^2)^\gamma\vert\partial_x^\alpha\partial_p^\beta f\vert^2{\rm d}x{\rm d}p+C_\eta e^{\bar\phi}\left\|\Phi\right\|_{H^{\vert\alpha\vert}_x}^2.
	\end{align}
\end{lemma}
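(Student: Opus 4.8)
\textbf{Proof plan for Lemma \ref{lemma A5}.}
The plan is to expand the term $e^{2\phi}\nabla_p\cdot(\Lambda_{\phi,p}\nabla_p\bar F)-e^{2\bar\phi}\nabla_p\cdot(\Lambda_{\bar\phi,p}\nabla_p\bar F)$ as a quantity that is controlled by $\Phi=\phi-\bar\phi$ and then integrate by parts in $p$ to move one momentum derivative off $\bar F$ and onto the test function $\partial_x^\alpha\partial_p^\beta f$. First I would write the difference as an integral along the segment joining $\bar\phi$ to $\phi$: since $\Lambda_{\cdot,p}$ and the prefactor $e^{2\cdot}$ depend smoothly on the scalar parameter, one has
\[
e^{2\phi}\nabla_p\cdot(\Lambda_{\phi,p}\nabla_p\bar F)-e^{2\bar\phi}\nabla_p\cdot(\Lambda_{\bar\phi,p}\nabla_p\bar F)=\Phi\int_0^1 \partial_s\Big(e^{2\phi_s}\nabla_p\cdot(\Lambda_{\phi_s,p}\nabla_p\bar F)\Big){\rm d}s,\quad \phi_s=\bar\phi+s\Phi,
\]
so that the whole expression carries a factor $\Phi$ (or its $x$-derivatives after differentiation). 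Applying $\partial_x^\alpha\partial_p^\beta$ distributes derivatives by the Leibniz rule: the $x$-derivatives land on $\Phi$ (producing $\partial_x^{\alpha'}\Phi$ with $|\alpha'|\le|\alpha|$, hence the norm $\|\Phi\|_{H^{|\alpha|}_x}$ via Sobolev embedding $H^2_x\hookrightarrow L^\infty_x$), while the $p$-derivatives land on the $\bar F$-structure and produce terms of the schematic form $(e^{2\phi}+|p|^2)^{-k/2}$ times $\nabla_p^{j}\bar F$ with $j\le|\beta|+2$, controlled by Lemma \ref{lemma A1}.

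The key technical point is the degree counting in the momentum weights. The diffusion structure $e^{2\phi}\nabla_p\cdot(\Lambda_{\phi,p}\nabla_p\bar F)$, written out, behaves like $(e^{2\phi}+|p|^2)^{1/2}$ times second-order derivatives plus lower-order terms with at worst one negative half-power of $(e^{2\phi}+|p|^2)$; crucially, the two highest-derivative contributions $e^{2\phi}(e^{2\phi}+|p|^2)^{-1/2}(e^{2\phi}|\nabla_p^2\bar F|+|p\cdot\nabla_p^2\bar F|)$ are exactly of the form that, after one integration by parts in $p$ against $(e^{2\phi}+|p|^2)^\gamma\partial_x^\alpha\partial_p^\beta f$, yields a term bounded by $\eta\mathcal D^\gamma$-type dissipation plus a constant times $e^{\bar\phi}\mathcal E^\gamma$-type energy — this is where the factor $e^{\bar\phi}$ on the right-hand side comes from, since $\bar F$-weighted norms cost $e^{2\bar\phi}$ and the balance of $e^{(\cdot)\bar\phi}$ powers leaves exactly one spare $e^{\bar\phi}$. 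So the order of steps is: (1) rewrite the difference with an explicit $\Phi$ factor; (2) apply Leibniz, separating the $\Phi$-part (bounded in $L^\infty_x$ by $\|\Phi\|_{H^{|\alpha|}_x}$) from the $\bar F$-part; (3) for the terms in the $\bar F$-part where at most one $p$-derivative hits $f$, estimate directly by Cauchy–Schwarz with the $e^{\bar\phi}$-small factor absorbing everything; (4) for the genuinely second-order-in-$p$ piece, integrate by parts once in $p$ to transfer the derivative onto $f$, producing the $\eta\int e^{(\cdots)\bar\phi+2\phi}(e^{2\phi}+|p|^2)^{\gamma-1/2}(e^{2\phi}|\nabla_p\partial_x^\alpha\partial_p^\beta f|^2+|p\cdot\nabla_p\partial_x^\alpha\partial_p^\beta f|^2)$ term by Young's inequality with parameter $\eta$, and the remaining $C_\eta e^{\bar\phi}(\cdots)$ energy terms.

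The main obstacle will be the careful bookkeeping in step (4): one must verify that after the integration by parts the resulting weight on $f$ is exactly $(e^{2\phi}+|p|^2)^{\gamma-1/2}$ (not $\gamma$, not $\gamma-1$), which requires tracking how $\nabla_p(e^{2\phi}+|p|^2)^\gamma \sim (e^{2\phi}+|p|^2)^{\gamma-1}p$ combines with the $(e^{2\phi}+|p|^2)^{1/2}$ arising from the $\Lambda_{\phi_s,p}$-structure, and that all boundary terms vanish by decay of $\bar F$ from Lemma \ref{lemma A1}. A secondary subtlety is handling the cases $|\beta|\ge2$, where $\partial_p^\beta$ applied to $\nabla_p\cdot(\Lambda_{\phi_s,p}\nabla_p\bar F)$ produces $\nabla_p^{|\beta|+2}\bar F$, which by Lemma \ref{lemma A1} grows like $(1+t)^{(|\beta|+1)/2}$; one checks this polynomial growth is harmless because it is multiplied by the factor $e^{\bar\phi}$ coming from the weight balance, and $e^{\bar\phi}(1+t)^{C}\to 0$ since $\bar\phi\to-\infty$ linearly — so even the worst-case time growth is dominated. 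These are all routine once the structure is set up; no new idea beyond the $\Phi$-factorization and the integration-by-parts trick is needed, and the proof closely parallels that of Lemma \ref{lemma A4}.
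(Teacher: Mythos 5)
Your plan is correct and follows essentially the same route as the paper: the paper extracts the factor $\Phi$ by splitting the difference as $(e^{2\phi}-e^{2\bar\phi})\nabla_p\cdot(\Lambda_{\phi,p}\nabla_p\bar F)+e^{2\bar\phi}\nabla_p\cdot\big((\Lambda_{\phi,p}-\Lambda_{\bar\phi,p})\nabla_p\bar F\big)$ and applying the mean value theorem, which is the two-term version of your one-shot segment representation (note, incidentally, that writing $\Phi\int_0^1\partial_s(\cdots)\,{\rm d}s$ double counts the $\Phi$ factor — it should be either $\int_0^1\partial_s(\cdots)\,{\rm d}s$ or $\Phi\int_0^1\partial_\phi(\cdots)|_{\phi=\phi_s}\,{\rm d}s$). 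After that, the paper proceeds exactly as you describe: one integration by parts in $p$, Young's inequality with $\eta$ to produce the dissipation term, Lemma \ref{lemma A1} to control the $\bar F$-weighted norms (with the polynomial time growth absorbed by the spare exponential factors), and the observation that for $|\alpha|>0$ the $\bar\phi$-term is $x$-independent so every surviving term carries an $x$-derivative of $\Phi$, yielding the $\|\Phi\|_{H^{|\alpha|}_x}$ contribution.
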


\begin{proof}
If $\vert\alpha\vert=0$, we first rewrite the term on the left hand side  of \eqref{A 22} as
	\begin{align}
\int_{\mathbb R^6}&e^{3\vert\beta\vert\bar\phi}\partial_p^\beta\Big(e^{2\phi}\nabla_p\cdot(\Lambda_{\phi,p}\nabla_p\bar F)-e^{2\bar\phi}\nabla_p\cdot(\Lambda_{\bar\phi,p}\nabla_p\bar F)\Big)(e^{2\phi}+\vert p\vert^2)^\gamma \partial_p^\beta f{\rm d}x{\rm d}p\nonumber\\
=&\int_{\mathbb R^6}e^{3\vert\beta\vert\bar\phi}\partial_p^\beta\Big(e^{2\phi}\nabla_p\cdot(\Lambda_{\phi,p}\nabla_p\bar F)-e^{2\bar\phi}\nabla_p\cdot(\Lambda_{\phi,p}\nabla_p\bar F)\Big)(e^{2\phi}+\vert p\vert^2)^\gamma \partial_p^\beta f{\rm d}x{\rm d}p\nonumber\\
&+\int_{\mathbb R^6}e^{3\vert\beta\vert\bar\phi}\partial_p^\beta\Big(e^{2\bar\phi}\nabla_p\cdot(\Lambda_{\phi,p}\nabla_p\bar F)-e^{2\bar\phi}\nabla_p\cdot(\Lambda_{\bar\phi,p}\nabla_p\bar F)\Big)(e^{2\phi}+\vert p\vert^2)^\gamma \partial_p^\beta f{\rm d}x{\rm d}p
\notag\\:=&\CJ_1+\CJ_2.\notag
	\end{align}
We now  estimate $\CJ_1$ and $\CJ_2$ separately. For $\CJ_1$, by mean value theorem, there exists   $\theta\in(0,1)$ such that
\begin{align}\label{A 24}
\CJ_1=&2\int_{\mathbb R^6}\Phi e^{3\vert\beta\vert\bar\phi}e^{2\bar\phi+2\theta\Phi}\partial_p^\beta\big( \nabla_p\cdot(\Lambda_{\phi,p}\nabla_p\bar F)\big)(e^{2\phi}+\vert p\vert^2)^\gamma \partial_p^\beta f{\rm d}x{\rm d}p\nonumber\\
=&-2\int_{\mathbb R^6}\Phi e^{3\vert\beta\vert\bar\phi}e^{2\bar\phi+2\theta\Phi} \Lambda_{\phi,p}\nabla_p\partial_p^\beta\bar F\nabla_p\Big((e^{2\phi}+\vert p\vert^2)^\gamma \partial_p^\beta f\Big){\rm d}x{\rm d}p\nonumber\\
&+2\int_{\mathbb R^6}\Phi e^{3\vert\beta\vert\bar\phi}e^{2\bar\phi+2\theta\Phi}\nabla_p\cdot\Big(\partial_p^\beta(\Lambda_{\phi,p}\nabla_p\bar F)-\Lambda_{\phi,p}\nabla_p\partial_p^\beta\bar F\Big)(e^{2\phi}+\vert p\vert^2)^\gamma \partial_p^\beta f{\rm d}x{\rm d}p\nonumber\\
	\leq& \eta\int_{\mathbb R^6}e^{3\vert\beta\vert\bar\phi+2\phi}(e^{2\phi}+\vert p\vert^2)^{\gamma-\frac{1}{2}}(e^{2\phi}\vert\nabla_p\partial_p^\beta f\vert^2+\vert p\cdot\nabla_p\partial_p^\beta f\vert^2){\rm d}x{\rm d}p\nonumber\\
	&+Ce^{\bar\phi}\int_{\mathbb R^6}e^{3\vert\beta\vert\bar\phi}(e^{2\phi}+\vert p\vert^2)^\gamma\vert\partial_p^\beta f\vert^2{\rm d}x{\rm d}p+Ce^{2\bar\phi}\left\|\Phi\right\|_{L^2_x}^2.
\end{align}
Similarly, for $\CJ_2$, we have
\begin{align}\label{A 25}
\CJ_2=&-\int_{\mathbb R^6}e^{3\vert\beta\vert\bar\phi+2\bar\phi}\nabla_p\cdot\Big(\Lambda_{\phi,p}\nabla_p\partial_p^\beta\bar F-\Lambda_{\bar\phi,p}\nabla_p\partial_p^\beta\bar F\Big)\nabla_p\big((e^{2\phi}+\vert p\vert^2)^\gamma \partial_p^\beta f\big){\rm d}x{\rm d}p\nonumber\\
&+\int_{\mathbb R^6}e^{3\vert\beta\vert\bar\phi+2\bar\phi}\nabla_p\cdot\Big(\partial_p^\beta\big(\Lambda_{\phi,p}\nabla_p\bar F-\Lambda_{\bar\phi,p}\nabla_p\bar F\big)-\big(\Lambda_{\phi,p}\nabla_p\partial_p^\beta\bar F-\Lambda_{\bar\phi,p}\nabla_p\partial_p^\beta\bar F\big)\Big)\nonumber\\
&\quad\times(e^{2\phi}+\vert p\vert^2)^\gamma \partial_p^\beta f{\rm d}x{\rm d}p\nonumber\\
\leq& C\int_{\mathbb R^6}e^{3\vert\beta\vert\bar\phi+2\bar\phi}\vert\sqrt{e^{2\phi}+\vert p\vert^2}-\sqrt{e^{2\bar\phi}+\vert p\vert^2}\vert\vert p\cdot\nabla_p\partial_p^\beta\bar F\vert(e^{2\phi}+\vert p\vert^2)^{\gamma-1}\vert f\vert{\rm d}x{\rm d}p\nonumber\\
&+\int_{\mathbb R^6}e^{3\vert\beta\vert\bar\phi+2\bar\phi}\Big(\frac{e^{2\phi}\nabla_p\partial_p^\beta f\cdot\nabla_p\partial_p^\beta\bar F+p\cdot\nabla_p\partial_p^\beta fp\cdot\nabla_p\partial_p^\beta \bar F}{\sqrt{e^{2\phi}+\vert p\vert^2}}\nonumber\\
&\qquad\qquad\qquad\qquad-\frac{e^{2\bar\phi}\nabla_p\partial_p^\beta f\cdot\nabla_p\partial_p^\beta \bar F+p\cdot\nabla_p\partial_p^\beta fp\cdot\nabla_p\partial_p^\beta \bar F}{\sqrt{e^{2\bar\phi}+\vert p\vert^2}}\Big){\rm d}x{\rm d}p\nonumber\\
&+Ce^{\bar\phi}\int_{\mathbb R^6}(e^{2\phi}+\vert p\vert^2)^\gamma\vert\partial_p^\beta f\vert^2{\rm d}x{\rm d}p+Ce^{2\bar\phi}\left\|\Phi\right\|_{L^2_x}^2\nonumber\\
\leq& C\int_{\mathbb R^6}e^{\frac{3\vert\beta\vert}{2}\bar\phi}(e^{2\phi}+\vert p\vert^2)^{\frac{\gamma}{2}}\vert\partial_p^\beta f\vert(e^{2\bar\phi}+\vert p\vert^2)^{\frac{\gamma}{2}+\frac{1}{2}}\vert\nabla_p\partial_p^\beta\bar F\vert e^{2\bar\phi}\vert\Phi\vert{\rm d}x{\rm d}p\nonumber\\
&+C\int_{\mathbb R^6}e^{\frac{3\vert\beta\vert}{2}\bar\phi+2\phi}(e^{2\phi}+\vert p\vert^2)^{\frac{\gamma}{2}-\frac{1}{4}}\vert\nabla_p\partial_p^\beta f\vert(e^{2\bar\phi}+\vert p\vert^2)^{\frac{\gamma}{2}+\frac{1}{2}}\vert\nabla_p\partial_p^\beta\bar F\vert e^{\frac{3\bar\phi}{2}}\vert\Phi\vert{\rm d}x{\rm d}p\nonumber\\
&+C\int_{\mathbb R^6}e^{\frac{3\vert\beta\vert}{2}\bar\phi+\phi}(e^{2\phi}+\vert p\vert^2)^{\frac{\gamma}{2}-\frac{1}{4}}\vert p\cdot\nabla_p\partial_p^\beta f\vert(e^{2\phi}+\vert p\vert^2)^{\frac{\gamma}{2}+\frac{1}{4}}\vert\nabla_p\partial_p^\beta\bar F\vert e^{\bar\phi}\vert\Phi\vert{\rm d}x{\rm d}p\nonumber\\
\leq&\eta\int_{\mathbb R^6}e^{3\vert\beta\vert\bar\phi+2\phi}(e^{2\phi}+\vert p\vert^2)^{\gamma-\frac{1}{2}}(e^{2\phi}\vert\nabla_p\partial_p^\beta f\vert^2+\vert p\cdot\nabla_p\partial_p^\beta f\vert^2){\rm d}x{\rm d}p\nonumber\\
&+Ce^{\bar\phi}\int_{\mathbb R^6}e^{3\vert\beta\vert\bar\phi}(e^{2\phi}+\vert p\vert^2)^\gamma\vert\partial_p^\beta f\vert^2{\rm d}x{\rm d}p+C_\eta e^{\bar\phi}\left\|\Phi\right\|_{L^2_x}^2.
	\end{align}
Next, we consider the case $\vert\alpha\vert>0$. Similar to \eqref{A 24}, we have
\begin{align}\label{A 26}
\int_{\mathbb R^6}&e^{(\vert\alpha\vert+3\vert\beta\vert)\bar\phi}\partial_x^\alpha\partial_p^\beta\Big(e^{2\phi}\nabla_p\cdot(\Lambda_{\phi,p}\nabla_p\bar F)\Big)(e^{2\phi}+\vert p\vert^2)^\gamma \partial_x^\alpha\partial_p^\beta f{\rm d}x{\rm d}p\nonumber\\
	\leq& Ce^{\bar\phi}\int_{\mathbb R^6}e^{(\vert\alpha\vert+3\vert\beta\vert)\bar\phi}(e^{2\phi}+\vert p\vert^2)^\gamma\vert\partial_x^\alpha\partial_p^\beta f\vert^2{\rm d}x{\rm d}p+Ce^{\bar\phi}\left\|\Phi\right\|_{H^{\vert\alpha\vert}_x}^2.
\end{align}
From \eqref{A 24}, \eqref{A 25} and \eqref{A 26}, we obtain \eqref{A 22}. This completes the proof of Lemma \ref{lemma A5}.
\end{proof}

Finally, similar to Lemma \ref{lemma A3}, Lemma \ref{lemma A4} and Lemma \ref{lemma A5}, by using Lemma \ref{lemma A2}, we have the following lemma.

\begin{lemma}\label{lemma A6}
	Assume $(g,\Phi)$ is a solution to \eqref{g-eq} and \eqref{g-id}, and $(\bar G,\bar\phi)$ is a solution to \eqref{A8} and \eqref{A9}. If $\vert\alpha\vert+\vert\beta\vert\leq1$ , then for $\frac{1}{2}<\la<\frac{3}{2}$, the following estimates hold
\begin{equation}\label{g-tp}
	\begin{split}
		\int_{\mathbb R^6}&\partial_x^\alpha\partial_q^\beta\big(\nabla_x\sqrt{e^{2\Phi}+\vert q\vert^2}\cdot\nabla_q\bar G\big)	(e^{2\Phi}+\vert q\vert^2)^\la\partial_x^\alpha\partial_q^\beta g{\rm d}x{\rm d}q\\
		\leq& Ce^{(\frac{3}{2}-\la)\bar\phi}\int_{\mathbb R^6}(e^{2\Phi}+\vert q\vert^2)^\la\vert\partial_x^\alpha\partial_q^\beta g\vert^2{\rm d}x{\rm d}q+Ce^{(\frac{3}{2}-\la)\bar\phi}\left\|\nabla_x\Phi\right\|_{H^{\vert\alpha\vert}_x}^2,
		\end{split}
\end{equation}
 \begin{align}\label{g-df}
	\int_{\mathbb R^6}&\partial_x^\alpha\partial_q^\beta\big(e^{2\Phi}\nabla_q\cdot(\Lambda_{\Phi,q}\nabla_qg)\big)(e^{2\Phi}+\vert q\vert^2)^\la\partial_x^\alpha\partial_q^\beta g{\rm d}x{\rm d}q\nonumber\\
	\leq& -(1-\eta)\int_{\mathbb R^6}e^{\bar\phi+2\Phi}(e^{2\Phi}+\vert p\vert^2)^{\la-\frac{1}{2}}\big(e^{2\Phi}\vert\nabla_q\partial_x^\alpha\partial_q^\beta g\vert^2+\vert q\cdot\nabla_q\partial_x^\alpha\partial_q^\beta g\vert^2\big){\rm d}x{\rm d}q\nonumber\\
	&+Ce^{\bar\phi}\int_{\mathbb R^6}(e^{2\Phi}+|q|^2)^\la|\partial_x^\alpha\partial_q^\beta g|^2{\rm d}x{\rm d}q+C\sum_{m+n<\vert\alpha\vert+\vert\beta\vert}\tilde{\mathcal D}_{m,n}^\gamma(t),
\end{align}
and
\begin{align}\label{G-co}
	\int_{\mathbb R^6}&\partial_x^\alpha\partial_q^\beta\Big(e^{\bar\phi+2\Phi}\nabla_q\cdot(\Lambda_{\Phi,q}\nabla_q\bar G)-e^{\bar\phi}\nabla_p\cdot(\Lambda_{0,q}\nabla_q\bar G)\Big)(e^{2\Phi}+\vert q\vert^2)^\la \partial_x^\alpha\partial_q^\beta g{\rm d}x{\rm d}q\nonumber\\
	\leq& \eta\int_{\mathbb R^6}e^{\bar\phi+2\Phi}(e^{2\Phi}+\vert q\vert^2)^{\la-\frac{1}{2}}(e^{2\Phi}\vert\nabla_q\partial_x^\alpha\partial_q^\beta g\vert^2+\vert q\cdot\nabla_q\partial_x^\alpha\partial_q^\beta g\vert^2){\rm d}x{\rm d}q\nonumber\\
	&+Ce^{\bar\phi}\int_{\mathbb R^6}(e^{2\Phi}+\vert q\vert^2)^\la\vert\partial_x^\alpha\partial_q^\beta g\vert^2{\rm d}x{\rm d}q\notag\\
	&+C_\eta\sum_{\vert\beta'\vert\leq\vert\beta\vert} e^{\bar\phi}\left\|\Phi\right\|_{H^{\vert\alpha\vert}_x}^2\int_{\mathbb R^3}(1+\vert q\vert^2)^{\lambda-\frac{1}{2}}(\vert\nabla_q\partial^{\beta'}_{q}\bar G\vert^2
	+\vert q\cdot\nabla_q\partial_{q}^{\beta'}\bar G\vert^2){\rm d}q.
\end{align}
\end{lemma}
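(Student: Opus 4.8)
\textbf{Plan of proof for Lemma \ref{lemma A6}.}
The plan is to mimic the three-part structure of Lemmas \ref{lemma A3}, \ref{lemma A4} and \ref{lemma A5}, but now using the self-similar weights $(e^{2\Phi}+|q|^2)^{\la}$ and the decay estimates for $\bar G$ recorded in Lemma \ref{lemma A2} in place of the polynomial growth estimates for $\bar F$ of Lemma \ref{lemma A1}. Since $|\al|+|\bet|\le 1$ the combinatorics is very mild, so the only genuine work is tracking the powers of $e^{\bar\phi}$ that the weights and the coefficient $e^{\bar\phi+2\Phi}$ contribute. First, for \eqref{g-tp}, I would expand $\partial_x^\al\partial_q^\bet\big(\nabla_x\sqrt{e^{2\Phi}+|q|^2}\cdot\nabla_q\bar G\big)$ by the Leibniz rule; each term carries at least one $x$-derivative of $\sqrt{e^{2\Phi}+|q|^2}$, which produces a factor $\nabla_x\Phi$ (times a bounded function of $(\Phi,q)$), and a factor $\nabla_q\partial_q^{\bet'}\bar G$ with $|\bet'|\le|\bet|$. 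Using $(1+|q|^2)^{1/2}|\nabla_q\partial_q^{\bet'}\bar G|\lesssim(1+|q|^2)^{\la/2}$-type bounds coming from \eqref{A10} of Lemma \ref{lemma A2}, together with the elementary comparison $(e^{2\Phi}+|q|^2)^{\la}\sim(1+|q|^2)^{\la}$ valid because $|\Phi|\lesssim\eps_1$, Cauchy--Schwarz gives a bound of the form $Ce^{(\frac32-\la)\bar\phi}\big(\int(e^{2\Phi}+|q|^2)^{\la}|\partial_x^\al\partial_q^\bet g|^2 + \|\nabla_x\Phi\|_{H^{|\al|}_x}^2\big)$; the crucial point is that the power $e^{(\frac32-\la)\bar\phi}$ is exactly $\|(1+|q|^2)^{\la/2}\nabla_q\bar G\|_{L^2_q}$ by \eqref{A10} with the weight shifted by the extra $(1+|q|^2)^{1/2}$.

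For \eqref{g-df}, I would follow the proof of Lemma \ref{lemma A4} verbatim in structure: write $\partial_x^\al\partial_q^\bet\big(e^{2\Phi}\nabla_q\cdot(\Lambda_{\Phi,q}\nabla_q g)\big)$, integrate by parts in $q$ against $(e^{2\Phi}+|q|^2)^{\la}\partial_x^\al\partial_q^\bet g$, and split into (i) the diagonal term in which all derivatives fall on $g$, which after using the identity \eqref{m-lb} (now with $\phi$ replaced by $\Phi$) yields the negative definite quantity $-(1-\eta)\int e^{\bar\phi+2\Phi}(e^{2\Phi}+|q|^2)^{\la-\frac12}(e^{2\Phi}|\nabla_q\partial_x^\al\partial_q^\bet g|^2+|q\cdot\nabla_q\partial_x^\al\partial_q^\bet g|^2)$ plus a cross term absorbed by Cauchy--Schwarz with $\eta$; and (ii) commutator terms where at least one derivative hits $e^{2\Phi}$ or $\Lambda_{\Phi,q}$, all of which are lower order in $|\al|+|\bet|$ and hence controlled by $Ce^{\bar\phi}\int(e^{2\Phi}+|q|^2)^{\la}|\partial_x^\al\partial_q^\bet g|^2$ and the dissipation rates $\tilde{\mathcal D}^{\la}_{m,n}$ with $m+n<|\al|+|\bet|$. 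Because here $|\al|+|\bet|\le1$, the commutators involve only first $x$- or $q$-derivatives of $\Phi$, which are small by \eqref{g-aps}; I would point this out so that the $Ce^{\bar\phi}$ factor can in fact be replaced by (or combined with) a small constant if needed later.

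For \eqref{G-co}, I would reproduce the splitting used in Lemma \ref{lemma A5}: write the difference $e^{\bar\phi+2\Phi}\nabla_q\cdot(\Lambda_{\Phi,q}\nabla_q\bar G)-e^{\bar\phi}\nabla_q\cdot(\Lambda_{0,q}\nabla_q\bar G)$ as $\mathcal J_1+\mathcal J_2$, where $\mathcal J_1 = e^{\bar\phi}(e^{2\Phi}-1)\nabla_q\cdot(\Lambda_{\Phi,q}\nabla_q\bar G)$ and $\mathcal J_2 = e^{\bar\phi}\nabla_q\cdot\big((\Lambda_{\Phi,q}-\Lambda_{0,q})\nabla_q\bar G\big)$. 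In both terms the mean value theorem extracts a factor $\Phi$ (times a bounded function of $(\Phi,q)$), exactly as in \eqref{A 24}--\eqref{A 25}; integrating by parts in $q$, using $|q\cdot\nabla_q\partial_q^{\bet'}\bar G|$- and $|\nabla_q\partial_q^{\bet'}\bar G|$-weights from \eqref{A10}--\eqref{A11}, and applying Cauchy--Schwarz with a small $\eta$ on the $\nabla_q\partial_x^\al\partial_q^\bet g$-terms gives the stated right-hand side, the last summand $C_\eta\sum_{|\bet'|\le|\bet|}e^{\bar\phi}\|\Phi\|^2_{H^{|\al|}_x}\int(1+|q|^2)^{\la-\frac12}(|\nabla_q\partial_q^{\bet'}\bar G|^2+|q\cdot\nabla_q\partial_q^{\bet'}\bar G|^2)\,dq$ coming precisely from the $\|\Phi\|^2$-factor times a $\bar G$-integral that is finite by Lemma \ref{lemma A2}. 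I expect the main obstacle to be purely bookkeeping: verifying that the extra $e^{\bar\phi}$ (from the coefficient $e^{\bar\phi+2\Phi}$) combined with the weight powers conspires to give uniformly integrable-in-time prefactors and, in particular, that no stray positive power of $e^{-\bar\phi}$ (recall $\bar\phi\to-\infty$) sneaks in — which is why one must be careful to keep $\la<\frac32$ so that $e^{(\frac32-\la)\bar\phi}\to0$ and $\int_0^\infty e^{(\frac32-\la)\bar\phi}\,dt<\infty$. All the $g$-integrals and commutators are standard once the weights are matched, so I would present those calculations only schematically.
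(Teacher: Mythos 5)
Your plan matches the paper's proof essentially verbatim: the paper establishes \eqref{g-tp} exactly as you describe (Leibniz expansion, the $L^2_q$ bound $\|(1+|q|^2)^{\la/2}\nabla_q\partial_q^{\beta'}\bar G\|_{L^2_q}\leq Ce^{(\frac{3}{2}-\la)\bar\phi}$ from Lemma \ref{lemma A2}, and H\"older/Cauchy--Schwarz), and then simply refers \eqref{g-df} and \eqref{G-co} to the arguments of Lemmas \ref{lemma A4} and \ref{lemma A5}, which is precisely the splitting and integration-by-parts scheme you outline. The only caveat is cosmetic: your intermediate phrase suggesting a pointwise bound on $(1+|q|^2)^{1/2}|\nabla_q\partial_q^{\beta'}\bar G|$ should be read as the $L^2_q$ estimate of \eqref{A10}, which you in fact invoke correctly at the end.
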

\begin{proof}
We will only prove \eqref{g-tp}, as the proofs of \eqref{g-df} and \eqref{G-co} are  similar to those of \eqref{A 17}  and \eqref{A 22}, respectively. By applying Lemma \ref{lemma A2} and H\"{o}lder's inequality, we obtain
	\begin{align*}
		\int_{\mathbb R^6}&\partial_x^\alpha\partial_q^\beta\big(\nabla_x\sqrt{e^{2\Phi}+\vert q\vert^2}\cdot\nabla_q\bar G\big)	(e^{2\Phi}+\vert q\vert^2)^\la\partial_x^\alpha\partial_q^\beta g{\rm d}x{\rm d}q\nonumber\\
		\leq& C\int_{\mathbb R^6}(e^{2\Phi}+\vert q\vert^2)^{\frac{\la}{2}}\vert\partial_x^\alpha\partial_q^\beta g\vert\sum_{\vert\alpha'\vert\leq\vert\alpha\vert}\vert\nabla_x\partial_x^{\alpha'}\Phi\vert
		(1+\vert q\vert^2)^{\frac{\la}{2}}\sum_{\vert\beta'\vert\leq\vert\beta\vert}\vert\nabla_q\partial_q^{\beta'}\bar G\vert {\rm d}x{\rm d}q\nonumber\\
		\leq& Ce^{(\frac{3}{2}-\la)\bar\phi}\int_{\mathbb R^6}(e^{2\Phi}+\vert q\vert^2)^\la\vert\partial_x^\alpha\partial_q^\beta g\vert^2{\rm d}x{\rm d}q+Ce^{(\frac{3}{2}-\la)\bar\phi}\left\|\nabla_x\Phi\right\|_{H^{\vert\alpha\vert}_x}^2.
	\end{align*}
This completes the proof of Lemma \ref{lemma A6}.
\end{proof}

\noindent {\bf Acknowledgements:}
SQL was supported by grants from the National Natural Science Foundation of China under contract 12325107.
TY was supported by a fellowship award from the Research Grants Council of the Hong Kong Special Administrative Region, China (Project no. SRFS2021-1S01).  TY would also like to thank the Kuok  foundation for its generous support. CSC extends his gratitude to the Department of Applied Mathematics at The Hong Kong Polytechnic University for their hospitality during his visit in 2023-2024. SQL also thanks the Department of Mathematics and IMS at The Chinese University of Hong Kong for their hospitality during his visit in the summer of 2024. And the authors would also like to thank the support by the Research Centre for Nonlinear Analysis at The Hong Kong Polytechnic University.


\begin{thebibliography}{99}
\bibitem{AJ-CS-11}
J. A. Alc\'antara, S. Calogero: On a relativistic Fokker-Planck equation in kinetic theory. {\it Kinet. Relat. Models}  {\bf 4}  (2011),  no. 2, 401--426.


\bibitem{ACP-2014} J. A. Alc\'{a}ntara, S. Calogero, S. Pankavich: Spatially homogeneous solutions of
the Vlasov-Nordstr\"{o}m-Fokker-Planck system. {\it  J. Differential Equations} {\bf 257} (2014), no. 10,  3700--3729.

\bibitem{ANH}
H. Andr\'easson: The Einstein-Vlasov System/Kinetic Theory. {\it Living Rev. Relativity} {\bf 14} (2011), 4. URL: http://www.livingreviews.org/lrr-2011-4.

\bibitem{AKR-2011}
H. Andr\'easson, M. Kunze, G. Rein: Existence of axially symmetric static solutions of the Einstein-Vlasov system.
{\it Comm. Math. Phys.} {\bf 308} (2011), no. 1, 23--47.


\bibitem{BKRR-06}
S. Bauer, M. Kunze, G. Rein, A. D. Rendall: Multipole radiation in a collisionless gas
coupled to electromagnetism or scalar gravitatio. {\it Comm. Math. Phys.} {\bf 266} (2006), no. 1, 267--288.

\bibitem{BJ}
J. Bedrossian: Suppression of plasma echoes and Landau damping in Sobolev spaces by weak collisions in a Vlasov-Fokker-Planck equation. {\it Ann. PDE} {\bf 3} (2017), no. 2, Paper No. 19, 66 pp.

\bibitem{BJD-22}
J. Bedrossian, M. Zelati, M. Dolce:
Taylor dispersion and phase mixing in the non-cutoff Boltzmann equation on the whole space. arXiv:2211.05079.

\bibitem{CS03}
S. Calogero:
Spherically symmetric steady states of galactic dynamics in scalar gravity. Classical Quantum Gravity. {\bf 20} (2003), no. 9, 1729--1741.

\bibitem{CS06}
S. Calogero: Global classical solutions to the 3D Nordstr\"om-Vlasov system. {\it  Comm. Math. Phys.} {\bf 266} (2006), no. 2, 343--353.

\bibitem{CR-03-cpde}
S. Calogero, G. Rein: On classical solutions of the Nordstr\"om-Vlasov system. {\it Comm. Partial Differential Equations} {\bf 28} (2003), no. 11-12, 1863--1885.

\bibitem{CR-04}
S. Calogero, G. Rein: Global weak solutions to the Nordstr\"om-Vlasov system. {\it J. Differential Equations} {\bf 204} (2004), no. 2, 323--338.

\bibitem{CSS-09-arma}
S. Calogero, \'{O}. S\'{a}nchez, J. Soler: Asymptotic behavior and orbital stability of galactic dynamics in relativistic scalar gravity. {\it Arch. Ration. Mech. Anal.} {\bf 194} (2009), no. 3, 743--773.

\bibitem{CL-04}
S. Calogero, H. Lee: The non-relativistic limit of the Nordstr\"om-Vlasov system. {\it Comm. Math. Sci.} {\bf 2} (2004), no. 1, 19--34.


\bibitem{CSV-1996} J. A. Carrillo, J. Soler, J. L. Vel\'{a}zquez: Asymptotic behaviour and self-similarity
for the three dimensional Vlasov-Poisson-Fokker-Planck system. {\it J. Funct. Anal.} {\bf 141} (1996), no. 1, 99--132.
	
\bibitem{S. Chaturvedi} S. Chaturvedi, J. Luk, T. Nguyen: The Vlasov-Poisson-Landau system in the weakly collisional regime.
{\it J. Amer. Math. Soc.} {\bf 36} (2023), no. 4, 1103--1189.

\bibitem{D-VML}
R.-J. Duan: Global smooth dynamics of a fully ionized plasma with long-range collisions. {\it Annales de l'Institut Henri Poincar\'e C, Analyse Non Lin\'eaire} {\bf 31}  (2014),  no. 4, 751--778.

\bibitem{DL-cmp}
R.-J. Duan, S.-Q. Liu: The Vlasov-Poisson-Boltzmann system without angular cutoff. {\it Comm. Math. Phys.}  {\bf 324}  (2013),  no. 1, 1--45.

\bibitem{DLY-SIAM}
R.-J. Duan, S.-Q. Liu, T. Yang: Global classical solutions for the Vlasov-Nordstr\"om-Fokker-Planck system. {\it SIAM J. Math. Anal.} {\bf 53} (2021), no. 5, 6164--6190.


\bibitem{FJS-17}
D. Fajman, J. Joudioux, J. Smulevici: Sharp asymptotics for small data solutions of the Vlasov-Nordstr\"om system in three dimensions. arXiv:1704.05353v1.

\bibitem{FJS-apde}
D. Fajman, J. Joudioux, J. Smulevici:
A vector field method for relativistic transport equations with applications. {\it Anal. PDE} {\bf 10} (2017), no. 7, 1539--1612.

\bibitem{Guo-JAMS-2011} Y. Guo: The Vlasov-Poisson-Landau system in a periodic box. {\it J. Amer. Math. Soc.} {\bf 25} (2012), no. 3, 759--812.

\bibitem{RR-cmp}
G. Rein, A. D. Rendall: Global existence of solutions of the spherically symmetric Vlasov-Einstein system with small initial data.
{\it Comm. Math. Phys.} {\bf 150} (1992), no. 3, 561--583.


\bibitem{Wang-18}
X.-C. Wang: Propagation of regularity and long time behavior of 3D massive relativistic transport equation I: Vlasov-Nordstr\"om system. {\it Comm. Math. Phys.} {\bf 382} (2021), no. 3, 1843--1934

\bibitem{YangYu-VMFP}
T. Yang, H.-J. Yu: Global classical solutions for the Vlasov-Maxwell-Fokker-Planck system. {\it SIAM J. Math. Anal.}  {\bf 42}  (2010),  no. 1, 459--488.

\end{thebibliography}
\end{document}